\title{A supervised learning scheme for computing Hamilton--Jacobi equation via density coupling\thanks{\funding{The research is partially supported by research grants NSF DMS-2307465 and ONR N00014-21-1-2891. The research of the first author is partially supported by the Hong Kong Research Grant Council GRF grant 15302823, NSFC grant 12301526, NSFC/RGC Joint Research Scheme N$\_$PolyU5141/24, MOST National Key $R\&D$ Program No. 2024YFA1015900, the internal grants 
(P0045336, P0046811) from Hong Kong Polytechnic University and the CAS AMSS-PolyU Joint Laboratory of Applied Mathematics.}}}
\author{Jianbo Cui\thanks{Department of Applied Mathematics, The Hong Kong Polytechnic University, Hung Hom, Hong Kong.
  (\email{jianbo.cui@polyu.edu.hk}).}
\and Shu Liu\thanks{Department of Mathematics, University of California, Los Angeles, CA 90095, USA.
  (\email{shuliu@math.ucla.edu}).}
\and Haomin Zhou\thanks{School of Mathematics, Georgia Tech, Atlanta, GA 30332, USA
 (\email{hmzhou@math.gatech.edu}).}}
\newtheorem{tm}{Theorem}[section]
\newtheorem{df}{Definition}[section]
\newtheorem{lm}{Lemma}[section]
\newtheorem{prop}{Proposition}[section]
\newtheorem{rk}{Remark}[section]
\numberwithin{equation}{section}
\newcommand*{\bX}{\ensuremath{\boldsymbol{X}}}
\newcommand*{\bP}{\ensuremath{\boldsymbol{P}}}
\begin{document}

\maketitle
\begin{abstract} 
We propose a supervised learning scheme for the first order Hamilton--Jacobi PDEs in high dimensions. The scheme is designed by using the geometric structure of Wasserstein Hamiltonian flows via a density coupling strategy. It is equivalently posed as a regression problem using the Bregman divergence, which provides the loss function in learning while the data is generated through the particle formulation of Wasserstein Hamiltonian flow. We prove a posterior estimate on $L^1$ residual of the proposed scheme based on the coupling density. Furthermore, the proposed scheme can be used to describe the behaviors of  Hamilton--Jacobi PDEs beyond the singularity formations on the support of coupling density.
Several numerical examples with different Hamiltonians are provided to support our findings.  
\end{abstract}

\begin{keywords}
Hamilton--Jacobi PDE, high dimension,  supervised learning, density coupling, Wasserstein Hamiltonian flow
\end{keywords}

\begin{MSCcodes}
65M75, 65P10, 49Q22, 68T07
\end{MSCcodes}

\section{Introduction}

In this paper, we are concerned with solving the following Hamilton--Jacobi equation numerically,
\begin{equation}
  \frac{\partial u(x,t)}{\partial t} + H(x,\nabla u(x,t)) = 0, \quad u(x,0)=g(x), \label{HJ}
\end{equation}
where $t\in [0,T], x\in \mathbb R^d$ with $d\in \mathbb N^+$, and the Hamiltonian $H$ is convex with respect to the second variable. 
Hamilton--Jacobi partial differential equations (HJ PDEs) \eqref{HJ}  arise in many areas of applications, including the
calculus of variations, control theory, and differential games \cite{MR3135343}. 
For instance, in classical mechanics, the solution of HJ PDE (also called as the generating function), when used with canonical transformations which preserve the structure of the Hamiltonian equations, can reveal hidden symmetries, and thus has been applied to design symplectic integrators for the motion of planets and other celestial bodies \cite{pmlr-v139-chen21r, MR2221614}.  HJ PDE is  often used  to determine the optimal policy in optimal control and density control problems \cite{MR3489825}, and has been applied to model the optimal investment strategies and pricing of derivatives in financial mathematics.
In the semiclassical approximation, HJ PDE is related to the Schrödinger equation and has been used in the WKB (Wentzel-Kramers-Brillouin) approximation to describe quantum systems and  tunnelling phenomena \cite{jin2011mathematical}.

However, obtaining their analytical solutions, if at all possible, is often challenging, especially in high dimensions. As indispensable tools, numerical methods such as finite difference \cite{MR744921,MR1111446}, fast sweeping \cite{MR2004194} and level set methods \cite{MR2069944, jin2005computing} have been developed and refined over the years to approximate the solutions and predict their longtime dynamics. Those traditional algorithms involve discretizing the equation on grids and approximating the derivatives by using either finite difference or finite element techniques, and thus their applicability is limited by the so-called curse of dimensionality, namely the computational cost grows exponentially with respect to the problem dimension $d$ \cite{MR134403}. 

In recent years, several strategies are proposed 
to mitigate the challenges caused by the curse of dimensionality when solving HJ PDEs numerically\footnote{For more related topics and research problems on high dimensional HJ equations, we refer to {\color{blue} \href{http://www.ipam.ucla.edu/programs/long-programs/high-dimensional-hamilton-jacobi-pdes/?tab=activities}{http://www.ipam.ucla.edu/programs/long-programs/high-dimensional-hamilton-jacobi-pdes/?tab=activities}}.}, including the optimization method \cite{darbon2016algorithms}, sparse grids \cite{MR3045704}, 
neural networks \cite{Darbon_2021,MR3847747}, etc. For instance, several approaches have been developed from a probabilistic perspective. The works in \cite{fahim2011probabilistic} and \cite{MR3847747} relies on the Feynman-Kac representation of the second-order Hamilton-Jacobi-Bellman (HJB) equations. In \cite{nusken2021solving}, the authors address high-dimensional HJB equations with quadratic kinetic energy by minimizing the discrepancy between the path measures of controlled and uncontrolled diffusion processes.

Another category of research builds on the Hopf–Lax formula, which characterizes the viscosity solution of the HJ equation. The work in \cite{darbon2016algorithms}, along with its extension in \cite{chow2019algorithm}, employed a Hopf–Lax-type formula and iterative optimization algorithms for the scalable computation of HJ equations. A deep learning-based method for stationary HJ equations on bounded domains was proposed in \cite{gang_yin_et_al_IEEE}. In \cite{Darbon_2021}, the authors designed a deep neural network inspired by the Hopf-Lax formula and optimized its parameters to solve the HJ equation.

Causality-free methods for solving HJB equations arising from optimal feedback control were introduced in \cite{nakamura2020causality, nakamura2021adaptive}. These approaches compute numerical solutions by minimizing the $L^2$ loss between the neural network approximation and reference solutions obtained from optimal trajectories generated at random initial points.

In \cite{MR4717768}, an intriguing connection is established between a specific optimization problem and the multitime Hopf formula to address multitime Hamilton–Jacobi (HJ) partial differential equations (PDEs). In \cite{MR4848555}, the authors formulate a least-squares problem to approximate the viscosity solution of the HJ equation, leveraging a monotone and consistent numerical scheme. An implicit solution formula based on the method of characteristics is proposed in \cite{PO2025} for computing the viscosity solution of the HJ equation. This formula is subsequently learned using a deep learning-based approach that scales effectively to high dimensions.

In this paper, we introduce an alternative supervised learning method to solve HJ PDEs in high dimensions. Our study stems from some recent advancements in Wasserstein Hamiltonian flow (WHF) \cite{chow2020wasserstein}, which describes a family of PDEs defined on the  Wasserstein manifold, the probability density set equipped with the optimal transport (OT) metric. Examples of WHFs include the Wasserstein geodesic \cite{MR4479897}, Schr\"odinger equation \cite{MR4405488}, and mean field control \cite{MR4322102}. A typical WHF consists of a transport (or Fokker-Planck) equation and a HJ equation. Coupling two equations together, they form a geometric flow with symplectic and Hamiltonian structures on the Wasserstein manifold (we refer to section \ref{sec-2} for more details). This inspires us to design a numerical scheme that can preserve the geometric properties of the original HJ equation and mitigate the curse of dimensionality at the same time. 

To achieve this goal, we must confront several difficulties. First, the  classical structure-preserving methods are often implicit in time and they become intractable when the spatial dimension grows high. Second, it is well-known that the characteristics of HJ equation may intersect and its classical solution may only exist up to a finite time. Third, the state-of-the-art numerical methods mainly focus on solving the viscosity solution, and may not capture {the geometric structure on the Wasserstein density manifold}. Last but not least, in some applications like the geometric optics, seismic waves and semi-classical limits of quantum dynamics, one may be more interested in other physical solutions, like the multi-valued solution and its statistical information \cite{MR2069944}. 

To overcome these challenges, we leverage the geometric structure of WHF and the approximation power of deep neural networks (DNNs) to design a supervised learning procedure. More precisely, we propose an approach consisting of the following steps. 

\begin{enumerate}
    \item Coupling the given HJ equation with a continuity equation that transports a probability density function to form a WHF on Wasserstein manifold. The transport velocity field is provided by the solution of the HJ equation. According to the theory of WHF, a particle version corresponding to the coupled system can be constructed leading to a system of Hamiltonian ordinary differential equations (ODEs). 
    \item Formulating a regression problem based on the Bregmann divergence following the OT theory. Its critical point satisfies the coupled system of WHF. This regression or its equivalent least squares expression are then used as the loss function in the learning process. 
    \item Generating the training data $(\{\bX_t \},\{\bP_t\})$ by applying a symplectic integrator to the particle version of WHF, which is the Hamiltonian {ODE system}  constructed in the first step. 
    \item Learning the solution {of} HJ equation by reducing the loss function evaluated on the training data $(\{\bX_t \},\{\bP_t\})$ via  minimization algorithms such as Adam \cite{kingma2014Adam}.
\end{enumerate}
Details on the first and second steps will be given in section \ref{sec-2}, and about the third and fourth steps in section \ref{sec-3}. 

The proposed method eases the computation burden of high dimensional HJ equation from three different aspects. (i) The loss function is expressed in term of expectation, which can be evaluated by employing the Monte Carlo integral methods and auto differentiation in DNNs. This allows us to carry out the calculation in higher dimensions without limiting the number of unknowns as the classical finite difference and finite element methods do. (ii) The training data $(\{\bX_t \},\{\bP_t\})$ is generated by solving ODEs, which can be scaled up to higher dimensions. {The numerical solvers for Hamiltonian ODEs that we choose are the symplectic integrators which could  preserve the symplectic structure of the phase flow exactly (see section 2 for the definitions) and maintain the Hamiltonian up to a relatively long time.}
(iii) The density function can be selected (supervised) so that its support covers the region of interest. This provides a mechanism to only generate training data concentrated at the place where the solution of HJ equation is needed, and it is different from many existing DNN based methods for high dimensional PDEs, like  physics-informed neural network (PINN) \cite{MR3881695}, deep-Ritz \cite{MR3767958}, or weak adversarial network \cite{MR4079373}, in which samples are usually taken everywhere in the domain. {It should be noted that there are a few recent works that propose samples in a sophisticated way, like the deep adaptive sampling method (see \cite{MR4531552} and references therein).} An added benefit is that the training data is computed by symplectic structure preserving schemes so that better geometric properties of the HJ equation can be retained in the learning procedure. 

More importantly, we would like to advocate two new features of the proposed method for theoretical analysis. The coupling strategy enables us to develop a novel error bound using the residual estimate with respect to the density controlling where and how the training data is sampled. In other words, the error estimate may vary depending on the chosen density. This is different from the traditional error estimates, and it is more suitable for machine learning-based methods in which random samples are used for the training. We establish the rigorous error estimate for the proposed method in section \ref{sec-3}.  In a special case when the initial density is selected as the uniform distribution, the proposed method generates training data using ODEs that resemble the bi-characteristic formulation. According to the uniqueness theorem of ODEs, the training data can be generated beyond the blow-up time that the classic solution of HJ equation doesn't exist anymore, for example, the characteristics intersect. In this sense, the supervised learning method may compute the solution of HJ equation after the blow-up time. We show several such examples along with other numerical experiments in section \ref{sec-4}.  

Although our proposed approach shares some similarities with the supervised learning formulation presented in \cite{nakamura2020causality, nakamura2021adaptive}, they have major differences too. The algorithm in \cite{nakamura2020causality, nakamura2021adaptive} is designed for the ``backward'' HJ equations originated from control with desirable terminal conditions, and the training data is generated by solving boundary value problems following the Pontryagin {maximal principle}. While our scheme is proposed for the ``forward'' HJ equation with given initial condition, and the training data is created by solving initial value Hamiltonian ODEs following particle formulation of WHF. More importantly, our derivation is conducted on the Wasserstein manifold, and it reveals the connection between the supervised learning scheme and a sup-inf problem originated from the mean-field control, which further provides a formulation for error analysis. It is also worth mentioning that the coupling idea is also used in \cite{meng2023primal}, in which the solution of HJ equation is reformulated as a saddle point problem and further solved by the primal-dual hybrid gradient algorithm.  In our scheme, we introduce a swarm of particles governed by the Hamiltonian ODEs corresponding to the WHF, and their trajectories are used as the data in the supervised learning. This leads to a minimization problem whose loss function can be computed by the Monte--Carlo method, and it is scalable to high-dimensional problems. Recently, \cite{Neklyudov2022ActionML} uses the similar connection between the optimal transport and action matching to propose a learning method for stochastic dynamics from samples.

\section{Density coupling strategy}\label{sec-2}

In this section, we introduce the preliminary and key ingredients for designing the supervised learning scheme of HJ equations. We first present
a regression problem via the Bregman divergence with Hamiltonian dynamics as the constrain, which plays the role of the loss function in section 3. Then we show that the critical point of this regression problem is indeed the HJ equation coupled with a transport equation for the probability density. The connection between the regression and a sup-inf optimization from the viewpoint of optimal transport is also discussed.

{
\subsection{Regression problem via the Bregman divergence}

 \begin{df}[Bregman divergence \cite{BREGMAN1967200}] \label{def: bregman}
  Suppose $f\in\mathcal{C}^1(\mathbb{R}^d)$ is a strict convex function. We define the Bregman divergence $D_f(\cdot:\cdot):\mathbb{R}^d\times\mathbb{R}^d\rightarrow\mathbb{R}_{\geq 0}$ induced by $f$ as
  $$D_f(q_1:q_2) = f(q_1) - f(q_2) - \nabla f(q_2)\cdot(q_1 - q_2).$$
\end{df}
It is known that the Bregman divergence is positive and  $D_f(q_1:q_2) = 0$ if and only if $q_1 = q_2$. 

\begin{lm}\label{thm Legendre and Bregman divergence}
  Suppose $f\in\mathcal{C}^2(\mathbb{R}^d)$ is $\alpha$-strongly convex and $L$-strongly smooth ($\alpha, L >0$), i.e., $\alpha I_d \preceq \nabla^2 f(q) \preceq L I_d$ for any $q\in\mathbb R^d$. Then, the Legendre transform $f^*$ of $f$ 
 belongs to $\mathcal{C}^2  (\mathbb{R}^d),$ and is $\frac{1}{L}-$strongly convex and $\frac{1}{\alpha}$-strongly smooth on $\mathbb{R}^d$. Furthermore, it holds that 
  $$f(q)+f^*(p)-q\cdot p = D_f(q : \nabla f^*(p)) = D_{f^*}(p:\nabla f(q)).$$
\end{lm}
Lemma  \ref{thm Legendre and Bregman divergence} uses standard arguments which are common in convex optimization. We defer its proof to the appendix. Denote $H^*$ as the \textit{Legendre Transform} of the given Hamiltonian $H(x,p)$ with respect to $p$, i.e., $H^*(x,v)\triangleq \sup_{p\in \mathbb{R}^d}\{v\cdot p - H(x,p)\}$ for any fixed $x\in \mathbb R^d, v\in \mathbb R^d$. Since $H\in\mathcal{C}^1(\mathbb{R}^{2d})$ is strictly convex with respect to $p$ for arbitrary $x$, $H^*(x,v)$ is also strictly convex with respect to $v$. Both $\frac {\partial }{\partial p}H(x,\cdot)$ and $\frac {\partial}{\partial v} H^*(x,\cdot)$ are  invertible for arbitrary $x\in\mathbb{R}^d$.

 We denote $\Psi$ the set of the equivalence class, consisting of $[\psi]$ with $\psi\in  C^1([0,T]\times \mathbb R^d).$, i.e.,  
 {\small
 \begin{align*}
 [\psi]=&\Big\{\phi \in C^1([0,T]\times \mathbb R^d)\Big| \phi=\psi+c \; \text{for a spatial constant function}\;  c \in C^1([0,T]\times \mathbb R^d) \Big\}.
 \end{align*}}

We consider a regression problem given by 
\begin{align}\label{DH}
 &(D_{H}\textrm{-Regression}) \,   \min_{[\psi]\in\Psi} \left\{\mathscr{L}^{D_{H,x}}_{\rho_0, g, T}(\psi)\right\}, \\\nonumber
 &  \mathscr{L}^{D_{H,x}}_{\rho_0, g, T}(\psi)  \triangleq \mathbb{E}_{\omega}\left[\int_0^T D_{H,x}(\nabla\psi(\bX_t(\omega), t):\bP_t(\omega))dt\right],
 \label{regression 2 expectation form }
\end{align}
where we denote $D_{H,x}(q_1,q_2)=D_{H(x,\cdot)}(q_1 : q_2)$, i.e., $D_{H,x}$ is the $x$-dependent Bregman divergence regarding $H(x, \cdot)$ and $\rho_0$ is a given probability density function. Here the random process $\{\bX_t\}_{t\in [0,T]}$ is 
defined on a complete probability space $(\Omega, \mathcal{F}, P),$ satisfying the following Hamiltonian  ODE
\begin{equation}
  \begin{cases}
    \dot \bX_t = \frac {\partial}{\partial p}H(\bX_t, \bP_t),\quad \bX_0 \sim \rho_0, \\
    \dot \bP_t = -\frac {\partial} {\partial x} H(\bX_t, \bP_t), \quad \bP_0 = \nabla g(\bX_0).
  \end{cases} \label{correspd Hamiltonian System}
\end{equation}
The initial value $\bX_0$ obeys the probability distribution with the density function $\rho_0$ (denote $\bX_0\sim \rho_0$ for simplicity).
It is known (see, e.g., \cite{MR2221614}) that the phase flow of the Hamiltonian system (2.2) preserve two intrinsic properties, i.e., the Hamiltonian structure $H(\bX_t,\bP_t) = H(\bX_0,\bP_0)$, and the canonical symplectic structure on the Euclidean space, i.e.,
$$\omega (t):=\text{d}\bX_t \wedge \text{d}\bP_t =\omega(0),$$
where “d” denotes the differential with respect to the phase variables $(\bX_0,\bP_0)$. The
 preservation of $\omega(\cdot)$ means that the oriented areas of projections onto the coordinate
planes is a geometric invariant.

Notice that  $\mathscr{L}^{D_{H,x}}_{\rho_0,g,T}(\phi)=\mathscr{L}^{D_{H,x}}_{\rho_0,g,T}(\psi)$  for $\phi\in [\psi]$.  This $D_{H}\textrm{-Regression}$ functional \eqref{DH} matches the gradient $\nabla\psi(\bX_t, t)$ to the momentum $\bP_t$ with respect to the Bregman divergence induced by the Hamiltonian $H$. The computation of $D_{H}\textrm{-Regression}$ can be approximated by the Monte--Carlo method once the samples are available. 
In addition, if we denote $\mu_t$ as the joint probability distribution of $(\bX_t, \bP_t)$ solved from the Hamiltonian system \eqref{correspd Hamiltonian System} for $0\leq t \leq T$, $\rho(\cdot, t)$ is the density of the $\bX$-marginal distribution of $\mu_t$. Since $\mu_t(x,p)$ can be conveniently sampled according to the Hamiltonian ODEs \eqref{correspd Hamiltonian System},  by the Fubini's theorem,  the regression problem \eqref{DH} can be rewritten as  
\begin{align}
  \quad \min_{[\psi]\in\Psi} \left\{\int_0^T \int_{\mathbb{R}^{2d}} D_{H, x }(\nabla\psi(x, t):p)~d\mu_t(x,p)dt \right\}.
  \label{regression 1}
\end{align}

By the classical results, one may propose a loss function to enforce $\nabla \psi(\bX_t) = \bP_t$
for all $(\bX_0 , \bP_0 )$  in the phase space, like in PINNs.
 In contrast, \eqref{DH} penalises deviation from this equality by coupling with a initial
 density function $\rho_0$. Besides, one can replace the cost function $D_{H,x}$ in \eqref{DH} by any bivariate function $d: \mathbb R^d \times \mathbb R^d\to [0,+\infty)$ such that $d(x,y) = 0 \Leftrightarrow x = y$. This will not affect the analysis in section 3. Here we use the regression problem \eqref{DH}  is due to its natural connections with the Hamiltonian dynamics and a sup-inf problem (see subsections \ref{sub-hamilton}-\ref{sec-sup-inf}).
 
In section \ref{sec-3}, we will take $D_{H,x}$ as the quadratic distance $|\cdot|^2$, and  propose the following least squares problem as the loss function in the supervised learning procedure, which may make the training easier. 
{\small\begin{align}
  (\textrm{Least Squares}) \,  \min_{[\psi]\in\Psi} ~\left\{\mathscr{L}^{|\cdot|^2}_{\rho_0,g,T}(\psi)\right\},\,
   \mathscr{L}^{|\cdot|^2}_{\rho_0, g, T}(\psi)  \triangleq  \mathbb{E}_{\omega} \left[\int_0^T |\nabla\psi(\bX_t(\omega), t)-\bP_t(\omega)|^2~dt \right]. \label{regression quad}
\end{align}}This does not weaken the performance of the original problem \eqref{regression 1} since 
$D_{H, x }(q_1:q_2)\approx \frac{1}{2}(q_1-q_2)^\top \frac {\partial^2}{\partial p^2}H (x,q_2) (q_1 - q_2)$ for sufficiently close $q_1,q_2$. Furthermore, one can verify the following result.
\begin{prop}
Suppose $H(x,p) = \frac{1}{2}|p|^2+V(x)$. Then $D_{H, x }(q_1 : q_2) = \frac{1}{2}|q_1 - q_2|^2$, and the corresponding regression \eqref{regression 2 expectation form } is equivalent to the least squares formulation \eqref{regression quad}.
\end{prop} 
Next, we give a consistency result on the regression problem \eqref{regression 2 expectation form } whose proof can be found  in the supplementary material. 

\begin{tm}[Consistency]\label{thm consist}
Suppose the Hamiltonian $H\in \mathcal C^1(\mathbb R^d \times \mathbb R^d)$ satisfies the conditions that {$\frac {\partial}{\partial x}  H, \frac {\partial}{\partial p}  H $} are Lipschitz {uniformly w.r.t $x,p$}, and that $H$ is strictly convex with respect to $p$ for any fixed $x \in \mathbb{R}^d $.
Assume that $\widehat{\psi}\in\mathcal{C}^2(\mathbb{R}^d\times[0, T])$ satisfies $\mathscr{L}_{\rho_0, g, T}^{D_{H, x }}(\widehat{\psi})=0,$ then $\widehat{\psi}$ solves the following gradient-version of the Hamilton-Jacobi equation
\begin{align}
  \nabla\left(\frac{\partial}{\partial t}\widehat{\psi}(x,t) + H(x, \nabla\widehat{\psi}(x,t))\right) = 0, \quad & \textrm{at } (x,t)\in\mathbb R^d \times (0, t] \textrm{ with } x\in \mathrm{Spt}(\rho_t);\label{supported HJE}\\
  &  \textrm{and}  \; \nabla\widehat{\psi}(x, 0) = \nabla g(x) ~~\textrm{with any } x \in \textrm{Spt}(\rho_0).\nonumber
\end{align}
Similarly, $\widehat \psi$ also solves \eqref{supported HJE} if $\mathscr{L}_{\rho_0, g, T}^{|\cdot|^2}(\widehat{\psi}) = 0$.
\end{tm}

\subsection{ Extending solution beyond singularity formation of HJ equation}\label{rk weighted momentum as grad HJ solu }
We would like to point out that the solution of dynamical ODEs \eqref{correspd Hamiltonian System}, and both definitions of the regression \eqref{regression 2 expectation form } and least square problems \eqref{regression quad} can exist even after the singularity formation in the solution of HJ equation \eqref{HJ}. This means that we can use the proposed method to compute the minimizers beyond the singularity time. An interesting question is what solution the proposed method computes. To answer it, Theorem \ref{thm consist} may give us some hints as it can be used to define a weak solution of HJ equation in the following sense. By swapping the integrals in  $\mathscr{L}_{\rho_0,g,T}^{D_{H, x }}$, it holds that 
\begin{align*}
\mathscr{L}_{\rho_0,g,T}^{D_{H, x }}(\psi)& = \int_0^T\int_{{\mathbb{R}^{2d}}} D_{H, x }(\nabla\psi(x, t):p)~d\mu_t(x,p)dt \\
   & = \int_0^T\int_{\mathbb{R}^d} \left(\int_{\mathbb{R}^d} D_{H, x }(\nabla\psi(x, t):p)~d\mu_t(p|x)\right)~\rho_{t}(x)~dxdt.
\end{align*}
The minimizer $\widehat \psi$ of $\mathscr{L}_{\rho_0,g,T}^{D_{H, x }}$ can be viewed as a weak solution of the HJ equation since taking the first variation on $\psi$ leads to   
\begin{equation*}
  -\nabla\cdot\left(\rho_{t}(x) \left(\int_{\mathbb{R}^d} \nabla_{q_1}D_{H, x }(\nabla\widehat{\psi}(x,t):p)~d\mu_t(p|x)  \right)  \right)=0.
\end{equation*}
Here $\nabla_{q_1}D_{H, x }(\cdot:\cdot)$ is the partial derivative with respect to the first variable $q_1$ of $D_{H, x }(q_1 : q_2).$ In particular, if $H(x,p)=\frac{1}{2}|p|^2+V(x)$,  the minimizer of  $\mathscr{L}_{\rho_0, g, T}^{|\cdot|^2}$ solves the following elliptic equation
\begin{equation}
  -\nabla\cdot(\rho_{t}(x)(\nabla\widehat{\psi}(x,t) - \bar{p}(x,t) )) = 0. \quad \textrm{where } \bar{p}(x,t)  =  \int_{\mathbb{R}^d}  p 
 ~  d\mu_t(p|x). \quad \textrm{for } t\in[0, T].  \label{optimal psi when H is quad} 
\end{equation}
{By the integration by parts formula, \eqref{optimal psi when H is quad} yields that for any $f\in \mathcal C^1(\mathbb R^d),$
$$ \int_{\mathbb R^d}\rho_{t}(x)(\nabla\widehat{\psi}(x,t) - \bar{p}(x,t))\nabla f(x)d x=0.$$
Thus, the weighted divergence free vector field $\nabla\widehat{\psi}(x,t) - \bar{p}(x,t)$ is orthogonal to the gradient of any test function $f$ with respect to the $L^2(\rho_{t})$ inner product.
}
 To sum up, in the proposed regression problem, $\nabla\widehat{\psi}$ can be viewed as the orthogonal (with respect to the $L^2(\rho_{t})$ inner product) projection of the $\mu_t(\cdot | x)$-weighted momentum $\bar{p}(x,t)$ to the space of gradient fields.

This definition comes with several benefits. On the one hand,  Theorem \ref{thm consist} verifies that the minimizer $\widehat \psi$ solves the HJ equation \eqref{supported HJE} in the strong sense (in the gradient form) before the time $T_*$ that the classical solution develops caustics. On the other hand, the lifetime of the minimizer $\widehat \psi$ of  $\mathscr{L}_{\rho_0, g, T}^{|\cdot|^2}$ goes beyond $T_*$ since the conditional distribution $\mu_t(\cdot|x)$ on momentum is not based on the Dirac type function centered at certain positions $x$. {Although the gradient $\nabla \phi$ may be multi-valued and has information about which mono-momentum to match with in the particle level,} we treat $\widehat{\psi}$ as the $\mu_t(\cdot|x)$-weighted ``solution'' associated with the Hamilton-Jacobi equation \eqref{HJ} in this paper. However, how to theoretically understand the numerical solution after the singularity remains as an open question (see also the supplementary material), which is beyond the scope of this paper. Furthermore, by modifying the cost functional in the regression problem, one may construct different types of weak solutions of HJ equations. This is another topic that deserves further investigation and careful discussion. 

It is worth mentioning that when taking $H(x,p)=\frac{1}{2}|p|^2+V(x)$, the corresponding joint distribution $\mu(\cdot,\cdot,t)$, the marginal distribution $\rho_{t}$, and $\nabla\widehat{\psi}(\cdot,t)$, are related to the semiclassical limit of the following Schr\"{o}dinger equation
\begin{equation}
\epsilon \mathrm{i} \frac{\partial}{\partial t}\Psi^{\epsilon}(x,t) = -\frac{\epsilon^2}{2} \Delta\Psi^\epsilon(x,t) + V(x)\Psi^\epsilon(x,t). \quad \Psi^\epsilon(x,0)=\sqrt{\rho_0(x)} ~ e^{\mathrm{i}\frac{g(x)}{\epsilon}}. \label{schrodinger}
\end{equation}
Denote $\overline{\Psi}$ the conjugate of $\Psi.$
It is known that the Wigner transform $w^\epsilon(\cdot, \cdot, t)$ associated with $\Psi^\epsilon(\cdot, t)$, i.e.,
\[w^\epsilon(x,p,t) = \frac{1}{(2\pi)^d} \int \Psi^\epsilon(x+\frac{\epsilon}{2}\eta, t)\overline{\Psi^\epsilon(x - \frac{\epsilon}{2}\eta, t)}e^{\mathrm{i}p\cdot\eta}~d\eta  \]
 weak-$*$ converges to the distribution $w(\cdot, \cdot, t)$ which satisfies the Liouville equation on phase space with the initial Wigner measure $w_{in}(\cdot,\cdot)=w(\cdot, \cdot, 0)$ (see, e.g., \cite[Theorem 3.2]{jin2011mathematical}).
If $\mu_0(\cdot,\cdot)$ is  the initial Wigner measure $w_{in}(\cdot,\cdot)$, then $\mu_t(\cdot,\cdot)$ is indeed $w(\cdot,\cdot,t)$ according to the uniqueness of the solution of the Liouville equation.
Furthermore, $|\Psi^\epsilon(\cdot,t)|^2=\int w^\epsilon(\cdot,dp,t)$ weakly converges to $\rho_t$; and $\int p w^\epsilon(\cdot, dp, t)$ weakly converges to 
 $\int p w(\cdot,dp,t)= \rho_{t}(\cdot)\int p d\mu_t(p|\cdot) = \rho_{t}(x)\bar{p}(x,t).$
In particular, when dimension $d=1$, recall \eqref{optimal psi when H is quad} admits the exact solution $\nabla\widehat{\psi}(x,t) = \bar{p}(x,t)$, the flux $\rho_{t}(x)\nabla\widehat{\psi}(x,t)$ is formally the semiclassical limit of weighted momentum $\int p~ w^\epsilon(x,p,t)~dp.$
}

\subsection{Connections with Hamiltonian dynamics}
\label{sub-hamilton}
In this part, we give the connections between  the regression problem \eqref{DH} (or \eqref{regression 1}) and the HJ equation \eqref{HJ}  coupled with a continuity equation from the viewpoint of classical Hamiltonian dynamics. To explain it clearly, let us assume that 
the Hamiltonian $H:(x,p)\mapsto H(x,p)$ belongs to $\mathcal C^2(\mathbb R^d\times \mathbb R^d)$ and 
being strictly convex with respect to the second variable $p$ for arbitrary fixed first variable $x$. 
Suppose that the  solution $u$ of \eqref{HJ} exists, is unique and smooth in both time and space.

Consider the random process $\bX_t$ satisfying 
$$\dot \bX_t= {\frac {\partial}{\partial p}} H(\bX_t, {\nabla} u(\bX_t,t)).$$
Then the probability density function $\rho(\cdot,t)$ of $\bX_t$ satisfies
\begin{align}
     & \partial_t \rho(x,t) + \nabla\cdot(\rho(x,t) {\frac {\partial}{\partial p}} H(x,{\nabla}   u(x,t))) = 0, \quad \rho(\cdot, 0)=\rho_0, \label{FPE PDE system 2}
\end{align}
which is a transport (continuity) equation.  
Let us consider the dynamics of the momentum defined by $\bP_t(\omega):= {\nabla}  u(\bX_t(\omega), t)$. By taking the time derivative of $\bP_t$, we get 
\begin{equation}
     \dot\bP_t \! = \! \frac{\partial}{\partial t}{\nabla}  u(\bX_t, t) + {\nabla^2 }u(\bX_t, t)\dot\bX_t \! = \! \frac{\partial}{\partial t}{\nabla} u(\bX_t, t) + {\nabla^2}u(\bX_t, t)\frac {\partial}{\partial p}H(\bX_t,{\nabla} u(\bX_t, t)),\label{dynamic P}
\end{equation}
where ${\nabla^2u(x, t)}$ is the Hessian matrix of $u(x,t).$
If we differentiate \eqref{HJ} on both sides with respect to $x$, we have
\begin{equation}
\frac{\partial}{\partial t} {\nabla}  u(x,t) + \frac {\partial }{\partial x}H(x,{\nabla} u(x,t)) + {\nabla^2_x} u(x,t)\frac {\partial}{\partial p}H(x, {\nabla}  u(x,t)) = 0,~~ {\nabla} u(\cdot, 0) = \nabla g(x).  \label{nabla x of HJE}
\end{equation}
By setting $x=\bX_t$ in \eqref{nabla x of HJE} and substituting back into \eqref{dynamic P}, we obtain that $$\dot\bP_t = -{\frac {\partial}{\partial x} }H(\bX_t,{\nabla} u(\bX_t, t)) = -{\frac {\partial}{\partial x}} H(\bX_t, \bP_t).$$
{To sum up, we have recovered the constrained Hamiltonian ODE system \eqref{correspd Hamiltonian System} in the regression problem \eqref{DH} and find that the minimizer of \eqref{DH} is the solution $u$ of \eqref{HJ}.}

It should be pointed out that \eqref{HJ} coupled with the continuity equation \eqref{FPE PDE system 2}  is related to the WHF  introduced in \cite{chow2020wasserstein}. Indeed, by rewriting \eqref{nabla x of HJE} as 
$${\nabla}  \Big(\frac{\partial}{\partial t}  u(x,t) + H(x,{\nabla} u(x,t)) \Big) = 0$$
(see also \cite[Page 1020]{MR4405488}), one can obtain a coupled system of PDEs corresponding to the particle system \eqref{correspd Hamiltonian System},
\begin{align}
    & \partial_t {\rho}(x,t) + \nabla\cdot({\rho}(x,t){\frac {\partial}{\partial p} }H(x,\nabla \widehat u(x,t))) = 0, \quad \rho(\cdot, 0)=\rho_0; \label{FPE PDE system 3}\\
  & \partial_t \widehat u(x,t) + H(x,\nabla \widehat u(x,t)) = 0,~ \widehat{u}(\cdot,0) = g(\cdot),
  \label{HJE PDE system 3}
\end{align}
where 
$\widehat u(x,t)=u(x,t)+c(t)$
for any arbitrary $c(\cdot)\in \mathcal{C}^1([0, T])$. The phase flow
of  \eqref{FPE PDE system 3}-\eqref{HJE PDE system 3} preserves the Hamiltonian structure $\int_{\mathbb R^d}H(x,\nabla \widehat u(x)) \rho(x) dx$, and the infinite-dimensional symplectic structure (see, e.g., \cite{MR997295}), i.e.,
$$\widehat \omega(t):=\int_{\mathbb R^d} \text{d} \rho(x,t) \wedge \text{d} \widehat u(x,t) dx=\widehat \omega(0).$$ 
In particular, when {$H(x,p)=\frac 12|p|^2$}, the coupled system \eqref{FPE PDE system 3}-\eqref{HJE PDE system 3} is the Wasserstein geodesic equation \cite{villani2009optimal}, which is the critical point of the Benamou-Brenier formula defining the OT distance on Wasserstein manifold \cite{benamou2000computational}. 
 
This approach of coupling offers additional freedom in choosing the initial density $\rho_0$ which ultimately controls the support of the coupled density $\mathrm{Spt}(\rho(\cdot,t))$, hence where and how the samples $(\{\bX_t \},\{\bP_t\})$ are drawn. As a by-product, solving \eqref{FPE PDE system 3}-\eqref{HJE PDE system 3} on $\mathrm{Spt}(\rho(\cdot,t))$, can recover the solution of original Hamiltonian--Jacobi equation \eqref{HJ} up to a spatial constant function. It should be noticed that the solution solved by  \eqref{FPE PDE system 3}-\eqref{HJE PDE system 3} is consistent with the classical solution of \eqref{HJE PDE system 3} when $T<T_*$ with $T_*$ being the first time that \eqref{HJE PDE system 3} develops a singularity. On the other hand, the Hamiltonian system \eqref{correspd Hamiltonian System} is always well-posed even if the PDE \eqref{HJE PDE system 3} does not admit classical solutions. This inspires us to design a new way to learn the solution of \eqref{HJ} even beyond the singularity.
 
{
\subsection{Connections with a variational problem}
\label{sec-sup-inf}

In this part, we show the connection between \eqref{DH} and the following sup-inf problem
\begin{equation} 
  \sup_{\psi\in\mathcal C^1}\;\inf_{\widetilde\rho\in AC([0, T];\mathcal{P}_2)}\;\{\mathscr{J}_{\rho_0, \rho_T, T}(\widetilde \rho, \psi)\},  \label{original supinf problem}  
\end{equation}
here $AC([0, T];\mathcal{P}_2)$ denotes the space of absolute continuous trajectories in the probability space equipped with 2-Wasserstein distance. We define
\begin{equation}
\label{original saddle functional J}
\begin{split}
  \mathscr{J}_{\rho_0, \rho_b, T} (\widetilde\rho, \psi) = & \int_0^T\int_{\mathbb R^d} -(\partial_t\psi(x,t) +H(x,\nabla\psi(x,t)))\widetilde\rho(x,t)~dx dt \\
  & + \int_{\mathbb{R}^d} \psi(x, T) \rho_T(x)~dx- \int_{\mathbb{R}^d} \psi(x, 0)\rho_0(x)~dx.
\end{split}
\end{equation}
This formulation originates from the optimal transport associated with the initial density $\rho_0 = \rho(\cdot, 0)$ and target $\rho_T = \rho(\cdot, T)$ (see, e.g., \cite{benamou2000computational, villani2009optimal, ambrosio2005gradient, chow2020wasserstein} and references
therein).
Here we use $\widetilde \rho$ as variable of the functional so as to distinguish it from the solution $\rho$ to the continuity equation \eqref{FPE PDE system 3}.
By taking the first variation of \eqref{original saddle functional J}
 w.r.t $\psi,\widetilde \rho $, by standard arguments, one can show that  \eqref{FPE PDE system 3}-\eqref{HJE PDE system 3} (g is unknown
since $\rho_T$ is given) forms the critical point of \eqref{original supinf problem}. In fact,  \eqref{FPE PDE system 3} can be derived from  \eqref{original saddle functional J} by taking the first variation of  \eqref{original saddle functional J} w.r.t $\psi$ and using the integration by parts formula.

We want to point out that in the standard OT formulation, the terminal density $\rho_T$ is given independently. This is different from  \eqref{FPE PDE system 3}-\eqref{HJE PDE system 3} considered in subsection \ref{sub-hamilton} where $g$ is given. Since $\rho(x,t)$ is the probability density of $\boldsymbol{X}_t$ given by the Hamiltonian system \eqref{correspd Hamiltonian System} on $[0, T]$, which is uniquely determined by the initial conditions $\rho_0$ and $g$. It implies that $\rho_T=\rho(x,T)$ is also determined by $\rho_0$ and $g$. For this reason, we use notation $\mathscr{L}_{\rho_0, g, T}(\psi)$ in \eqref{def L reduced} to emphasize the dependence on $g$. If we directly replace $\widetilde \rho$ in \eqref{original saddle functional J} by the optimal density $\rho$, then \eqref{original supinf problem} can be rewritten as the following optimization problem
\begin{equation}
  \sup_{[\psi]\in\Psi
  } \; \{\mathscr{L}_{\rho_0, g, T}(\psi) \}, \label{variational problem}
\end{equation}
where
\begin{align}\mathscr{L}_{\rho_0,g,T}(\psi) = & \int_0^T\int_{\mathbb{R}^d} -\left(\partial_t \psi(x,t) + H(x, \nabla\psi(x,t) )\right) \rho_t(x)~dx dt \nonumber \\
  & + \int_{\mathbb{R}^d} \psi(x, T) \rho_T(x)~dx- \int_{\mathbb{R}^d} \psi(x, 0)\rho_0(x)~dx. \label{def L reduced}
\end{align}}

\begin{lm}\label{lemma: calculation L equals regression funcitonal}
  Suppose that $T > 0$ is the given terminal time, and that the Hamiltonian $H\in \mathcal{C}^1(\mathbb{R}^d\times \mathbb{R}^d)$  is 
  {strongly convex with respect to the momentum $p$ for any $x\in\mathbb{R}^d$}. Assume $\rho_0\in\mathcal{C}^1(\mathbb{R}^d)$ and $g\in\mathcal{C}^1(\mathbb{R}^d)$. Then
  \begin{equation}
  \footnotesize
  \mathscr{L}_{\rho_0, g, T}(\psi) = - \int_0^T\int_{\mathbb{R}^{2d}} D_{H,x}(\nabla\psi(x,t):p)~d\mu_t(x,p) dt + \int_0^T\int_{\mathbb{R}^{2d}}    H^*(x, \nabla_pH(x, p))~d\mu_t(x,p) dt, \label{mathscrL = regression}
  \end{equation}
  {where we denote $D_{H,x}(q_1 : q_2) = D_{H(x, \cdot)}(q_1 : q_2)$, i.e., $D_{H,x}$ is the $x$-dependent Bregman divergence regarding $H(x,\cdot)$.}
\end{lm}

{The proof of Lemma \ref{lemma: calculation L equals regression funcitonal} is done by direct calculation of
$ {\mathscr{L}_{\rho_0, g, T}(\psi)}$ and  Lemma  \ref{thm Legendre and Bregman divergence}. For completeness, we provide the proofs in the supplementary material.} The second term on the right-hand side of \eqref{mathscrL = regression} does not involve $\psi$, and thus can be treated as a constant, {which implies that the variational problem \eqref{def L reduced} is equivalent to the regression problem \eqref{DH}.}

\section{Supervised learning scheme via density coupling}
\label{sec-3}

In this section, we present the supervised learning scheme based on the density coupling strategy and the regression formulation \eqref{regression 2 expectation form }. 

\subsection{Algorithm}\label{sub : alg}
Our method for computing the Hamilton-Jacobi equation \eqref{HJ} associated with the probability density distribution $\rho_0$ consists of the following two main steps.
\begin{itemize}
    \item (Generating sample trajectories on phase space) Sample $N$ particles $\{x_0^{(k)}\}_{k=1}^N$ from $\rho_0$ with momentum $p_0^{(k)} = \nabla g(x_0^{(k)})$, and apply a suitable geometric integrator to solve the Hamiltonian system 
    \begin{equation}
        \label{alg: Hamilton system}
        \begin{split}
          &  \dot{x}_t^{(k)}  = \partial_p H(x_t^{(k)}, p_t^{(k)})  \\
          &\dot{p}^{(k)}_t  = -\partial_x H(x_t^{(k)}, p_t^{(k)})  
        \end{split} \quad \textrm{with initial condition } (x_0^{(k)}, \nabla 
          g(x_0^{(k)})).
    \end{equation}
    at time steps $t_i = ih$, with $h=\frac{T}{M}$, $1\leq i \leq M$ for each $k\in\{1,2,...,N\}$. We denote the numerical solutions at  $t_i$ as $\{(\tilde{x}_{t_i}^{(k)},\tilde{p}_{t_i}^{(k)})\}$, $1\leq k\leq N$.
    \item (Compute $\psi$ via supervised learning) Set up the neural network $\psi_\theta:\mathbb{R}^d\times [0, T]\rightarrow \mathbb{R}$, and minimize the sum of average discrepancies between each $\nabla_x\psi_\theta(\tilde{x}_{t_i}^{(k)}, t_i)$ and $\tilde{p}_{t_i}^{(k)}$ at each time step $t_i$ evaluated on a random batch $\{\widetilde{x}^{(k_j)}\}_{j=1}^{N_0} \subset \{\widetilde{x}^{(k)}\}$ with batchsize $N_0$. More precisely, we denote   
{\begin{equation} 
  \textrm{Loss}(\theta) = \frac{1}{M}\sum_{i=1}^M \left(\frac{1}{N_0}\sum_{k=1}^{N_0} |\nabla_x\psi_\theta(\tilde{x}_{t_i}^{(k)}, t_i)-\tilde{p}_{t_i}^{(k)}|^{  2  }\right). \label{ Loss}
\end{equation}}
 We apply stochastic gradient descent algorithms such as Adam's method \cite{kingma2014Adam} to minimize $\textrm{Loss}(\theta)$ with respect to the parameter $\theta$ in $\psi_\theta$.
We summarize our method in Algorithm \ref{alg1}.

\end{itemize}
    \begin{algorithm}[htb!]
    \caption{Computing the gradient field of Hamilton-Jacobi equation \eqref{HJ} associated with initial density function $\rho_0$. }\label{alg1}
    \begin{algorithmic}
    \State Set up neural network $\psi_\theta:\mathbb{R}^d\times [0,T]\rightarrow \mathbb{R}$; {He initialization \cite{he2015delving} is used to initialize the parameter $\theta$.}
    \State Sample $\{x_0^{(k)}\}_{k=1}^N$ from $\rho_0$;
    \State Apply a suitable geometric integrator to solve the Hamiltonian system \eqref{alg: Hamilton system} with initial condition $x_0=x_0^{(k)}, p_0 = \nabla g(x_0^{(k)})$ to obtain the trajectory $(\tilde{x}_{t_i}^{(k)}, \tilde{p}_{t_i}^{(k)})$ at time steps $0\leq t_1\leq \dots \leq t_M=T$ for each $k$, $1\leq k \leq N$.
    \For{    $\textrm{Iter} = 0$ to $ N_{\textrm{Iter}}$}
       \State Pick random batch with size $N_0 \leq N$ from $\{\widetilde{x}^{(k)}\}$;
       \State Evaluate $\textrm{Loss}(\theta)$ defined as in \eqref{ Loss};
       \State Apply Adam's method with learning rate $lr$ to perform gradient descent $$\theta \leftarrow \theta -lr  \; \nabla_\theta \textrm{Loss}(\theta);$$
    \EndFor
    \State $\nabla_x\psi_\theta(\cdot, t)$ ($0\leq t \leq T$) is the computed gradient field of the Hamilton-Jacobi equation \eqref{HJ}.
    \end{algorithmic}
    \end{algorithm}
{ In our algorithm, we have the freedom to choose the geometric integrator to discretize the Hamiltonian system \eqref{correspd Hamiltonian System}. There are various choices such as symplectic Runge--Kutta schemes, symplectic partitioned Runge--Kutta Methods, Str\"{o}mer--Verlet scheme, etc. We refer interested readers to \cite{MR2221614} and references therein for further details. Such structure-preserving methods could preserve the properties, such as symplectic structure and quadratic conservative quantities, of the original system as much as possible \cite{MR4405488}. We also would like to mention that in the generating procedures, one may replace the symplectic integrator as other high-order nonsymplectic integrators if the first time that the classical solution develops the singularity is not long. To further explore the difference caused by the symplectic scheme and high-order non-symplectic scheme, we have included an extended discussion in section \ref{subsec-4.4}.  However, it remains challenging to ascertain which scheme is superior and warrants further investigation. 
    
A few observations have been made during our implementation of the proposed algorithm. 

First, {Theorem \ref{thm consist} suggests that both the regression problem \eqref{regression 2 expectation form } and \eqref{regression quad} are consistent with respect to equation \eqref{supported HJE}. However, in practice, to perform the supervised learning in an efficient and stable way, one needs to avoid the case in which the Hessian (with respect to $p$) of the Hamiltonian $H$ possesses a large conditional number. {We adopt the least squares regression \eqref{regression quad} and use the quadratic loss \eqref{ Loss} in our implementation.}} 

Second, it may be difficult for a single neural network to learn the solution on the entire time interval $[0, T]$, especially when $T$ is large or when the solution experiences large-scale oscillations.  In such cases, in order to improve the performance of our method, we split the time interval $[0, T]$ into smaller sub-intervals, train different $\psi_\theta$ on each sub-interval respectively, and then concatenate the solution together. We refer the reader to section \ref{example: harmonic} for further details.

Third, we may re-sample the points $\{x_0^{(k)}\}_{1\leq k\leq N}$ from $\rho_0$ and repeat the procedure in each training iteration to update $\theta$. According to our experience, such a strategy produces numerical solutions with similar quality compared to that computed by the method with fixed samples throughout the simulations. }
    
\subsection{Bound on the residual}\label{sec: bound res }
In this part, we estimate the density weighted residual of the numerical solution $\psi_\theta$ produced from the proposed algorithm. Let us denote $\tilde{\Phi}_h:\mathbb{R}^{2d}\rightarrow \mathbb{R}^{2d}$ as the solution map of the chosen geometric integrator for \eqref{correspd Hamiltonian System}, and 
\begin{equation*}
    (\tilde{x}_{t_i}, \tilde{p}_{t_i}) = \tilde{\Phi}_h^{ (i) }(x_0,\nabla g (x_0)) \triangleq \underbrace{\tilde{\Phi}_h\circ \dots \circ\tilde{\Phi}_h}_{i ~\tilde{\Phi}_h  \textrm{s composing together} }(x_0, \nabla g (x_0)), 
\end{equation*}
where  the stepsize $ h=\frac{T}{M}$, $(\tilde{x}_{t_i}, \tilde{p}_{t_i})$ is the \textit{numerical solution} solved at time $t_i=ih$ with initial condition $x_0$ and $p_0=\nabla g(x_0)$.  We denote $\tilde{\rho}_{t_i}$ the probability density of random variable $\tilde x_{t_i}$. Let $r\geq 2$ be the order of the local truncation error of numerical solver $\tilde{\Phi}_h${\footnote{Suppose $(x_h, p_h)$ is the exact solution of \eqref{residual est: Hamilton system} with initial condition $(x_0, p_0)$ after one time step $h$, then 
\begin{equation}
  |\tilde{\Phi}_h(x_0, p_0) - (x_h, p_h)|=C_{\tilde{\Phi}_h}(x_0, p_0)h^r,  \label{num schm solver Phi order r}
\end{equation}
where $C_{\tilde{\Phi}_h}((x_0, p_0))$ is a constant only depending on the Hamiltonian $H$, the initial condition $(x_0,p_0)$, and the numerical scheme.  }.} Correspondingly, we denote $\Phi_t:\mathbb{R}^{2d}\rightarrow\mathbb{R}^{2d}$ as the flow map of the Hamiltonian system 
\begin{equation}
    \label{residual est: Hamilton system}
      \dot{x}_t  = \partial_p H(x_t, p_t),  \quad
      \dot{p}_t  = -\partial_x H(x_t, p_t),
\end{equation}
i.e., $\Phi_t((x_0, p_0)) = (x_t, p_t)$ for $t\in [0,T]$.

{
Let the neural network approximation $\psi_{\theta}$ be trained by minimizing the loss \eqref{ Loss}
with data generated by a numerical integrator of order $r$ for \eqref{alg: Hamilton system} with initial sample  $\{x_{t_0}^{(k)}\}_{k\le N}$ drawn from $\rho_0$. In this subsection, assume that the approximation $\psi_\theta$ is
regular enough, for example $\psi_{\theta}\in \mathcal C^{1,2}([0,T]\times \mathbb R^d)\cap \mathcal C^{0,3}([0,T]\times \mathbb R^d)$.
}
We consider the loss vector of the supervised learning at each sample point as
\begin{equation}
  e_{t_i}^{(k)} =  \nabla\psi_\theta(\tilde{x}_{t_i}^{(k)}, t_i) - \tilde{p}_{t_i}^{(k)}  .  \label{def every particle error}
\end{equation} Let us set 
\begin{equation}
{\varepsilon_i^N}=\frac{1}{N}\sum_{k=1}^N {|e_{t_{i}}^{(k)}|} \quad \text{and} \quad {\delta_i^{N,h}}= \frac{1}{N}\sum_{k=1}^N \frac {|e_{t_{i+1}}^{(k)}-e_{t_{i}}^{(k)}|}{h}\label{def: epsilon N and delta N }
\end{equation}
as the empirical average of the training loss and its  difference quotient  at time node $t_i$, respectively. We note that when $\nabla\psi_{\theta}$ is Lipschitz on the support of the probability density function,  {$e_{t_{i}}^{(k)}$ is continuous with respect to {$t_i$}. In particular, if there is no training error (i.e., $e_{t_i}^{(k)}=0$), we have ${\varepsilon_i^N}={\delta_i^{N,h}}=0.$} Our estimate on the $L^1$-residual of $\nabla \psi_\theta$ is presented in the next theorem. 

\begin{tm}[Posterior estimation on $L^1$ residual of Hamilton-Jacobi equation]\label{thm est}Suppose that $\frac{\partial H}{\partial p}$ and $\frac{\partial H}{\partial x}$ are Lipschitz  {(uniformly w.r.t $x,p$)} with constants $L_1$ and $L_2$ respectively, the initial distribution $\rho_0$ has a compact support, $\epsilon \in (0,1)$ is a given constant, $M$ is large enough such that $M \geq  \max\{T, \frac{T}{2}(L_1+L_2)e^{L_1+L_2}\}$, and the time stepsize is taken as $h=\frac{T}{M}$. Assume that the neural network $\psi_\theta$ is trained by minimizing the loss \eqref{ Loss} with data generated by a numerical integrator of order $r$ for \eqref{alg: Hamilton system} with initial samples $\{x_{t_0}^{(k)}\}_{k=1}^N$ drawn from $\rho_0$.
Then with probability $1-\epsilon$,  $\psi_\theta$ satisfies
\begin{align}
    \int_{\mathbb{R}^d}  &  \left|\nabla\left(\frac{\partial}{\partial t}\psi_\theta(x, t_i )  +  H(x, \nabla\psi_\theta(x,t_i))\right)\right| ~\tilde{\rho}_{t_i}(x) dx \nonumber\\
     & \leq  ~ \frac{1}{2}\lambda(\theta, i )h + \eta(\theta, {i}  )h^{r-1} +\delta_i^{N,h}+\nu(\theta,i)\varepsilon_{i}^N +R(\theta, i)\sqrt{\frac{\ln M + \ln\frac{2}{\epsilon}}{2N}} ,   \label{thm final ineq}
\end{align}
at $t_i=ih$, $i=1,\dots, M$. Here, $\lambda(\theta, i),\eta(\theta, i),\nu(\theta, i), R(\theta, i)$ are non-negative constants depending on the parameter $\theta$, time node $t_i$, Hamiltonian $H$, initial distribution $\rho_0$, and numerical scheme  $\tilde{\Phi}_h$.
\end{tm}

\begin{proof}
Let us  focus on the $k$-th trajectory $\{(\tilde{x}_{t_i}^{(k)}, \tilde{p}_{t_i}^{(k)} )\}_{i=0}^M$. At time node $t_i$, $i\le M-1,$
we denote 
\begin{equation*}
  (\widehat{x}_{\tau}^{(k)}, \widehat{p}_{\tau}^{(k)}) = \Phi_\tau(\tilde{x}_{t_i}^{(k)}, \tilde{p}_{t_i}^{(k)}), \quad \tau\geq 0.
\end{equation*}
For simplicity, we omit the superscript $(k)$ of each $(\tilde{x}_{t_i}^{(k)}, \tilde{p}_{t_i}^{(k)} )$, $(x_{t}^{(k)}, p_{t}^{(k)})$, $(\widehat{x}_\tau^{(k)}, \widehat{p}_\tau^{(k)})$ and $e_i^{(k)}$. We start by considering
\begin{align}
  \nabla\psi_\theta(\tilde{x}_{t_{i+1}}, t_{i+1}) - \nabla\psi_\theta(\tilde{x}_{t_i}, t_i) = \tilde{p}_{t_{i+1}} - \tilde{p}_{t_i} + (e_{i+1} - e_i)\label{pf_eq1}
\end{align}
The left-hand side of \eqref{pf_eq1} can be recast as
\begin{align*}
(\nabla\psi_\theta(\widehat{x}_h, t_{i+1}) - \nabla\psi_\theta(\tilde{x}_{t_i}, t_i)) + (\nabla\psi_\theta(\tilde{x}_{t_{i+1}}, t_{i+1}) - \nabla\psi_\theta(\widehat{x}_h, t_{i+1})),
\end{align*}
where the first term can be formulated as
{\small
\begin{align*}
  \nabla\psi_\theta(\widehat{x}_h, t_{i+1}) - \nabla\psi_\theta(\tilde{x}_{t_i}, t_i)&= \int_0^h \frac{d}{d\tau}\nabla\psi_\theta(\widehat{x}_\tau, t_i + \tau)~d\tau\\ 
  & = \int_0^h \nabla^2\psi_\theta(\widehat{x}_\tau,t_i+\tau)\frac{\partial}{\partial p}H(\widehat{x}_\tau, \widehat{p}_\tau) +\frac{\partial}{\partial t}\nabla\psi_\theta(\widehat{x}_\tau, t_i+\tau)~d \tau.
\end{align*}}
For the second equality, we recall that $\dot{\widehat{x}}_\tau = \frac{\partial}{\partial p}H(\widehat{x}_\tau, \widehat{p}_\tau)$.

On the other hand, the right-hand side of \eqref{pf_eq1} can be formulated as
\begin{align*}
  (\widehat{p}_h - \tilde{p}_{t_i}) + (\tilde{p}_{t_{i+1}} - \widehat{p}_h) + (e_{i+1} - e_i),
\end{align*}
where the first term can be rewritten as
\begin{equation*}
  \widehat{p}_h - \tilde{p}_{t_i} = \int_0^h \dot{\widehat{p}}_\tau ~d\tau = \int_0^h - \frac{\partial}{\partial x}H(\widehat{x}_\tau,  \widehat{p}_\tau)~d\tau.
\end{equation*}
Combining the previous calculations, we obtain 
\begin{align}
  & \int_0^h \frac{\partial}{\partial t}\nabla\psi_\theta(\widehat{x}_\tau, t_i+\tau) + \nabla^2\psi_\theta(\widehat{x}_\tau, t_i+\tau)\frac{\partial}{\partial p}H(\widehat{x}_\tau, \widehat{p}_\tau) + \frac{\partial}{\partial x}H(\widehat{x}_\tau,  \widehat{p}_\tau)~d \tau \nonumber \\
  & =  (\nabla\psi_\theta(\widehat{x}_h, t_{i+1}) - \nabla\psi_\theta(\tilde{x}_{t_{i+1}}, t_{i+1})) + (\tilde{p}_{t_{i+1}} - \widehat{p}_h) + (e_{i+1} - e_i).\label{main eq }
\end{align}
Let us denote $R_{t_i}=\underset{1\leq k \leq N}{\max}\{|\tilde{x}_{t_i}^{(k)}|+|\tilde{p}_{t_i}^{(k)}|\}$, $L=L_1+L_2$ and $C = |\partial_pH(0,0)|+|\partial_xH(0,0)|$. We first claim that 
\begin{equation}\label{bound-control-particle}
  |\widehat{x}_\tau - \tilde{x}_{t_i}|+|\widehat{p}_\tau - \tilde{p}_{t_i}|\leq 2(LR_{t_i}+C+1)\tau.
\end{equation}
We refer to section \ref{appendix: proof of thm 3.1} for its proof. Denote the time-space region $E_i\subset\mathbb{R}^d\times\mathbb{R}_+$ as
\begin{equation}\label{def-ei}
  E_i = \{(y,s)~|~|y|\leq R_{t_i}+(LR_{t_i}+C+1)h, ~ t_i\leq s \leq t_{i+1}\}.
\end{equation}
Notice that $(\widehat{x}_\tau, t_i+\tau)\in E_i$ for any $0\leq\tau\leq h$. We define
\begin{equation}
  L_{\theta, i}^A = \textrm{Lip}_{E_i}(\partial_t\nabla\psi_\theta) \triangleq \sup_{(y,s),(y',s')\in E_i} \frac{|\partial_t\nabla\psi_\theta(y,s)-\partial_t\nabla\psi_\theta(y',s')|}{|y-y'|+|s-s'|}  \label{lip partial_t nabla psi},
\end{equation}
i.e., $L_{\theta, i}^A$ as the Lipschitz constant of vector function $\partial_t\nabla\psi_\theta(x,t)$ on $E_i$. Then we have 
\begin{equation}
   |\partial_t\nabla\psi_\theta(\widehat{x}_\tau, t_i+\tau) - \partial_t\nabla\psi_\theta(\tilde{x}_{t_i}, t_i)|\leq L_{\theta, i}^A(|\widehat{x}_\tau - \tilde{x}_{t_i}|+\tau)\leq L_{\theta, i}^A\ 3(LR_{t_i}+C+1)h.  \label{lip dist 1}
\end{equation}
Let us denote
\begin{equation}
M_{\theta, i}= \sup_{ x \in \textrm{supp}(\tilde{\rho}_{t_i})}\|\nabla^2\psi_\theta(x, t_i)\|,\label{notation M_theta, i}
\end{equation}
and
\begin{equation}
  L_{\theta, i}^B = \textrm{Lip}_{E_i}(\nabla^2\psi_\theta) \triangleq \sup_{(y,s),(y',s')\in E_i} \frac{\| \nabla^2\psi_\theta(y,s)- \nabla^2\psi_\theta(y',s')\|}{|y-y'|+|s-s'|},   \label{notation lip nabla2 psi}
\end{equation}
where $\|\cdot\|$ is the $2$-norm of the square matrix.

For convenience, we introduce  
\[
    \mathscr{D}\psi_\theta(x,p,t) = \frac{\partial}{\partial t}\nabla\psi_\theta(x, t) + \nabla^2\psi_\theta(x, t)\frac{\partial}{\partial p}H(x, p) + \frac{\partial}{\partial x}H(x, p).
\]
Then we can bound 
\begin{equation}
    |\mathscr{D}\psi_\theta(\widehat{x}_\tau, \widehat{p}_\tau, t_i+\tau) - \mathscr{D}\psi_\theta(\tilde{x}_{t_i}, \tilde{p}_{t_i}, t_i)|\leq \lambda(\theta, i )\tau,  \label{lip dist combine}
\end{equation}
where the positive constant $\lambda(\theta, i)$ is shown in section \ref{appendix: proof of thm 3.1}.

We reformulate \eqref{main eq } as
\begin{align*}
h\mathscr{D}\psi_\theta(\tilde{x}_{t_i}, \tilde{p}_{t_i}, t_i) = & ~ \int_0^h \mathscr{D}\psi_\theta(\tilde{x}_{t_i}, \tilde{p}_{t_i}, t_i)-\mathscr{D}\psi_\theta(\widehat{x}_{\tau}, \widehat{p}_{\tau}, t_i+\tau) ~ d\tau   \\
  & + (\nabla\psi_\theta(\widehat{x}_h, t_{i+1}) - \nabla\psi_\theta(\tilde{x}_{t_{i+1}}, t_{i+1})) + (\tilde{p}_{t_{i+1}} - \widehat{p}_h) + (e_{i+1} - e_i).
\end{align*}
We have the following estimate 
{\small\begin{align}
  |\mathscr{D}\psi_\theta(\tilde{x}_{t_i}, \tilde{p}_{t_i}, t_i)| & \leq \frac{1}{h}\int_0^h |\mathscr{D}\psi_\theta(\widehat{x}_\tau, \widehat{p}_\tau, t_i+\tau) - \mathscr{D}\psi_\theta(\tilde{x}_{t_i}, \tilde{p}_{t_i}, t_i)|~d\tau \label{main estimate}  \\
  & \quad + \frac{1}{h}|\nabla\psi_\theta(\widehat{x}_h, t_{i+1}) - \nabla\psi_\theta(\tilde{x}_{t_{i+1}}, t_{i+1})| + \frac{|\tilde{p}_{t_{i+1}}-\widehat{p}_h|}{h}+\frac{|e_{i+1}-e_i|}{h}.    \nonumber
\end{align}}
Let us define
\begin{equation}\label{def-di}
  D_i = \{ x ~ | ~ |x|\leq R_{t_i} + 3(LR_{t_i} + C + 1)h ~ \}.
\end{equation}
and
\begin{equation}
  L_{\theta, i}^C = \textrm{Lip}(\nabla\psi_\theta(\cdot, t_i)) \triangleq \sup_{y, y' \in D_i } \frac{| \nabla\psi_\theta(y,t_i)- \nabla\psi_\theta(y',t_i) |}{|y-y'|}.   \label{notation lip x nabla psi} 
\end{equation} 
This, together with \eqref{main estimate}, leads to (see the proof in section \ref{appendix: proof of thm 3.1})
\begin{align}\label{over-est-sample}
  & \left|\nabla\left(\frac{\partial}{\partial t}\psi_\theta(\tilde{x}_{t_i}, t_i ) + H(\tilde{x}_{t_i}, \nabla\psi_\theta(\tilde{x}_{t_i}))\right)\right|\\\nonumber
  \leq & ~  \frac{1}{2}\lambda(\theta, i )h + (L_{\theta, i+1}^C+1)C_{\tilde{\Phi}_h}(\tilde{x}_{t_i}, \tilde{p}_{t_i})h^{r-1} + \frac{|e_{i+1}-e_i|}{h}+(M_{\theta, i}L_1+L_2)e_i.
\end{align}
We finally take average over the sample points $\{\tilde{x}_{t_i}^{(k)}\}_{1\leq k\leq N}$. This leads to
{
\begin{align}
  & \frac{1}{N}\sum_{k=1}^N \left|\nabla\left(\frac{\partial}{\partial t}\psi_\theta(\tilde{x}_{t_i}^{(k)}, t_i ) + H(\tilde{x}_{t_i}^{(k)}, \nabla\psi_\theta(\tilde{x}_{t_i}^{(k)}))\right)\right| \nonumber\\
  \leq & ~ \frac{1}{2}\lambda(\theta, i )h + \underbrace{(L_{\theta, i+1}^C+1)\frac{1}{N}\sum_{k=1}^N C_{\tilde{\Phi}_h}(\tilde{x}_{t_i}^{(k)}, \tilde{p}_{t_i}^{(k)})}_{\textrm{denote as} ~ \eta(\theta, i)}h^{r-1} 
 \label{main ineq}  \\
  & + \frac{1}{N}\sum_{k=1}^N\frac{|e_{i+1}^{(k)}-e_i^{(k)}|}{h}+\underbrace{(M_{\theta, i}L_1+L_2)}_{\textrm{denote as}~\nu(\theta, i)}|e_i^{(k)}|.  \nonumber 
\end{align}
}
This provides an upper bound on the empirical average of the $L^1$-residual of $\psi_\theta$ using the computed samples $\{\tilde{x}_{t_i}^{(k)}\}_{1\leq k\leq N}$ at time node $t_i$.

To further estimate the expectation of the $L^1$-residual at all the time nodes $\{t_1,\dots, t_T\}$, let us denote $\tilde{\rho}_{t_i}=(\tilde{\Phi}_h\circ\dots\circ \tilde{\Phi}_h)_\sharp \rho_0$ as the probability density function of the numerical solution $\tilde{x}_{t_i}$ computed by the chosen scheme starting from $x_0\sim\rho_0$. For simplicity, let us denote the residual term of the Hamilton-Jacobi equation as
\begin{equation*}
  \mathcal{R}[\psi_\theta](x,t)=\nabla\left(\frac{\partial}{\partial t}\psi_\theta(x, t) + H(x, \nabla\psi_\theta(x,t))\right).
\end{equation*}

For a fixed time $t_i$ and samples $\{\tilde{x}_{t_i}^{(k)}\}_{1\leq k \leq N }\sim \tilde{\rho}_{t_i}$, by Hoeffding's inequality (see e.g. \cite{SSBD2014}), for any $0<\delta<1$, with probability $1-\delta$, we can bound the gap between the expectation and the empirical average of the $L^1$ residual as
\begin{equation}  \left|\int_{\mathbb{R}^d} |\mathcal{R}[\psi_\theta](x,t_i)| \tilde{\rho}_{t_i}~dx - \frac{1}{N}\sum_{k=1}^N |\mathcal{R}[\psi_\theta](\tilde{x}_{t_i}^{(k)},t_i)|\right|\leq \underbrace{\sup_{x\in\textrm{supp}({\tilde{\rho}_{t_i}})}|\mathcal{R}[\psi_\theta](x,t_i)|}_{\textrm{denote as}~ R(\theta,i)}\sqrt{\frac{\ln\frac{2}{\delta}}{2N}}.\label{hoeffding ineq}
\end{equation}

Since we assume that $\textrm{supp}(\rho_0)$ is a bounded set, and the solution map $\tilde{\Phi}_h$ of the numerical scheme is continuous, then $\textrm{supp}(\tilde{\rho}_{t_i})$ is also bounded. Thus $R(\theta, i)$ is guaranteed to be finite.

By combining \eqref{main ineq} and \eqref{hoeffding ineq}, for any time node $t_i$, with probability $1-\delta$, we can estimate the average $L^1$ residual of Hamilton-Jacobi equation at time $t_i$ as
\begin{align}
    & \int_{\mathbb{R}^d}   \left|\nabla\left(\frac{\partial}{\partial t}\psi_\theta(x, t_i ) + H(x, \nabla\psi_\theta(x,t_i))\right)\right| \tilde{\rho}_{t_i}~dx\nonumber\\
    \leq & ~ \frac{1}{2}\lambda(\theta, i )h + \eta(\theta,i)h^{r-1} + \left( \frac 1 N \sum_{k=1}^N\frac{|e_{i+1}^{(k)} - e_i^{(k)}|}{h} + \nu(\theta, i)|e_i^{(k)}| \right) +R(\theta, i)\sqrt{\frac{\ln\frac{2}{\delta}}{2N}}.\label{expectation residual ineq}
\end{align}
If we denote the subset $\Omega_{t_i}$ of the sample space on which \eqref{expectation residual ineq} holds. It follows that $\mathbb{P}(\Omega_{t_i}^c)\leq \delta$. Then we have   
\begin{equation*}
  \mathbb{P}\;\left(\bigcap_{i=1}^M   \Omega_{t_i} \right) = 1- \mathbb{P}\; \left( \bigcup_{i=1}^M \Omega_{t_i}^c \right) \geq 1-\sum_{i=1}^M \mathbb{P} \left(\Omega_{t_i}^c\right)\geq 1 - M\delta.
\end{equation*}
By letting $M\delta=\epsilon $, we have shown that for the fixed neural network $\psi_\theta$, initial distribution with density $\rho_0$ and initial samples $\{x_{t_0}^{(k)}\}_{k=1}^N\sim\rho_0$, with probability $1-\epsilon$, 
\begin{align}
    & \int_{\mathbb{R}^d}   \left|\nabla\left(\frac{\partial}{\partial t}\psi_\theta(x, t_i ) + H(x, \nabla\psi_\theta(x,t_i))\right)\right| \tilde{\rho}_{t_i}~dx \nonumber\\
    \leq & ~ \frac{1}{2}\lambda(\theta, i )h + \eta(\theta,i)h^{r-1} +\delta_i^{N,h}+\nu(\theta,i)\varepsilon_{i}^N + R(\theta,i)\sqrt{\frac{\ln M + \ln\frac{2}{\epsilon}}{2N}} \label{expectation residual ineq any time}
\end{align}
holds at any time {node} $t_i$, $i=1,2,\dots,M$.
\end{proof}

In Theorem \ref{thm est}, it can be seen that the constants $\lambda(\theta,i),\eta(\theta,i)$, $\nu(\theta,i),R(\theta,i)$ depend on the neural network approximation $\psi_\theta$ and the sample complexity $N$.  In the proof of Theorem \ref{thm est}, the set $E_i$ also depends on $N$. We refer to Remark \ref{rk-thm 3.1} in section \ref{appendix: proof of thm 3.1} for their dependence on $\psi_\theta$ and $N$. 

{We want to highlight that the \textit{posterior} estimation on the $L^1$-residual of $\nabla \psi_\theta$ consists of three parts: the numerical error depending on the geometric integrator $\frac{1}{2}\lambda(\theta, i )h + \eta(\theta, {i}  )h^{r-1}$ in \eqref{thm final ineq}, the training error $\delta_i^{N,h}+\nu(\theta,i)\varepsilon_{i}^N$ caused by the neural network approximation, and the sampling error $R(\theta, i)((\ln M + \ln\frac{2}{\epsilon})/(2N))^{1/2}$ due to the Monte--Carlo method.   
For the results about explicit bound of $\varepsilon_i^N$, one may use the McDiarmid's inequality \cite{SSBD2014} and Rademacher complexity $\textrm{Rad}(F)$ of the function set $F=\{\mathcal{R}[\psi_\theta]\circ \tilde{\Phi}_h^{i}\}_{i=0,1,\dots, M}$, as well as Masaart's Lemma \cite{SSBD2014} on estimating the upper bound of $\textrm{Rad}({F})$. Since $\varepsilon_i^N$ mainly relies on the approximation power of $\psi_\theta$, which is another topic beyond the scope of this work, we omit its detailed discussion here.}

We note that the error estimate \eqref{thm final ineq} is established for \textit{density-weighted} residual of $\nabla\psi_\theta$. Here the probability density $\tilde \rho_{t_i}$ of numerical solution $\tilde{x}_{t_i}$ is solved via the geometric integrator $\tilde{\Phi}_h$. We anticipate smaller residual values of $\nabla\psi_\theta$ at the region on which $\tilde\rho_{t_i}$ possesses a higher probability. On the contrary, no estimate is provided outside of the support of $\tilde \rho_{t_i}$. Such an observation is verified in the later section \ref{sec: verify rel btw residual and tile rho ti }. {It can be seen that the approximate solution $\psi_{\theta}$ depends on the choice of $\rho_0.$ The question on the sensitivity and dependence on the choice of $\rho_0$ is very interesting and important. We give a brief discussion on this issue with a specific example in the supplementary material and hope to this issue in the future.
}

{We would like to remark that,  if assuming the existence of the classical solution,  one can show that the temporal convergence order of numerical integrator in  proposed algorithm can be improved to  $r-2$ ($r>2$) via similar arguments as in the proof of Theorem \ref{thm est}.} 
Besides, the error analysis in Theorem \ref{thm est} works for any $T>0$ even when $T$ goes beyond the threshold time $T_*$ of classical solution. However, when $t_i$ is approaching (or even surpassing) $T_*$, the superposition of momentum vectors in the configuration space often leads to {a} larger training loss $\mathcal E_i$, which increases the error upper bound in \eqref{thm final ineq}. Such increment in the loss values $\mathcal E_i$ is reflected in several numerical examples demonstrated in section \ref{numerical example separable hamilton }. This is justifiable because the classical solution itself even cannot be extended beyond $T_*$, and we are not able to control the residual value of $\nabla\psi_\theta$ when time $t_i$ approaches (or surpasses) $T_*$. On the other hand, in our proposed algorithm, the numerical solution $\psi_\theta$ extends naturally beyond $T_*$, which can be treated as the approximation to the $\mu_t(\cdot|x)-$weighted ``solution'' $\widehat{\psi}$ to the HJ equation \eqref{HJ} discussed in {section} \ref{rk weighted momentum as grad HJ solu }. Several numerical examples of such $\mu_t(\cdot |x)-$weighted ``solution'' are also demonstrated in next section.

\section{Numerical tests}\label{sec-4}

{In our implementation, we set $\psi_\theta(\cdot, \cdot):\mathbb{R}^{d+1}\rightarrow \mathbb{R}$ as neural network with ResNet \cite{he2016deep} structure in our implementation. To be more precise, we consider the following neural network $\mathcal{NN}_{\theta}^{L, \widetilde{d}}(\cdot, \cdot):\mathbb{R}^{d+1}\rightarrow\mathbb{R}$ with depth $L$ and width (hidden dimension) $\widetilde{d}$ as
\begin{equation*}
  \mathcal{NN}^{L, \widetilde{d}}_\theta(x,t) = f_L\circ f_{L-1}\circ \dots f_2\circ f_1(x,t), 
\end{equation*}
with 
\begin{align*}
  f_k(y) = 
  \begin{cases}
      \sigma(A_ky+b_k) \quad \textrm{for } k=1,\\
      \sigma(y+ \kappa(A_ky+b_k)) \quad \textrm{for } 2\leq k \leq L-1,\\
      A_ky \quad \textrm{for } k=L.
  \end{cases} 
\end{align*}If not specified, we choose the activation function $\sigma(\cdot)$ as the hyperbolic tangent function $\mathrm{tanh}(\cdot)$. And $\kappa \in\mathbb{R^+}$ is the stepsize of each layer, we choose $\kappa=0.5$ in our experiments. Furthermore, $A_1 \in \mathbb{R}^{\widetilde{d}\times (d+1)}, b_1\in\mathbb{R}^{\widetilde{d}}$, $A_k\in\mathbb{R}^{\widetilde{d}\times \widetilde{d}}, b_k\in\mathbb{R}^{\widetilde{d}}$ for all $2\leq k \leq L-1$, and $A_L \in\mathbb{R}^{1\times\widetilde{d}}$ compose the parameter $\theta\in\mathbb{R}^{(L-2)\widetilde{d}^2 + \widetilde{d}(d+2) + (L-1)\widetilde d}$ of this neural network. {We omit the bias term in the final layer, as our goal is to compute the gradient field $\nabla_x \psi_\theta$, where the bias has no effect and is therefore discarded.} We apply the Adam method \cite{kingma2014Adam} to train $\psi_\theta$ in Algorithm \ref{alg1}. We {pick the random batch size $N_0=1200$} for all the numerical experiments discussed in this section. All experiments are conducted on a Tesla T4 GPU with 16GB high-bandwidth memory, using PyTorch 2.6.0 and CUDA 12.5.} The training time for $\psi_\theta$ on each time interval is around 3-10 minutes for problems with dimensions varying from 2 to 30. The Python code used to reproduce the numerical examples presented in this paper is available at {\color{blue} \href{https://github.com/LSLSliushu/HJ_via_density_coupling}{\texttt{github.com/LSLSliushu/HJ\_via\_density\_coupling}}}.

\subsection{{Residual and error bounds}}\label{sec: verify rel btw residual and tile rho ti }

Theorem \ref{thm est} states that the expectation of the residual can be bounded, where the expectation is taken with respect to the distribution $\tilde{\rho}_{t_i}$ of samples used for training $\psi_\theta$. Thus we anticipate a smaller residual value %
on the support of $\tilde\rho_{t_i}$; On the other hand, the residual outside of the support of $\tilde \rho_{t_i}$ can not be controlled due to lack of learning samples. This is observed in the following examples. 

Consider the Hamilton-Jacobi equation on $\mathbb{R}^2\times [0,T]$ with $T=3$, $H(x,p)=\frac{|p|^2}{2}+\frac{|x|^2}{2}$ and initial data $u(x)=\frac{|x|^2}{2}$.  {We choose $\rho_0 = \mathcal N((3, 3), I)$, i.e., the normal distribution shifted by $(3, 3).$} {We set $\psi_\theta = \mathcal{NN}^{L, \widetilde{d}}_\theta$ with $L=7, \widetilde d = 40$}. We choose the number of time subintervals $M=40$, and the number of samples $N=7500.$ We set the learning rate $ lr=0.5 \cdot 10^{-4}$ and perform Adam's method for $N_{\textrm{Iter}} = 8000$ iterations. We plot the heat map of the residual term 
\begin{equation}
  \textrm{Res}(x,t) = \left|\nabla\left(\frac{\partial}{\partial t}\psi_\theta(x, t) + H(x, \nabla\psi_\theta(x, t))\right)\right|  \label{def: res}
\end{equation}
together with the samples $\{x_{t_i}^{(k)}\}_{k=1}^N$ at different time nodes $t_i$ in the first row of Figure \ref{residue and error samples}. The support of the samples mostly overlaps with the region on which the residual value $\textrm{Res}(x,t)$ is small. 
A similar observation is also found about the error between $\nabla\psi_\theta(x,t)$ and the real solution $\nabla u(x,t)$, where $u(x,t) =\frac{1}{2}\cot(t+\frac{\pi}{4})|x|^2$, i.e.
\begin{equation}
 \textrm{Err}(x,t)=|\nabla \psi_\theta(x,t) - \nabla u (x,t)|.\label{def:  error}
\end{equation} 
The results are demonstrated in the second row of Figure \ref{residue and error samples}.
\begin{figure}[htb!]
\begin{subfigure}{.24\textwidth}
  \centering
  \includegraphics[width=\linewidth]{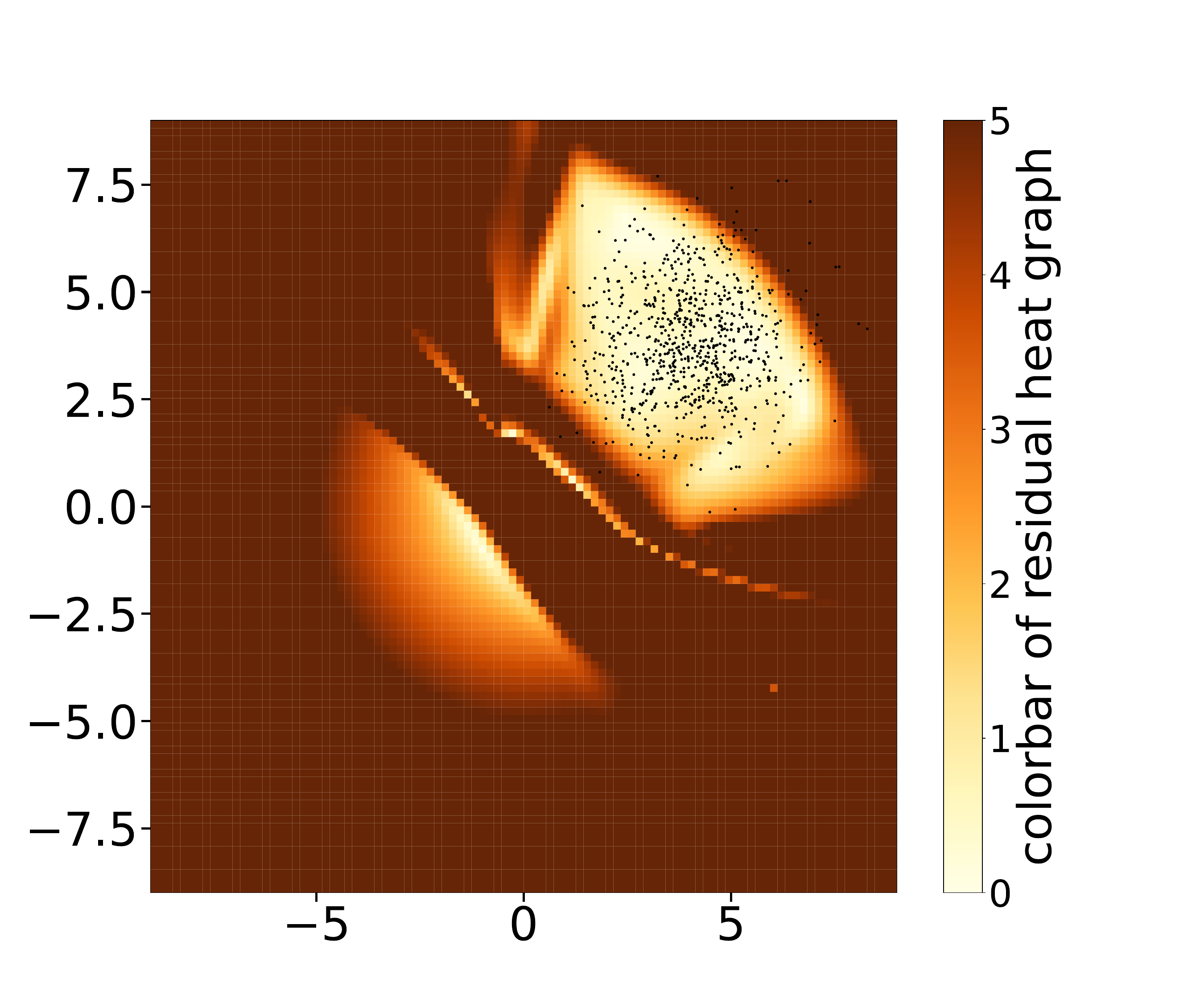}
\end{subfigure}
\begin{subfigure}{.24\textwidth}
  \centering
  \includegraphics[width=\linewidth]{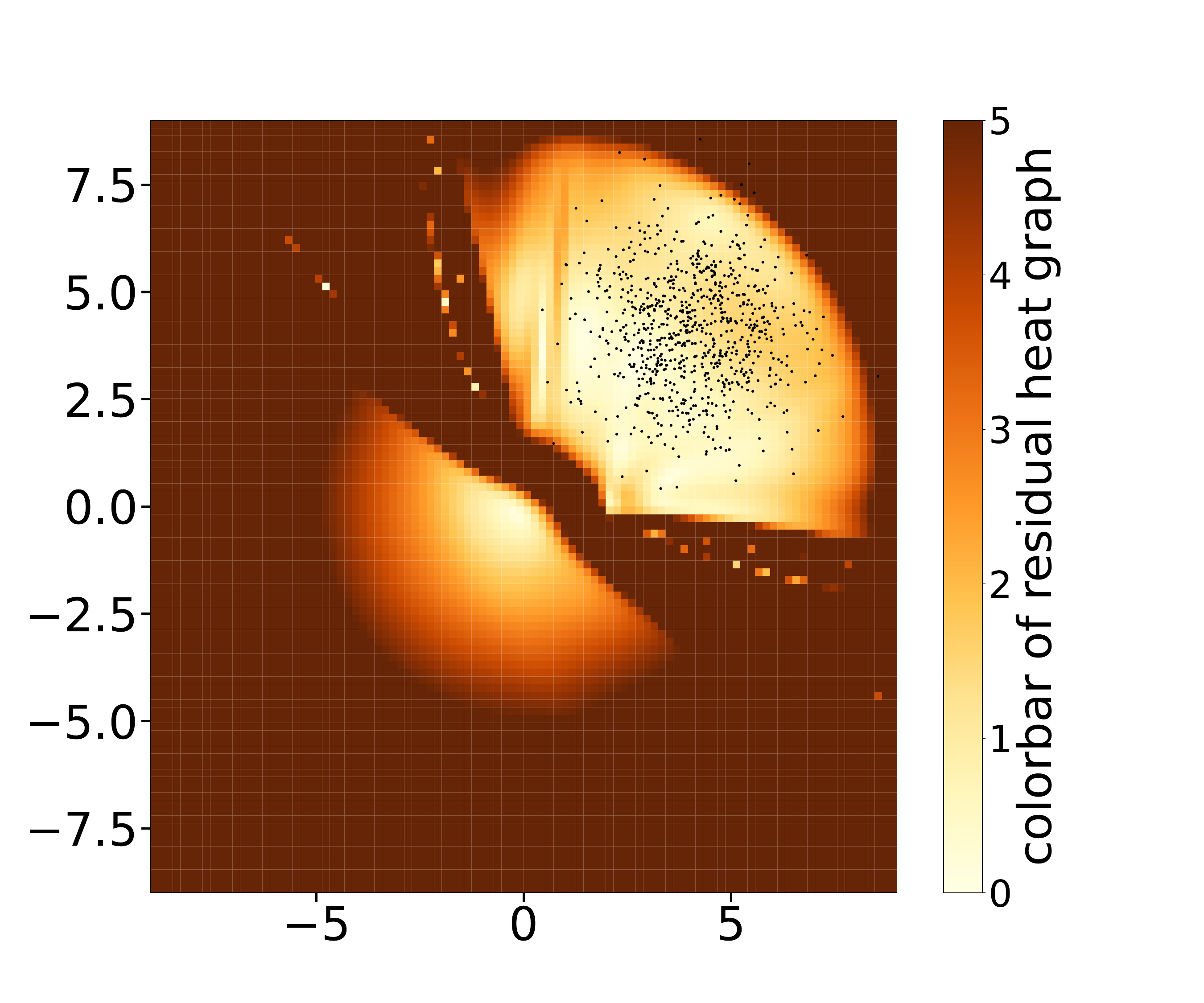}
\end{subfigure}
\begin{subfigure}{.24\textwidth}
  \centering
  \includegraphics[width=\linewidth]{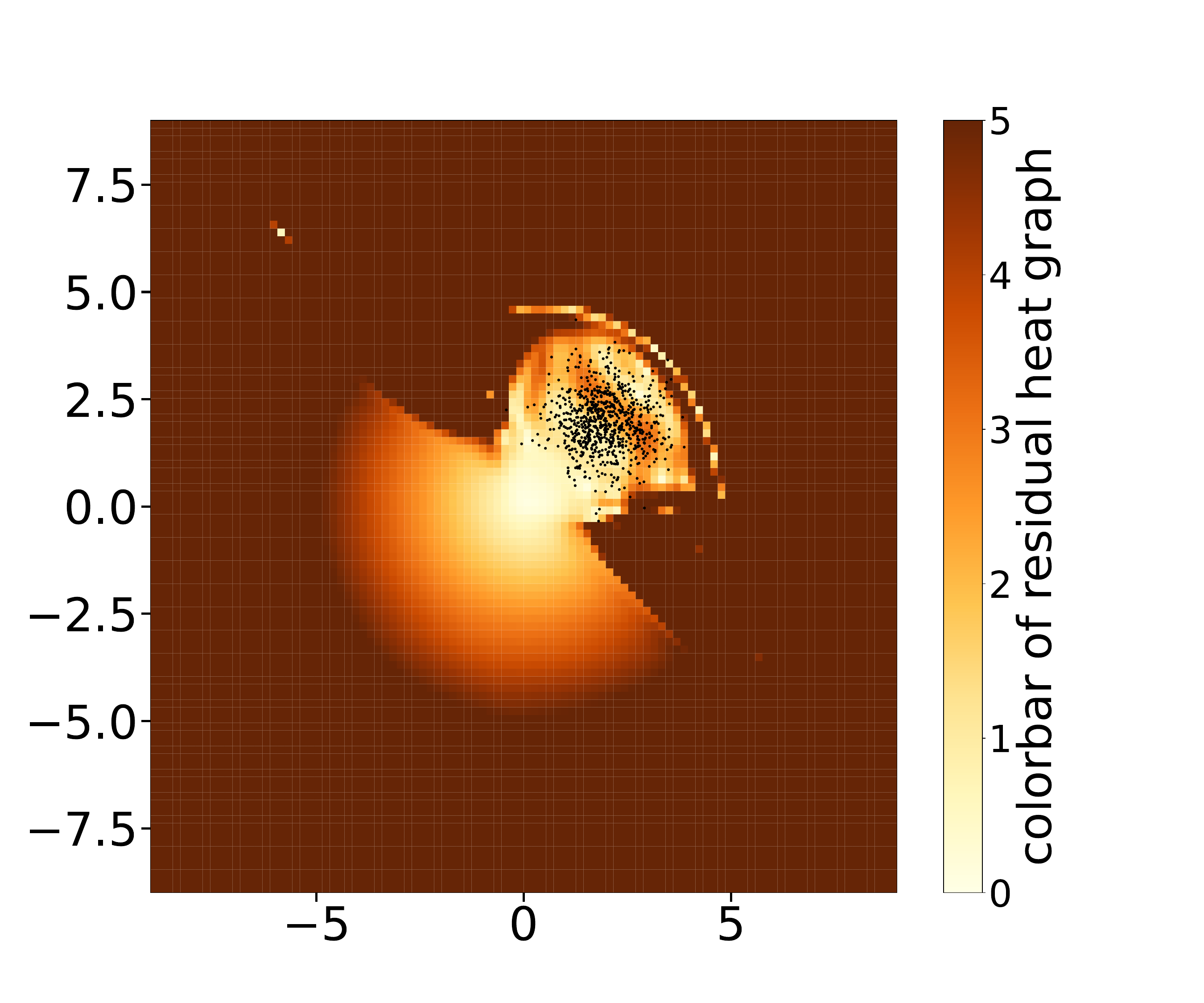}
\end{subfigure}
\begin{subfigure}{.24\textwidth}
  \centering
  \includegraphics[width=\linewidth]{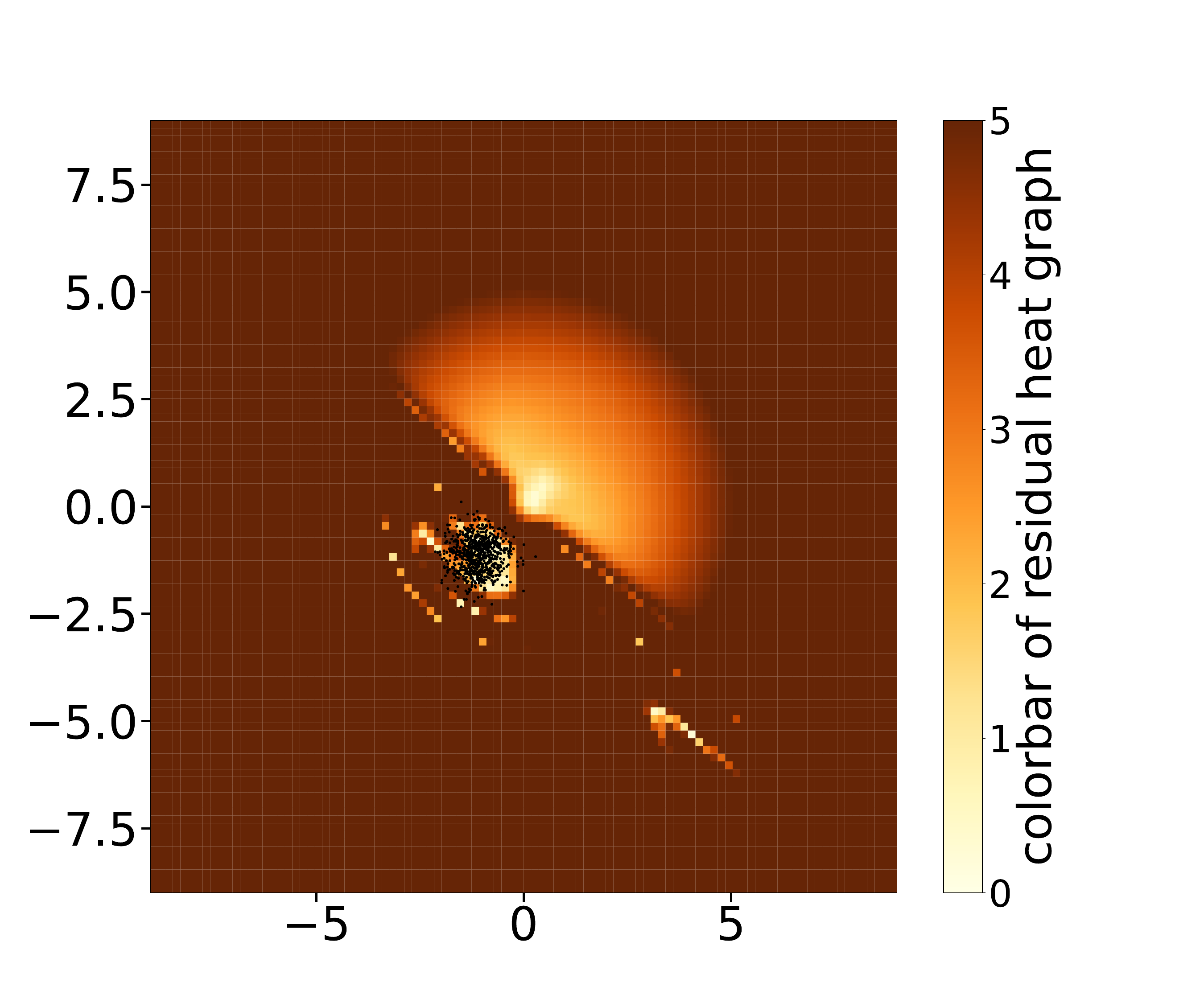}
\end{subfigure}
\begin{subfigure}{.24\textwidth}
  \centering
  \includegraphics[width=\linewidth]{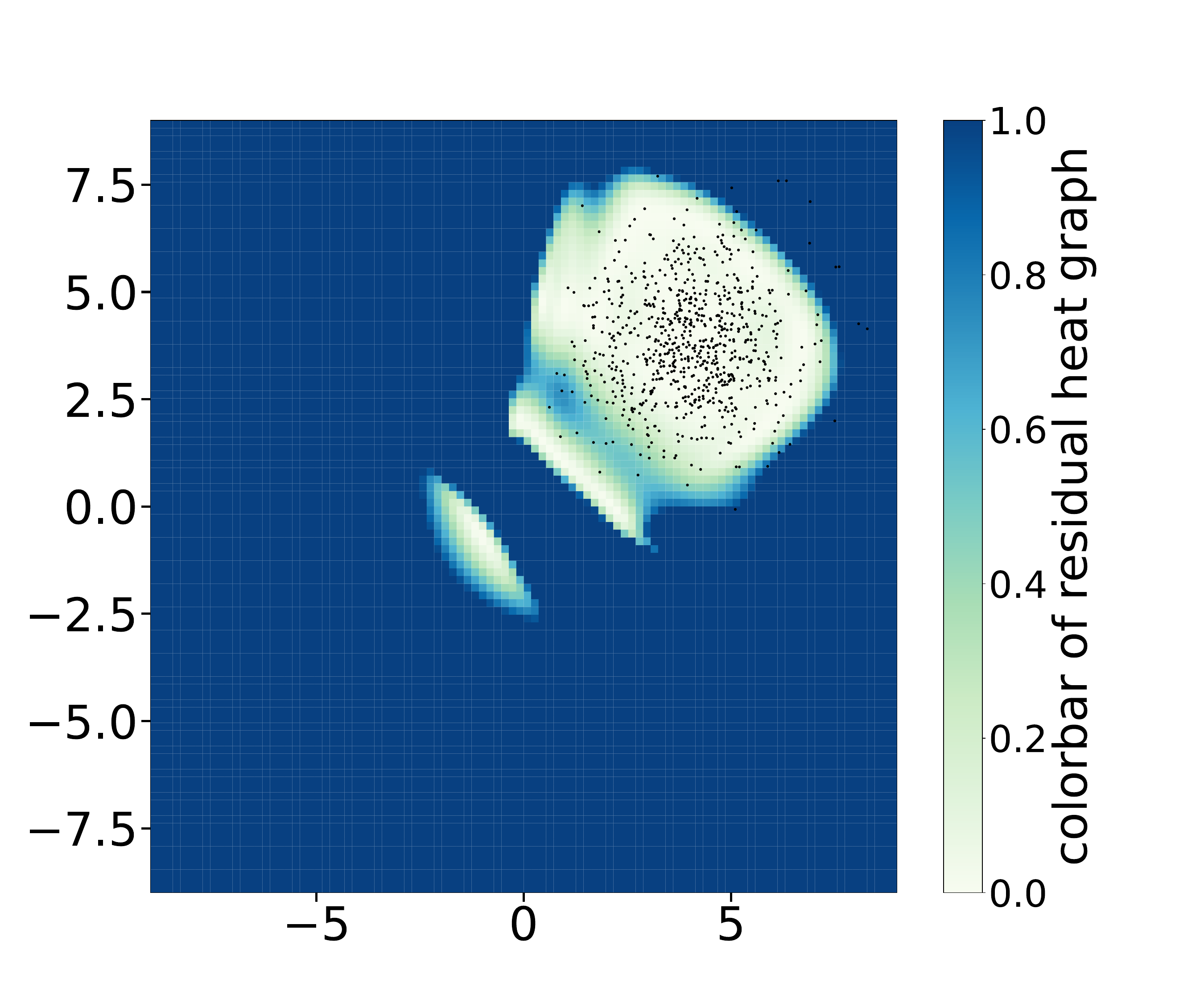}
  \caption{$t=3/8$}
\end{subfigure}
\begin{subfigure}{.24\textwidth}
  \centering
  \includegraphics[width=\linewidth]{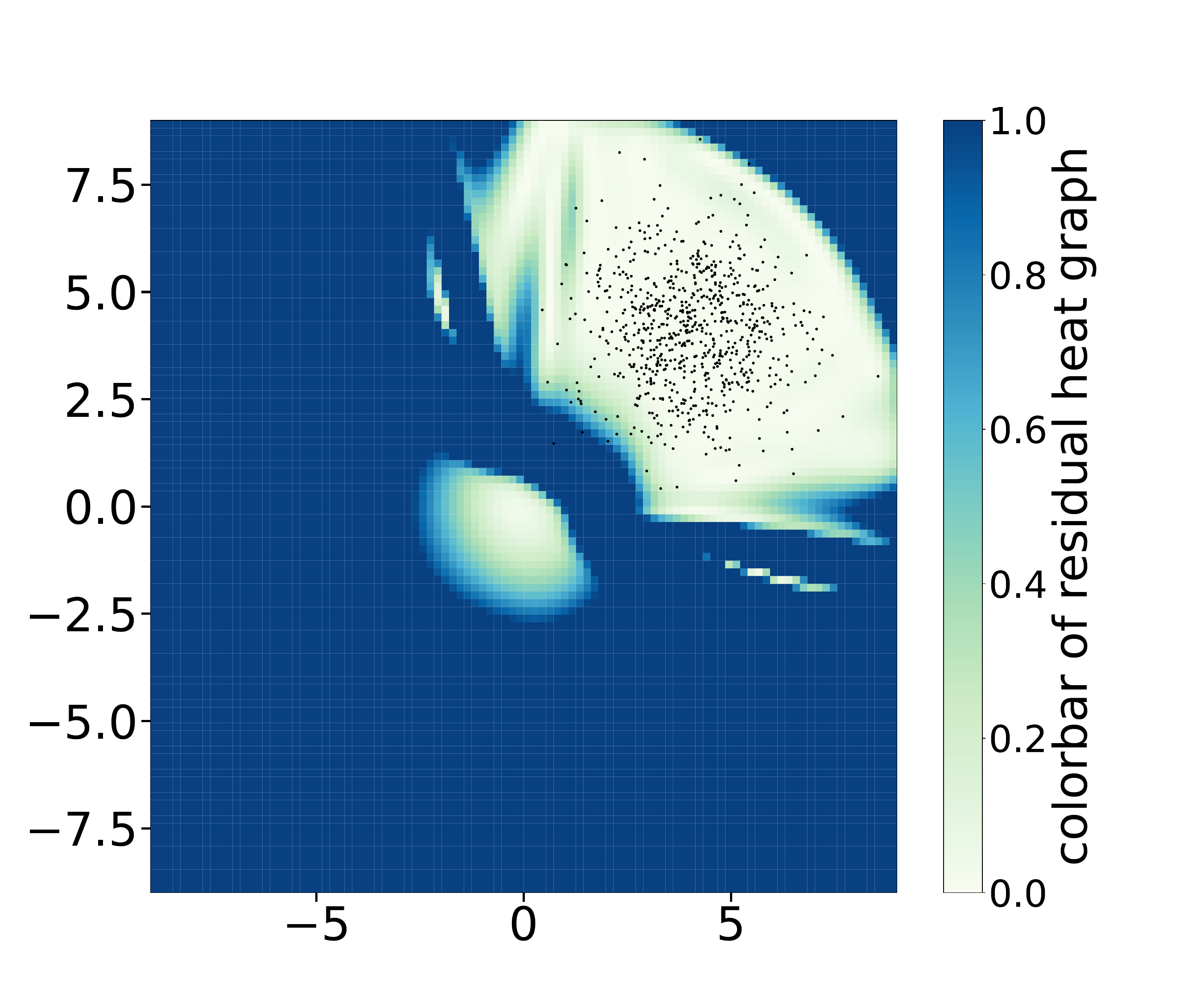}
  \caption{$t=9/8$}
\end{subfigure}
\begin{subfigure}{.24\textwidth}
  \centering
  \includegraphics[width=\linewidth]{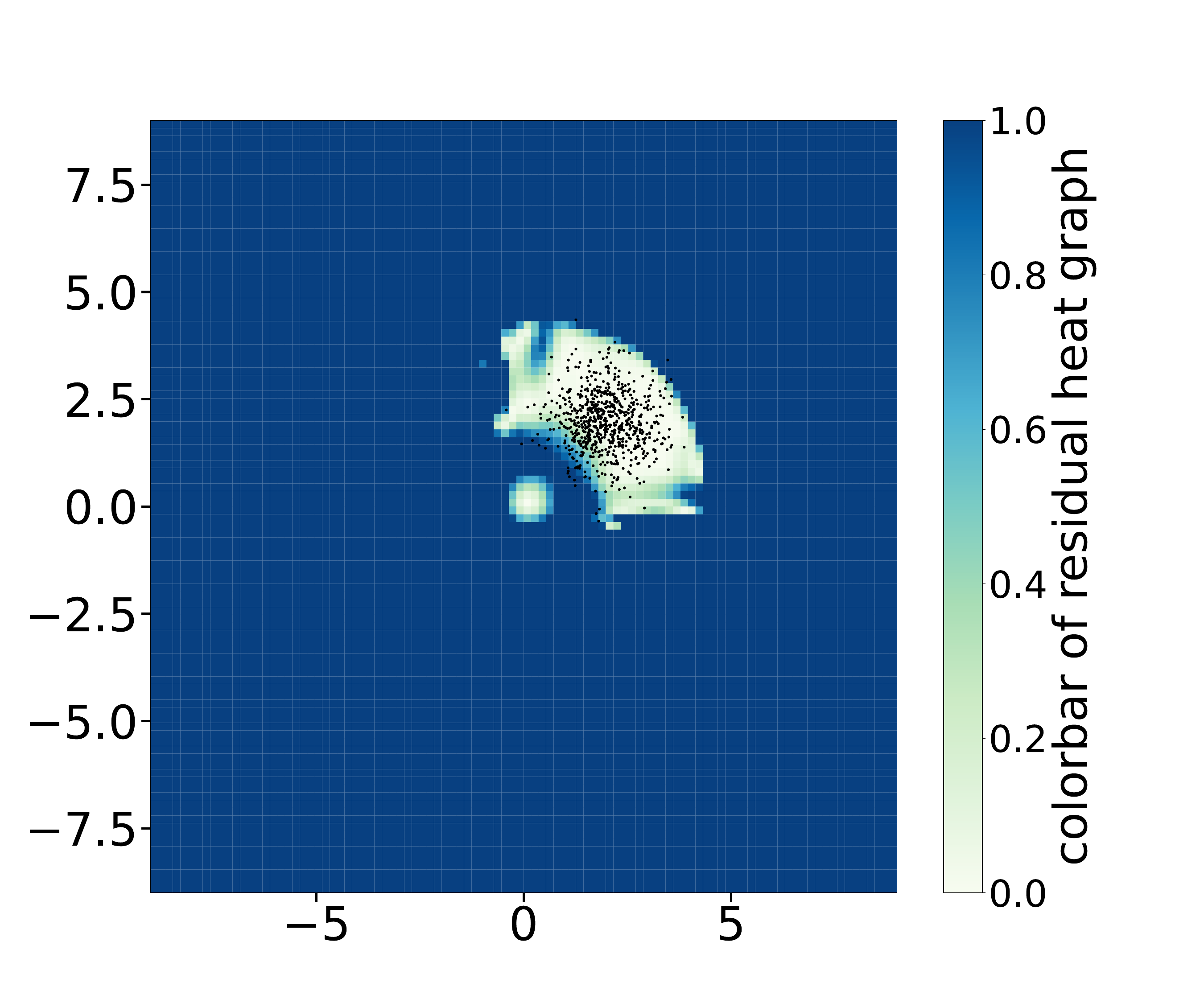}
  \caption{$t=15/8$}
\end{subfigure}
\begin{subfigure}{.24\textwidth}
  \centering
  \includegraphics[width=\linewidth]{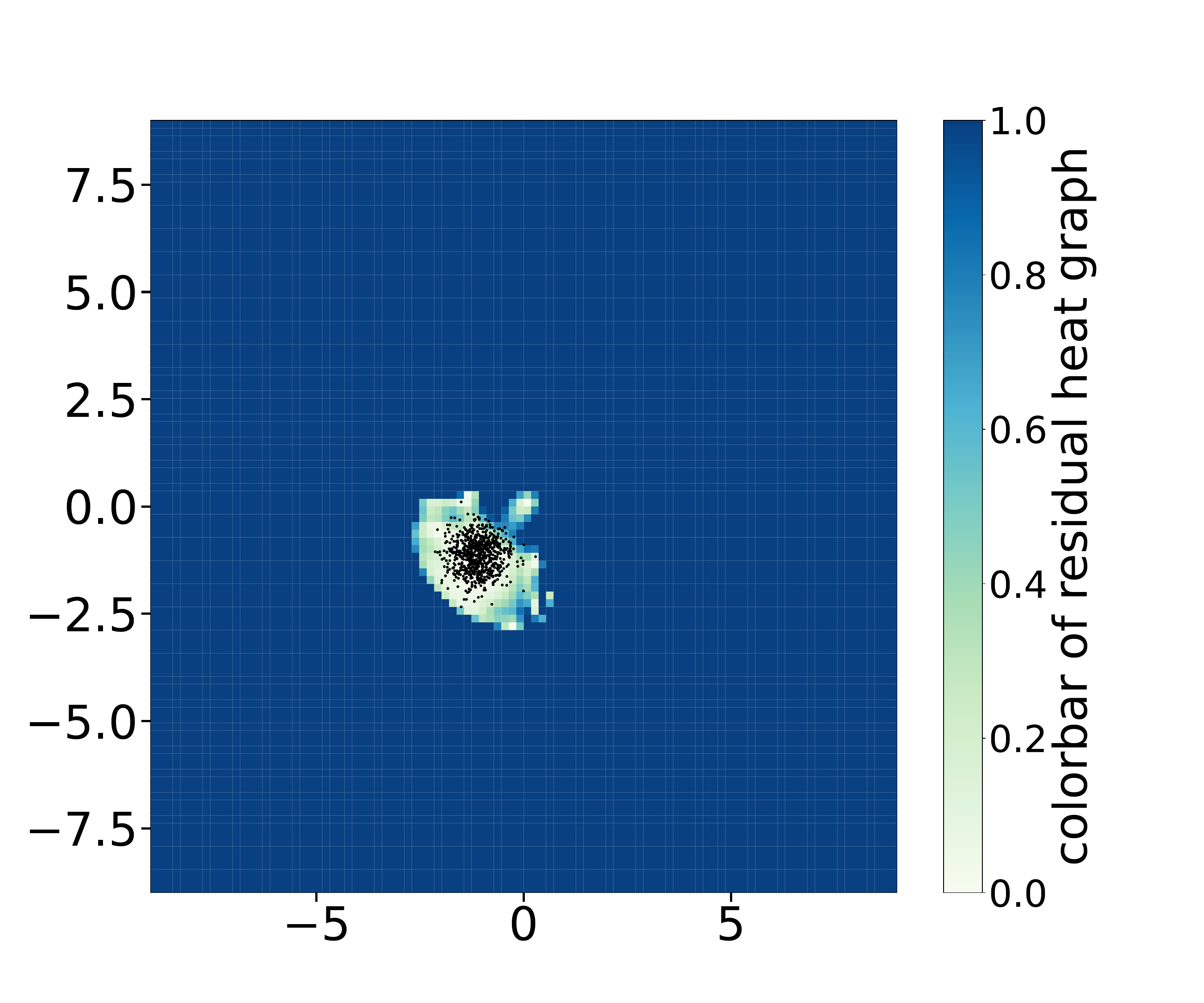}
  \caption{$t=21/8$}
\end{subfigure}
\caption{{(Top row) Heat graphs of the residual $\textrm{Res}(x, t)$ of the numerical solution $\psi_\theta$ and the sample points (black) at different time stages $t$. (Bottom row) Heat graphs of the error $\textrm{Err}(x, t)$ of the numerical solution $\psi_\theta$ and the sample points (black) at different time stages $t$.}
}
\label{residue and error samples}
\end{figure}
Another interesting question is how the sample size $N$ affects the accuracy of the numerical solution $\nabla\psi_\theta$. To test it, we train $\psi_\theta$ by using different sample size $N$ while keeping other hyperparameters unchanged. We examine the relationship between the error $\|\nabla\psi_\theta(\cdot,t)-\nabla u(\cdot,t)\|^2_{L^2(\rho_t)}$ and the sample size $N$ on time interval $[0, 0.25]$, where we discretize the time interval into $M=100$ subintervals. 

We repeat Algorithm \ref{alg1} for different sample sizes $N=16\cdot 2^k$ with $k=0,1, \dots 9$. We approximate the $L^2(\rho_t)$ discrepancy between numerical solution $\nabla\psi_\theta$ and real solution $\nabla u$ by using the Monte--Carlo method with a large sample size 45000. We conduct the numerical experiments on the same Hamilton-Jacobi equation with dimensions being $2$ and $10$ respectively. The results are plotted in Figure \ref{loglog plot error - N }, showing that the accuracy of the proposed method improves as the number of sample sizes $N$ increases. 
\begin{figure}[htb!]
\centering
\begin{subfigure}{.24\textwidth}
  \centering
  \includegraphics[width=\linewidth]{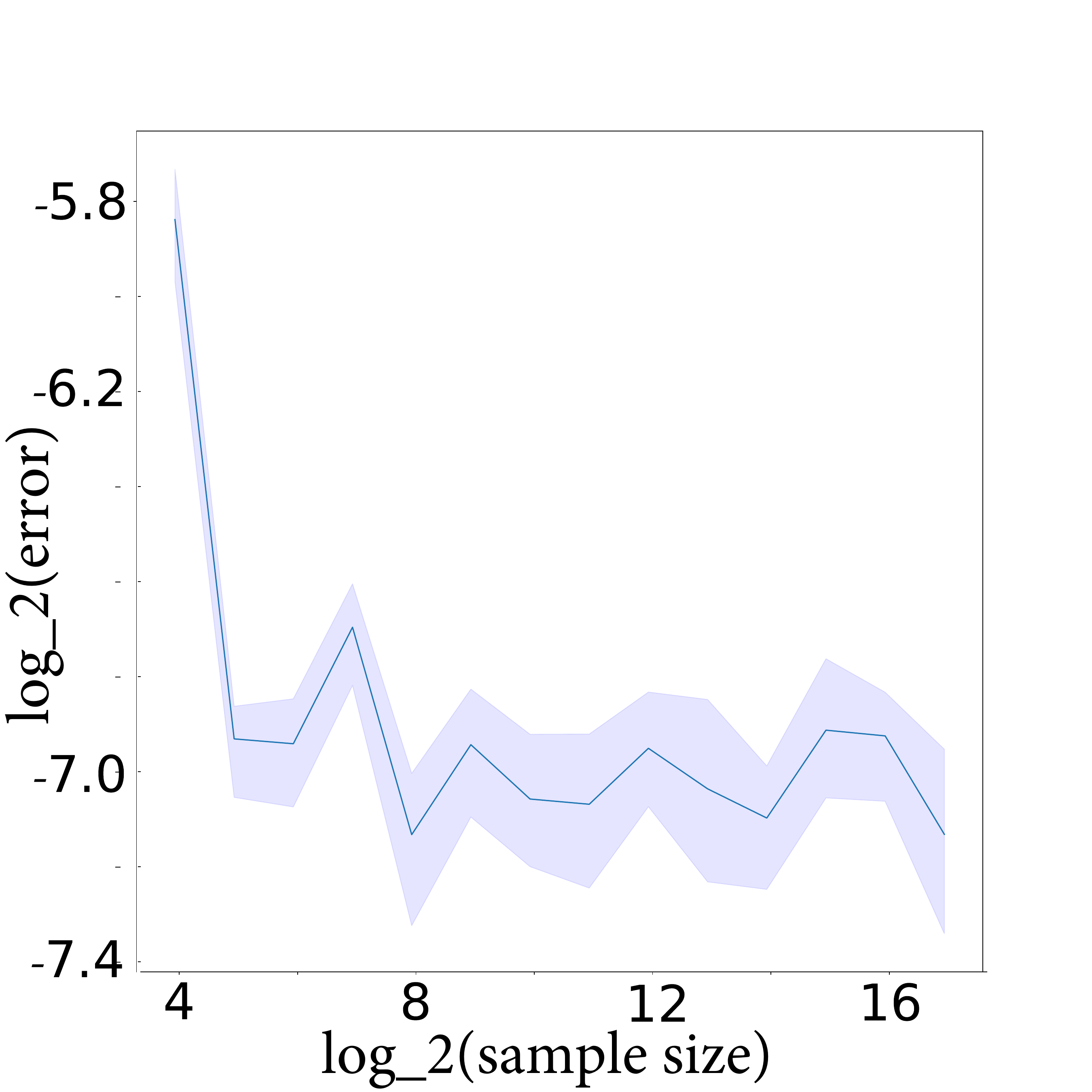}
  \caption{\footnotesize $d=2$, $t=0.1$}
\end{subfigure}
\begin{subfigure}{.24\textwidth}
  \centering
  \includegraphics[width=\linewidth]{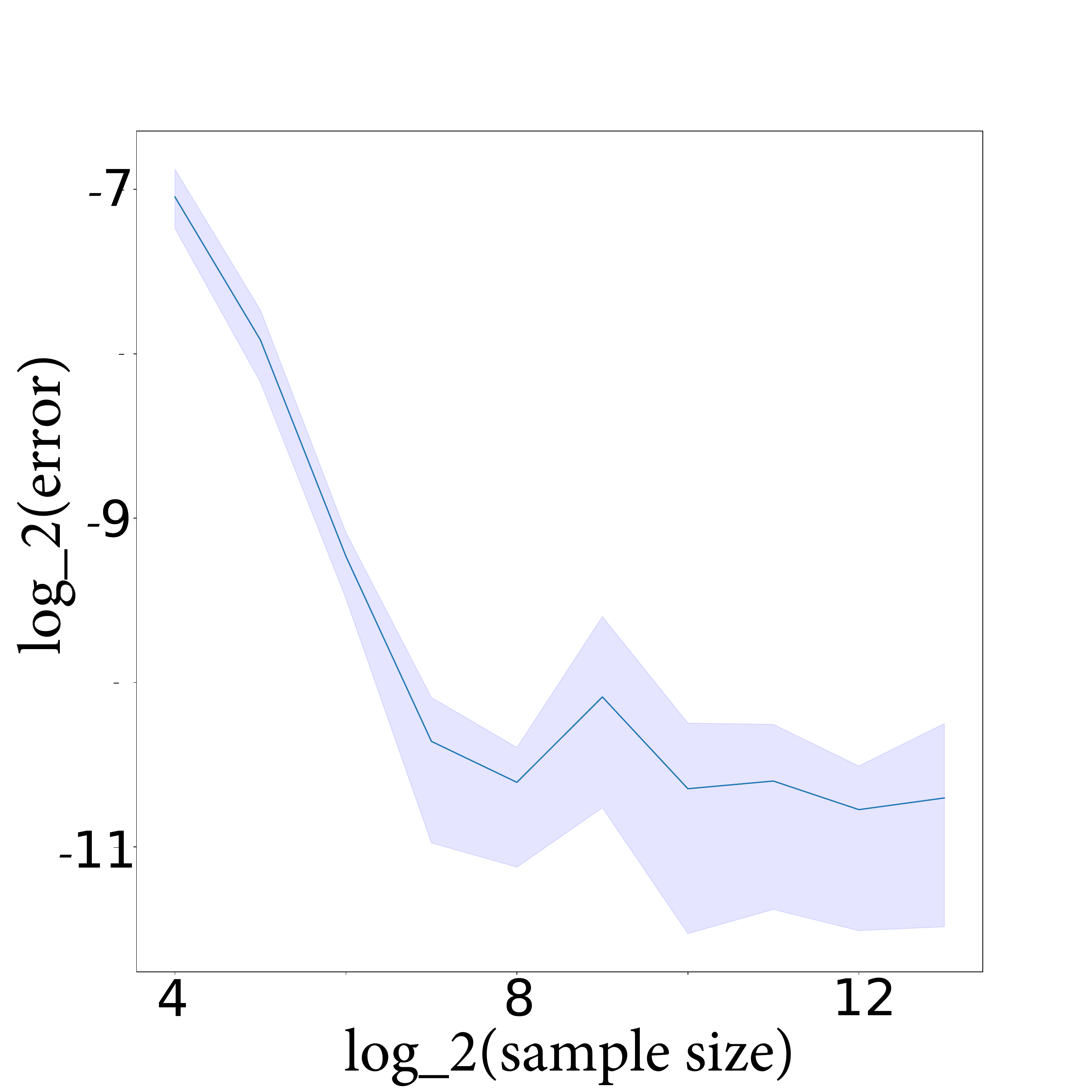}
  \caption{\footnotesize $d=2$, $t=0.2$}
\end{subfigure}
\begin{subfigure}{.24\textwidth}
  \centering
  \includegraphics[width=\linewidth]{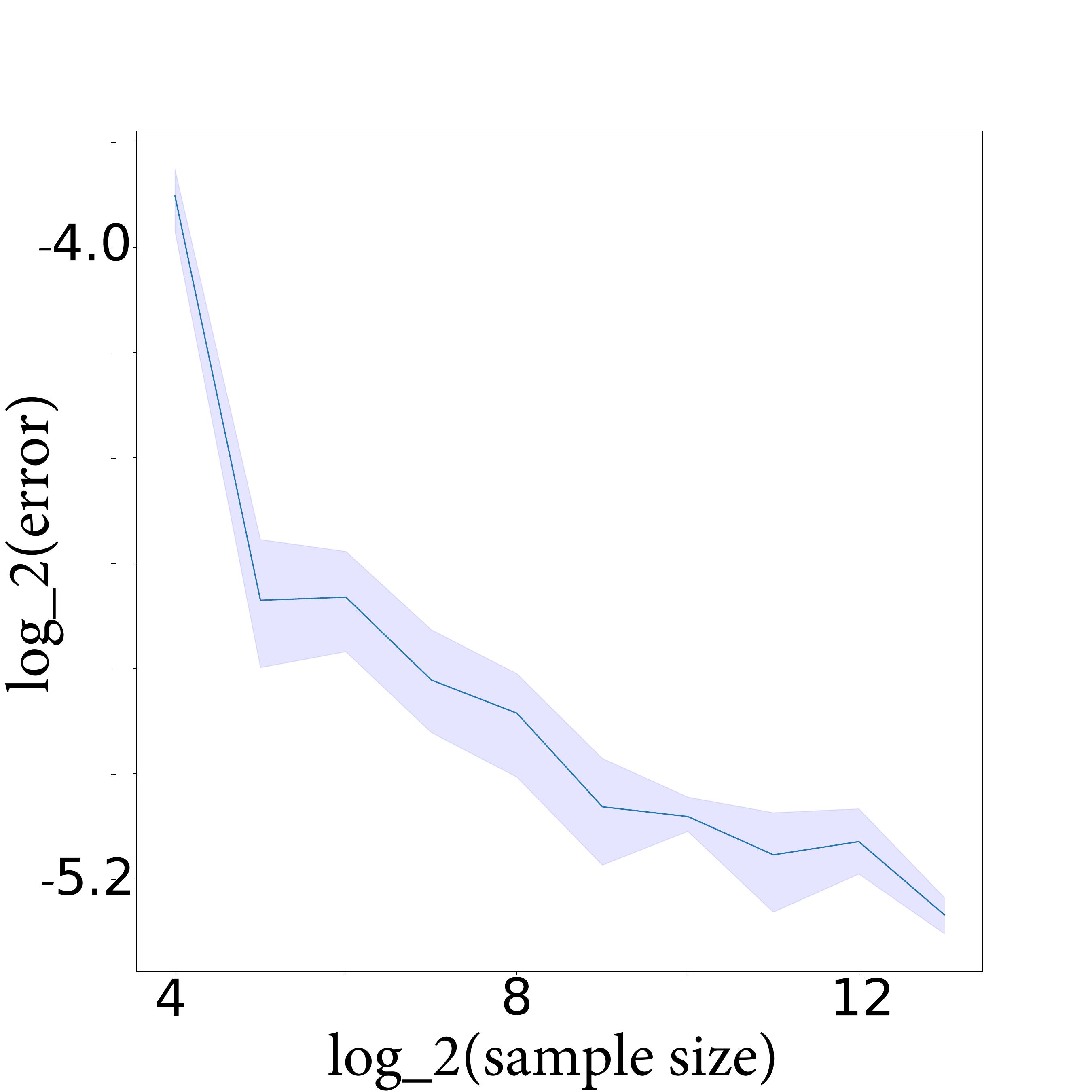}
  \caption{\footnotesize  $d=10$, $t=0.1$}
\end{subfigure}
\begin{subfigure}{.24\textwidth}
  \centering
  \includegraphics[width=\linewidth]{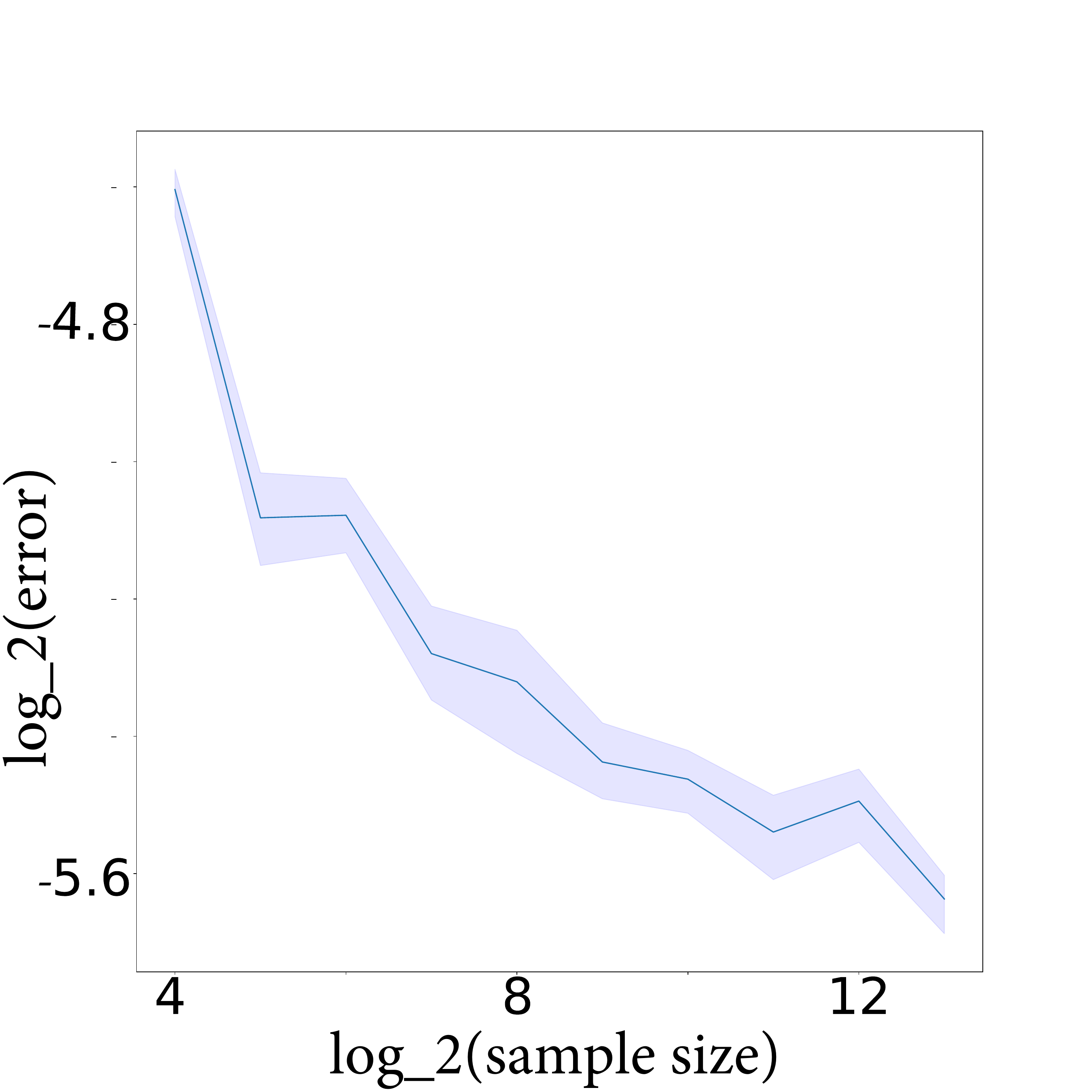}
  \caption{\footnotesize  $d=10$, $t=0.2$}
\end{subfigure}
\caption{{Average error versus sample size plots ($\log_2-\log_2$) for 2D and 10D HJ equation 
(plots with confidence interval ($25\%-75\%$) based on $40$ sets of data)}} \label{loglog plot error - N }
\end{figure}

\subsection{Behavior of neural network solution near caustics}\label{subsec: study_near_caustics}

As discussed in section \ref{rk weighted momentum as grad HJ solu }, one of the remarkable features of our treatment is its ability to compute the solution beyond the time $T_*$ at which caustics form--that is, multiple characteristics possess same state position $x$ but exhibit different momentum directions $p$. The classical solution no longer exists beyond $T_*$. However, our method remains applicable by matching the $\mu_t(\cdot|x)$-weighted momentum via a least-squares approach. In this section, we present a 2D Hamilton–Jacobi equation with  caustic formation.

We consider the Hamiltonian $H(x, p) = \frac{1}{2}p^\top\boldsymbol{\Sigma} p$ as the degenerate quadratic kinetic energy, where $\boldsymbol{\Sigma}=\boldsymbol{\eta}\boldsymbol{\eta}^\top = \frac{1}{2}\boldsymbol{1}\boldsymbol{1}^\top$, $\boldsymbol{\eta}=\frac{1}{\sqrt{2}}\boldsymbol{1}$. Here we define $\boldsymbol{1}=(1,1)^\top$ as a $2$-D vector. In the first test, we set $g(x)=\cos(\boldsymbol{\eta}^\top  x)$, and $\rho_0=\mathcal U(E)$ as the uniform distribution on the square region $E=[-\frac{\pi}{\sqrt{2}}, \frac{\pi}{\sqrt{2}}]^2$ and solve the HJ equation on the time interval $[0, T]$ with $T=3$. By restricting the equation on straight lines along the diagonal ($\boldsymbol{\eta}$) direction, one can verify that the classical solution $u(\cdot, t)$ of \eqref{HJ} takes the form $u(x,t) = f(\boldsymbol{\eta}^\top x, t)$, where $f(\cdot, t):\mathbb{R}\rightarrow \mathbb{R}$ solves 
\[ \frac{\partial f(z,t)}{\partial t}+\frac12|\partial_z f(z,t)|^2=0, \quad f(z,0) = \cos(z). \]
Here, we denote $z$ as the coordinate along the diagonal direction. Consider $\{z_t, p_t\}_{t\geq 0}$ as the bi-characteristics of the above equation. For arbitrary $\xi \in\mathbb{R},$ we assume $z_0=\xi, p_0 = -\sin(\xi).$ One can solve for $z_t = \xi - t\sin(\xi), p_t = -\sin(\xi).$ Let us denote the map $\varphi_t:\mathbb{R} \rightarrow \mathbb{R},\xi \mapsto \xi-t\sin(\xi)$. It can be checked that the map $\varphi_t(\cdot)$ remains injective as $t<T_*:=1$. Since we have $\partial_z f(z_t, t)=p_t$ for $t<T_*$, this leads to 

\begin{equation}
  \partial_z f(z,t)=-\sin(\varphi_t^{-1}(z)),  \label{def classic solution examp sinusoidal}
\end{equation}
for $t<T_*=1$. As $t$ increases beyond $T_*$, caustics will develop. Solving the least square problem \eqref{regression quad} leads to our proposed weak solution. We present an explicit formula for $\widehat{f}(z, t)$, which denotes the restriction of $u(\cdot, t)$ on the diagonal line $\ell$ passing through the origin. It can be verified that the conditional distribution $\mu(\cdot|z)$ for the momentum is
\[ \mu_t(\cdot|z) = \frac{1}{C_z} \sum_{\xi \in \varphi_t^{-1}(z)} \frac{\widehat{\rho}_0(\xi)}{|\varphi_t'(\xi)|} \delta_{-\sin(\xi)}. \]
Here $\widehat{\rho}_0$ denotes the distribution of $\rho_0$ conditioned on $\ell$. Therefore, $\widehat{\rho}_0 = \mathcal U([-\pi, \pi])$; we denote $C_z := \sum_{\xi \in \varphi_t^{-1}(z)} \frac{\widehat{\rho}_0(\xi)}{|\varphi_t'(\xi)|}$; and $\delta_y$ denotes the Dirac measure concentrated on $y\in\mathbb{R}$. As the coupled density $\widehat{\rho}_t$ is always supported on $[-\pi, \pi]$ as $t$ increases from $0$ to $T=3$, we shall only focus on the solution $\widehat{f}(z, t)$ defined on $[-\pi, \pi] \times [0, T]$. We can calculate\footnote{It is worth mentioning that, when $t \in [T_*, 3],$ there exist $\pm z_t^*:= \pm (\sqrt{t^2 - 1} - \mathrm{arccos}\frac{1}{t}) \in [-\pi, \pi]$, such that $\xi = \pm \arccos(\frac{1}{t}) \in \varphi_t^{-1}(\pm z_t^*)$, which will lead to zero Jacobian $|\varphi_t'(\xi)| = 0$. Under such scenarios, we should define $\widehat{f}(\pm z_t^*, t) := \pm \frac{\sqrt{t^2-1}}{t},$ which turns out to be the limit of the right-hand side of \eqref{def weighted momentum examp sinusoida} as $z\rightarrow \pm z_t^*$. For the sake of brevity, we keep the notation concise in \eqref{def weighted momentum examp sinusoida}.}
\begin{align}\label{def weighted momentum examp sinusoida}
 \partial_z \widehat{f}(z, t) = \int_{\mathbb{R}} p \ d\mu_t(p|z) =
  - \frac{1}{C_z} \sum_{\xi\in\varphi_t^{-1}(z)} \frac{\sin (\xi)}{2\pi|1 - t\cos \xi|}, \quad \textrm{for } z\in [-\pi, \pi], t \in [0, 3].
\end{align}
\noindent
\textbf{Capturing jump discontinuities:} As demonstrated in Figure \ref{fig: 2D_caustic_compare_NN_exact}, the weak solution $\widehat{f}(\cdot, t)$ develops two jump discontinuities as $t$ evolves beyond $T_*$. For all the presented examples in this section, we pick $N_{iter}=20000$ and set $lr=0.5\cdot 10^{-4}$. We first set $\psi_\theta=\mathcal N\mathcal N_\theta^{L, \widetilde{d}}$ with $L=6, \widetilde{d}=50$ using $\mathrm{tanh}(\cdot)$ activation functions. The numerical results are provided in Figure \ref{fig: 2D_caustic_compare_NN_exact} and \ref{fig: 2D_caustic_compare_NN_exact_separate_time}.
\begin{figure}[htb!]
\begin{subfigure}{.49\textwidth}
  \centering
  \includegraphics[width=\linewidth]{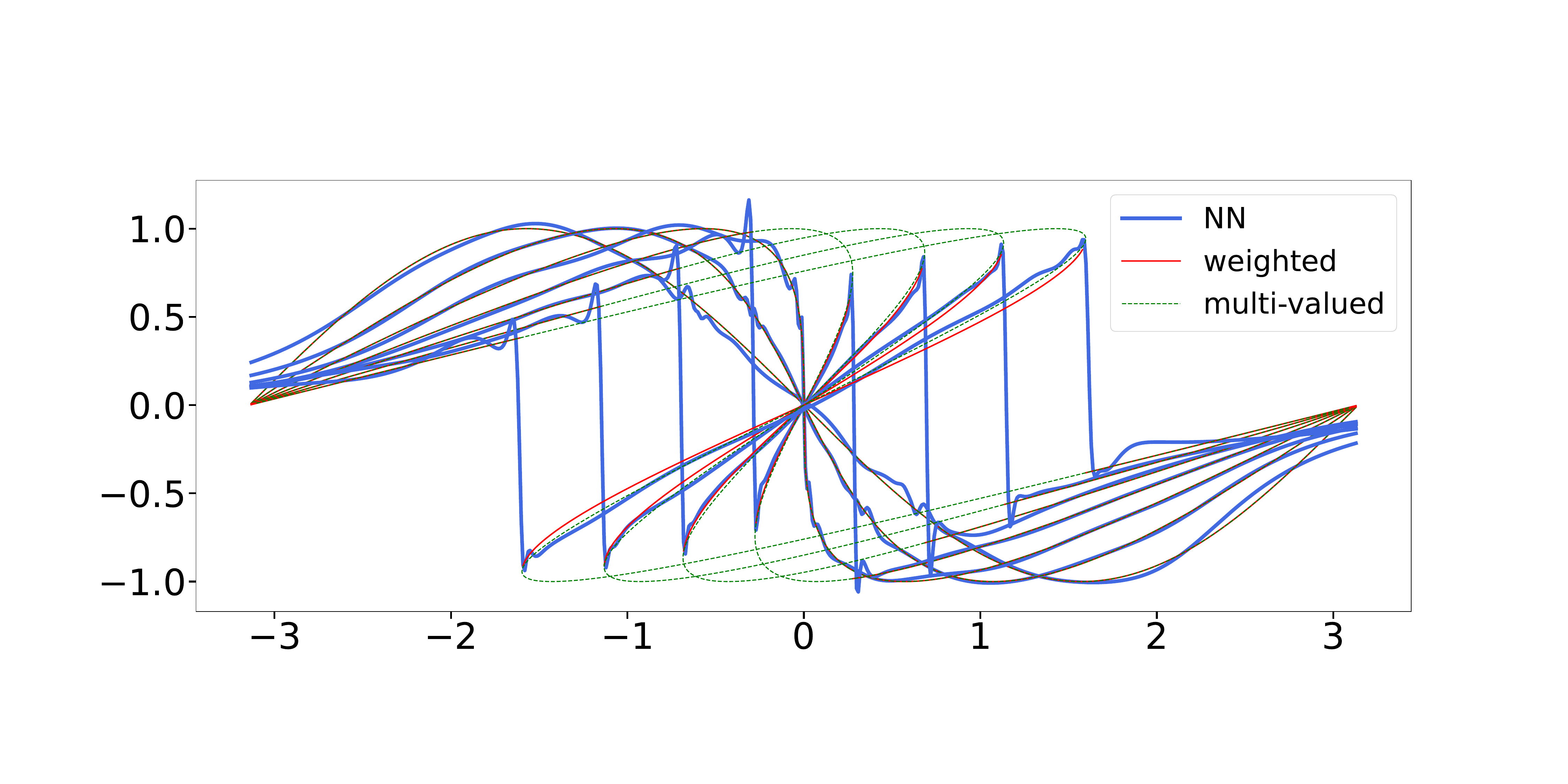}
\end{subfigure}
\begin{subfigure}{.49\textwidth}
  \centering
  \includegraphics[width=\linewidth]{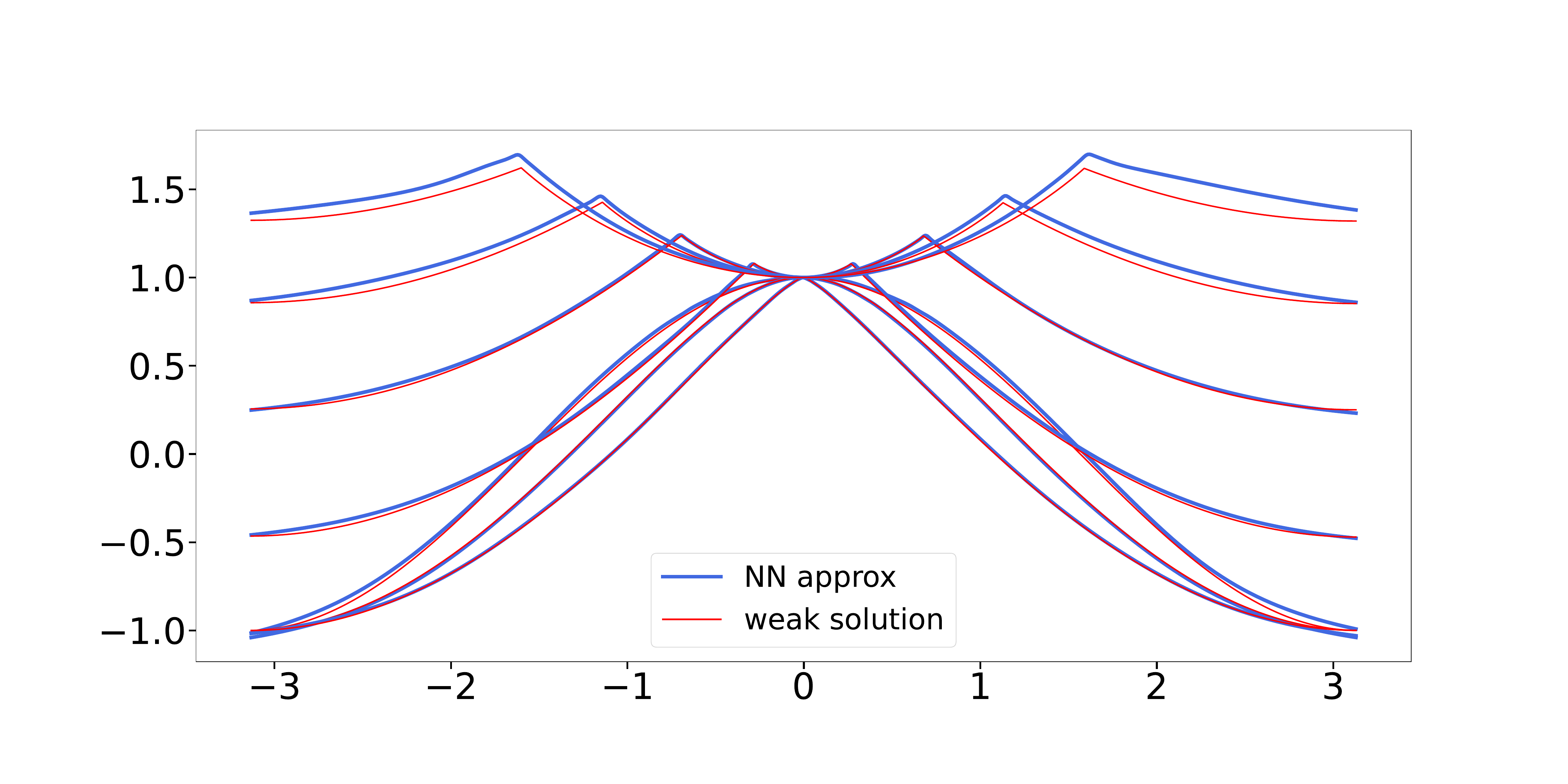}
\end{subfigure}
\vspace{-0.3cm}
\caption{{\textbf{Left}: Plots of NN approximation $\boldsymbol{\eta}^\top\nabla\psi_\theta(\cdot, t)$ (blue), weighted momentum (gradient of weak solution) $\partial_z \widehat{f}(\cdot, t)$ (red), and multi-valued momentum (green) along diagonal line $\ell$. \textbf{Right}: Plots of NN approximation $\psi_\theta(\cdot)$ (blue), weak solution $\widehat{f}(\cdot, t)$ (red) along $\ell$, we fix the function value equal to $0$ at the origin.}}
\label{fig: 2D_caustic_compare_NN_exact}
\end{figure}

\begin{figure}[h!]
\begin{subfigure}{.24\textwidth}
  \centering
  \includegraphics[width=\linewidth]{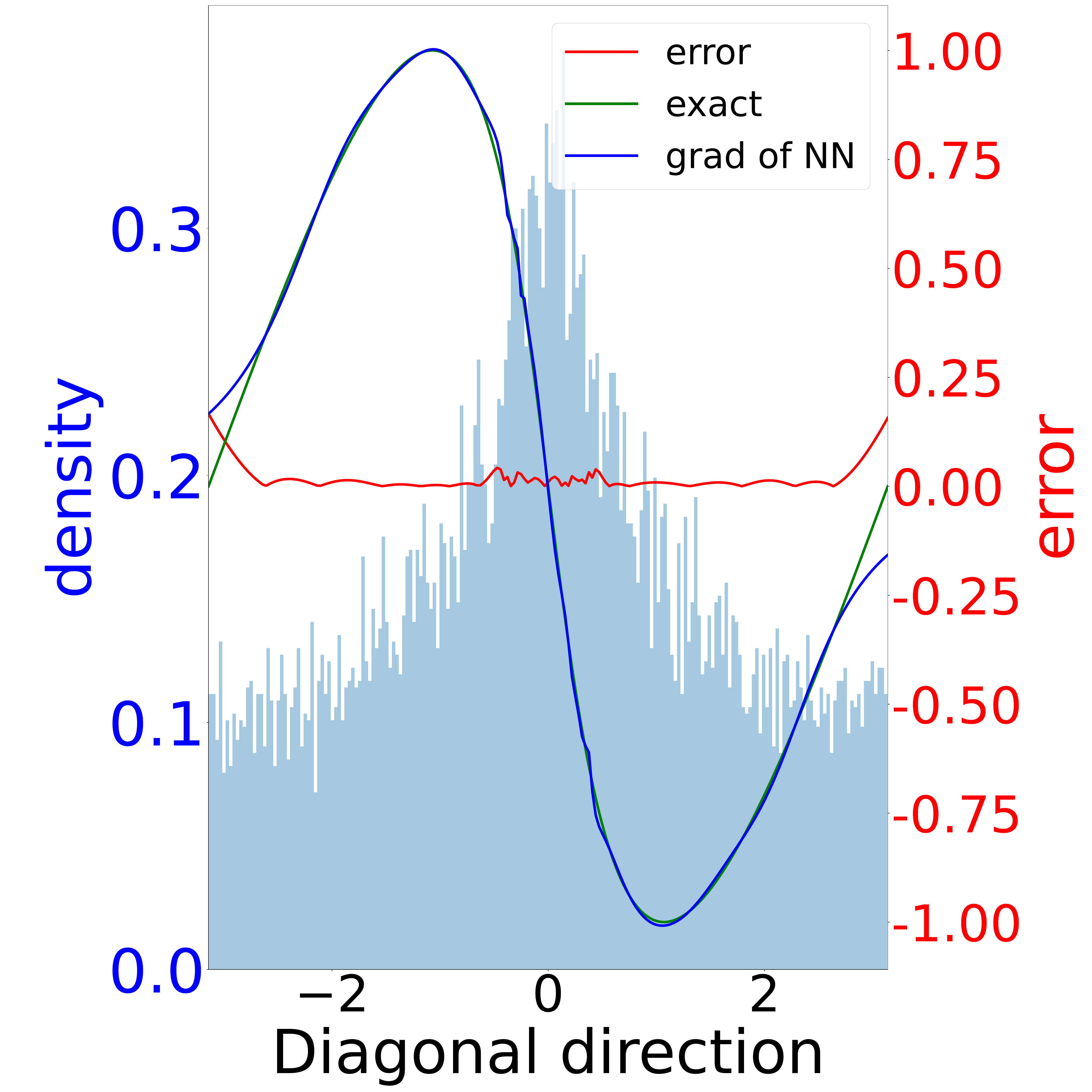}
  \subcaption{$t=0.5$}
\end{subfigure}
\begin{subfigure}{.24\textwidth}
  \centering
  \includegraphics[width=\linewidth]{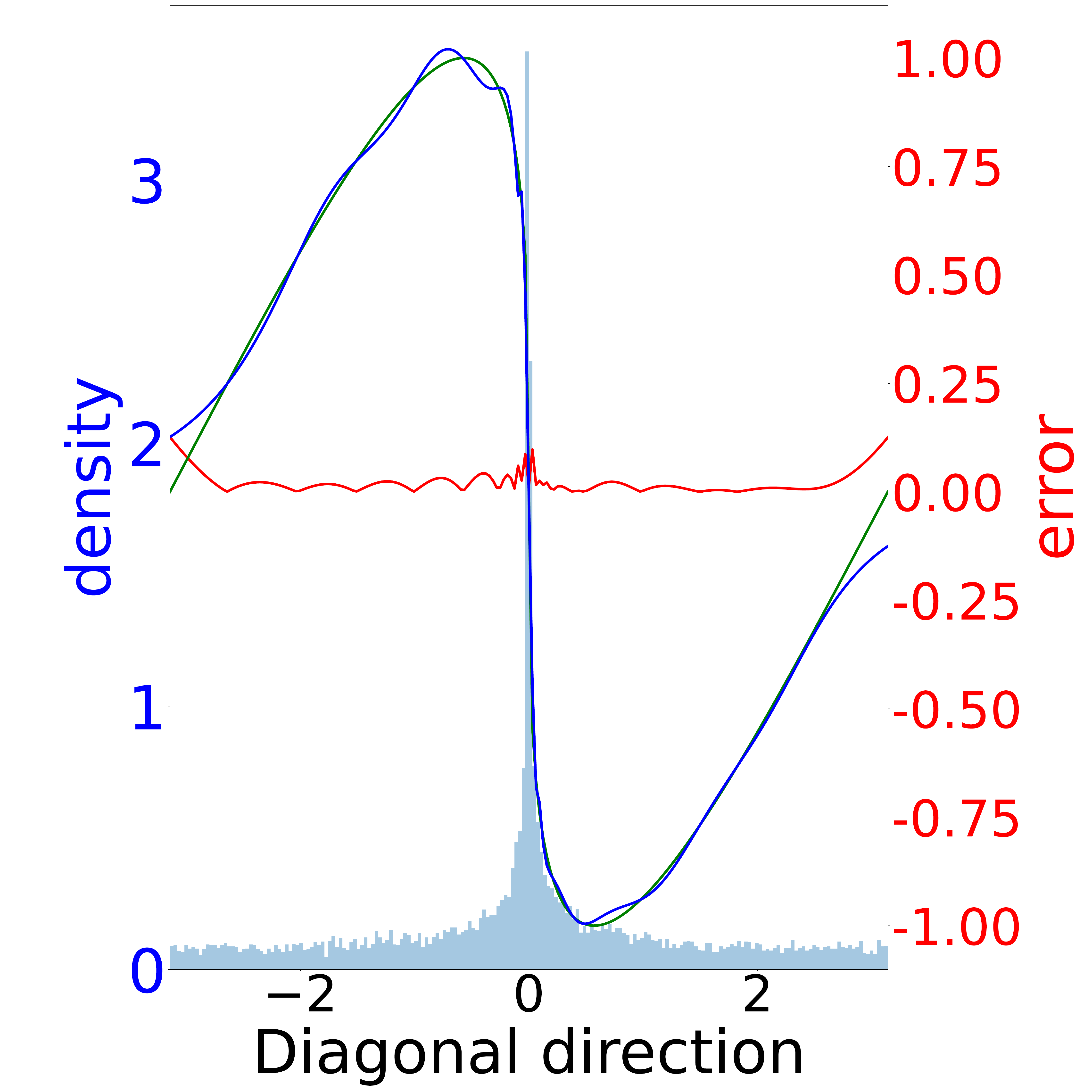}
  \subcaption{$t=1.0 (T_*)$}
\end{subfigure}
\begin{subfigure}{.24\textwidth}
  \centering
  \includegraphics[width=\linewidth]{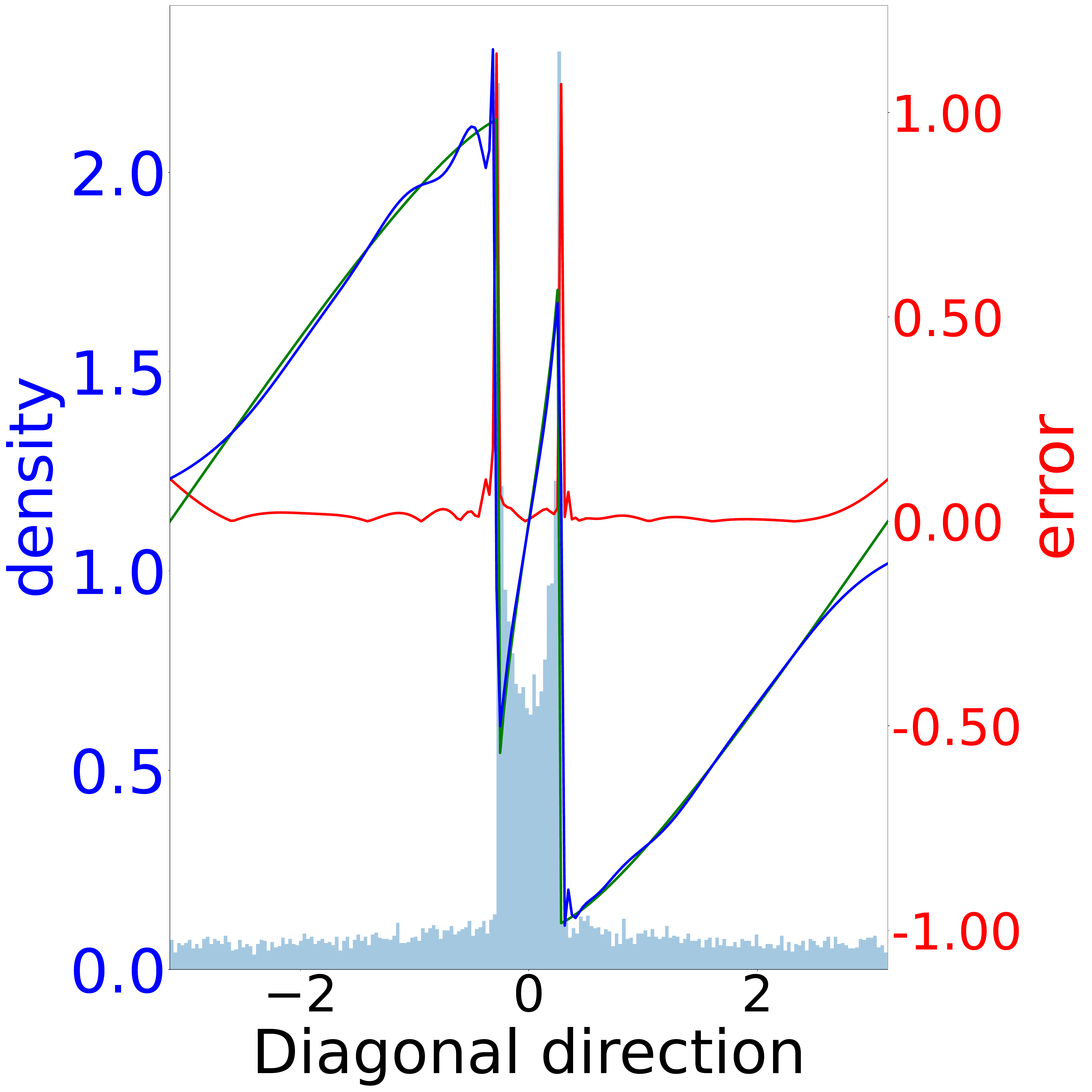}
  \subcaption{$t=1.5$}
\end{subfigure}
\begin{subfigure}{.24\textwidth}
  \centering
  \includegraphics[width=\linewidth]{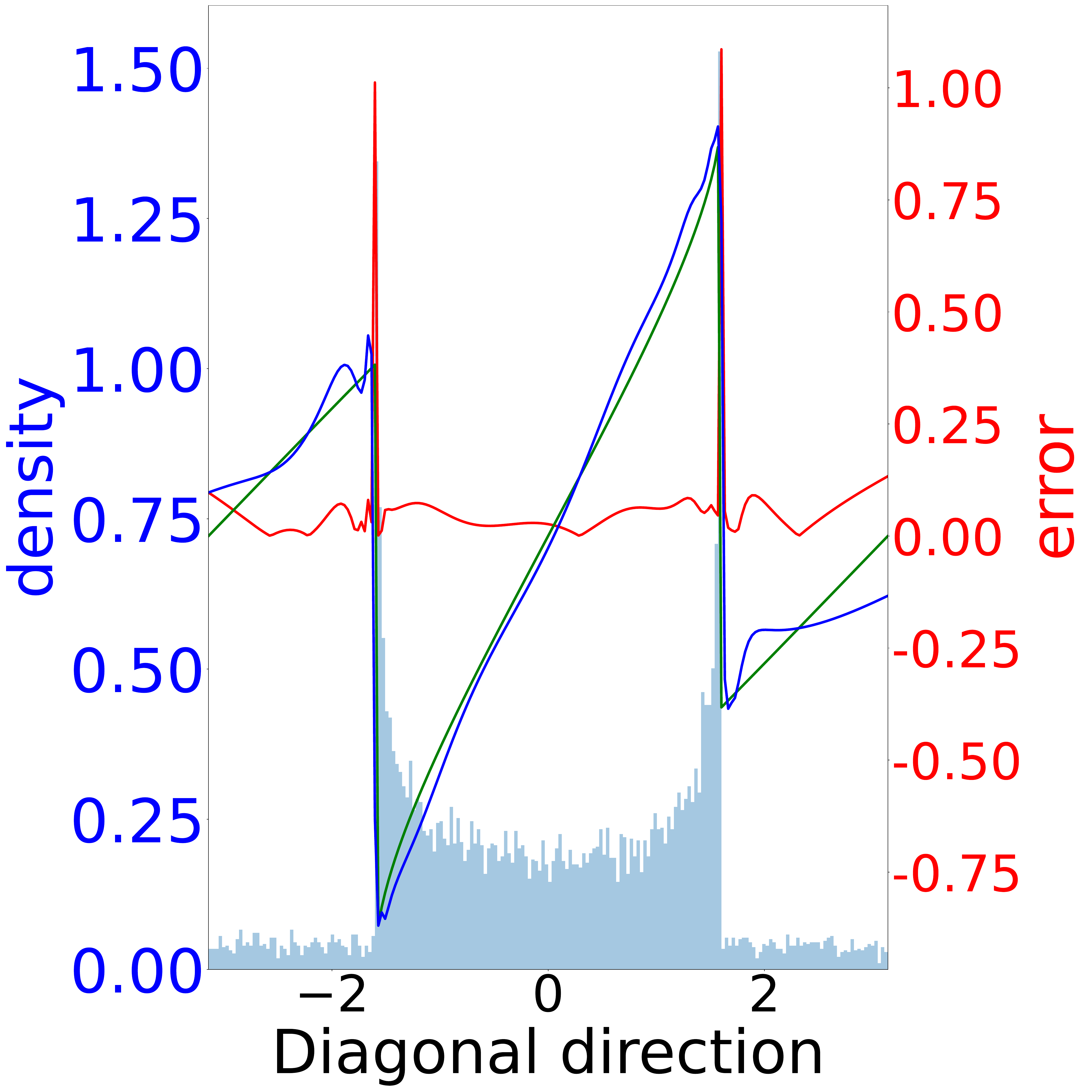}
  \subcaption{$t=3.0$}
\end{subfigure}
\vspace{-0.4cm}
\caption{{ \textbf{Left to Right}: Plots of NN approximation $\boldsymbol{\eta}^\top\nabla\psi_\theta(z\boldsymbol{\eta})$ (blue), weighted momentum (weak solution) $\partial_z \widehat{f}(z, t)$ (green), and the error term $|\boldsymbol{\eta}^\top\nabla\psi_\theta(z\boldsymbol{\eta}) - \partial_z \widehat{f}(z, t)|$ versus $z$ ($-\pi \leq z \leq \pi$) (red) at different time points. The background histogram indicates the probability distribution $\widehat{\rho}_t$ at time $t$.}}
\label{fig: 2D_caustic_compare_NN_exact_separate_time}
\end{figure}
Although the neural network approximation exhibits oscillatory behavior as $z$ approaches the discontinuity points $\pm z_t^*$, it accurately captures the value of $\partial_z \widehat{f}(\cdot, t)$ outside small neighborhoods around $\pm z_t^*$, given a moderately higher sample density.

Due to the page limitation, we  present a series of the same 2D Hamilton–Jacobi equation, with or without caustic formation, in section \ref{SM-NN}, generated by varying the initial values $g(\cdot)$ and initial densities $\rho_0$. We also analyze the numerical behavior of the neural network approximation with different activation functions in section \ref{SM-NN}.

\subsection{{Solving HJ equations}}\label{numerical example separable hamilton } 
In this part, we first test our algorithm on the {separable Hamiltonian $H(x,p) = K(p) + V(x)$ with the quadratic kinetic energy $K(p) = \frac{1}{2}|p|^2$}. For the considered example, we apply our method to solve equation \eqref{HJ} with the one-step St\"ormer–Verlet scheme \cite{MR2221614} for the corresponding Hamiltonian system \eqref{alg: Hamilton system}. Extra examples including an HJ equation with non-separable Hamiltonian $H(x,p)$ are presented in subsections \ref{example: sinusoidal_initial}-\ref{example: nonseparable hamiltonian }. Finally, in example \ref{example: LQC inverted pendulum }, we apply our algorithm to the linear quadratic control (LQC) problem of inverted pendulums with terminal density constraint. The hyperparameters used in our algorithm for each numerical example are listed in Table \ref{tab : hyperparameter } of the supplementary material.

\subsubsection{Example with quadratic potential}\label{example: harmonic}
We set {the potential and the initial condition}  as {$V(x) = \frac 12 |x|^2$} and $g(x)=\frac{1}{2}|x|^2$. {We choose $\rho_0 = \mathcal N(\underbrace{(3,...,3)}_{30}, I)$ }  {and} solve this equation on $[0, 5].$ 

It can be verified directly that $u(x,t)=\frac{1}{2}\cot(t+\frac{\pi}{4})|x|^2$ is the classical solution to the equation on $[0, \frac{3\pi}{4})$. When $t$ approaches  $T_*=\frac{3\pi}{4}$, {this classical} solution blows up.  Our method is able to compute both the classical solution as well as the extended solution beyond $T_*$.

{The solution to this HJ equation possesses a rather strong oscillatory profile along time $t$. Due to the rigidity of the neural network, given $T=5$, it is generally difficult for a single neural network to capture the overall shape of $\{u(x, t)\}_{t\geq 0}$ \cite{ziyin2020neural}.}

{As a remedy, in order to make our computation more efficient, we apply the multi-interval training strategy in this example.} We separate $[0, T]$ into multiple shorter subintervals and train different neural networks on each subinterval. Our experiments indicate that such treatment of training the networks independently on each subinterval and concatenating together improves the flexibility of the numerical solution $\psi_\theta(x,t)$ and thus enhances the performance. To be more specific, we divide $[0, T]$ into $M_T=25$ equal intervals, i.e., $[0, T]=\bigcup_{k=1}^{M_T}I_k$ with each $I_k=[\frac{k-1}{M_T}T, \frac{k}{M_T}T)$ for $1\leq k\leq M_T-1$ and $I_{M_T} = [\frac{M_T-1}{M_T}T, T]$. We train $\psi_{\theta_k}$ on each $I_k$ and set $\psi_\theta(x,t) = \sum_{k=1}^{M_T}\chi_{I_k}(t)\psi_{\theta_k}(x,t)$ as our numerical solution. Here $\chi_{I_k}$ is the indicator function of time interval $I_k$.

We demonstrate the numerical solutions in Figure \ref{harmonic 30d plot vector field}. Since the solution is a high dimensional function, we plot its graph {on  the $5$-th and $15$-th coordinates}. For convenience, we call it {$5\text{th}$-$15\text{th}$} plane. It is observed that both the solution and vector field have good agreements with their exact counterparts at the regions where samples are drawn.  
\begin{figure}[htb!]
\begin{subfigure}{.18\textwidth}
  \centering
  \includegraphics[width=\linewidth]{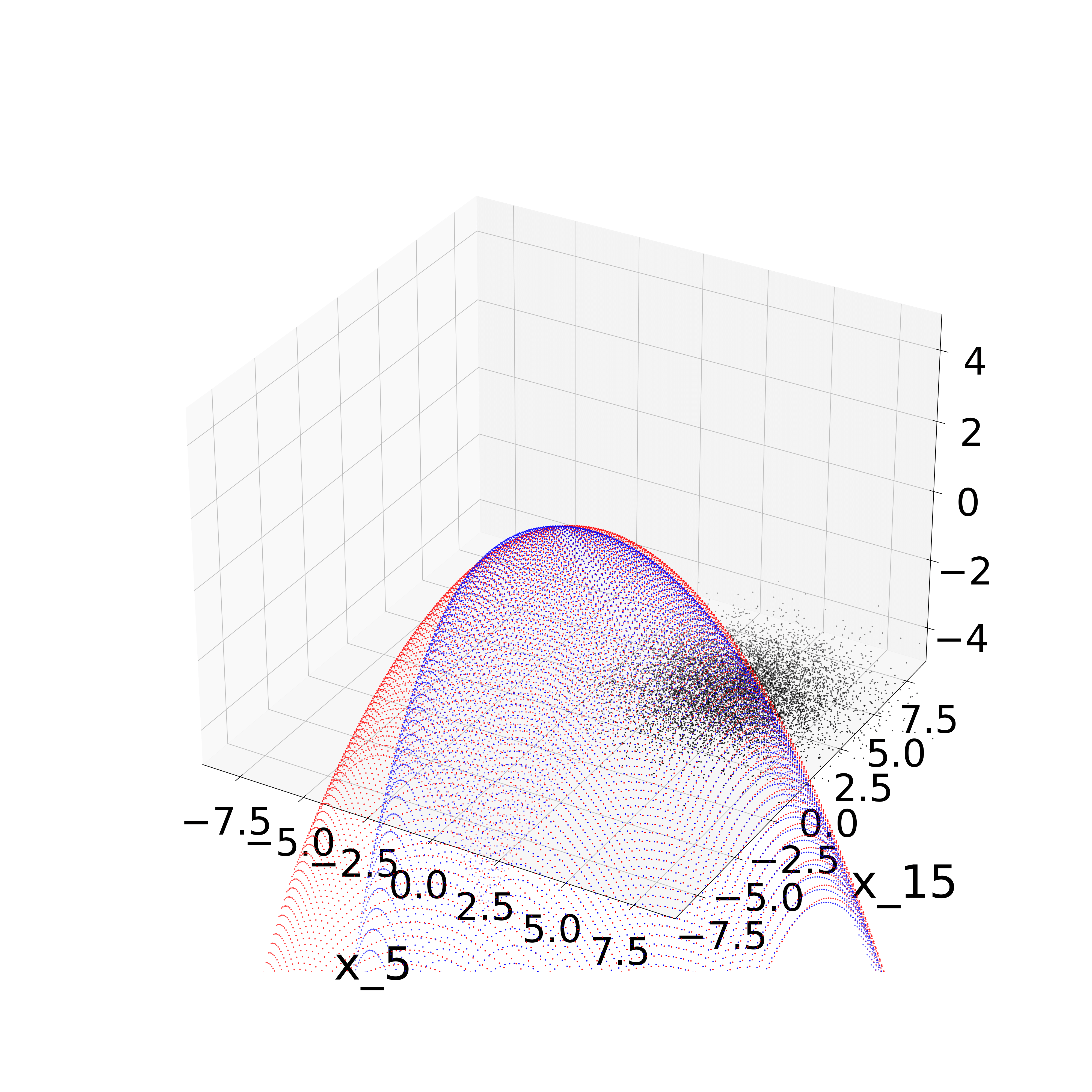}
\end{subfigure}
\begin{subfigure}{.18\textwidth}
  \centering
  \includegraphics[width=\linewidth]{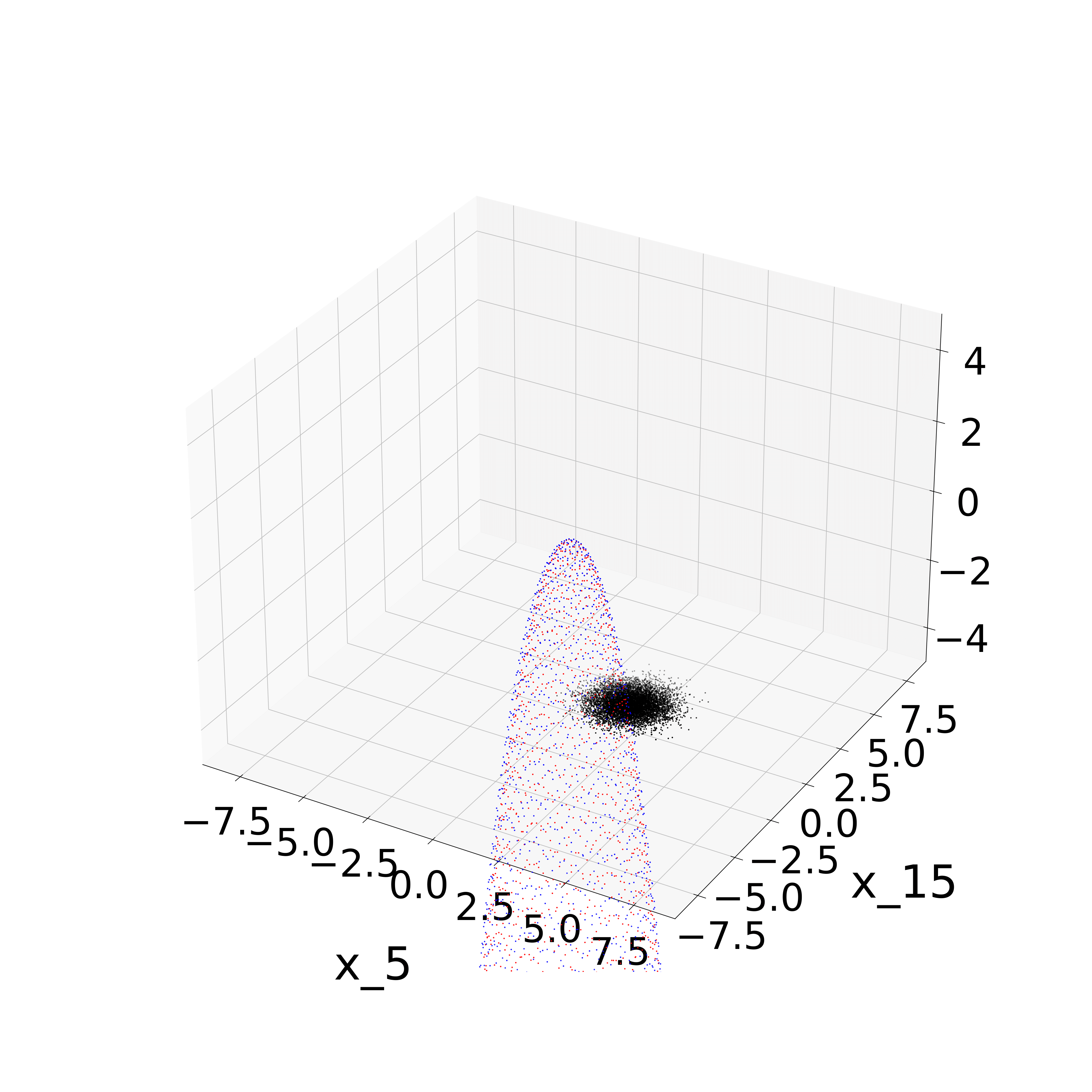}
\end{subfigure}
\begin{subfigure}{.18\textwidth}
  \centering
  \includegraphics[width=\linewidth]{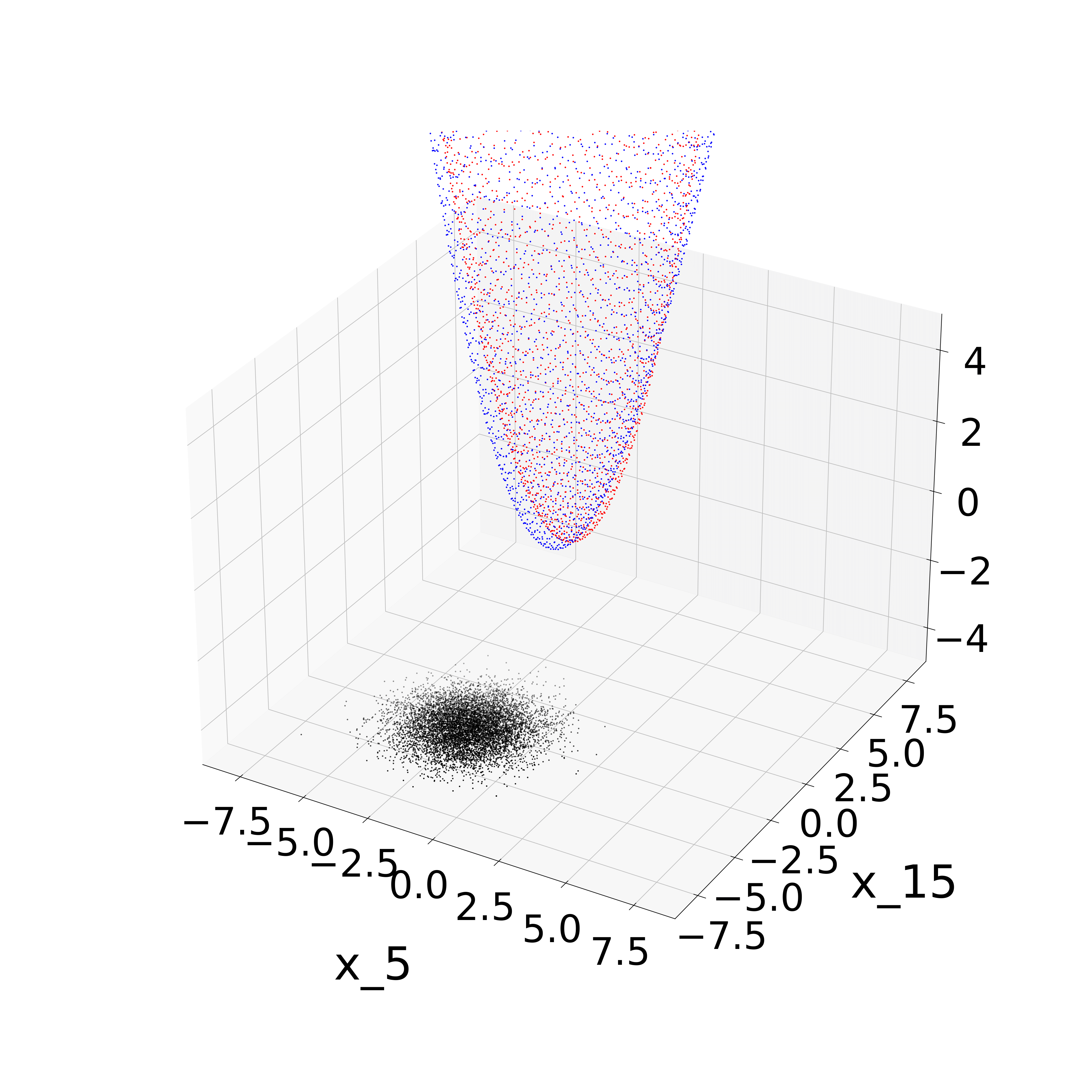}
\end{subfigure}%
\begin{subfigure}{.18\textwidth}
  \centering
  \includegraphics[width=\linewidth]{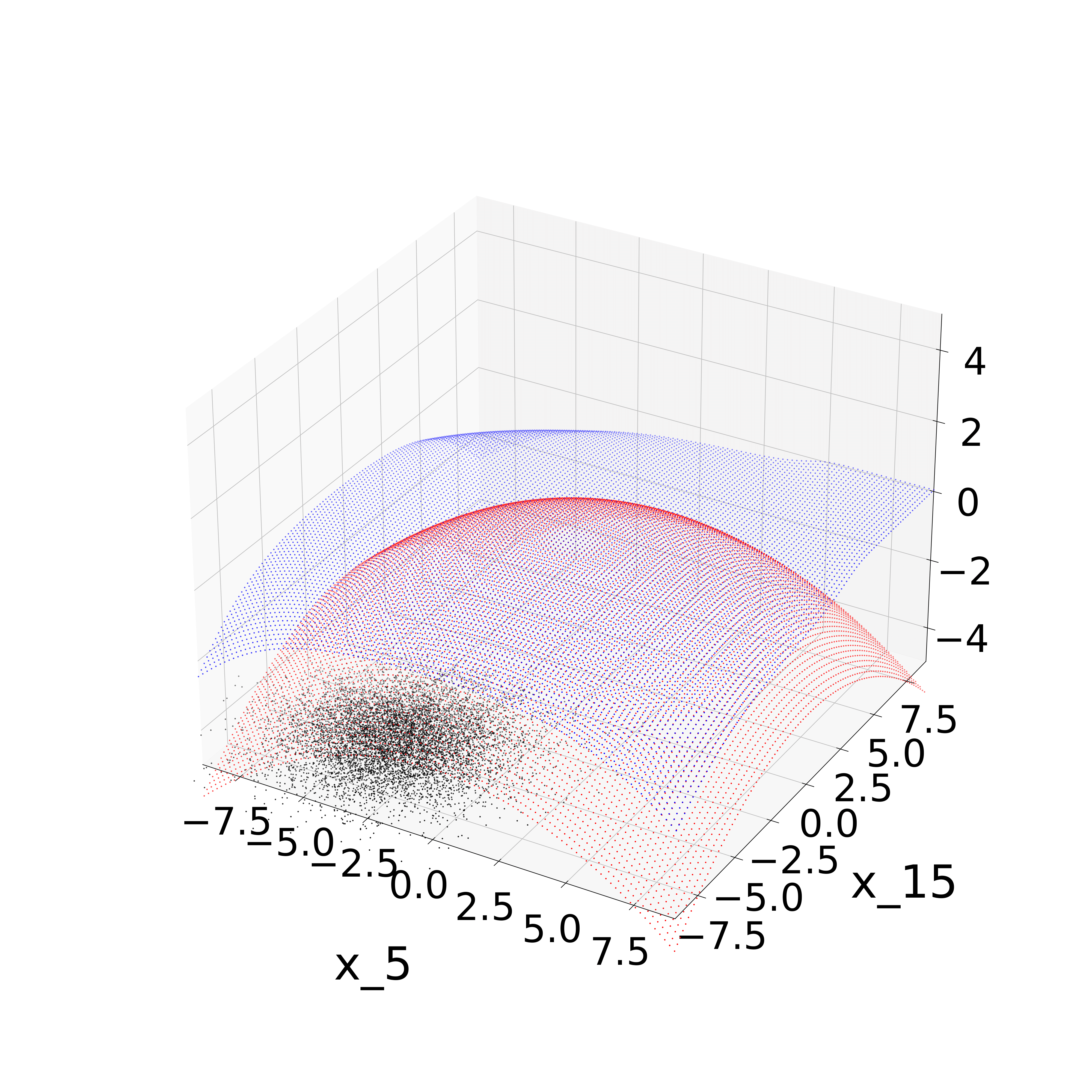}
\end{subfigure}
\begin{subfigure}{.18\textwidth}
  \centering
  \includegraphics[width=\linewidth]{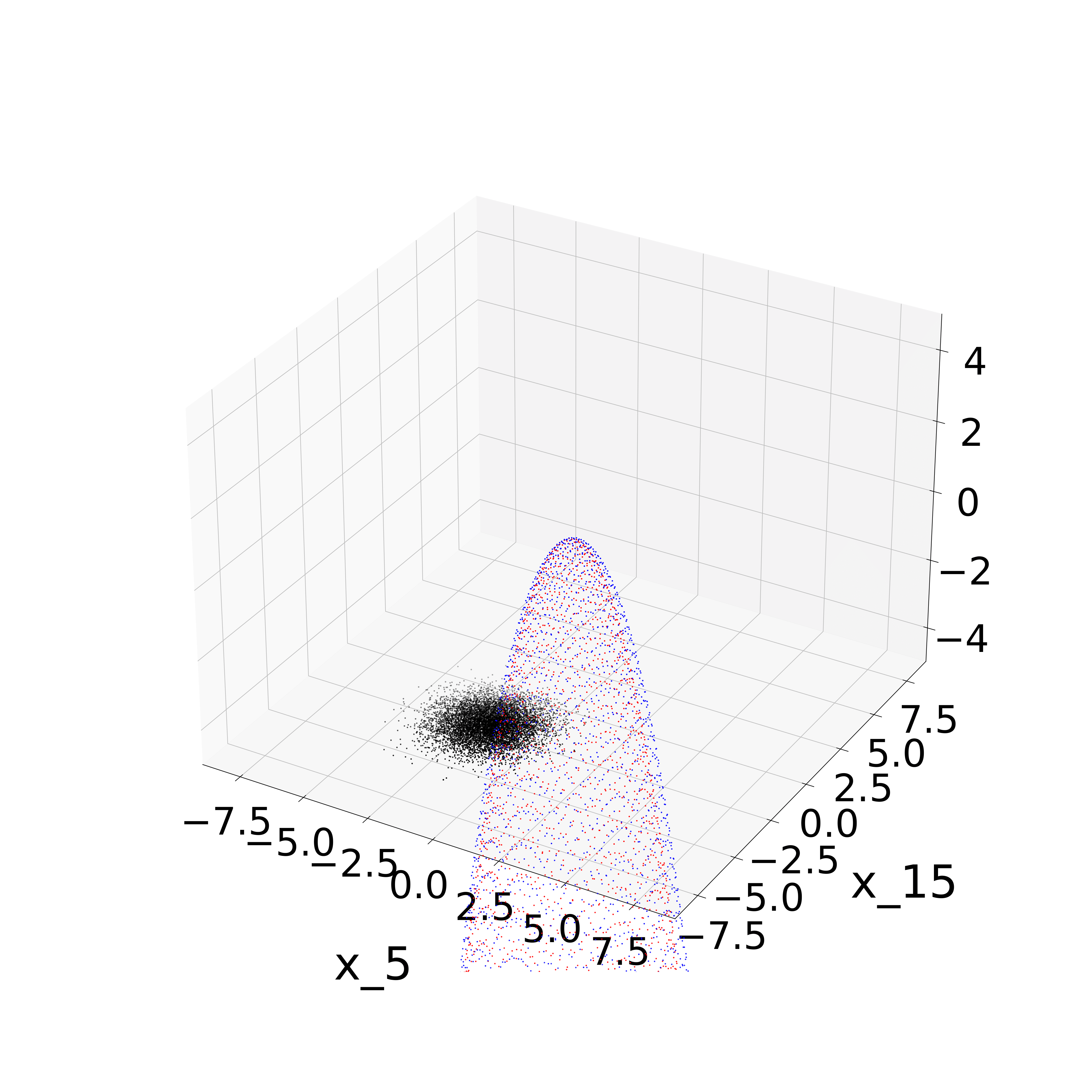}
\end{subfigure}
\begin{subfigure}{.18\textwidth}
  \centering
  \includegraphics[width=\linewidth]{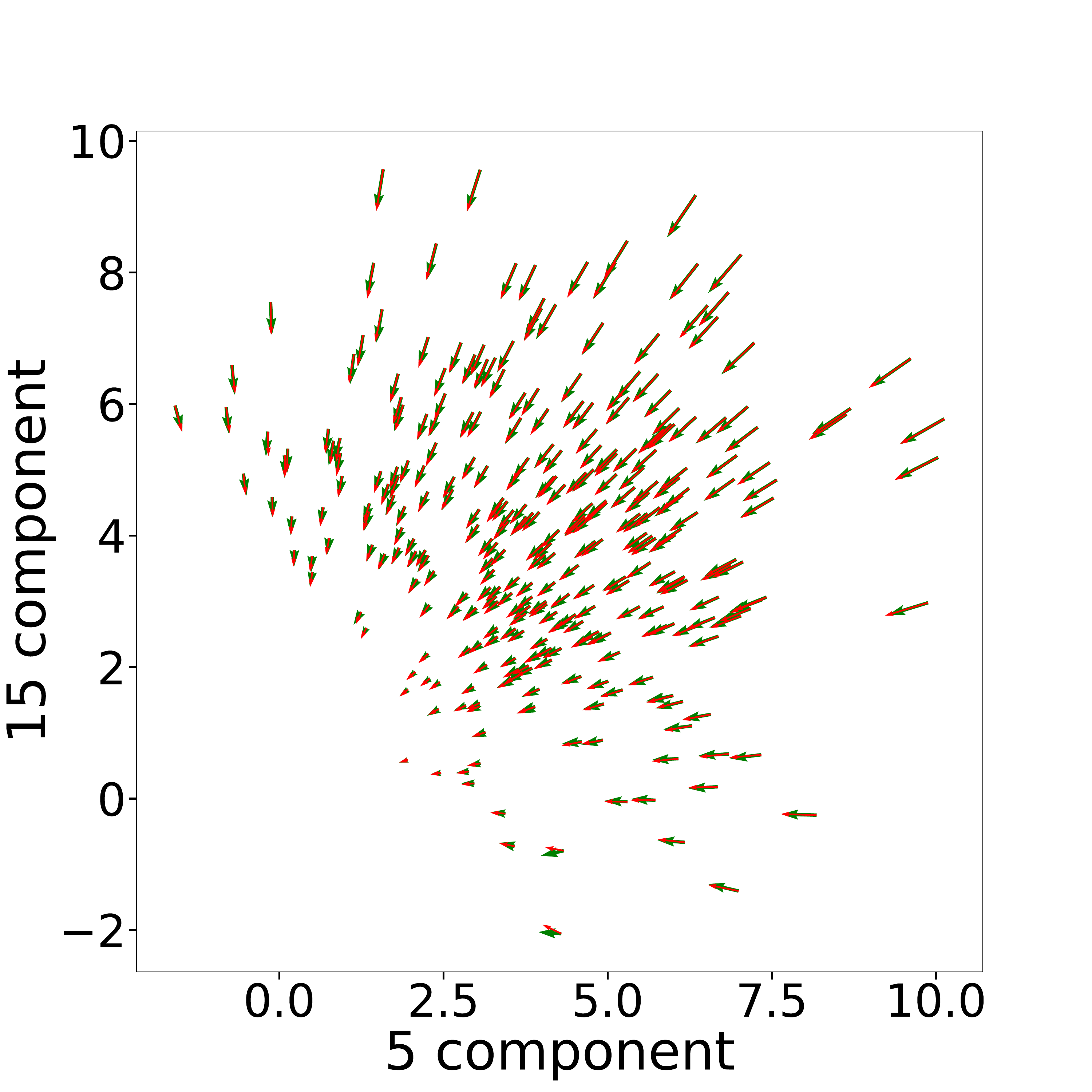}
  \caption{$t=1.0$}
\end{subfigure}\hfill
\begin{subfigure}{.18\textwidth}
  \centering
  \includegraphics[width=\linewidth]{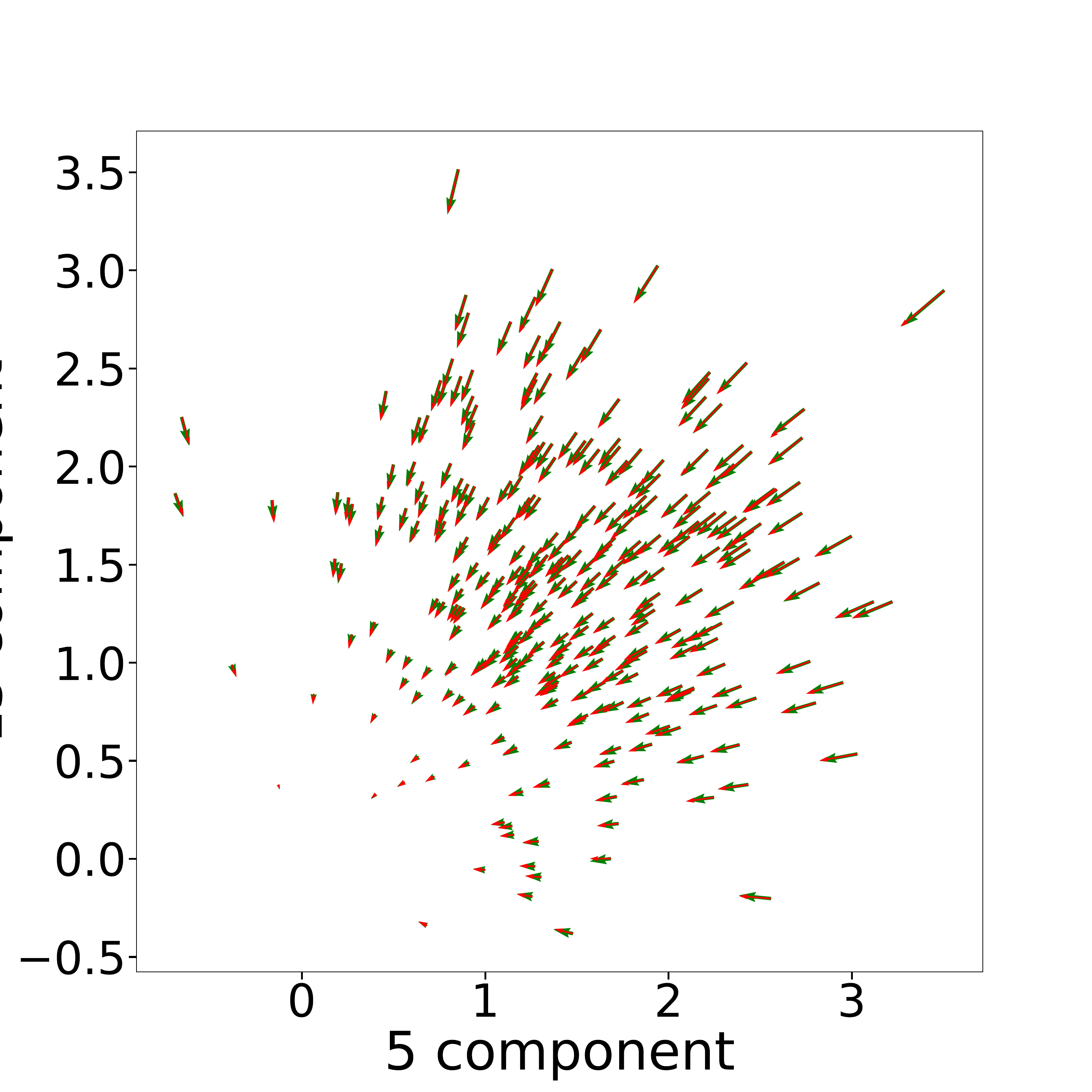}
  \caption{$t=2.0$}
\end{subfigure}\hfill
\begin{subfigure}{.18\textwidth}
  \centering
  \includegraphics[width=\linewidth]{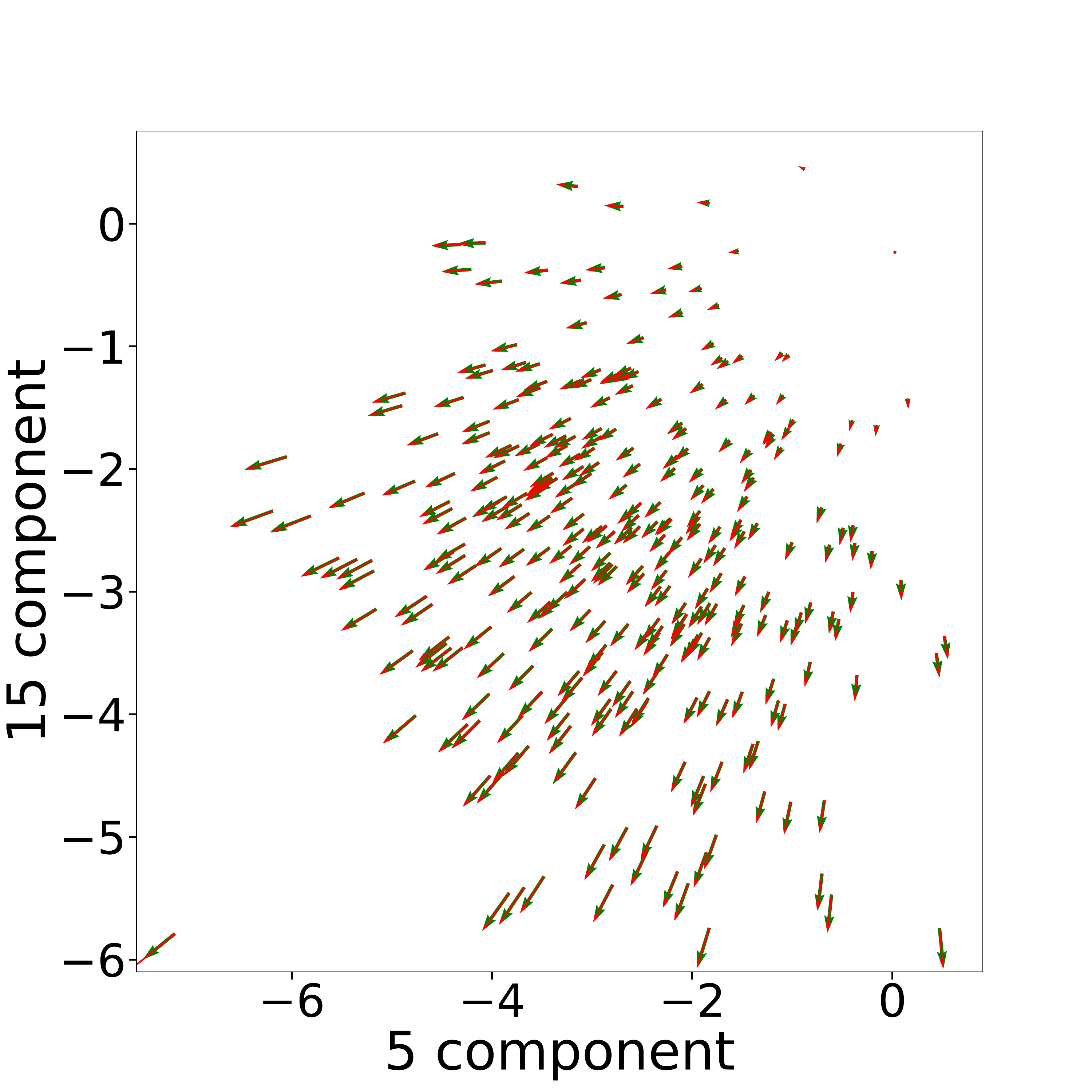}
  \caption{$t=3.0$}
\end{subfigure}\hfill
\begin{subfigure}{.18\textwidth}
  \centering
  \includegraphics[width=\linewidth]{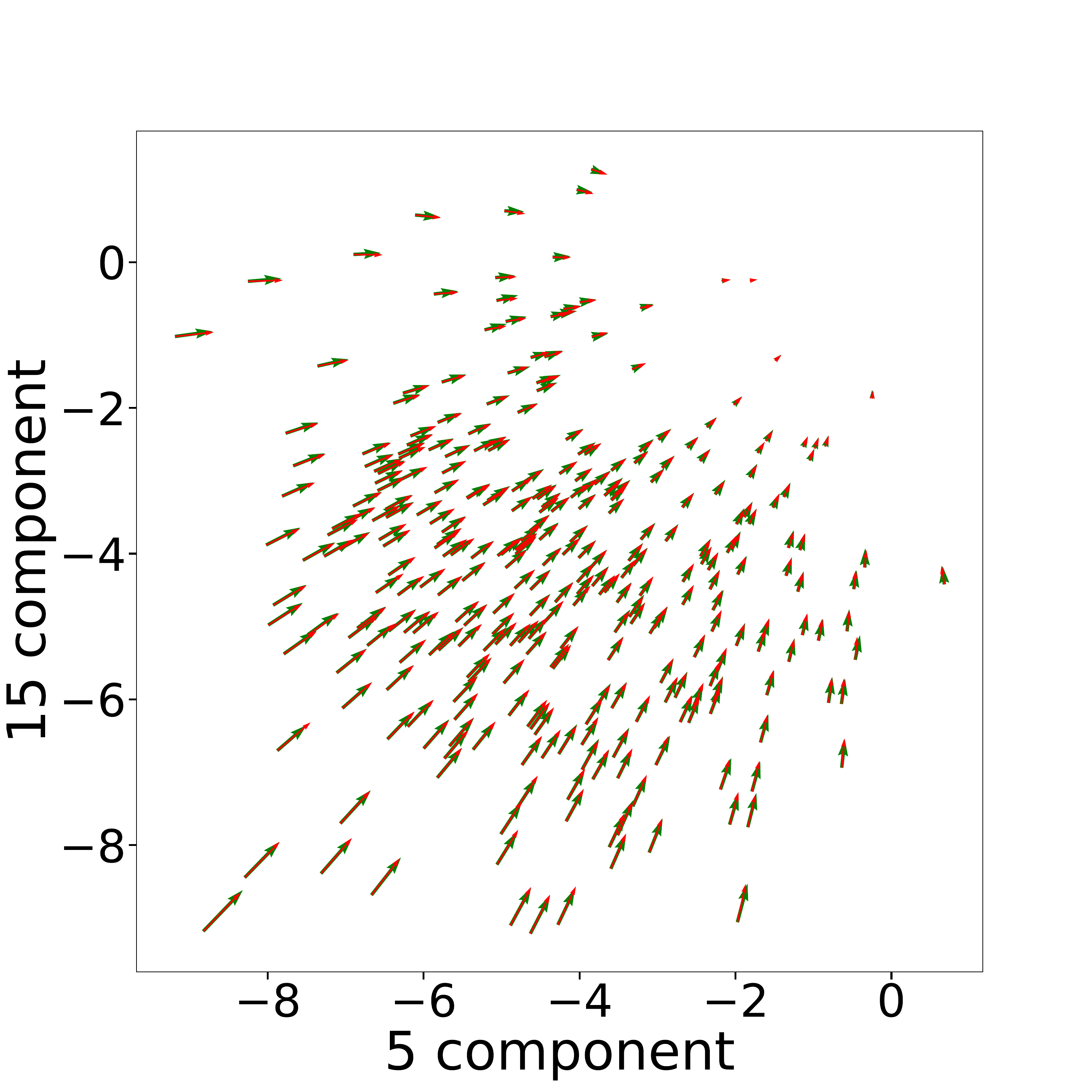}
  \caption{$t=4.0$}
\end{subfigure}\hfill
\begin{subfigure}{.18\textwidth}
  \centering
  \includegraphics[width=\linewidth]{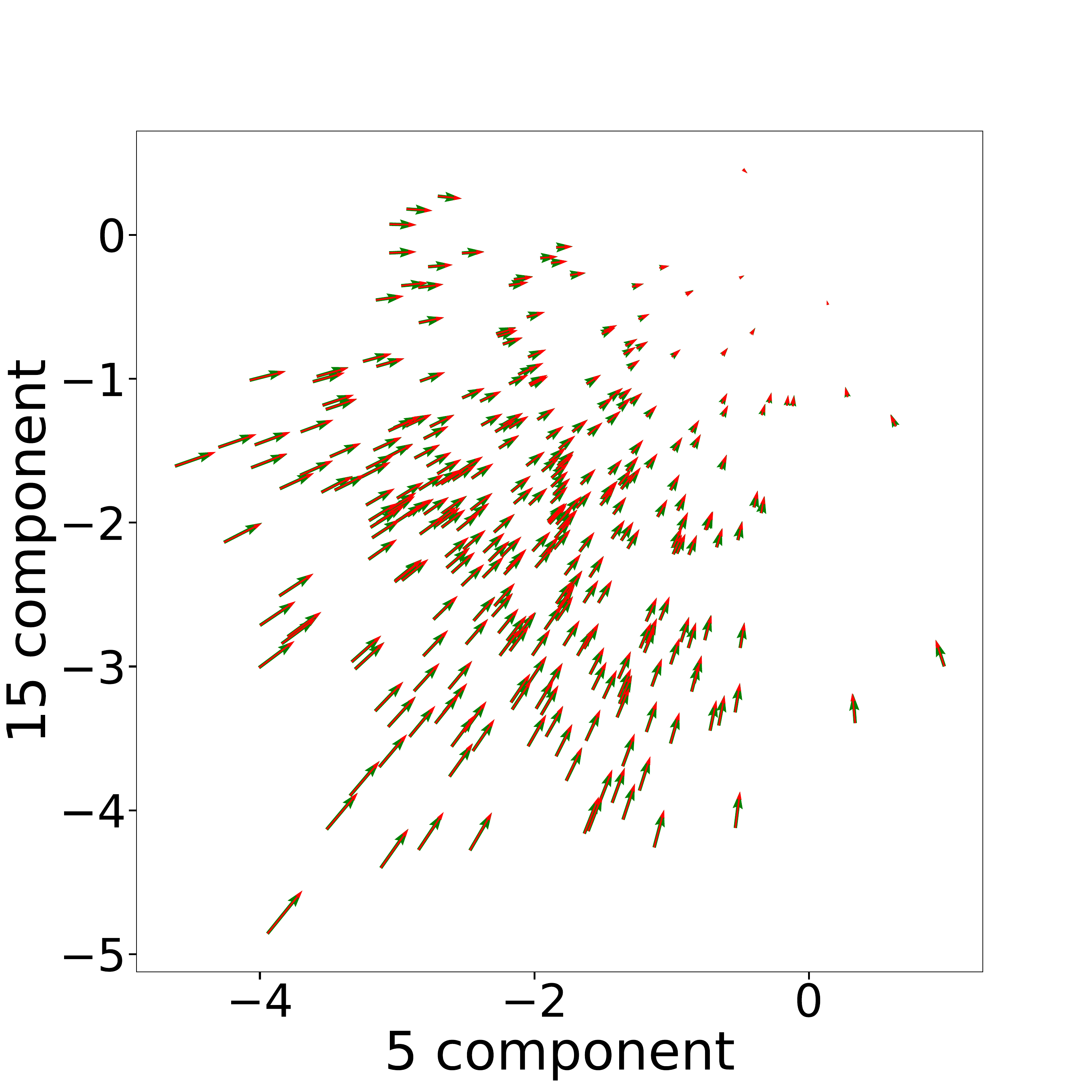}
  \caption{$t=5.0$}
\end{subfigure}  \hfill
\vspace{-0.3cm}
\caption{{\textbf{First row:} Graphs of the numerical solution $\psi_\theta$ (blue) and the exact solution (red) at different time stages on the {$5\text{th}-15\text{th}$} plane; \textbf{Second row:} Plots of vector fields $\nabla \psi_\theta(\cdot, t)$ (green) with momentums of samples (red) at different time stages on the {$5\text{th}-15\text{th}$} plane.}}
\label{harmonic 30d plot vector field}
\end{figure}
{Recall the $\{\epsilon_i^N\}$ defined in \eqref{def: epsilon N and delta N },} we calculate the total loss {$\sum_{i=(j-1)l}^{jl-1} \varepsilon_i^N$} among the time nodes located in the subinterval $I_j$, where  $l=\frac{M}{M_T}$, and plot $\sum_{i=(j-1)l}^{jl-1} \varepsilon_i^N$ ($1\leq j\leq M_T$) versus time in Figure \ref{error vs time plots} (left-most) in subsection \ref{append: hyperparams_alg}.

It is clear that the error increases significantly around $T_*=\frac{3\pi}{4}\approx2.36$. According to our experience, it is intrinsically difficult to compute the solution near singular point $T_*$.

\subsubsection{Swarming particles with LQC of inverted pendulum with terminal distribution constrain}\label{example: LQC inverted pendulum }

We apply the proposed method to the inverted pendulum model \cite{franklin2002feedback}\cite{sultan2003inverted}. We refer the reader to \cite{franklin2002feedback} for the basic settings of inverted pendulum control. Let us denote the position of the cart as $x_t$, and the angle between the stick and the vertical direction as $\zeta_t$ at time $t$ (we take the counter-clockwise as the positive direction for $\zeta_t$). Suppose we exert a force $u_t$ on the cart at time $t$, the mechanics of the cart and the stick are governed by the following differential equation (The equation has been linearized at $\zeta \approx 0, \dot \zeta \approx 0$.)
\begin{align*}
  u_t = & (M+m)\ddot x_t - ml\ddot \zeta_t  \\
  l\ddot \zeta_t  = & g\zeta_t  +  \ddot x_t.
\end{align*}
This yields
\begin{align*}
  \ddot x_t = & \frac{m}{M}g\zeta_t + \frac{u_t}{M} \\
  \ddot \zeta_t = & \frac{M+m}{Ml}g\zeta_t + \frac{u_t}{Ml}.
\end{align*}
By introducing $y_t = \dot x_t, \phi_t = \dot \zeta_t,$ we consider the following dynamics of 
\[  \textbf{X}_t = (x_t, y_t, \zeta_t, \phi_t)  \] 
with the external force $u_t$ as the control,
\begin{equation*}
  \dot {\left[\begin{array}{c}
       x_t  \\
       y_t  \\
       \zeta_t    \\
       \phi_t  
  \end{array}\right]} = \left[\begin{array}{cccc}
      0  & 1 & 0 &  0  \\
      0  & 0 & \frac{m}{M} & 0 \\
      0 & 0 & 0 & 1 \\
      0 & 0 & \frac{M+m}{Ml}g & 0 
  \end{array}\right]  \left[ \begin{array}{c}
       x_t  \\
       y_t  \\
       \zeta_t  \\
       \phi_t    
  \end{array} \right]  +  \left[\begin{array}{c}
       0  \\
       \frac{1}{M}  \\
       0  \\
       \frac{1}{Ml}
  \end{array}\right] \left[ u_t \right] \overset{\textrm{denote as}}{ = } A \textbf{X}_t + B u_t. 
\end{equation*}
We wish to exert the control $\{u_t\}$ to this dynamics so that both the cart and the stick stay stably, and at the same time, minimize the effort $u_t$ paid to the control. Thus, we consider the following cost functional
\begin{equation*}
  \int_0^T \frac12 \textbf{X}_t^\top Q\textbf{X}_t + \frac12 R u_t^2 + \frac12 \textbf{X}_T^\top P_1 \textbf{X}_T.
\end{equation*}
Here we pick $Q = P_1 = \textrm{diag}(1, 0, 1, 0)$, $R=1$. This is a optimal control problem in 4-dimensional phase space of $x, \zeta$. We assume the terminal distribution $\rho_T$ as $\mathcal N(0, \sigma_x^2 I_2)\otimes \mathcal U([-\zeta_0, \zeta_0]) \otimes \mathcal N (0, \sigma_{\dot \zeta}^2)$. That is, if $(x, \dot x, \zeta, \dot \zeta)\sim \rho_T$, then $(x, \dot x)\sim \mathcal N(0, \sigma_x^2 I_2)$, $\theta \sim \mathcal U([-\zeta_0, \zeta_0])$, $\dot\zeta \sim \mathcal N(0, \sigma_{\dot \zeta }^2)$. Here $\mathcal U([a,b])$ denotes the uniform distribution on the interval $[a, b]$. In this example, we set $\sigma_x = \sigma_{\dot \zeta} = 0.2, \zeta_0 = \frac{\pi}{20}$. We pick terminal $T=2$.  We then apply our algorithm to compute for $\{\psi_\theta(\cdot, t)\}_{0 \leq t \leq T}$ as the solution to the HJ equation,
\begin{align}
  \frac{\partial \widetilde{u}(x,t)}{\partial t} + \underbrace{\frac12 (B^\top \nabla \widetilde{u}(x,t))^\top R^{-1} (B^\top \nabla \widetilde{u}(x,t)) - \nabla \widetilde{u}(x,t)^\top Ax - \frac12 x^\top Qx}_{H(x, \nabla u(x))} = 0, \nonumber & \\
  \widetilde{u}(x,0) = \frac{1}{2} x^\top P_1x. &  \nonumber
\end{align}
To carry out our computation, we evolve the associated   Hamiltonian system 
\begin{equation*}
          \dot q_t = \partial_p H(q_t, p_t) , \quad \dot p_t = - \partial_x H(q_t, p_t). \quad \textrm{with } q_0 \sim \widetilde{\rho}_0, ~ p_0 = P_1 q_0.
\end{equation*}
This yields the linear ODE system
\begin{equation}
    {\left[\begin{array}{c}
        \dot  q_t \\
        \dot  p_t  
    \end{array}\right]} = \left[ \begin{array}{cc}
       -A    &   BR^{-1}B^\top  \\
        Q    &    A^\top
    \end{array} \right] \left[ \begin{array}{c}
         q_t    \\
         p_t    
    \end{array} \right], \quad  \begin{array}{c}
         q_0 \sim \widetilde{\rho}_0,  \\
         p_0 = P_1 q_0. 
    \end{array}    \label{Hamiltonian system associated with inverted pendulum}
\end{equation}
We denote $\widetilde{\rho}_T$ as the density of $\mathrm{Law}(q_T)$. The
initial samples are drawn from $\widetilde{\rho}_0 = \rho_T$.
A more detailed discussion is included in subsection \ref{example: LQC inverted pendulum 1}.

Moreover, upon evolving \eqref{Hamiltonian system associated with inverted pendulum}, we denote $\widetilde{\rho}_T $ as the distribution of terminal particles. We set the initial distribution of the swarm  $\rho_0$ as $\widetilde{\rho}_T$. 
For any samples of $\rho_0$, we calculate the trajectory under our learned control $\{\psi_\theta(\cdot, t)\}_{0 \leq t \leq T}$ and compare it with the trajectory under the optimal control (i.e., the trajectories solved from the Pontryagin's minimum principle, see  \eqref{Pontryagin Principle of LCQ problem} in subsection \ref{example: LQC inverted pendulum 1}). The results are demonstrated in the Figure \ref{fig: LQC plot traj compare} and \ref{fig: LQC plot x vs t and theta vs t compare}. 
\begin{figure}[htb!]
    \centering
    \begin{subfigure}{0.45\textwidth}
        \includegraphics[trim={0cm 0cm 0cm 10cm},clip, width=\textwidth]{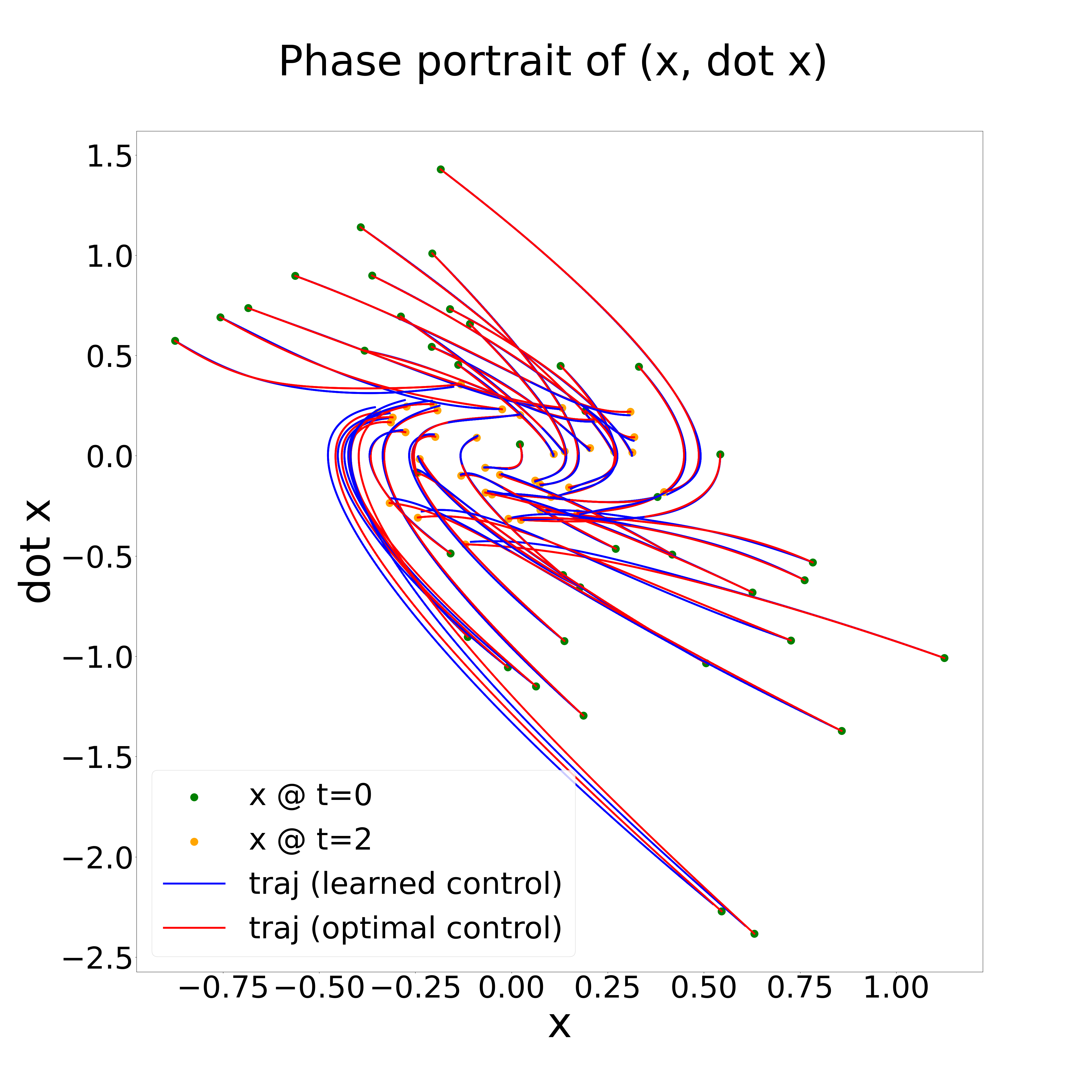}
    \end{subfigure}
    \begin{subfigure}{0.45\textwidth}
        \includegraphics[trim={0cm 0cm 0cm 10cm},clip, width=\textwidth]{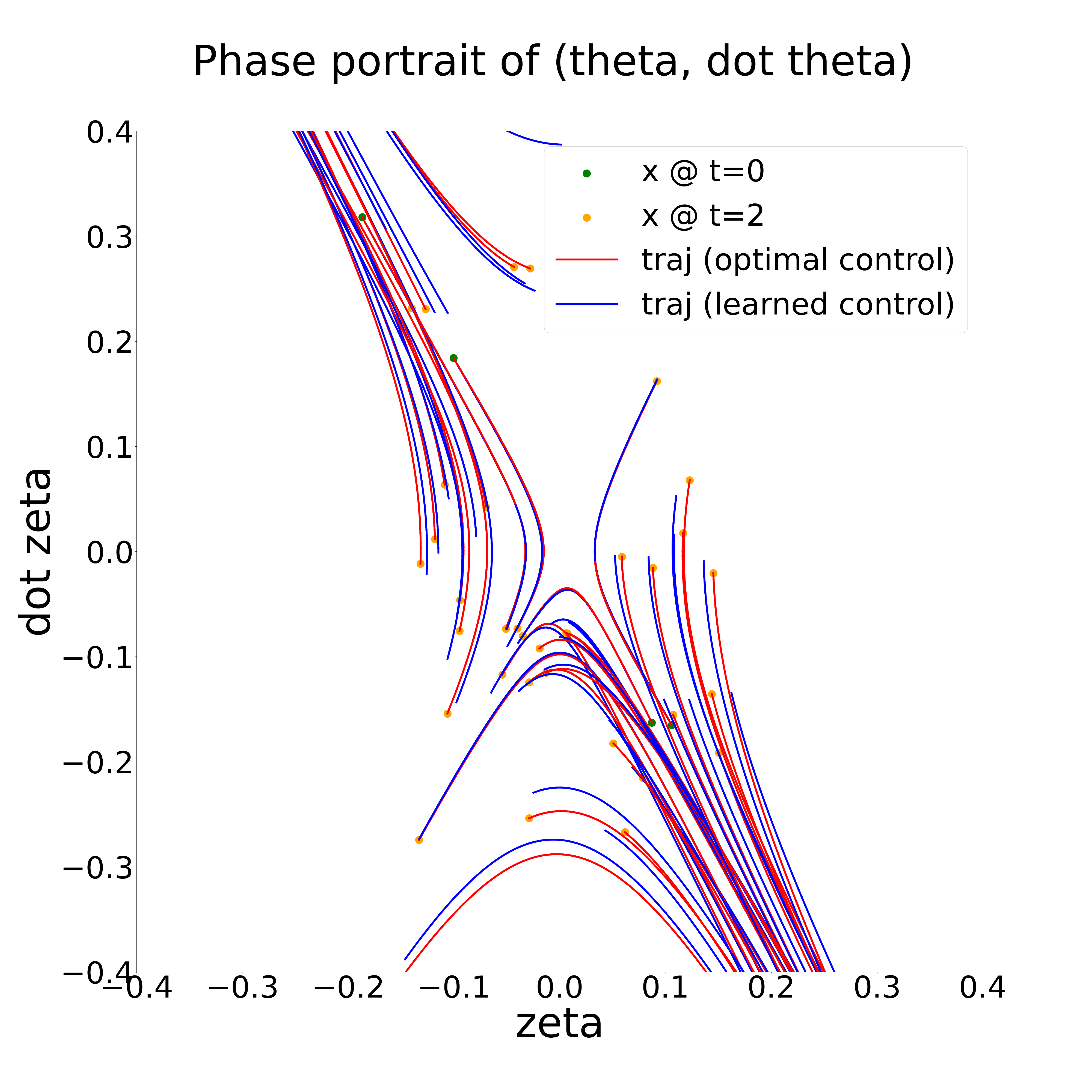}
    \end{subfigure}
    \vspace{-0.3cm}
    \caption{{Plot of different trajectories under the learned control $-R^{-1}B^\top\nabla\psi_\theta(\cdot, t)$ (blue) and the corresponding trajectories under the optimal control (red). \textbf{Left:} plot on $(x, \dot x)$ plane; \textbf{Right:} plot on $(\zeta, \dot\zeta)$. We showcase the control over 40 agents.}}\label{fig: LQC plot traj compare}
\end{figure}
{The $L^2$ loss decay curve shown in the rightmost subfigure \ref{fig: LQC plot x vs t and theta vs t compare}} converges exponentially to $0$, suggesting that our algorithm works properly on this example.
\begin{figure}[htb!]
    \centering
    \begin{subfigure}{0.32\textwidth}
        \includegraphics[width=\textwidth]{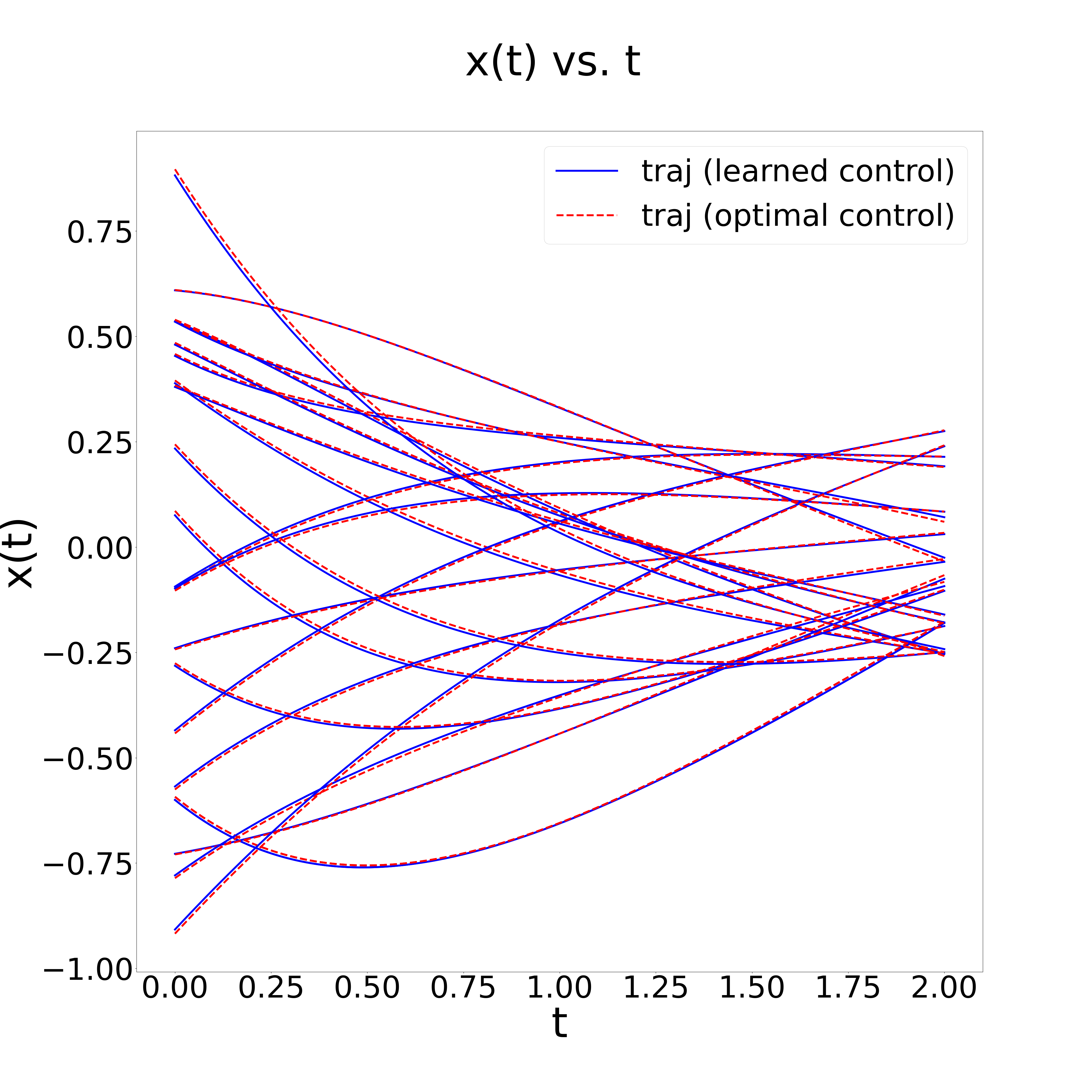}
    \end{subfigure}
    \begin{subfigure}{0.32\textwidth}
        \includegraphics[width=\textwidth]{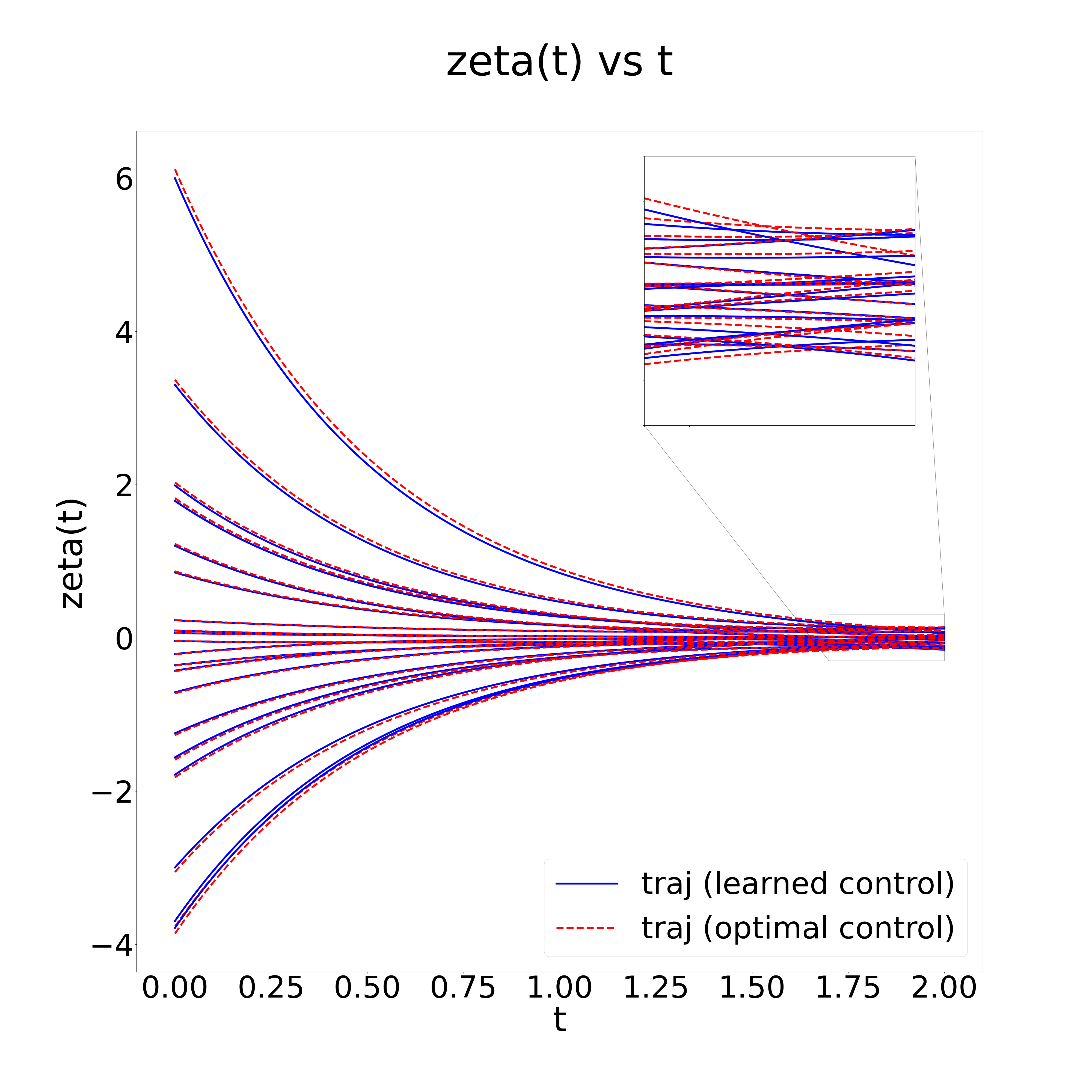}
    \end{subfigure}
    \begin{subfigure}{0.32\textwidth}
        \includegraphics[width=\linewidth]{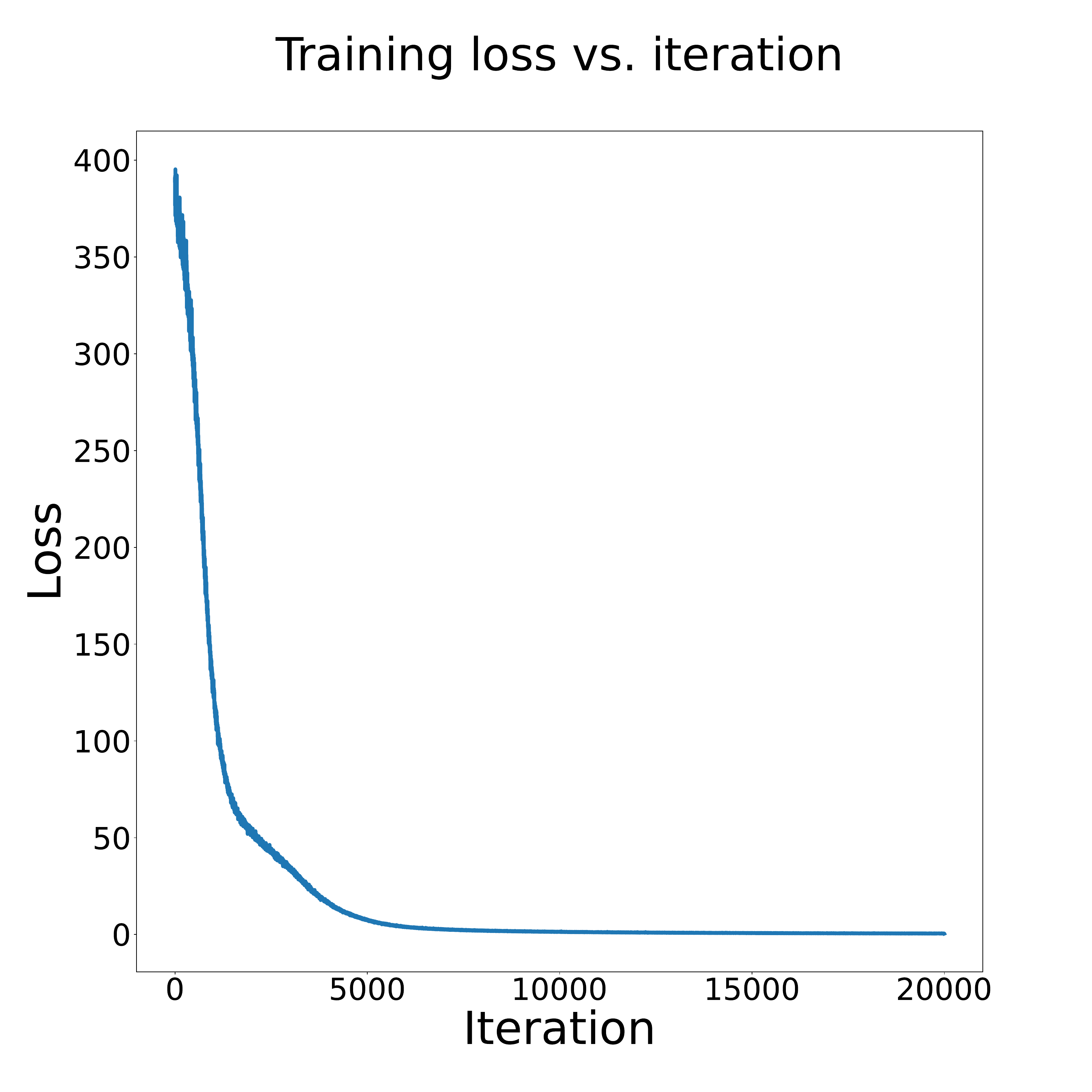}
    \end{subfigure}
    \vspace{-0.3cm}
    \caption{{\textbf{Left two figures:} Plot of different trajectories under the learned control $-R^{-1}B^\top\nabla\psi_\theta(\cdot, t)$ (blue) and the corresponding trajectories under the optimal control (red). First figure: plot of $x_t$ vs. $t$; Second figure: plot of $\zeta_t$ vs. $t$. \textbf{Right figure:} Plot of $L^2$ loss vs iteration in training. For clarity, we illustrate the control of 20 agents here.}}\label{fig: LQC plot x vs t and theta vs t compare}
\end{figure}

\section{Conclusion}\label{ sec-5}
{In this paper, we propose a supervised learning algorithm to compute the first-order {HJ} equation by the density-coupling strategy. Such treatment is inspired by the Wasserstein Hamiltonian flow, which bridges the {HJ} equation and its associated Hamiltonian ODE system. We then reformulate our method as a regression algorithm using the Bregman divergence. Furthermore, we provide error estimation on the $L^1$ residual term for the {proposed} method. The efficiency of our algorithm is verified by a series of numerical examples.

Multiple research directions may serve as the proceeding of this work. To name some of them, 
\begin{itemize}
    \item Our method can compute the solution to the {HJ} equation beyond the caustics, which is different from the commonly considered viscosity solution \cite{crandall1983viscosity}. Is it possible to modify our algorithm at points at which caustics develop to compute the viscosity solution of the {HJ}  equation?
    \item As mentioned in {section} \ref{rk weighted momentum as grad HJ solu }, our treatment leads to a new way to extend the classical solution of HJ equation beyond the caustics. What are the mathematical properties of such a solution? What is the relationship between this solution and the viscosity solution to the HJ equation?
    \item As discussed in section \ref{sec: bound res }, we are not able to control the residual outside of the support of the swarm of particles. How can we propose the initial distribution $\rho_0$ such that the support of $\rho_t$ covers the desired region on which we wish to obtain the accurate solution to the HJ equation?
\end{itemize}
We leave these topics to be investigated in the future.}

\bibliographystyle{plain}
\bibliography{references}
\newpage

\appendix
\section{Proof of Lemma 2.1}
In order to prove Lemma 2.1, we first prove the following result.
\begin{lemma}\label{lemm: regularity of nabla f inv }
  Suppose $f\in \mathcal C^2(\mathbb{R}^d)$, and $\alpha I \preceq \nabla^2 f \preceq L I$ with $L \geq \alpha>0$. Then $\nabla f:\mathbb{R}^d\rightarrow \mathbb{R}^d$ is invertible, if we denote $(\nabla f)^{-1}$ as the inverse function of $\nabla f$, we have $(\nabla f)^{-1} \in \mathcal C^1(\mathbb{R}^d;\mathbb{R}^d)$, and $\nabla((\nabla f)^{-1}) = (\nabla^2f\circ\nabla f^{-1})^{-1}$. 
\end{lemma}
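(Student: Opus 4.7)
The plan is to establish that $\nabla f : \mathbb{R}^d \to \mathbb{R}^d$ is a $\mathcal{C}^1$-diffeomorphism in three stages: injectivity, surjectivity, and differentiability of the inverse.

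For injectivity, I would exploit the lower bound $\nabla^2 f \succeq \alpha I$. For any $x, y \in \mathbb{R}^d$, by writing $\nabla f(x) - \nabla f(y) = \int_0^1 \nabla^2 f(y + s(x-y))(x-y)\,ds$ and pairing with $x - y$, I obtain the strong monotonicity estimate
\begin{equation*}
  (\nabla f(x) - \nabla f(y)) \cdot (x - y) \geq \alpha |x - y|^2,
\end{equation*}
which immediately gives $\nabla f(x) \neq \nabla f(y)$ whenever $x \neq y$.

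For surjectivity, I would fix any $p \in \mathbb{R}^d$ and consider the auxiliary function $\Phi_p(x) = f(x) - p \cdot x$. Since $\Phi_p$ inherits $\alpha$-strong convexity from $f$, a standard estimate using $\nabla^2 f \succeq \alpha I$ gives $\Phi_p(x) \geq \Phi_p(0) + \nabla \Phi_p(0) \cdot x + \tfrac{\alpha}{2}|x|^2$, so $\Phi_p$ is coercive and attains a unique global minimum at some $x^* \in \mathbb{R}^d$. The first-order optimality condition $\nabla f(x^*) = p$ then exhibits the desired preimage, so $\nabla f$ is surjective and hence bijective.

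Finally, since $\nabla^2 f(x)$ is symmetric with spectrum contained in $[\alpha, L] \subset (0, \infty)$, the Jacobian of $\nabla f$ is invertible at every point. The classical inverse function theorem applied locally at each $p = \nabla f(x)$ then gives $(\nabla f)^{-1} \in \mathcal{C}^1$ in a neighborhood of $p$, with differential
\begin{equation*}
  \nabla\bigl((\nabla f)^{-1}\bigr)(p) = \bigl(\nabla^2 f(x)\bigr)^{-1} = \bigl(\nabla^2 f \circ (\nabla f)^{-1}\bigr)(p)^{-1};
\end{equation*}
combined with the global bijectivity established above, this upgrades to a global $\mathcal{C}^1$-statement on all of $\mathbb{R}^d$.

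The only subtle step is the passage from local $\mathcal{C}^1$ information coming from the inverse function theorem to a globally well-defined $\mathcal{C}^1$ inverse. The bijectivity proved in the first two steps makes this automatic: the locally defined smooth inverses must coincide on overlaps with the unique global set-theoretic inverse, so the formula for $\nabla((\nabla f)^{-1})$ holds pointwise on $\mathbb{R}^d$. No further global argument (e.g., Hadamard's theorem) is needed beyond coercivity-based surjectivity, which I expect to be the main conceptual point to verify carefully.
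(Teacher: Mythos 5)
Your proposal is correct, and it diverges from the paper's proof in how the regularity of the inverse is obtained. On bijectivity the two arguments are essentially the same: the paper also produces the preimage of a given $p$ as the unique critical point of the $\alpha$-strongly convex function $x \mapsto f(x) - p\cdot x$ (your injectivity via the strong-monotonicity inequality $(\nabla f(x)-\nabla f(y))\cdot(x-y)\geq \alpha|x-y|^2$ is just a more explicit version of the paper's uniqueness claim). The real difference is downstream: the paper proves continuity of $(\nabla f)^{-1}$ directly (it is $\tfrac{1}{\alpha}$-Lipschitz by strong monotonicity) and then establishes differentiability by hand, expanding $\nabla f(y)-\nabla f(x)=\nabla^2 f(x)(y-x)+r(x,y)$, inverting the Hessian, and checking that the transported remainder is $o(|q-p|)$ using $\|(\nabla^2 f)^{-1}\|\leq \tfrac{1}{\alpha}$ and the Lipschitz bound on the inverse. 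You instead invoke the classical inverse function theorem at each point (legitimate, since $\nabla f\in\mathcal{C}^1$ with everywhere-nonsingular Jacobian $\nabla^2 f\succeq \alpha I$) and then correctly observe that global bijectivity forces the local $\mathcal{C}^1$ inverses to agree with the global set-theoretic inverse, which yields $(\nabla f)^{-1}\in\mathcal{C}^1$ and $\nabla((\nabla f)^{-1})=(\nabla^2 f\circ(\nabla f)^{-1})^{-1}$ globally. Your route is shorter and outsources the delicate remainder bookkeeping to a standard theorem (incidentally sidestepping the paper's slightly garbled estimate $\frac{|y-x|}{|q-p|}\leq\frac{1}{L}$, which should read $\leq\frac{1}{\alpha}$); the paper's route is self-contained and yields the explicit quantitative Lipschitz constant $\tfrac{1}{\alpha}$ for the inverse as a byproduct. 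Both are complete proofs of the lemma.
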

\begin{proof}
  We first prove that $\nabla f$ is invertible. For arbitrary $p\in\mathbb{R}^d$, consider $g(x) = -p\cdot x + f(x)$, then $g$ is $\alpha-$strongly convex. There exists unique $x'\in\mathbb{R}^d$ s.t. $\nabla g(x')=0$, i.e., $\nabla f(x') = p$; furthermore, for any $x''$ such that $\nabla f(x'')=p$ we have $\nabla g(x'')=0$, the uniqueness yields $x''=x'$. This proves that $\nabla f$ is a bijective map on $\mathbb{R}^d$. We denote $(\nabla f)^{-1}$ as the inverse map of $\nabla f$. To show the continuity of $(\nabla f)^{-1}$, for any $\epsilon > 0$, choose $\delta < \alpha \epsilon$. For fixed $p\in\mathbb{R}^d$, consider any $q$ with $\|q-p\|<\delta,$ denote $x=\nabla f^{-1}(p)$, $y = \nabla f^{-1}(q)$, from $\alpha-$strongly convexity, we have $\|\nabla f(y) - \nabla f(x)\|\geq \alpha \|y-x\|$, this yields $\|\nabla f^{-1}(q) - \nabla f^{-1}(p)\|\leq \frac{\|q-p\|}{\alpha}<\epsilon$. This verifies the continuity of $\nabla f^{-1}$.
  
  We then show $(\nabla f)^{-1}$ is differentiable. Since $f\in\mathcal C^2$, $\nabla f\in\mathcal C^1$. So $\nabla f$ is differentiable, which indicates that for any $x,y\in\mathbb{R}^d$, 
  \[ \nabla f(y) - \nabla f(x) = \nabla^2 f(x)(y-x) + r(x,y), \]
  where $r:\mathbb{R}^d\times \mathbb{R}^d\rightarrow \mathbb{R}^d$ is certain vector function satisfying $\lim_{y\rightarrow x} \frac{\|r(x,y)\|}{\|y-x\|} = 0.$ Denote $p = \nabla f(x), q = \nabla f(y)$, the above equation yields,
  \[ q - p = \nabla^2 f(x)(\nabla f^{-1}(q) - \nabla f^{-1}(p)) + r(x,y). \]
  This is 
  \begin{equation} 
    \nabla f^{-1}(q) - \nabla f^{-1}(p) = (\nabla^2 f(x))^{-1} (q - p) - (\nabla^2 f(x))^{-1} r(x,y).  \label{diff of nabla f inv }
  \end{equation}
  Denote $\hat{r}(q, p) = -(\nabla^2 f(x))^{-1}r(x,y)$, we have
  \[ \|\hat{r}(q, p)\| \leq \|\nabla^2 f(x))^{-1}\|\cdot \frac{\|r(x,y)\|}{\|y-x\|}\cdot\frac{\|y-x\|}{\|q - p\|}\cdot \|q-p\|. \]
  Since $\|\nabla^2 f(x)^{-1}\|\leq \frac{1}{\alpha}$, and $\frac{\|y-x\|}{\|q-p\|} = \frac{\|y-x\|}{\|\nabla f(y) - \nabla f(x)\|}\leq \frac{1}{L}.$ This yields
  \[ \|\hat{r}(q, p)\| \leq \frac{1}{\alpha L}\frac{\|r(x,y)\|}{\|y-x\|}\cdot \|q-p\|. \]
  Now send $q\rightarrow p$, due to the continuity of $\nabla f^{-1}$, we know $y\rightarrow x$. The above inequality yields $r(q,p)=o(\|q-p\|),$ which verifies the differentiability of $\nabla f^{-1}.$ Furthermore, by \eqref{diff of nabla f inv },  we know the Jacobian of $\nabla f^{-1}$ is $\nabla(\nabla f^{-1})(p) = (\nabla^2 f(\nabla f^{-1}(p)))^{-1}$, which is continuous. This verifies $\nabla f^{-1}\in\mathcal C^1$.
\end{proof}

\begin{proof}[Proof of Lemma 2.1]
  By Lemma \ref{lemm: regularity of nabla f inv }, we know $\nabla f$ is bijective, and we denote $\nabla f^{-1}\in\mathcal{C}^1$ as its inverse.
  According to the definition of Legendre transformation, 
  \[ f^*(p) = \sup_{\xi\in\mathbb{R}^d} \{\xi \cdot p - f(\xi)\}, \]
  since $\xi\cdot p - f(\xi)$ is $\alpha-$strongly concave as a function  of $\xi,$ for any $p\in \mathbb R^d,$ there is a unique maximizer $\xi_*$, which solves $\nabla f(\xi_*) = p,$ i.e., $\xi_* = (\nabla f)^{-1}(p).$ Thus $f^*(p) = (\nabla f)^{-1}(p)\cdot p - f((\nabla f)^{-1}(p))$, since $\nabla f^{-1}\in\mathcal C^1$, $f^*$ is at least $\mathcal C^1$, use $\nabla(\nabla f^{-1}(p)) = (\nabla^2 f(\nabla f^{-1}(p)))^{-1},$ we have
  \[ \nabla f^*(p) = \nabla((\nabla f)^{-1}(p)) p + \nabla f^{-1}(p) - f(\nabla f^{-1}(p)) = \nabla f^{-1}(p). \]
  Since $\nabla f^{-1}\in\mathcal C^1$, we know $\nabla f^*\in\mathcal C^1$, this leads to $f^*\in\mathcal C^2$.

  Furthermore, we have $\nabla^2 f^*(p) = \nabla(\nabla f^{-1}(p)) = [\nabla^2 f(\nabla f^{-1}(p))]^{-1},$ this yields $\frac{1}{L}I \preceq\nabla^2 f^*\preceq \frac{1}{\alpha}I.$
  
  On the other hand, recall that $\xi_* = \nabla f^{-1}(p) = \nabla f^*(p),$ we have $f^*(p) = \nabla f^*(p)\cdot p - f(\nabla f^*(p)).$ Thus,
  \begin{align*}
    f(q)+f^*(p)-q\cdot p  &= f(q) + \nabla f^*(p)\cdot p - f(\nabla f^*(p)) - q\cdot p\\
    & = f(q) - f(\nabla f^*(p)) - p\cdot(q - \nabla f^*(p))   \\
    & = f(q) - f(\nabla f^*(p)) - \nabla f(\nabla f^*(p))\cdot(q - \nabla f^*(p)) \\
    & = D_f(q : \nabla f^*(p)). 
  \end{align*}
  For the third equality, we use the fact that $\nabla f^*(p) = (\nabla f)^{-1}(p)$ for any $p\in\mathbb{R}^d.$

  To prove the fact that $f(q)+f^*(p)-q\cdot p = D_{f^*}(p : \nabla f(q)),$ one only needs to treat $g=f^*\in\mathcal{C}^2(\mathbb{R}^d)$ with $\frac{1}{L} I \preceq \nabla^2 g \preceq \frac{1}{\alpha} I $, and $g^* = f^{**} = f$,\footnote{This is true for any $f\in\mathcal{C}(\mathbb{R}^d)$ that is convex, c.f. Chapter 11 of \cite{rockafellar2009variational}. } 
  and then apply the above argument to $g$.
\end{proof}

{
\section{Proof of Theorem 2.1}
\begin{proof}[Proof of Theorem 2.1]
Given the Lipschitz condition on the vector field $(\frac {\partial }{\partial x}H^\top, $ $ \frac {\partial}{\partial p}H^\top)^\top$, it is known that the underlying Hamiltonian system considered admits a unique solution with continuous trajectories  for arbitrary initial condition $(\bX_0, $ $\nabla g(\bX_0))$. 

Let us recall the probability space $(\Omega, \mathcal{F}, \mathbb P)$ used to describe the randomness of the Hamiltonian system. Since 
\[ \mathbb{E}_\omega \left[\int_0^T D_{H,x}(\nabla\widehat{\psi}(\bX_t(\omega), t):\bP_t(\omega))~dt\right]    = 0,  \] 
then by the fact that Bregman divergence $D_{H,x}$ is always non-negative, we obtain
\begin{equation*}
  \int_0^T D_{H,x}(\nabla\widehat{\psi}(\bX_t(\omega), t):\bP_t(\omega))~dt = 0, \quad  \mathbb P\textrm{-almost surely.}
\end{equation*}
Thus, there exists a measurable subset $\Omega'\subset\Omega$ with $\mathbb P(\Omega')=1$ such that 
\[ \int_0^T D_{H,x}(\nabla\widehat{\psi}(\bX_t(\omega'), t):\bP_t(\omega'))~dt = 0, \quad \forall ~\omega'\in\Omega'.  \]
By using the continuity and non-negativity of $D_{H,x}(\nabla\widehat{\psi}(\bX_t(\omega'), t):\bP_t(\omega'))$ with respect to $t$, we have
 \begin{equation}
   \nabla\widehat{\psi}(\bX_t(\omega'), t) = \bP_t(\omega') \quad \textrm{for} ~ ~ 0\leq t\leq T.  \label{nabla psi = p }
 \end{equation}
When $t=0$, we have $\nabla\widehat{\psi}(\bX_0(\omega'), 0) = \bP_0(\omega')$. Recall the initial condition of the Hamiltonian System, we have $\bP_0(\omega') = \nabla g(\bX_0(\omega'))$. This yields $\nabla\widehat{\psi}(\bX_0(\omega'), 0) = \nabla g(\bX_0(\omega'))$ for any $\omega'\in\Omega'$, which yields 
 \begin{equation}
   \nabla\widehat{\psi}(x,0) = \nabla g(x)\quad \textrm{for all } x\in\textrm{Spt}(\rho_0).  \label{consistent pf equ 1} 
 \end{equation}  
 On the other hand, for $t\in(0, T],$ by differentiating on both sides of \eqref{nabla psi = p } w.r.t. $t$, we obtain
\begin{equation}
  \frac{\partial}{\partial t} \nabla\widehat{\psi}(\bX_t(\omega'), t) + \nabla^2\widehat{\psi}(\bX_t(\omega'), t)\dot\bX_t(\omega') = \dot\bP_t(\omega').  \label{proof thm 1}
\end{equation}
Recall that we have 
\begin{align*}
\dot\bX_t = \frac {\partial}{\partial p}H(\bX_t, \bP_t)=\frac {\partial}{\partial p} H(\bX_t, \nabla\widehat{\psi}(\bX_t, t)),\\
\dot\bP_t = -\frac {\partial}{\partial x}H( \bX_t,  \bP_t ) = -\frac {\partial}{\partial x}H(\bX_t, \nabla\widehat{\psi}(\bX_t, t)).
\end{align*} Plugging these into \eqref{proof thm 1} yields
\begin{align*}
  \frac{\partial}{\partial t} \nabla\widehat{\psi}(\bX_t(\omega'), t) + \nabla^2\widehat{\psi}(\bX_t(\omega'), t)\frac {\partial}{\partial p} H(\bX_t(\omega'), \nabla\widehat{\psi}(\bX_t(\omega'), t)) \\
  = -\frac {\partial}{\partial x}H(\bX_t(\omega'), \nabla\widehat{\psi}(\bX_t(\omega'), t)),
\end{align*}
which leads to
\begin{equation*}
  \nabla\left( \frac{\partial}{\partial t}\widehat{\psi}(\bX_t(\omega'),t) + H(x, \nabla\widehat{\psi}(\bX_t(\omega'),t)) \right) = 0, \quad \forall~\omega'\in\Omega'.
\end{equation*}
Since the probability density distribution of $\bX_t$ is $\rho_t$, we have proved that 
\begin{equation}
\nabla\left( \frac{\partial}{\partial t}\widehat{\psi}(x,t) + H(x, \nabla\widehat{\psi}(x,t)) \right) = 0, \quad \forall~~x\in\textrm{Spt}(\rho_t).  \label{consistent pf equ 2}
\end{equation}
Combining \eqref{consistent pf equ 1} and \eqref{consistent pf equ 2} proves  this theorem.

On the other hand, if $\mathscr{L}_{\rho_0, g, T}^{|\cdot|^2}(\widehat\psi)=0$. By using the fact that $|\nabla \widehat{\psi}(\boldsymbol{X}_t(\omega), t) - \boldsymbol{P}_t(\omega)|^2$ is continuous and non-negative for a.s. $\omega\in\Omega,$ we can repeat the previous proof to show the same assertion still holds.
\end{proof}
}

\section{Proof of Lemma 2.2}\label{appendix: proof of lemma 2.2 }
\begin{proof}[Proof of Lemma 2.2]

Let us first consider the term
\begin{equation}
 \int_{\mathbb{R}^d} \psi(x,t)\rho_t(x)dx = \mathbb{E}_{\bX_t} \psi(\bX_t, t).  \label{proof lemma 3 a}
\end{equation}
By differentiating \eqref{proof lemma 3 a} w.r.t. time $t$, we obtain
\begin{equation*}
\frac{d}{dt}\left( \int_{\mathbb{R}^d} \psi(x,t)\rho_t(x)    dx\right) = \mathbb{E} \left[ \nabla\psi(\bX_t, t)\cdot\dot\bX_t + \frac{\partial\psi(\bX_t, t)}{\partial t} \right]. 
\end{equation*}
The right-hand side of the above equation equals
\begin{align*}
  & \mathbb{E}_{\bX_t, \bP_t} \nabla\psi(\bX_t, t)\cdot {\frac {\partial}{\partial p}} H(\bX_t, \bP_t) + \mathbb{E}_{\bX_t} \left[\frac{\partial \psi(\bX_t, t)}{\partial t}\right] \\
  = & \int_{\mathbb{R}^{2d}} \nabla\psi(x, t)\cdot {\frac {\partial}{\partial p}} H(x, p)~d\mu_t(x,p) + \int_{\mathbb{R}^d} \frac{\partial\psi(x, t)}{\partial t}\rho_t(x)dx.
\end{align*}
Combining the above equations, we have
{\small
\begin{equation}
  \int_{\mathbb{R}^d} - \partial_t\psi(x, t) d \rho_t(x) = \int_{\mathbb{R}^{2d}} \nabla\psi(x, t)\cdot {\frac {\partial}{\partial p}} H(x, p)~d\mu_t(x,p) - \frac{d}{dt}\left( \int_{\mathbb{R}^{2d}} \psi(x,t)\rho_t(x)dx   \right). \label{proof lemma 3 b}
\end{equation}}Plugging \eqref{proof lemma 3 b} into the formula of $\mathscr{L}_{\rho_0,g,T}(\psi )$ yields that
{\small
\begin{align}\nonumber
&\mathscr{L}_{\rho_0,g,T}(\psi )\\\nonumber
 = & \int_0^T \left( \int_{\mathbb{R}^{2d}} \nabla\psi(x, t)\cdot {\frac {\partial}{\partial p}} H(x, p)~d\mu_t(x,p) - \frac{d}{dt}\left( \int_{\mathbb{R}^{2d}} \psi(x,t)\rho_t(x)  dx\right) \right)~dt \nonumber\\
  & + \int_0^T\int_{\mathbb{R}^d} -H(x, \nabla\psi(x,t))\rho_t(x)~dx~dt \nonumber\\
  & + \int_{\mathbb{R}^d} \psi(x, T)\rho_t(x)~dx - \int_{\mathbb{R}^d} \psi(x, 0)\rho_0(x)~dx. \nonumber\\
  = & \int_0^T \int_{\mathbb{R}^{2d}}  (\nabla\psi(x, t)\cdot {\frac {\partial}{\partial p}} H(x, p) - H(x, \nabla\psi(x,t)))~d\mu_t(x,p)dt \nonumber\\
  = & \int_0^T \int_{\mathbb{R}^{2d}} (\nabla\psi(x, t)\cdot {\frac {\partial}{\partial p}} H(x, p) - H(x, \nabla\psi(x,t)) - H^*(x, {\frac {\partial}{\partial p}} H(x, p)))~d\mu_t(x,p)dt\nonumber\\
  & + \int_0^T\int_{\mathbb{R}^{2d}} H^*(x,{\frac {\partial}{\partial p}} H(x,p))~d\mu_t(x,p)dt.
  \label{proof lemma 3 c}
\end{align}}The second equality is obtained by integrating the time-derivative of \eqref{proof lemma 3 a} on $[0, T]$ as well as by using the fact that $\rho_t(\cdot)$ is the density of $\bX-$marginal of $\mu_t$. 

Based on  Lemma 2.1, choosing $f$ as $H^*$ and  $f^*$ as the Hamiltonian $H$, and letting $q = {\frac {\partial}{\partial p}}H(x,p)$ and $p = \nabla\psi(x,t)$, we obtain 
{
\begin{align*} &H^*(x, {\frac {\partial}{\partial p}} H(x, p)) + H(x, \nabla\psi(x,t)) - \nabla\psi(x, t)\cdot {\frac {\partial}{\partial p}} H(x, p)
\\
&={D_{H,x}}(\nabla\psi(x,t):\nabla_v H^*(x, {\frac {\partial}{\partial p}} H(x,p))).
 \end{align*}}
Since ${\frac {\partial}{\partial v}} H^*(x, \cdot) = ({\frac {\partial}{\partial p}}H(x, \cdot))^{-1}$, the right-hand side of the above equality  leads to $D_{H, x}(\nabla\psi(x,t):p)$. Plugging this back to \eqref{proof lemma 3 c} proves Lemma 2.2.
\end{proof}

{\section{Discussion on the dependence on $\rho_0$}\label{SM-density}

We give a brief discussion about the dependence on $\rho_0$ for the computed solution $\psi_{\theta}$ via an example with the classical Hamiltonian $H(x,p)=\frac 12|p|^2+V(x).$
Assume that the solution $\psi_{\theta}$ exists for any initial density and is regular enough in time and space. For simplicity, we ignore the numerical error caused by symplectic integrator and consider the difference between $\psi_{\theta,\rho_{0,1}}$ and $\psi_{\theta,\rho_{0,2}}$ with different initial densities $\rho_{0,1}$ and $\rho_{0,2}.$. 

Then by remark 2.1, one can verify for any test function $f\in \mathcal C^1([0,T]\times \mathbb R^d)$ 
\begin{align}\label{equ-weak}
\int_{0}^T\int_{\mathbb R^d} \Big(\nabla \psi_{\theta,\rho_{0,i}}(x,t)- \bar {p}_i(x,t)\Big)\nabla f(x,t) \rho_{i}(x,t)dx dt=0.
\end{align}
Here $i\le 2$, $\rho_{i}(x,t)$ is the marginal density of the position of the particle $X_t^i$ with different initial data $x_0^{(i)}$, and $\bar {p}_i(x,t)=\int_{\mathbb R^d} p d \mu^{(i)}_{t}(p|x)$ with $\mu_{t}^{(i)}(p|x)$ being the conditional distribution of the joint distribution $\mu_{t}^{(i)}(x,p).$

In particular, when the characteristic lines do not intersect, 
by \eqref{equ-weak} one can infer that   
$\nabla \psi_{\theta,\rho_{0,1}}(x,t)=\nabla \psi_{\theta,\rho_{0,2}}(x,t)$
in the intersection of the supports of $\rho_{0,1}(t,\cdot)$ and $\rho_{0,2}(t,\cdot)$. Moreover, in this case $\bar {p_i}(x,t)=P_t^{i}$ with the initial value $(X_t^{(i)})^{-1}(x),\nabla g\big((X_t^{(i)})^{-1}(x)\big)$ as the initial value of the underlying Hamiltonian ODE. Since characteristic lines do not intersect, it is not hard to see that 
\begin{align}\label{prop-pi}
&|\bar {p_i}(x,t)|\le C\sup_{x\in supp(\rho_{0,i})}|\nabla g(x)|,\; i\le 2,\\\nonumber 
&\bar {p_1}(x,t)=\bar {p_2}(x,t), \;\text{for any fixed} \; (x,t).
\end{align}

By subtracting \eqref{equ-weak} for $i=1,2,$ one further has that 
\begin{align}\label{equ-weak1}
    &\int_{0}^T\int_{\mathbb R^d} \Big(\nabla \psi_{\theta,\rho_{0,1}}(x,t)- \nabla \psi_{\theta,\rho_{0,2}}(x,t)\Big)\nabla f(x,t) \rho_{1}(x,t)dx dt\\\nonumber
    &+\int_{0}^T\int_{\mathbb R^d} \nabla \psi_{\theta,\rho_{0,2}}(x,t) \nabla f(x,t) (\rho_{2}(x,t)-\rho_1(x,t))dx dt\\\nonumber
    &-\int_{0}^T\int_{\mathbb R^d} \Big(\bar{p_1}(x,t)- \bar{p_2}(x,t)\Big)\nabla f(x,t) \rho_{1}(x,t)dx dt\\\nonumber
    &-\int_{0}^T\int_{\mathbb R^d} \bar{p_2}(x,t) \nabla f(x,t) (\rho_1(x,t)-\rho_2(x,t)) dx dt
    =0.
\end{align}

Taking $f=\psi_{\theta,\rho_{0,1}}(x,t)-  \psi_{\theta,\rho_{0,2}}(x,t)$ and using Young's inequality, by the symmetry of $\rho_{0,i}$ and \eqref{prop-pi}, one can obtain 
\begin{align*}
    &\sup_{i\le 2}\int_{0}^T\int_{\mathbb R^d}|\nabla \psi_{\theta,\rho_{0,1}}(x,t)- \nabla \psi_{\theta,\rho_{0,2}}(x,t)|^2\rho_{i}(x,t)dx dt
    \\&
    \le C \int_0^T \int_{\mathbb R^d}\Big(1+|\nabla \psi_{\theta,\rho_{0,1}}(x,t)|^2+|\nabla \psi_{\theta,\rho_{0,1}}(x,t)|^2\Big)|\rho_1(x,t)-\rho_2(x,t)|dxdt\\
    &+C  \int_0^T \int_{\mathbb R^d}\Big(1+|\bar{p_1}(x,t)|^2+ \bar{p_2}(x,t)|^2\Big)|\rho_1(x,t)-\rho_2(x,t)|dxdt.
\end{align*}
This, together with the fact that $\rho_i(t,\cdot)$ is continuous w.r.t. the initial density, implies that the approximate solution $\psi_{\theta}$ is continuous w.r.t. the initial  density.

After the characteristic  lines intersect, the analysis is more complicate and relies on the properties of conditional distribution  $\mu_t^{(i)}(p|x)$ and the averaged momentum $\bar{p_i}(t,x).$ It is beyond the scope of this current work. We hope to address and study this issue in the future.
}

\section{More details in the proof of Theorem 3.1}\label{appendix: proof of thm 3.1}

\subsection{Proof of \eqref{bound-control-particle}}
We estimate the distance between $\widehat{x}_\tau$ and $\widehat{x}_0=\tilde{x}_{t_i}$ by considering
{\small
\begin{align}
  |\widehat{x}_\tau-\widehat{x}_0|   \leq \int_0^\tau |\frac{\partial}{\partial p}H(\widehat{x}_s, \widehat{p}_s)|~ds & \leq \int_0^\tau |\frac{\partial}{\partial p}H(\widehat{x}_0, \widehat{p}_0)|+|\frac{\partial}{\partial p}H(\widehat{x}_0, \widehat{p}_0)-\frac{\partial}{\partial p}H(\widehat{x}_s, \widehat{p}_s)|~ds  \nonumber \\
  & \leq \tau |\frac{\partial}{\partial p}H(\widehat{x}_0, \widehat{p}_0)|+L_1\int_0^\tau |\widehat{x}_s - \widehat{x}_0|+|\widehat{p}_s - \widehat{p}_0|~ds,\label{estimate x}
\end{align}
}where the second inequality is due to the Lipschitz property of $\frac{\partial H}{\partial p}$. Similarly, for $\widehat{p}_\tau$ and $\widehat{p}_0=\tilde{p}_{t_i}$, we have
\begin{equation}
  |\widehat{p}_\tau - \widehat{p}_0|\leq \int_0^\tau |-\frac{\partial}{\partial x}H(\widehat{x}_s, \widehat{p}_s)|~ds\leq \tau |\frac{\partial}{\partial x}H(\widehat{x}_0, \widehat{p}_0)|+L_2\int_0^\tau |\widehat{x}_s - \widehat{x}_0|+|\widehat{p}_s - \widehat{p}_0|~ds    \label{estimate p}
\end{equation}
By adding \eqref{estimate x} and \eqref{estimate p} and applying the Gr\"{o}nwall's inequality, we obtain
\begin{equation}\label{estimate x p}
\begin{split}
  |\widehat{x}_\tau - \tilde{x}_{t_i}|+|\widehat{p}_\tau - \tilde{p}_{t_i}|&\leq (|\frac{\partial}{\partial p}H(\tilde{x}_{t_i}, \tilde{p}_{t_i})|+|\frac{\partial}{\partial x}H(\tilde{x}_{t_i}, \tilde{p}_{t_i})|)\\
  &\times \left(\tau + \frac{e^{(L_1+L_2)\tau}-(L_1+L_2)\tau - 1}{L_1+L_2}  \right),
  \end{split}
\end{equation}

From the Lipschitz property and the inequality $e^x\leq 1+x+\frac{1}{2}e^xx^2$ for $x\geq 0$, the right hand side of \eqref{estimate x p}
 can be further bounded by
\begin{equation*}
   \Big(  (L_1+L_2)(|\tilde{x}_{t_i}|+|\tilde{p}_{t_i}|)+(|\partial_pH(0,0)|+|\partial_xH(0,0)|) \Big) \left(\tau + \frac{1}{2}e^{(L_1+L_2)\tau}(L_1+L_2)\tau^2\right).
\end{equation*}
Let us denote $R_{t_i}=\underset{1\leq k \leq N}{\max}\{|\tilde{x}_{t_i}^{(k)}|+|\tilde{p}_{t_i}^{(k)}|\}$, $L=L_1+L_2$ and $C = |\partial_pH(0,0)|+|\partial_xH(0,0)|$. Since we assume that $$M \geq  \max\{T, \frac{T}{2}(L_1+L_2)e^{L_1+L_2}\},$$
the time stepsize 
$$h\leq\frac{T}{M}\leq \min\{1, \frac{2}{L_1+L_2}e^{-(L_1+L_2)}\}.$$ Then for $0\leq \tau\leq h$, we have $\frac{1}{2}e^{(L_1+L_2)\tau}(L_1+L_2)\tau^2\leq \frac{1}{2}e^{Lh}Lh\cdot \tau\leq \tau$. Thus, \eqref{estimate x p} can be bounded by
\begin{equation*}
  |\widehat{x}_\tau - \tilde{x}_{t_i}|+|\widehat{p}_\tau - \tilde{p}_{t_i}|\leq 2(LR_{t_i}+C+1)\tau.
\end{equation*}

\subsection{Proof of \eqref{lip dist combine}}
{
 Direct calculation yields that
{\footnotesize  
\begin{align}\label{lip dist 2}
    & \left|\nabla^2\psi_\theta(\widehat{x}_\tau, t_i+\tau)\frac{\partial}{\partial p}H(\widehat{x}_\tau, \widehat{p}_\tau) - \nabla^2\psi_\theta(\tilde{x}_{t_i}, t_i)\frac{\partial}{\partial p}H(\tilde{x}_{t_i}, \tilde{p}_{t_i})\right| \\
    = & \left| (\nabla^2\psi_\theta(\widehat{x}_\tau, t_i+\tau) - \nabla^2\psi_\theta(\tilde{x}_{t_i}, t_i))\frac{\partial}{\partial p}H(\widehat{x}_\tau, \widehat{p}_\tau) + \nabla^2\psi_\theta(\tilde{x}_{t_i}, t_i)(\frac{\partial}{\partial p}H(\widehat{x}_{\tau}, \widehat{p}_\tau) - \frac{\partial}{\partial p}H(\tilde{x}_{t_i}, \tilde{p}_{t_i})) \right|\nonumber\\
    \leq & L_{\theta, i}^B(|\widehat{x}_\tau - \tilde{x}_{t_i}|+\tau)\left| \frac{\partial}{\partial p}H(\widehat{x}_\tau, \widehat{p}_\tau)\right|+\|\nabla^2\psi_\theta(\tilde{x}_{t_i}, t_i)\|L_1(|\widehat{x}_\tau - \tilde{x}_{t_i}|+|\widehat{p}_\tau - \tilde{p}_{t_i}|)\nonumber\\\nonumber
    \leq & L_{\theta, i}^B(2(LR_{t_i}+C+1)\tau+\tau)(|\partial_pH(0,0)|+L_1(R_{t_i}+2(LR_{t_i}+C+1)\tau)) \\\nonumber
    &+ M_{\theta, i}2L_1(LR_{t_i}+C+1)\tau,\nonumber
\end{align}
}
and that 
\begin{equation}
  \left|\frac{\partial}{\partial x}H(\widehat{x}_\tau, \widehat{p}_\tau) -\frac{\partial}{\partial x}H(\tilde{x}_{t_i}, \tilde{p}_{t_i}) \right|\leq L_2(|\widehat{x}_\tau - \tilde{x}_{t_i}|+|\widehat{p}_\tau - \tilde{p}_{t_i}|)\leq 2L_2(LR_{t_i}+C+1)\tau\label{lip dist 3}
\end{equation}
Combining \eqref{lip dist 1},\eqref{lip dist 2} and \eqref{lip dist 3} together, we obtain \eqref{lip dist combine} with 
{\small \begin{align}
  \lambda(\theta, i) = & 3L_{\theta, i}^A(LR_{t_i}+C+1) + L_{\theta, i}^B 3(LR_{t_i}+C+1)(|\partial_pH(0,0)|  \nonumber\\
  & +L_1(R_{t_i}+2(LR_{t_i}+C+1)h))+ 2 L_1M_{\theta, i}(LR_{t_i}+C+1) + 2L_2(LR_{t_i}+C+1).  \label{notation lambda}
\end{align}}

\subsection{Proof of \eqref{over-est-sample}}
Using \eqref{lip dist combine}, the first term on the right hand side of \eqref{main estimate} is upper bounded by $\frac{1}{2}\lambda(\theta, i )h.$ 

Recall the notation used in \eqref{num schm solver Phi order r}. Since we assume that the numerical scheme for integrating the Hamiltonian system has local truncation error of order $r$, the second term can be bounded by
\begin{equation}
  \frac{1}{h}|\nabla\psi_\theta(\widehat{x}_h, t_{i+1}) - \nabla\psi_\theta(\tilde{x}_{t_{i+1}}, t_{i+1})| \leq L_{\theta, i+1}^C\frac{|\widehat{x}_h-\tilde{x}_{t_{i+1}}|}{h}\leq L_{\theta, i+1}^C C_{\tilde{\Phi}_h}(\tilde{x}_{t_i}, \tilde{p}_{t_i})h^{r-1}.
\end{equation}
Similarly, the last two terms in \eqref{main estimate} can be bounded by $C_{\tilde{\Phi}_h}(\tilde{x}_{t_i}, \tilde{p}_{t_i})h^{r-1}$. 

The left hand side of \eqref{main estimate} can be recast as
\begin{equation*}
  |\mathscr{D}\psi_\theta(\tilde{x}_{t_i}, \nabla \psi_\theta(\tilde{x}_{t_i}), t_i) + (\mathscr{D}\psi_\theta(\tilde{x}_{t_i}, \tilde{p}_{t_i}, t_i) - \mathscr{D}\psi_\theta(\tilde{x}_{t_i}, \nabla \psi_\theta(\tilde{x}_{t_i}), t_i))|. 
\end{equation*}
Since $\nabla\psi_\theta(\tilde{x}_{t_i}, t_i) = \tilde{p}_{t_i}+e_{i}$, we have
\begin{align*}
  &|\mathscr{D}\psi_\theta(\tilde{x}_{t_i}, \tilde{p}_{t_i}, t_i) - \mathscr{D}\psi_\theta(\tilde{x}_{t_i}, \nabla \psi_\theta(\tilde{x}_{t_i}), t_i)|\\
   & \leq \|\nabla^2\psi_\theta(\tilde{x}_{t_i}, t_i)\|  L_1|\tilde{p}_{t_i}-\nabla\psi_\theta(\tilde{x}_{t_i})|+L_2|\tilde{p}_{t_i}-\nabla\psi_\theta(\tilde{x}_{t_i})|\\
  & \leq (M_{\theta, i}L_1+L_2)e_i.
\end{align*}
Let us recall
\begin{equation*}
  \mathscr{D}\psi_\theta(\tilde{x}_{t_i}, \nabla \psi_\theta(\tilde{x}_{t_i}), t_i)=\nabla\left(\frac{\partial}{\partial t}\psi_\theta(\tilde{x}_{t_i}, t_i ) + H(\tilde{x}_{t_i}, \nabla\psi_\theta(\tilde{x}_{t_i}))\right),
\end{equation*}
thus, \eqref{main estimate} leads to \eqref{over-est-sample}.}

\begin{rk}\label{rk-thm 3.1}
In Theorem \ref{thm est}, it can be seen that the constants $\lambda(\theta,i),\eta(\theta,i)$, $\nu(\theta,i),R(\theta,i)$ depend on the neural network approximation $\psi_\theta$ and $N$. In the proof of Theorem \ref{thm est}, the set $E_i$ also depends on $N$. Here we would like to explain their dependence on $\psi_\theta$ and $N$.
 
Since $(\tilde{x}_{t_i}^{(k)},\tilde{p}_{t_i}^{(k)})_{i\le M}$  is  the numerical solution of some symplectic integrator of \eqref{alg: Hamilton system}, the standard Gronwall's argument leads that there exists a constant $C(x_0^{(k)}, $ $p_0^{(k)},T)<+\infty$ such that
$$\underset{1\leq i \leq M}{\max}\{|\tilde{x}_{t_i}^{(k)}|+|\tilde{p}_{t_i}^{(k)}|\}\le C(x_0^{(k)},p_0^{(k)},T).$$
Thus, $\sup_{i\le M}R_{t_i}$ is uniformly bounded w.r.t. $(x_0^{(k)},p_0^{(k)}), k\le N$.   By \eqref{def-ei} and \eqref{def-di}, the sets $E_i$ and $  D_i $ are also uniformly bounded w.r.t. $k\le N$.

With regard to $\lambda(\theta,i),$ by \eqref{notation lambda}, \eqref{lip partial_t nabla psi}, \eqref{notation M_theta, i} and \eqref{notation lip nabla2 psi}, it holds that $$\lambda(\theta,i)\le C_1\Big(\textrm{Lip}_{E_i}(\partial_t\nabla\psi_\theta) ,\sup_{ x \in \textrm{supp}(\tilde{\rho}_{t_i})}\|\nabla^2\psi_\theta(x, t_i)\| ,\textrm{Lip}_{E_i}(\nabla^2\psi_\theta)\Big)<+\infty$$
for some constant $C_1\Big(\textrm{Lip}_{E_i}(\partial_t\nabla\psi_\theta) ,\sup\limits_{ x \in \textrm{supp}(\tilde{\rho}_{t_i})}\|\nabla^2\psi_\theta(x, t_i)\| ,\textrm{Lip}_{E_i}(\nabla^2\psi_\theta)\Big)>0.$ 

From \eqref{main ineq}, 
one has that for some $C_2>0,$
$$\eta(\theta,i)\le C_2\Big(\textrm{Lip}_{D_i}(\nabla\psi_\theta(\cdot, t_i))\Big)<\infty,$$
and that for some $C_3>0$,
$$\nu(\theta,i)\le C_3\Big(\sup_{ x \in \textrm{supp}(\tilde{\rho}_{t_i})}\|\nabla^2\psi_\theta(x, t_i)\|\Big)<\infty.$$
Since we assume that $\textrm{supp}(\rho_0)$ is a bounded set, and the solution map  of the numerical scheme is continuous, then one can verify that for some $C_4>0,$
$$R(\theta, i)\le C_4\Big(\sup_{ x \in \textrm{supp}(\tilde{\rho}_{t_i})}   \|\partial_t\nabla\psi_\theta\|,\sup_{ x \in \textrm{supp}(\tilde{\rho}_{t_i})}\|\nabla^2\psi_\theta(x, t_i)\|\Big)<\infty.$$ 
\end{rk}

\section{A stronger version of Theorem 3.1}
\begin{theorem}
Under the condition of Theorem 3.1, in addition assume that the classical solution of HJ PDE exists. Then with the probability $1-\epsilon$, the neural network $\psi_{\theta}$ satisfies 
{\small
\begin{align*}
 \int_{\mathbb{R}^d}  &  \left|\nabla\left(\frac{\partial}{\partial t}\psi_\theta(x, t_i )  +  H(x, \nabla\psi_\theta(x,t_i))\right)\right| ~\tilde{\rho}_{t_i}(x) dx \nonumber\\
     &\le  ~ C_{\theta,i}h^{r-2}+\frac 1{N}\sum_{k=1}^N|\sum_{j\in N(i)}a_{ij}e_j^k\frac 1h| +\nu(\theta,i)(|\nabla e_i^{(k)}|+|e_i^{(k)}|)+R(\theta, i)\sqrt{\frac{\ln M + \ln\frac{2}{\epsilon}}{2N}},
\end{align*}}at $t_i=ih$, $i=1,\dots, M$. Here, $a_{ij}$ is the coefficient and $j\in N(i)$ denotes the node to be used in the numerical differentiation formula $I^h(f)(t_i)=\sum_{j\in N(i)}a_{ij}f(t_i)\frac 1 h$ of order $r_1\ge r-2$. The constants $C(\theta, i),\nu(\theta, i), R(\theta, i)$ are non-negative depending on the parameter $\theta$, time node $t_i$, Hamiltonian $H$, initial distribution $\rho_0$, the exact solution of HJ PDE and the numerical solution of  temporal numerical scheme.
\end{theorem}

\begin{proof}
 We use the same notations as in the proof of Theorem 3.1. Let us denote the residual term of optimal neural network  as
\begin{equation*}
  \mathcal{R}[\psi_\theta](x,t)=\nabla\left(\frac{\partial}{\partial t}\psi_\theta(x, t) + H(x, \nabla\psi_\theta(x))\right).
\end{equation*}
and the residual term of the weak solution as 
$$\mathcal{R}_{exa}[\psi](x,t):=\nabla\left(\frac{\partial}{\partial t}\psi(x, t) + H(x, \nabla\psi(x,t))\right).$$
Note that if $\psi$ is the strong solution of HJ equation, then $\mathcal{R}_{exa}[\psi](x,t)=0.$

For the sample particle $\widetilde x_{t_i}^{(k)},k\le N, i\le M$, it holds that 
{\small
\begin{align*}
    \frac {1}{N} \sum_{k=1}^N \mathcal R \psi_{\theta}(\widetilde x_{t_i}^{(k)},t_i)&=  \frac {1}{N} \sum_{k=1}^N \Big(\mathcal R \psi_{\theta}(\widetilde x_{t_i}^{(k)},t_i)- \mathcal R_{exa} \psi( x_{t_i}^{(k)},t_i)\Big)
    \\
    &= \frac {1}{N} \sum_{k=1}^N \Big(\mathcal D \psi_{\theta}(\widetilde x_{t_i}^{(k)}, \widetilde p_{t_i}^{(k)},t_i)- \mathcal D \psi(x_{t_i}^{(k)},  p_{t_i}^{(k)},t_i)\Big)\\
    &= \frac {1}{N} \sum_{k=1}^N \Big(\mathcal D \psi_{\theta}(\widetilde x_{t_i}^{(k)}, \widetilde p_{t_i}^{(k)},t_i)- \mathcal D \psi_{\theta}(x_{t_i}^{(k)},  p_{t_i}^{(k)},t_i)\Big)
    \\
    &+\frac {1}{N} \sum_{k=1}^N \Big(\mathcal D \psi_{\theta}( x_{t_i}^{(k)},   p_{t_i}^{(k)},t_i)- \mathcal D \psi(x_{t_i}^{(k)},  p_{t_i}^{(k)},t_i)\Big).
\end{align*}}Next we estimate the two terms on the right hand side. First, we split the first term as 
\begin{align*}
&\mathcal D \psi_{\theta}(\widetilde x_{t_i}^{(k)}, \widetilde p_{t_i}^{(k)},t_i)- \mathcal D \psi_{\theta}(x_{t_i}^{(k)},  p_{t_i}^{(k)},t_i)\\
&= \nabla \frac {\partial}{\partial t}\psi_{\theta}(\widetilde x_{t_i}^{(k)},t_i)-\nabla \frac {\partial}{\partial t}\psi_{\theta}( x_{t_i}^{(k)},t_i)
\\
&+\nabla^2 \psi_{\theta}(\widetilde x_{t_i}^{(k)},t_i)\frac {\partial}{\partial p} H(\widetilde x_{t_i}^{(k)},\widetilde p_{t_i}^{(k)})-\nabla^2 \psi_{\theta}(x_{t_i}^{(k)},t_i)\frac {\partial}{\partial p} H( x_{t_i}^{(k)}, p_{t_i}^{(k)})\\
&+\frac {\partial}{\partial x}H(\widetilde p_{t_i}^{(k)},\nabla \psi_{\theta}(x_{t_i}^{(k)},t_i)))-\frac {\partial}{\partial x}H(p_{t_i}^{(k)},\nabla \psi_{\theta}(x_{t_i}^{(k)},t_i))).
\end{align*}
By using the finite support property of $\rho_{t_i}$ and $\tilde \rho_{t_i}$ and Lipschitz property of $\psi_{\theta}$ on bounded domain, 
\begin{align*}
    |\nabla \frac {\partial}{\partial t}\psi_{\theta}(\widetilde x_{t_i}^{(k)},t_i)-\nabla \frac {\partial}{\partial t}\psi_{\theta}( x_{t_i}^{(k)},t_i)|\le L_{\theta,i}^A|\widetilde x_{t_i}^{(k)}-x_{t_i}^{(k)}|.
\end{align*}
Similarly, one can obtain that 
\begin{align*}
&\Big|\nabla^2 \psi_{\theta}(\widetilde x_{t_i}^{(k)},t_i)\frac {\partial}{\partial p} H(\widetilde x_{t_i}^{(k)},\widetilde p_{t_i}^{(k)})-\nabla^2 \psi_{\theta}(x_{t_i}^{(k)},t_i)\frac {\partial}{\partial p} H( x_{t_i}^{(k)}, p_{t_i}^{(k)})\Big|\\
&\le L_{\theta,i}^B(|\widetilde x_{t_i}^{(k)}-x_{t_i}^{(k)}|+|\widetilde p_{t_i}^{(k)}-p_{t_i}^{(k)}|)
\end{align*}
and that 
\begin{align}\label{error1}
\Big|\frac {\partial}{\partial x}H(\widetilde p_{t_i}^{(k)},\nabla \psi_{\theta}(x_{t_i}^{(k)},t_i)))-\frac {\partial}{\partial x}H(p_{t_i}^{(k)},\nabla \psi_{\theta}(x_{t_i}^{(k)},t_i)))\Big|\\\nonumber
\le L_{\theta,i}^C(|\widetilde x_{t_i}^{(k)}-x_{t_i}^{(k)}|+|\widetilde p_{t_i}^{(k)}-p_{t_i}^{(k)}|).
\end{align}
Here $L_{\theta,i}^{A},L_{\theta,i}^{B},L_{\theta,i}^{C}$ are finite depending on the support of $\rho_0.$
Note that the global error of the numerical scheme $|\widetilde x_{t_i}^{(k)}-x_{t_i}^{(k)}|+|\widetilde p_{t_i}^{(k)}-p_{t_i}^{(k)}|$ is of order $r-1$. Thus, $$\frac {1}{N} \sum_{k=1}^N \Big(\mathcal D \psi_{\theta}(\widetilde x_{t_i}^{(k)}, \widetilde p_{t_i}^{(k)},t_i)- \mathcal D \psi_{\theta}(x_{t_i}^{(k)},  p_{t_i}^{(k)},t_i)\Big)\sim O(h^{r-1}).$$ 

Notice that  
\begin{align*}
&\mathcal D \psi_{\theta}( x_{t_i}^{(k)}, p_{t_i}^{(k)},t_i)- \mathcal D \psi(x_{t_i}^{(k)},  p_{t_i}^{(k)},t_i)\\
&=\nabla \frac {\partial}{\partial t}\psi_{\theta}(x_{t_i}^{(k)},t_i)
-\nabla \frac {\partial}{\partial t}\psi(x_{t_i}^{(k)},t_i)\\
&
+\nabla^2 \psi_{\theta}(x_{t_i}^{(k)},t_i)\frac {\partial}{\partial p} H(x_{t_i}^{(k)},p_{t_i}^{(k)})
-\nabla^2 \psi(x_{t_i}^{(k)},t_i)\frac {\partial}{\partial p} H(x_{t_i}^{(k)},p_{t_i}^{(k)})\\
&+\frac {\partial}{\partial x}H(x_{t_i}^{(k)},\nabla \psi_{\theta}(x_{t_i}^{(k)},t_i))
-\frac {\partial}{\partial x}H(x_{t_i}^{(k)},\nabla \psi(x_{t_i}^{(k)},t)).
\end{align*}
 Using the fact that $e_{t_i}^{k}=\nabla \psi_{\theta}(\widetilde x_{t_i}^{(k)},t_i)-\widetilde p_{t_i}^{(k)}$ and the mean value theorem, 
we get 
\begin{align}
    \nabla \psi_{\theta}(x_{t_i}^{(k)},t_i)
    &=   \nabla \psi_{\theta}(\widetilde x_{t_i}^{(k)},t_i)+ \nabla \psi_{\theta}(x_{t_i}^{(k)},t_i)-  \nabla \psi_{\theta}(\widetilde x_{t_i}^{(k)},t_i)\nonumber\\
&=\nabla \psi_{\theta}(\widetilde x_{t_i}^{(k)},t_i)+ \int_0^1  \nabla^2 \psi_{\theta}((1-\alpha_1) \widetilde x_{t_i}^{(k)}+\alpha_1 x_{t_i}^{(k)},t_i)(x_{t_i}^{(k)}-\widetilde x_{t_i}^{(k)}) d\alpha_1\\\nonumber
&=\widetilde p_{t_i}^{(k)}+e_i^k+O(|\widetilde x_{t_i}^{(k)}-x_{t_i}^{(k)}|). \label{decomposition}
\end{align}

Notice that in the error estimate, directly using the fact that $\nabla \psi(\widetilde x_{t_i},t_i)=\widetilde p_{t_i}$ and forward difference method may lead to a lower order of convergence in time for the numerical discretization since less information is known for the time derivative of $\widetilde p_{t_i}.$ Instead, our strategy is using a high order numerical differentiation formula to approximate the time derivative first and then applying the fact that $\nabla \psi(\widetilde x_{t_i},t_i)=\widetilde p_{t_i}$.
To this end, we approximate $\frac {\partial}{\partial t} \nabla \psi_{\theta}$ using a high order linear numerical differential formula $I_{h}(\nabla \psi_{\theta})$, i.e., for any sufficient smooth function $f.$
\begin{align*}
I_{h}(f)(t_i)=\sum_{j\in N(i)} a_{ij} f(t_j)\frac 1{h}=f'(t_i)+O(h^{r_1}),
\end{align*}
where $a_{ij}\in \mathbb R$ and $t_j$ are the nodes close to $t_i.$ 

Using the numerical differentiation formula and the mean value theorem, as well as the fact that $p_{t}^{(k)}=\nabla \psi(x_{t}^{(k)},t)$, it follows that 
\begin{align*}
\nabla \frac {\partial}{\partial t}\psi_{\theta}(x_{t_i}^{(k)},t_i)
-\nabla \frac {\partial}{\partial t}\psi(x_{t_i}^{(k)},t_i)
&= \frac {\partial}{\partial t} \nabla \psi_{\theta}( x_{t_i}^{(k)},t_i)-\frac {\partial}{\partial t} p_t^{(k)}|_{t=t_i}\\
&=I_h(\nabla \psi_{\theta}( x_{t}^{(k)},t))|_{t=t_i}
-I_h(p_t^{(k)})|_{t=t_i}+O(h^{r_1})\\
&=I_h(\nabla \psi_{\theta}( x_{t}^{(k)},t)-p_t^{(k)})|_{t=t_i}+O(h^{r_1}).
\end{align*}
According to \eqref{decomposition}, it follows that 
\begin{align}
\nabla \frac {\partial}{\partial t}\psi_{\theta}(x_{t_i}^{(k)},t_i)
-\nabla \frac {\partial}{\partial t}\psi(x_{t_i}^{(k)},t_i)
&=\sum_{j\in N(i)}a_{ij}(\nabla \psi_{\theta}(x_{t_j}^{(k)},t_j)-p_{t_j}^{k})\frac 1 h+O(h^{r_1})\nonumber \\
&=\sum_{j\in N(i)}a_{ij}(\widetilde p_{t_j}^{(k)}+e_{j}^k-p_{t_j}^{k})\frac 1{h}+O(h^{r-2})+O(h^{r_1})\nonumber\\
&=\sum_{j\in N(i)}a_{ij}  e_{j}^k\frac 1h +O(h^{r-2})+O(h^{r_1}). \label{upperbound1}
\end{align}

Next we give the estimate  for the term $\nabla^2 \psi_{\theta}(x_{t_i}^{(k)},t_i)\frac {\partial}{\partial p} H(x_{t_i}^{(k)},p_{t_i}^{(k)})
-\nabla^2 \psi(x_{t_i}^{(k)}$ $,t_i) \frac {\partial}{\partial p} H(x_{t_i}^{(k)},p_{t_i}^{(k)})$.
By using the mean value theorem and \eqref{decomposition} again, we obtain that 
\begin{align*}
  & \nabla^2 \psi_{\theta}(x_{t_i}^{(k)},t_i)\frac {\partial}{\partial p} H(x_{t_i}^{(k)},p_{t_i}^{(k)})
-\nabla^2 \psi(x_{t_i}^{(k)},t_i)\frac {\partial}{\partial p} H(x_{t_i}^{(k)},p_{t_i}^{(k)})\\
&=
(\nabla^2 \psi_{
\theta
}(x_{t_i}^{(k)},t_i)-\nabla p_{t_i}^{(k)})\frac {\partial}{\partial p} H(x_{t_i}^{(k)},p_{t_i}^{(k)})\\
&=(\nabla \widetilde p_{t_i}^{(k)} -\nabla p_{t_i}^{(k)})\frac {\partial}{\partial p} H(x_{t_i}^{(k)},p_{t_i}^{(k)})
+\nabla e_i^k \frac {\partial}{\partial p} H(x_{t_i}^{(k)},p_{t_i}^{(k)})
+O(h^{r-1}).
\end{align*}
Since the order of time integrator will not depends on the formulation of the coefficient of ODEs, one has $\nabla \widetilde p_{t_i}^{(k)} -\nabla p_{t_i}^{(k)}\sim O(h^{r-1}).$ 
As a consequence, it holds that 
\begin{align}
    &\nabla^2 \psi_{\theta}(x_{t_i}^{(k)},t_i)\frac {\partial}{\partial p} H(x_{t_i}^{(k)},p_{t_i}^{(k)})
-\nabla^2 \psi(x_{t_i}^{(k)},t_i)\frac {\partial}{\partial p} H(x_{t_i}^{(k)},p_{t_i}^{(k)})\\\nonumber
&=\nabla e_i^k \frac {\partial}{\partial p} H(x_{t_i}^{(k)},p_{t_i}^{(k)})
+O(h^{r-1}).\label{upperbound2}
\end{align}

It suffices to estimate the term $\frac {\partial}{\partial x}H(x_{t_i}^{(k)},\nabla \psi_{\theta}(x_{t_i}^{(k)},t_i))
-\frac {\partial}{\partial x}H(x_{t_i}^{(k)},\nabla \psi(x_{t_i}^{(k)},t)).$ For this term, using the mean value theorem, \eqref{decomposition} and the order of the numerical scheme, we get 
{\small\begin{align}
&\frac {\partial}{\partial x}H(x_{t_i}^{(k)},\nabla \psi_{\theta}(x_{t_i}^{(k)},t_i))
-\frac {\partial}{\partial x}H(x_{t_i}^{(k)},\nabla \psi(x_{t_i}^{(k)},t)) \nonumber \\
&=\int_0^1 \frac {\partial^2}{\partial x \partial 
 p }H(x_{t_i}^{(k)}, \alpha_2\nabla \psi_{\theta}(x_{t_i}^{(k)},t_i)
+(1-\alpha_2)\nabla \psi(x_{t_i}^{(k)},t))(\nabla \psi_{\theta}(x_{t_i}^{(k)},t_i)-\nabla \psi(x_{t_i}^{(k)},t_i)) d\alpha_2 \nonumber \\
&=\int_0^1 \frac {\partial^2}{\partial x \partial 
 p }H(x_{t_i}^{(k)}, \alpha_2\nabla \psi_{\theta}(x_{t_i}^{(k)},t_i)
+(1-\alpha_2)\nabla \psi(x_{t_i}^{(k)},t))(\widetilde p_{t_i}^{(k)}-p_{t_i}^{(k)}) d\alpha_2
+O(h^{r-1}) \label{upperbound3} \\
&+ \int_0^1 \frac {\partial^2}{\partial x \partial 
 p }H(x_{t_i}^{(k)}, \alpha_2\nabla \psi_{\theta}(x_{t_i}^{(k)},t_i)
+(1-\alpha_2)\nabla \psi(x_{t_i}^{(k)},t))e_i^kd\alpha_2. \nonumber
\end{align}}

Combining the estimates \eqref{upperbound1}-\eqref{upperbound3},  we obtain that 
{\small
\begin{align*}
&\frac 1 N \sum_{k=1}^N\mathcal D \psi_{\theta}( x_{t_i}^{(k)}, p_{t_i}^{(k)},t_i)- \mathcal D \psi(x_{t_i}^{(k)},  p_{t_i}^{(k)},t_i)\\
&= \frac 1 N \sum_{k=1}^N\sum_{j\in N(i)}a_{ij}  e_{j}^k\frac 1h+\nabla e_i^k \frac {\partial} {\partial p}H(x_{t_i}^{(k)},p_{t_i}^{(k)})\\
&+\int_0^1 \frac {\partial^2}{\partial x \partial 
 p }H(x_{t_i}^{(k)}, \alpha_2\nabla \psi_{\theta}(x_{t_i}^{(k)},t_i)
+(1-\alpha_2)\nabla \psi(x_{t_i}^{(k)},t))e_i^kd\alpha_2+O(h^{r-2})+O(h^{r_1}).
\end{align*}}Taking $r_1\ge r-2$, and using \eqref{error1} and the  Taylor expansion, we further obtain that 
\begin{align} \label{overall error}
  &\frac {1}{N} \sum_{k=1}^N \mathcal R \psi_{\theta}(\widetilde x_{t_i}^{(k)},t_i) \\\nonumber&=O(h^{r-2})+\frac {1}{N} \sum_{k=1}^N \Big(\sum_{j\in N(i)}a_{ij}  e_{j}^k\frac 1h+\nabla e_i^k \frac {\partial} {\partial p}H(x_{t_i}^{(k)},p_{t_i}^{(k)})  \\\nonumber 
  &+\int_0^1 \frac {\partial^2}{\partial x \partial 
 p }H(x_{t_i}^{(k)}, \alpha_2\nabla \psi_{\theta}(x_{t_i}^{(k)},t_i)
+(1-\alpha_2)\nabla \psi(x_{t_i}^{(k)},t))e_i^kd\alpha_2\Big) \nonumber\\  
&=O(h^{r-2})+\frac 1{N}\sum_{k=1}^N|\sum_{j\in N(i)}a_{ij}e_j^k\frac 1h| +\nu(\theta,i)(|\nabla e_i^{(k)}|+|e_i^{(k)}|), \nonumber
\end{align}
where 
\begin{align*}\nu(\theta,i)
&=\sup_{x_{t_i}\sim \rho_{t_i}}\Big(|\frac {\partial}{\partial p} H(x_{t_i},p_{t_i})|+|\int_0^1 \frac {\partial^2}{\partial x \partial 
 p }H(x_{t_i}^{(k)}, \alpha_2\nabla \psi_{\theta}(x_{t_i}^{(k)},t_i)
\\
&\quad +(1-\alpha_2)\nabla \psi(x_{t_i}^{(k)},t))d\alpha_2|\Big).
\end{align*}

To further estimate the expectation of the $L^1$-residual at all the time nodes $\{t_1,\dots, t_T\}$, let us denote $\tilde{\rho}_{t_i}=(\tilde{\Phi}_h\circ\dots\circ \tilde{\Phi}_h)_\sharp \rho_0$ as the probability density function of the numerical solution $\tilde{x}_{t_i}$ computed by the chosen scheme starting from $x_0\sim\rho_0$.
For a fixed time $t_i$ and samples $\{\tilde{x}_{t_i}^{(k)}\}_{1\leq k \leq N }\sim \tilde{\rho}_{t_i}$, by Hoeffding's inequality (see e.g. \cite{SSBD2014}), for any $0<\delta<1$, with probability $1-\delta$, we can bound the gap between the expectation and the empirical average of the $L^1$ residual as

\begin{equation}  
    \left|\int_{\mathbb{R}^d} \mathcal{R}[\psi_\theta](x,t_i) \tilde{\rho}_{t_i}~dx - \frac{1}{N}\sum_{k=1}^N \mathcal{R}[\psi_\theta](\tilde{x}_{t_i}^{(k)},t_i)\right|\leq \underbrace{\sup_{x\in\textrm{supp}({\tilde{\rho}_{t_i}})}|\mathcal{R}[\psi_\theta](x,t_i)|}_{\textrm{denote as}~ R(\theta,i)}\sqrt{\frac{\ln\frac{2}{\delta}}{2N}}.\label{hoeffding ineq_appendix}
\end{equation}
Similarly, for the samples $\{x_{t_i}^{(k)}\}_{1\le k\le N}\sim \rho_{t_i}$, for any $0<\delta<1,$ with probability $1-\delta,$
it holds that{\small 
\begin{align}
\left|\int_{\mathbb{R}^d} \mathcal{R}_{exa}[\psi](x,t_i) {\rho}_{t_i}~dx - \frac{1}{N}\sum_{k=1}^N \mathcal{R}_{exa}[\psi](\tilde{x}_{t_i}^{(k)},t_i)\right|\leq \underbrace{\sup_{x\in\textrm{supp}({{\rho}_{t_i}})}|\mathcal{R}_{exa}[\psi](x,t_i)|}_{\textrm{denote as}~ R_{exa}(i)}\sqrt{\frac{\ln\frac{2}{\delta}}{2N}}.\label{hoeffding ineq1}
\end{align}}Since we assume that $\textrm{supp}(\rho_0)$ is a bounded set, and the solution maps of the numerical scheme and the ODE system is continuous, then $\textrm{supp}(\tilde{\rho}_{t_i}),\textrm{supp}  ({\rho}_{t_i})$ are also bounded. Thus $R(\theta, i),R_{exa}(i)$ is guaranteed to be finite. Indeed, $R_{exa}(i)=0$ by our assumption.
Combining \eqref{overall error}, \eqref{hoeffding ineq_appendix}, and \eqref{hoeffding ineq1}, and using the similar arguments as in the proof of Theorem 3.1, we obtain the desired result where $C_{\theta,i}h^{r-2}$ is the upper bound of $\mathcal O(h^{r-2}).$
\end{proof}

\section{Behaviors of neural network solution near caustics}\label{SM-NN}

This section is a continuation of subsection \ref{subsec: study_near_caustics}. We  present a series  of 2D Hamilton–Jacobi equation, with or without caustic formation,  generated by varying the initial values $g(\cdot)$ and initial densities $\rho_0$. We also analyze the numerical behavior of the neural network approximation with different activation functions.
  
\textbf{Various activation functions:} In addition to $\mathrm{tanh}(\cdot),$ we test the same example using ResNets with different activation functions $\sigma(\cdot)$, including ReLU, softplus, and the $\sin(\cdot)$ function. The numerical performance of each activation function is summarized in Figure \ref{fig: 2D_caustic_compare_activations}. Based on the results, one may prefer $\tanh(\cdot)$ or $\sin(\cdot)$ over softplus or ReLU, as the latter appear less effective for approximating gradient fields. The sinusoidal activation is particularly well-suited for tasks involving periodic or oscillatory behavior. Although $\tanh(\cdot)$ yields smoother nonlinear approximations, neural networks with this activation still successfully capture the shocks that arise in the dynamical process. This observation highlights the broader applicability of $\tanh(\cdot)$, and therefore, we adopt it as the activation function for the subsequent examples presented in this article.
\begin{figure}[htb!]
\begin{subfigure}{.245\textwidth}
  \centering
  \includegraphics[width=\linewidth]{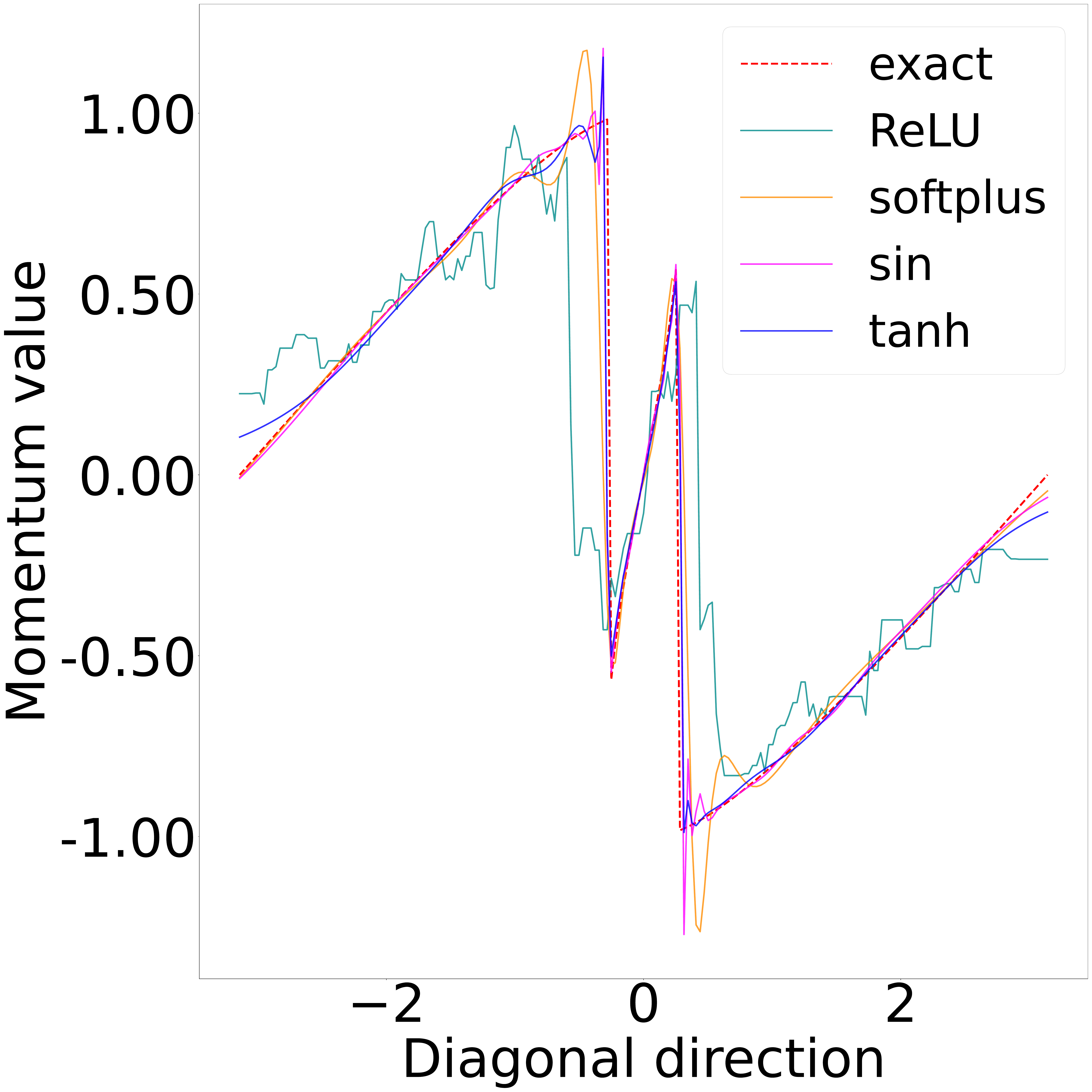}
  \subcaption{$t=1.5$}\label{subfig1}
\end{subfigure}
\begin{subfigure}{.245\textwidth}
  \centering
  \includegraphics[width=\linewidth]{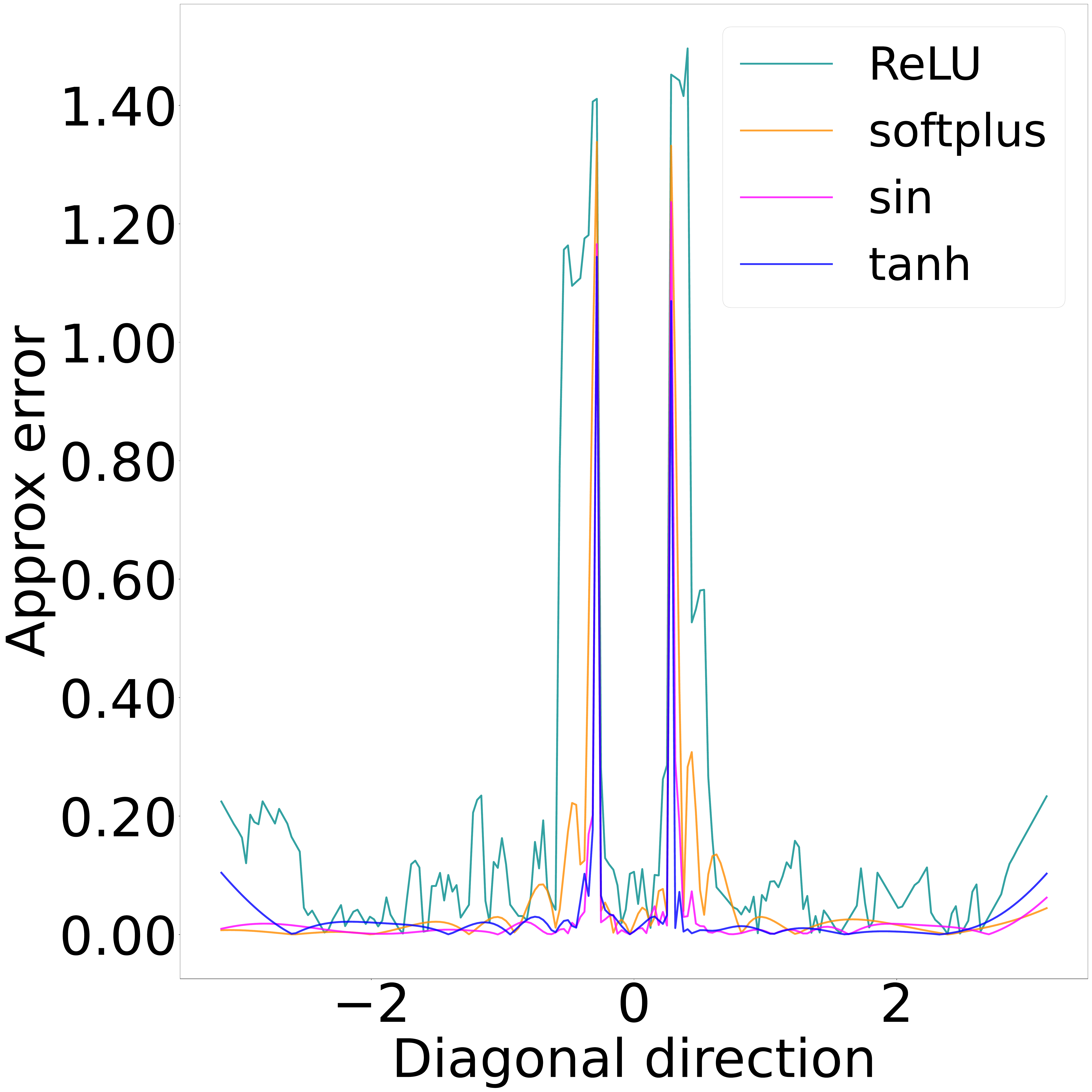}
  \subcaption{$t=1.5$}\label{subfig2}
\end{subfigure}
\begin{subfigure}{.245\textwidth}
  \centering
  \includegraphics[width=\linewidth]{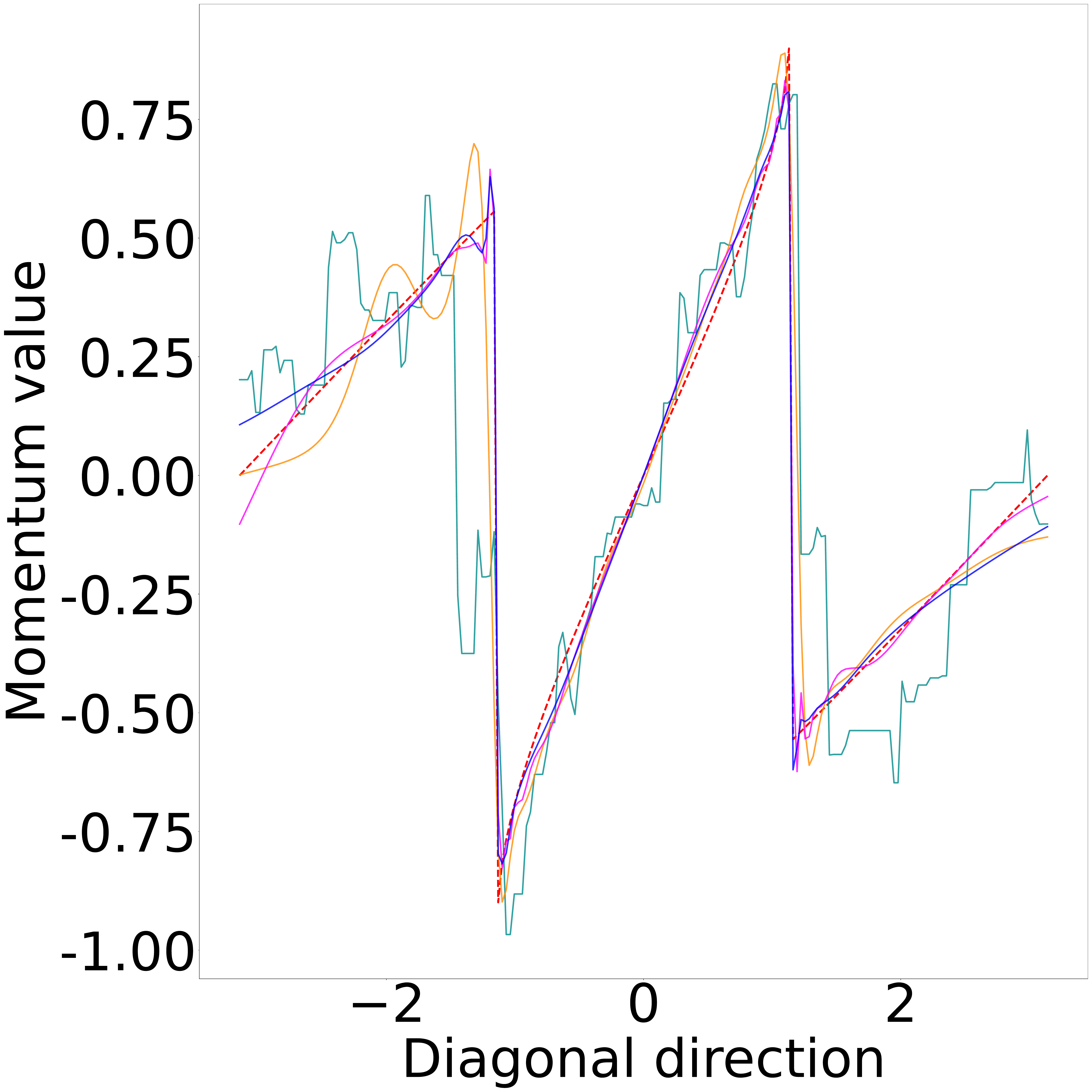}
  \subcaption{$t=2.5$}\label{subfig3}
\end{subfigure}
\begin{subfigure}{.245\textwidth}
  \centering
  \includegraphics[width=\linewidth]{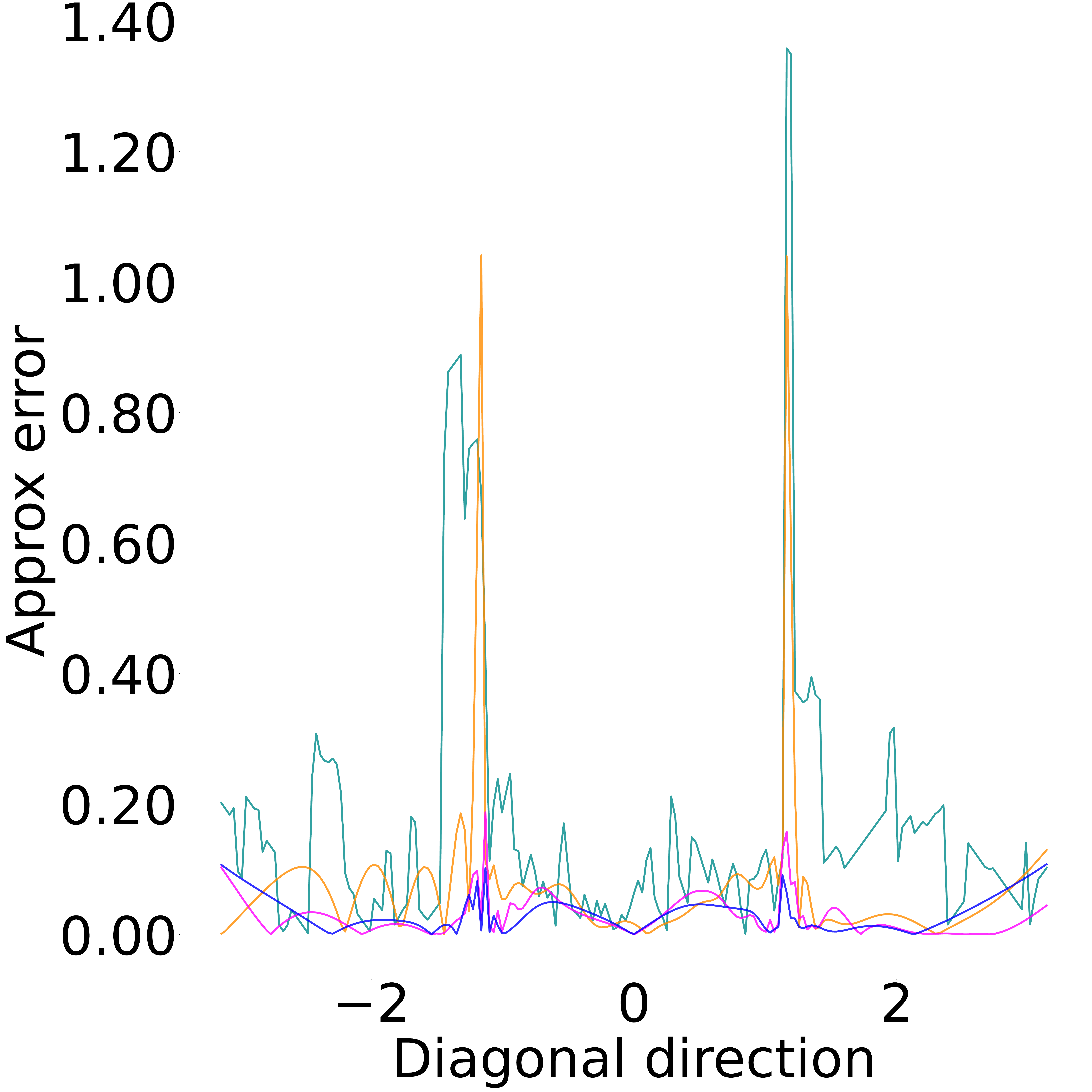}
  \subcaption{$t=2.5$}\label{subfig4}
\end{subfigure}
\vspace{-0.3cm}
\caption{{ \ref{subfig1}: Plots of $\boldsymbol{\eta}^\top\nabla\psi_\theta(z\boldsymbol{\eta})$ with the exact weighted momentum $\partial_z \widehat{f}(z, t)$ on $[-\pi,\pi]$ at $t=1.5$; \ref{subfig2}: Plots of the error $|\boldsymbol{\eta}^\top\nabla\psi_\theta(z\boldsymbol{\eta}) - \partial_z \widehat{f}(z, t)|$ on $[-\pi,\pi]$ at $t=1.5$. We test $\psi_\theta$ as ResNets using different activation functions; \ref{subfig3}-\ref{subfig4}: Same plots at $t=2.5$.}}
\label{fig: 2D_caustic_compare_activations}
\end{figure}

\noindent
\textbf{Non-continuous $\rho_0$:} We consider a more challenging example in which the initial density $\rho_0$ is discontinuous, defined as $\rho_0 = \frac{1}{|E|}(\lambda_1\chi_{E_1} + \lambda_2\chi_{E_2})$, where $\lambda_1 = \frac{1}{11}$, $\lambda_2 = \frac{10}{11}$, $|E|=2\pi^2$, and $E_1 := \{(x, y) \in E \mid x + y < 0 \}$, $E_2 := E \setminus E_1$. Here $\chi_{E_1}, \chi_{E_2}$ are indicator functions supported on $E_1, E_2$. The target function $\partial_z \widehat{f}(\cdot, t)$ develops three discontinuities on the interval $[-\pi, \pi]$ once $t$ exceeds the critical time $T_* = 1$. As illustrated on the left of Figure~\ref{fig: 2D_caustic_rho_1_10_and_non_caustics}, the discontinuity in $\rho_0$ further reduces the regularity of the weighted momentum, introducing additional challenges for neural network approximation.

\noindent
\textbf{Caustic-free initial condition:} In contrast to the caustic-forming case, we consider a scenario without caustic development by setting $g(x) \! = \! -\!\cos(\boldsymbol{\eta}^\top x)$, $\rho_0 = \mathcal{U}(E)$, and $T = 3$. The numerical results, shown on the right side of Figure~\ref{fig: 2D_caustic_rho_1_10_and_non_caustics}, demonstrate agreement between the neural network approximation and the exact classical solution. Moreover, the training loss readily drops to zero as the computing time increases.
\begin{figure}[htb!]
    \centering
    \begin{minipage}{0.45\textwidth}
      \centering
      \includegraphics[width=0.85\linewidth]{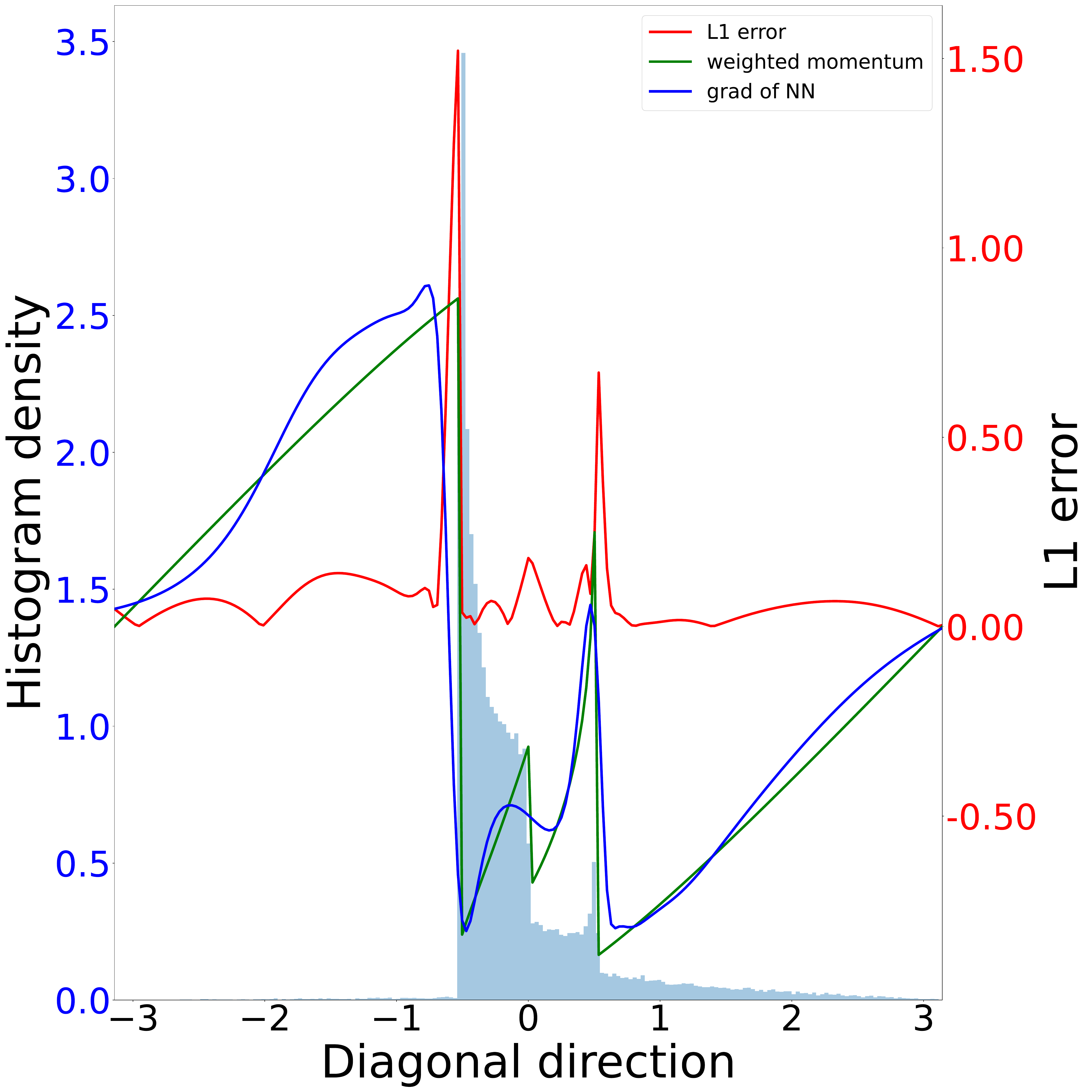}
      \subcaption*{{ Non-continuous $\rho_0$, $g(x)=\cos x$, $t=1.8$.}}
    \end{minipage}\hfill
    \begin{minipage}{0.53\textwidth}
        \centering
        \begin{minipage}{\textwidth}
        \centering
            \subcaptionbox*{}{\includegraphics[trim={0cm 15cm 0cm 17cm}, clip, width=0.78\linewidth]{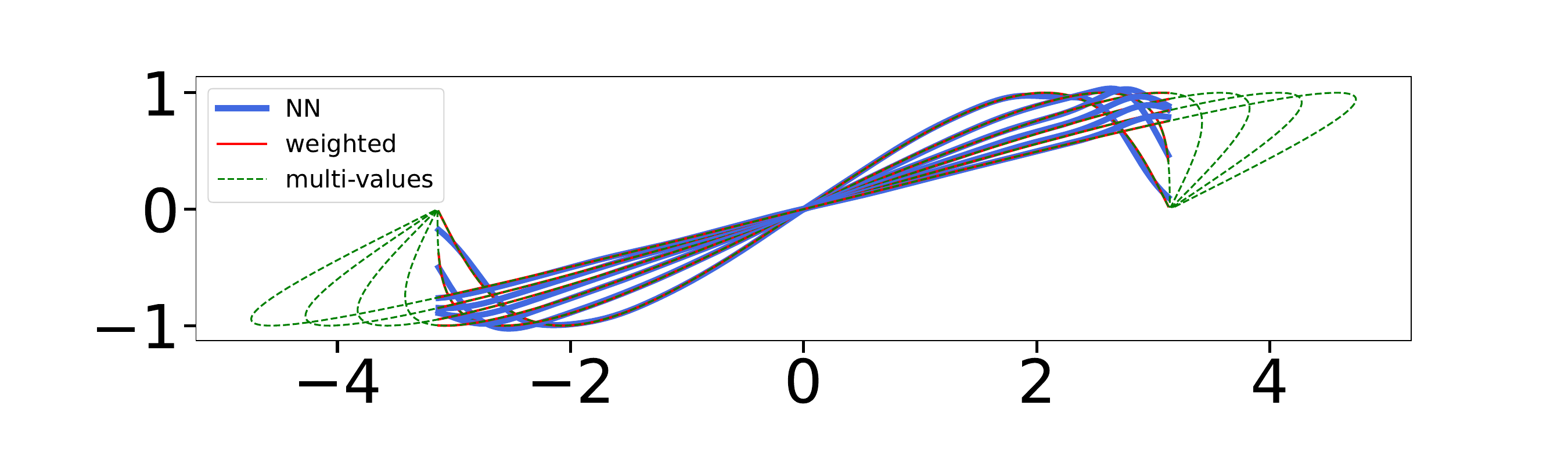}}\vspace{-7.9mm}
            \subcaptionbox*{{ \centering $\rho_0=\mathcal U(E)$,  $g(x)=-\cos x$.}}
            {\includegraphics[trim={0cm 1.4cm 0cm 5cm}, clip, width=0.62\linewidth]{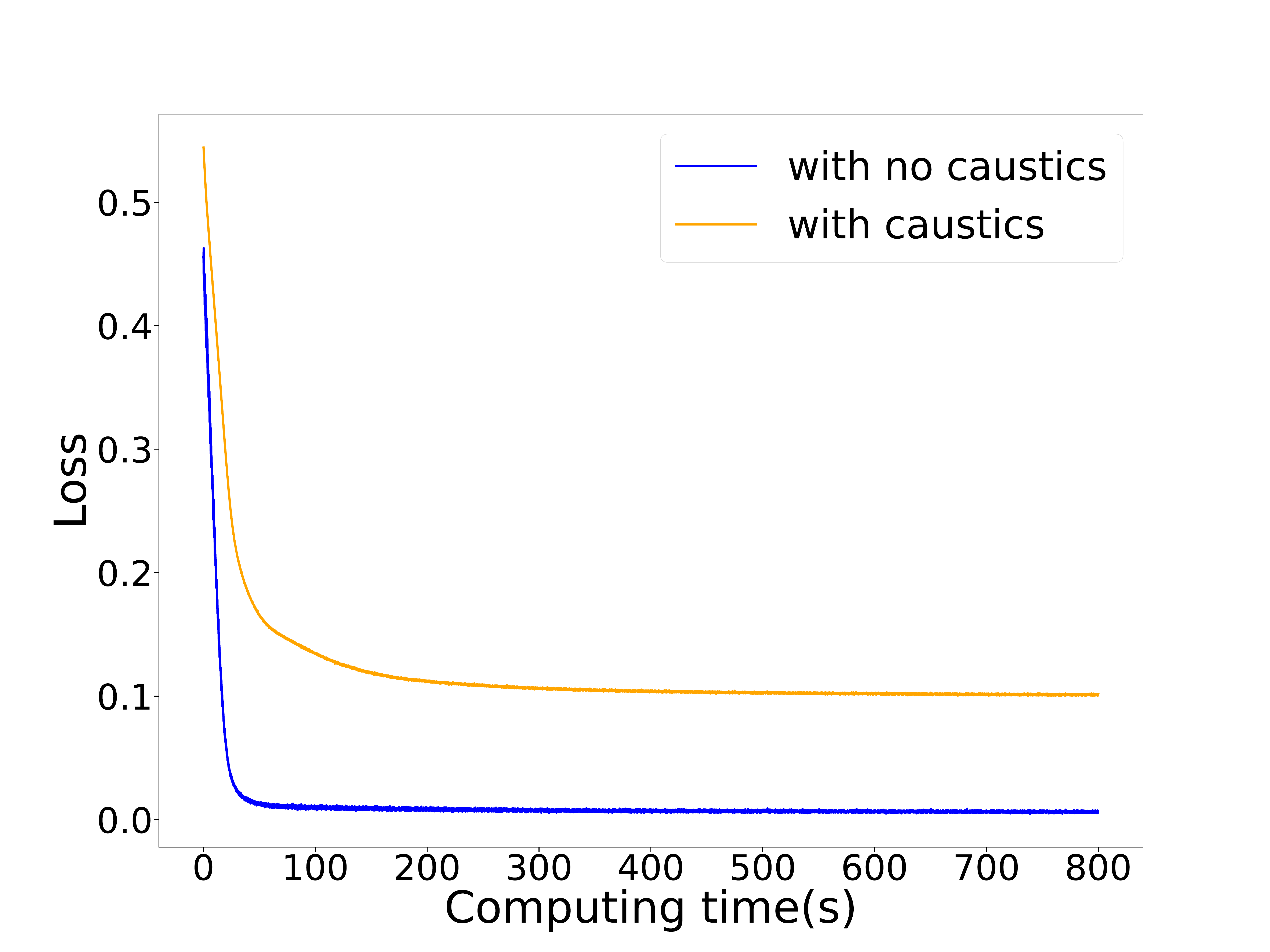}}
        \end{minipage}
    \end{minipage}
    \vspace{-0.3cm}
    \caption{{ \textbf{Left}: Plots of $\boldsymbol{\eta}^\top\nabla\psi_\theta(\cdot, t)$ at time $t=1.8$ for non-continuous $\rho_0 := (\lambda_1\chi_{E_1} + \lambda_2\chi_{E_2})/|E|$ and initial condition $g(x)=\cos(\boldsymbol{\eta}^\top x)$; \textbf{Right}: The case with uniform $\rho_0=\mathcal U(E)$ and initial condition $g(x)=-\cos(\boldsymbol{\eta}^\top x)$. \textbf{Top}: Plots of NN approximation $\boldsymbol{\eta}^\top\nabla\psi(z\boldsymbol{\eta}, t)$ with reference solution for $t\in[0, 3]$; \textbf{Bottom}: Training loss versus wall time(s) for the two cases: one that develops caustics ($g(x) = \cos(\boldsymbol{\eta}^\top x)$) and one that does not ($g(x) = -\cos(\boldsymbol{\eta}^\top x)$).}}\label{fig: 2D_caustic_rho_1_10_and_non_caustics}
\end{figure}

\section{More examples for Solving HJ equations}\label{more-example}

This section continues the discussion from subsection \ref{numerical example separable hamilton }. We apply our method to solve equation \eqref{HJ}  for both separable and non-separable Hamiltonians.

{ 
\subsection{Example with Degenerate Kinetic Energy and Sinusoidal Initial}\label{example: sinusoidal_initial}
{We consider a similar example to the one discussed in Section \ref{subsec: study_near_caustics}, with $d = 20$. Consider the Hamiltonian as the degenerate quadratic kinetic energy $K(p)=\frac{1}{2}p^\top\boldsymbol{\Sigma} p + \tau\boldsymbol{\eta}^\top p$ with $\boldsymbol{\Sigma}=\frac{1}{d}\boldsymbol{1}\boldsymbol{1}^\top$, $\boldsymbol{\eta}=\frac{1}{\sqrt{d}}\boldsymbol{1}$, $\tau=3$. We set the initial value $g(x)=\cos(\sqrt{3}\boldsymbol{\eta}^\top x)$ and $\rho_0 = \mathcal{U}([-4.5, 4.5]^d)$. We solve this equation on $[0, \frac{2}{3}]$. Similar to the discussion in section \ref{subsec: study_near_caustics}, the classical solution $ u (x,t)$ of \eqref{HJ} takes the form $ u (x,t) = f(\boldsymbol{\eta}^\top x, t)$, where $f(\cdot, t):\mathbb{R} \rightarrow \mathbb{R}$ satisfies
\begin{equation*}
  \partial_z f(z,t)=-\sqrt{3}\sin(\sqrt{3}\varphi_t^{-1}(z)), 
\end{equation*}
Here we denote $\varphi_t(\xi)=\xi+t(\tau-\sqrt{3}\sin(\sqrt{3}\xi))$. $\varphi_t$ remains injective as $0\leq t \leq T_*:=\frac13$, and the classical solution to this HJ equation \eqref{HJ} exists on $[0, T_*]$} We demonstrate the numerical solutions in Figure \ref{sinusodial example graph}.
\begin{figure}[htb!]
\begin{subfigure}{.24\textwidth}
  \centering
  \includegraphics[width=\linewidth]{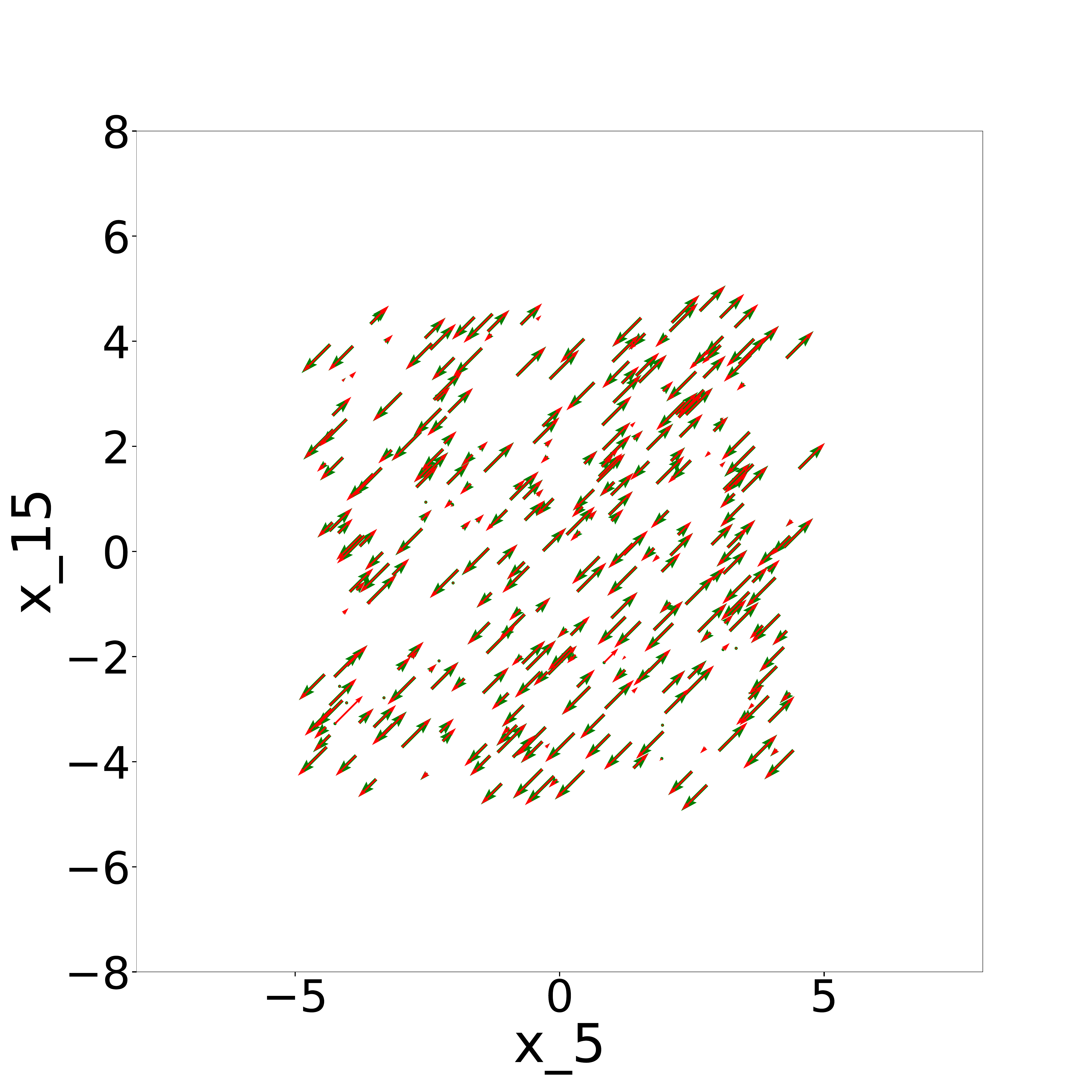}
  \caption{$t=\frac{1}{9}$}
\end{subfigure}
\begin{subfigure}{.24\textwidth}
  \centering
  \includegraphics[width=\linewidth]{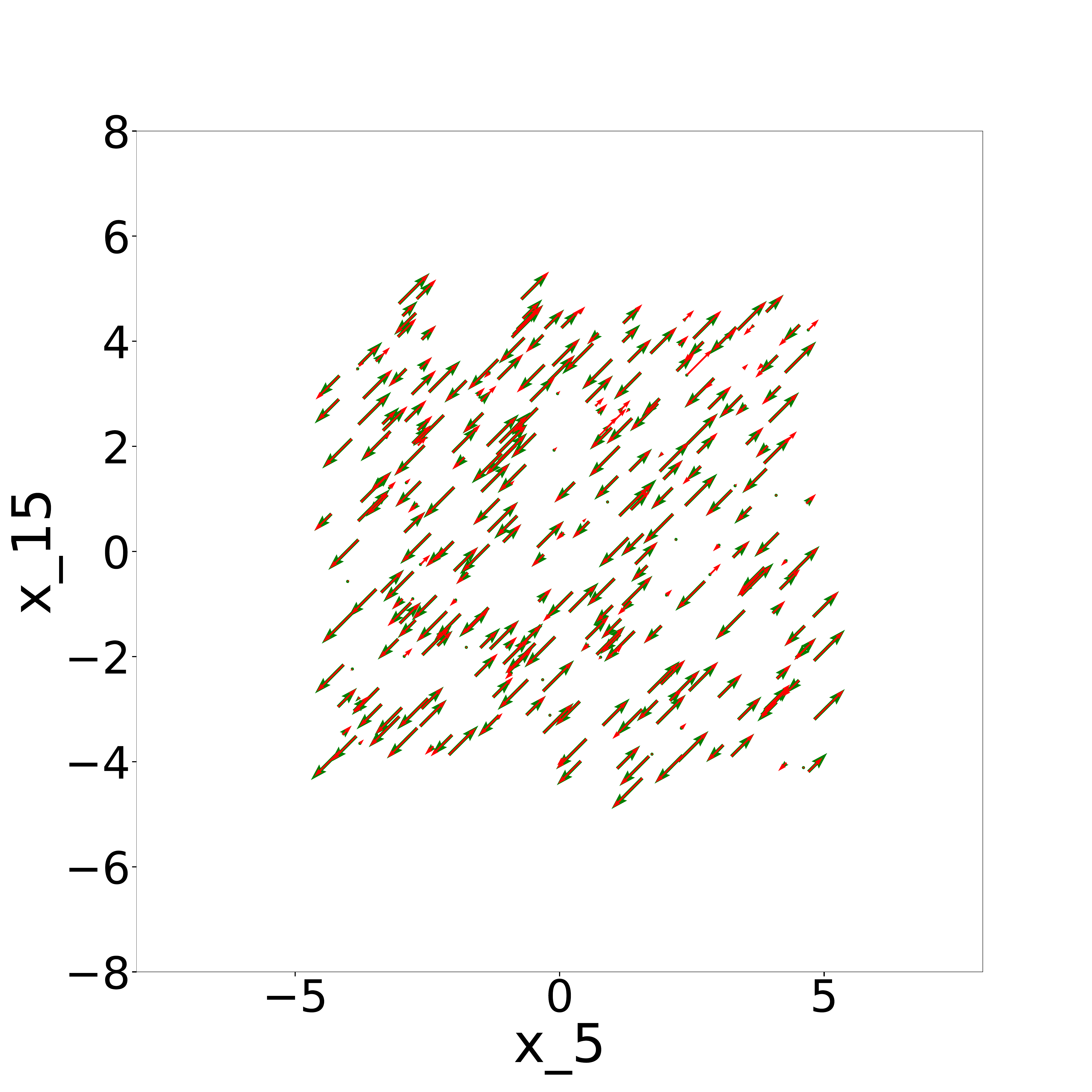}
  \caption{$t=\frac{1}{3}$ ($T_*$)}
\end{subfigure}
\begin{subfigure}{.24\textwidth}
  \centering
  \includegraphics[width=\linewidth]{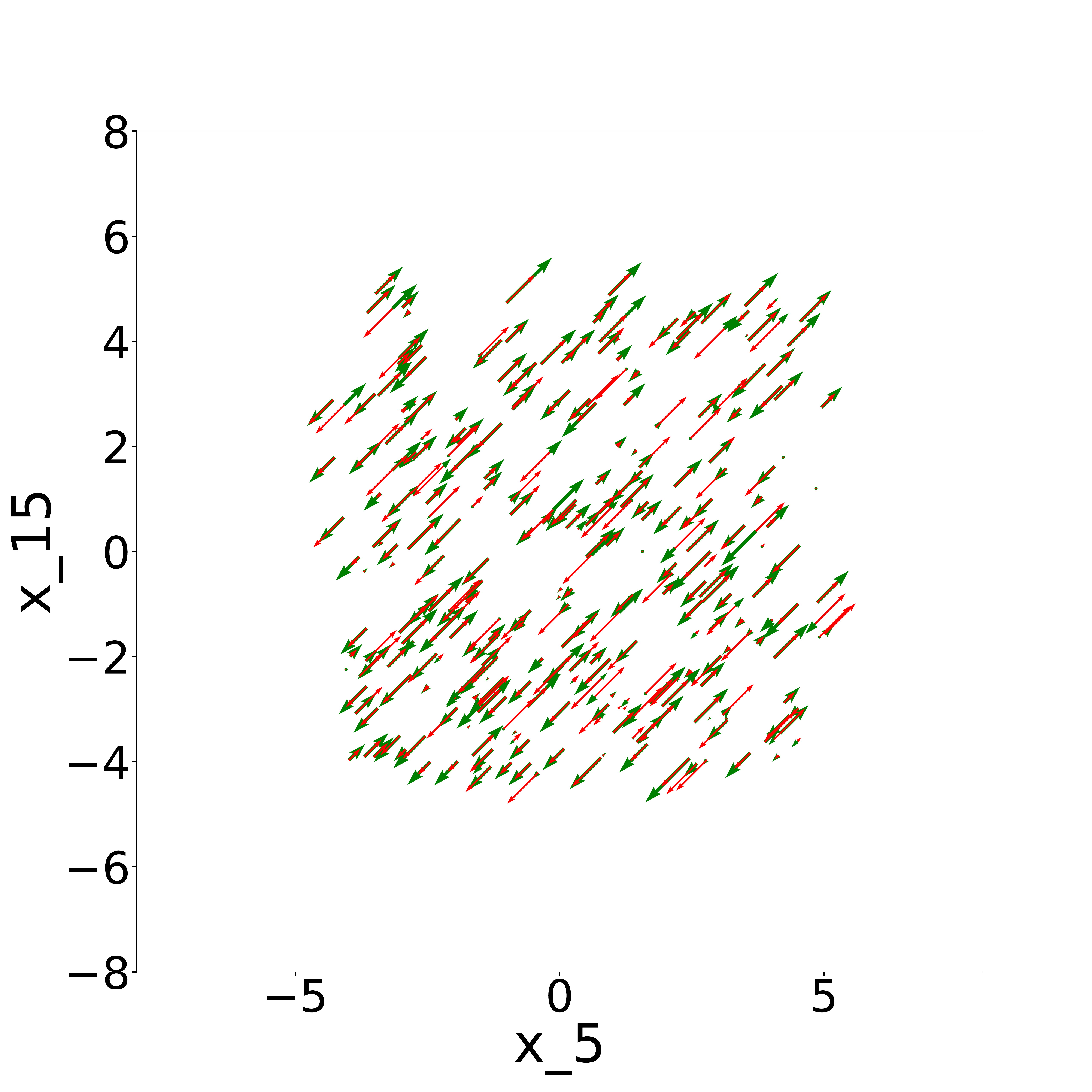}
  \caption{$t=\frac{5}{9}$}
\end{subfigure}
\begin{subfigure}{.24\textwidth}
  \centering
  \includegraphics[width=\linewidth]{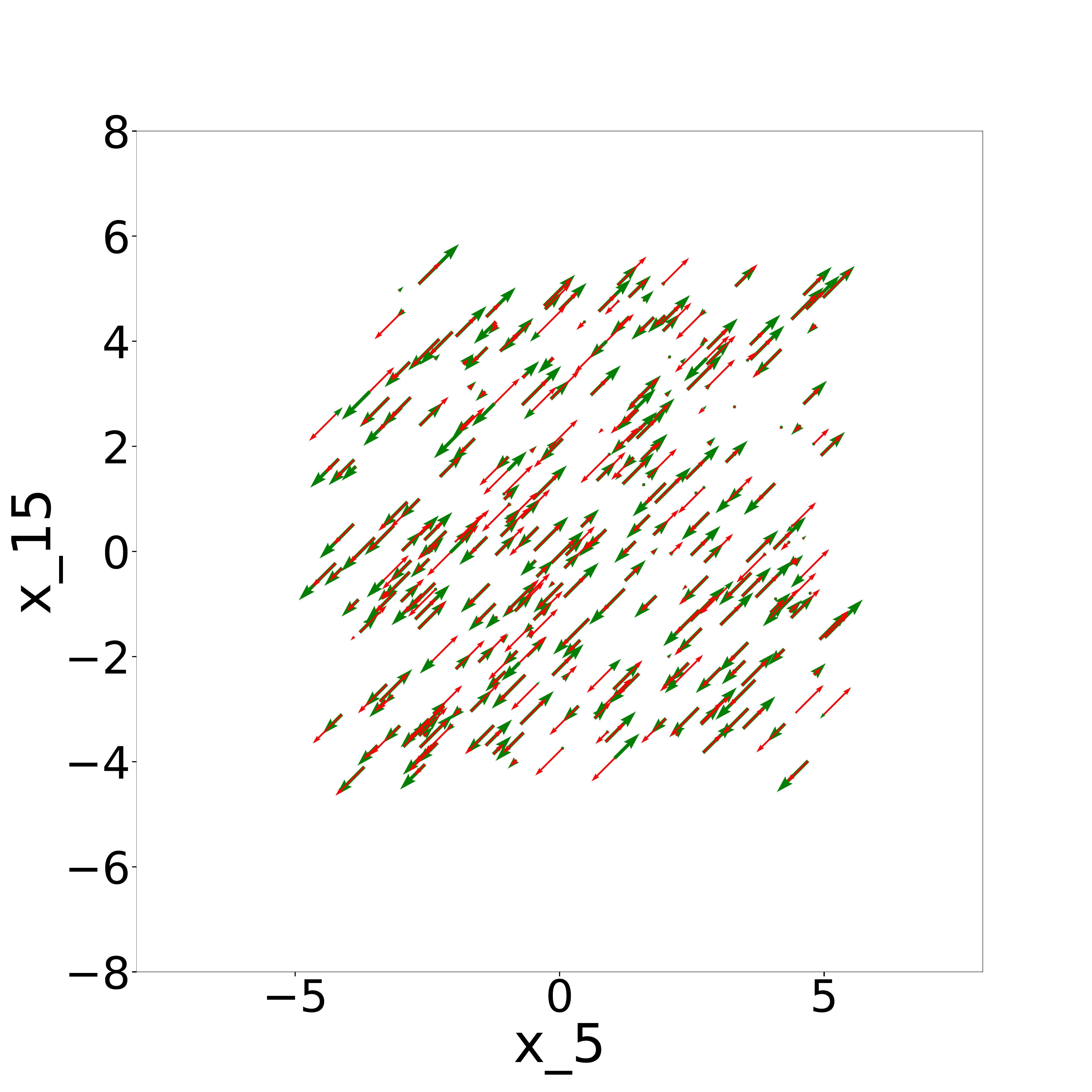}
  \caption{$t=\frac{2}{3}$}
\end{subfigure}
\vspace{-0.3cm}
\caption{Plots of vector fields $\nabla \psi_\theta(\cdot, t)$ (green) with momentums of samples (red) at different time stages on the {$5\text{th}-15\text{th}$} plane.}
\label{sinusodial example graph}
\end{figure}
The comparison between numerical solution and the exact solution along the diagonal line in $\mathbb{R}^{20}$ before $T_*$ is provided in Figure \ref{compare on diag line}. They show good agreement. 
\begin{figure}[htb!]
\begin{subfigure}{.32\textwidth}
  \centering
  \includegraphics[trim={0cm 2cm 0cm 0cm}, clip, width=\textwidth]{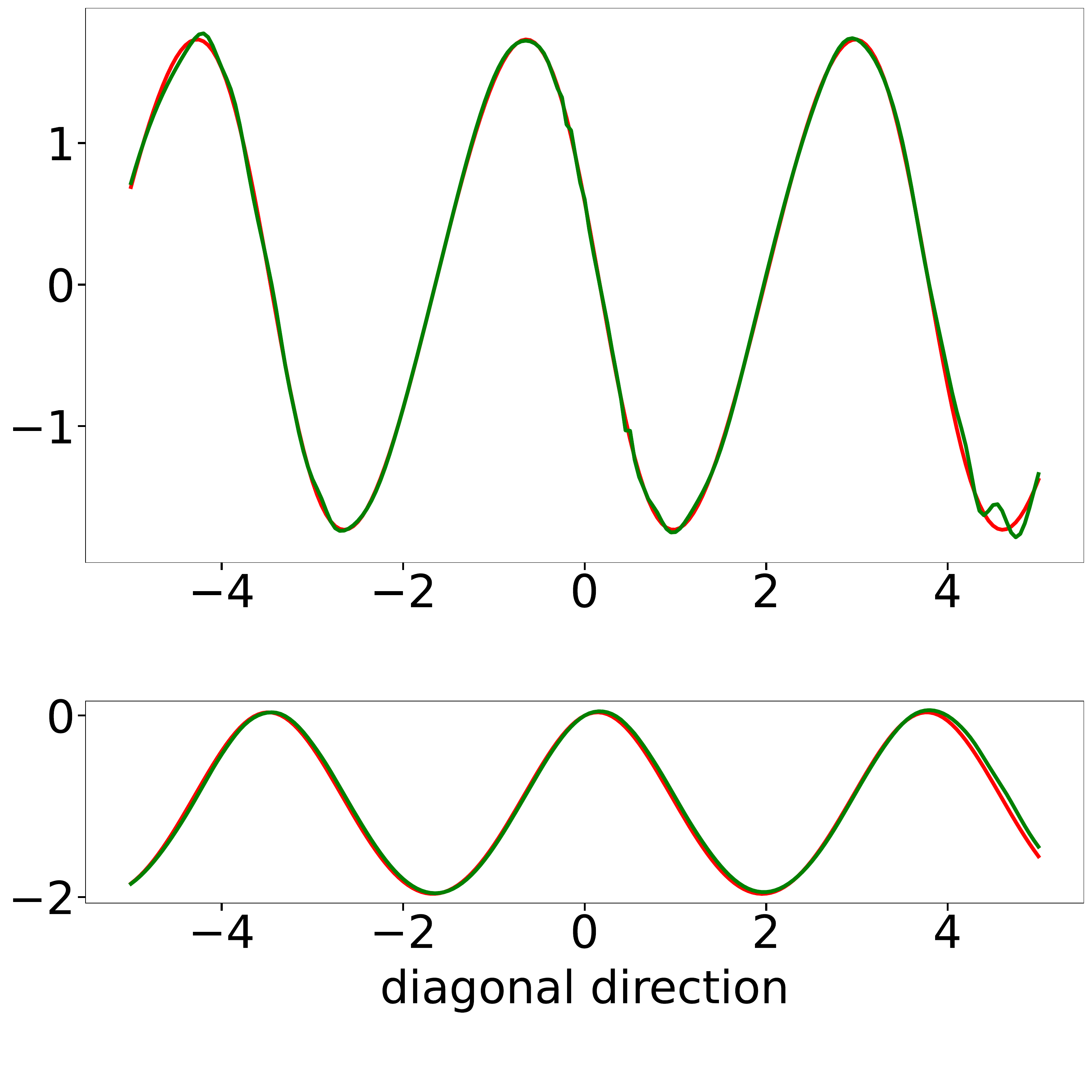}
  \caption{$t=\frac19$}
\end{subfigure}\hfill
\begin{subfigure}{.32\textwidth}
  \centering
  \includegraphics[trim={0cm 2cm 0cm 0cm}, clip, width=\textwidth]{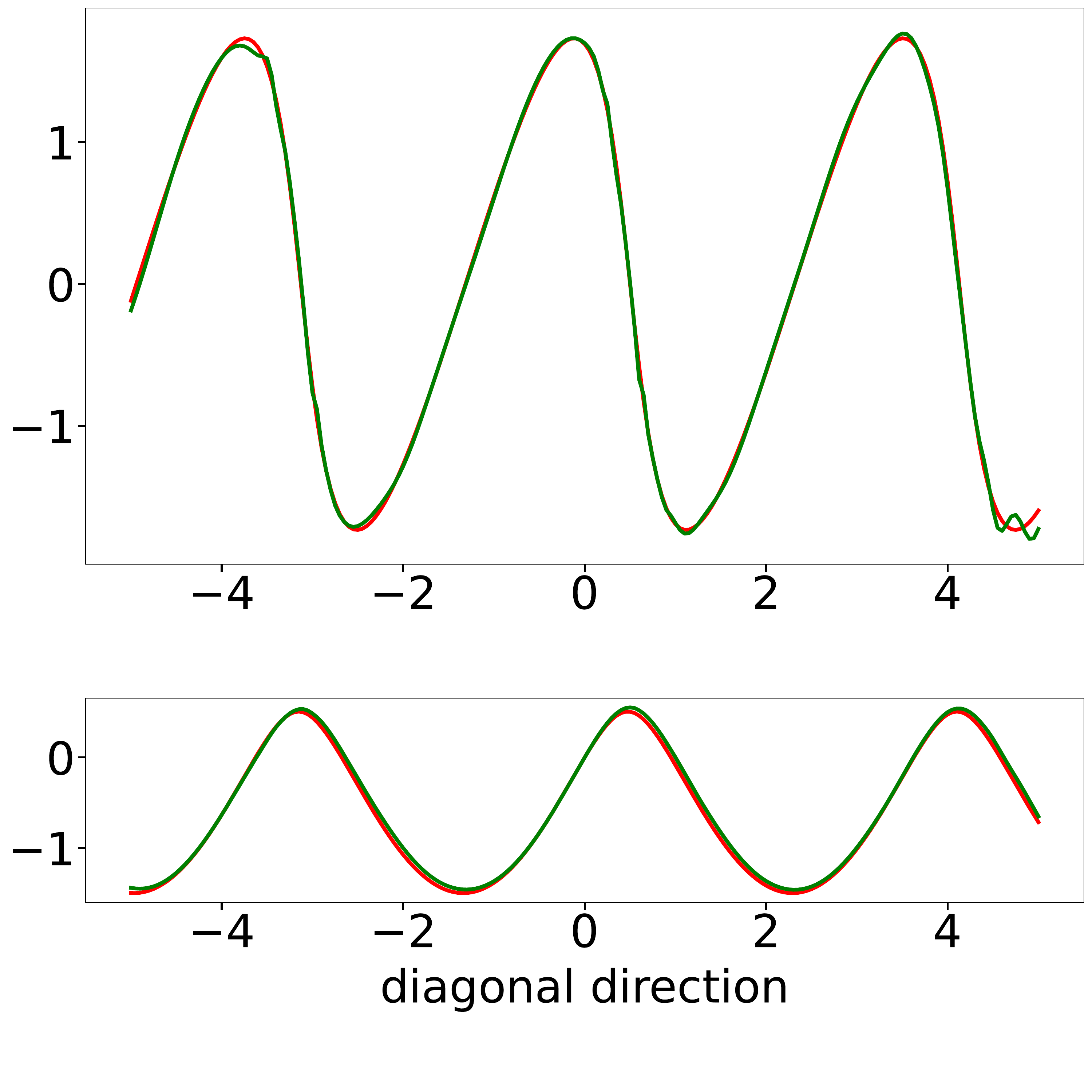}
  \caption{$t=\frac16$}
\end{subfigure}\hfill
\begin{subfigure}{.32\textwidth}
  \centering
  \includegraphics[trim={0cm 2cm 0cm 0cm}, clip, width=\textwidth]{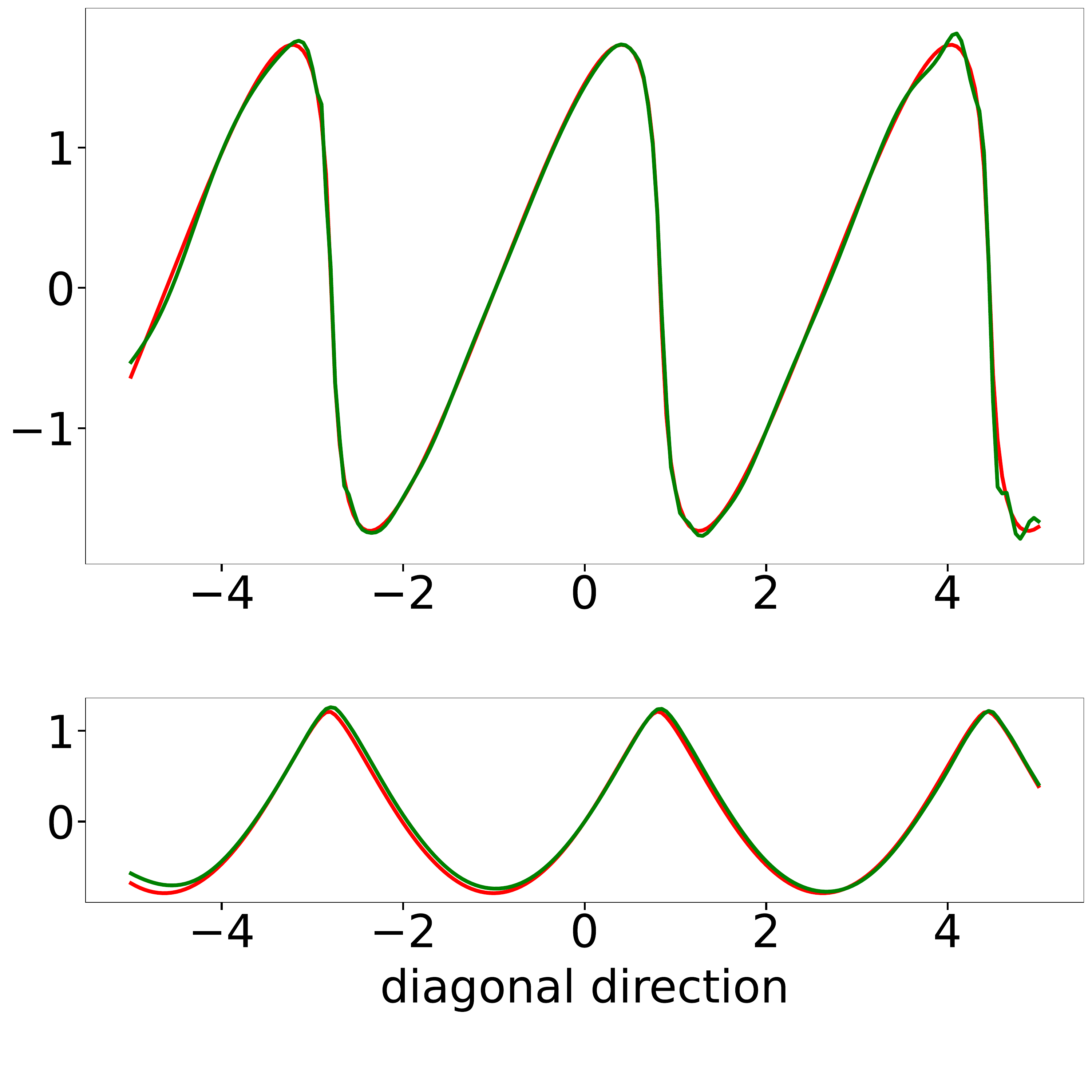}
  \caption{$t=\frac{5}{18}$}
\end{subfigure}\hfill
\vspace{-0.3cm}
\caption{{Comparison between numerical solution (green) and exact solution (red) along the diagonal line. \textbf{First row:} directional derivative $\boldsymbol{\eta}^\top\nabla\psi_\theta(x,t)$ (green) and $\boldsymbol{\eta}^\top\nabla  u(x,t)$ (red); \textbf{Second row:} $\psi_\theta(x,t)$ (green) and $\psi(x,t)$ (red).}}\label{compare on diag line}
\end{figure}
We further plot the loss $\frac{1}{N}\sum_{k=1}^N|e_{t_i}^{(k)}|^2$ (recall $e_{t_i}^{(k)}$ defined in \eqref{def every particle error}) versus time $t_i$ in Figure \ref{error vs time plots} (left subfigure). One can observe that the loss remains small before $T_*=\frac{1}{3}$ and increases significantly afterward.

\subsection{Example with Sinusoidal Potential and Gaussian Mixture as the Initial Distribution }\label{example: Shi Jin}
We now consider {the} Hamiltonian with a sinusoidal potential energy $H(x,p) = \frac{1}{2}|p|^2 + \cos(2x_{i_1}+0.4) + \cos(2x_{i_2}+0.4) $, {the} initial condition $u(x,0)=g(x)=\sin(x_{i_1}+0.15)+\sin(x_{i_2}+0.15)$, and the initial distribution $\rho_0=\frac{1}{2}(\mathcal{N}(\mu_1, I) + \mathcal{N}(\mu_2, I))$, 
where $\mu_1=-\frac{\pi}{2}(\boldsymbol{e}_{i_1} + \boldsymbol{e}_{i_2})$ and $\mu_2=\frac{\pi}{2}(\boldsymbol{e}_{i_1} + \boldsymbol{e}_{i_2})$. Here $\boldsymbol{e}_i$ denotes the vector with $i$-th entry being $1$ and remaining entries all $0$; and $i_1,i_2$ are two different integers between $1$ and $d$. In this example, we set $d=30$, $i_1 = 10, i_2 = 20$. We solve the equation on $[0, 1]$. A similar equation in one dimension was first considered in \cite{jin2005computing} and \cite{jin2011mathematical} in which the multivalued physical observables for the semiclassical limit of the Schr\"{o}dinger equation was computed.

We demonstrate the numerical solutions in Figure \ref{more general  30d plot vector field}. 
\begin{figure}[htb!]
\begin{subfigure}{.19\textwidth}
  \centering
  \includegraphics[width=\linewidth]{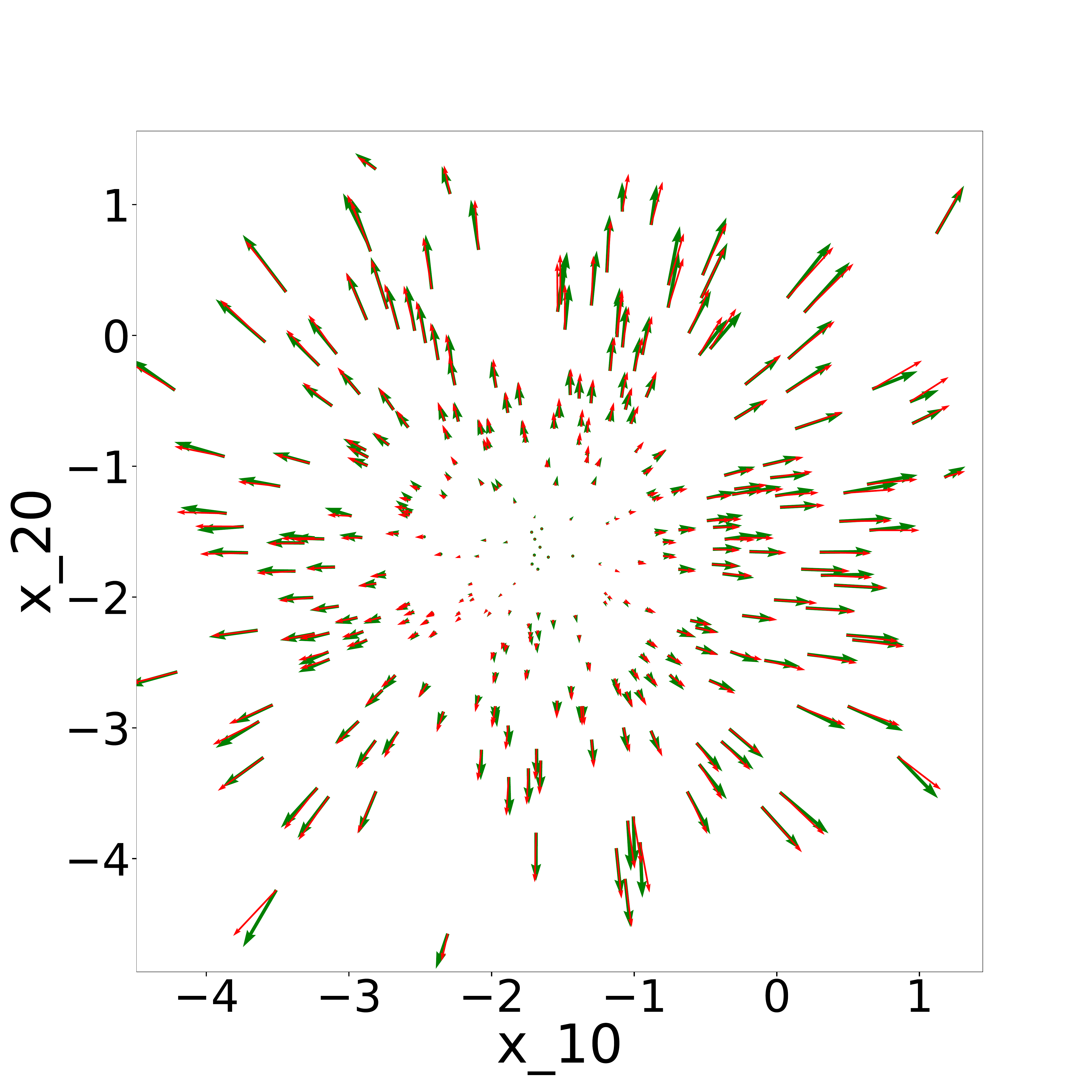}
  \caption{$t=0.2$}
\end{subfigure}
\begin{subfigure}{.19\textwidth}
  \centering
  \includegraphics[width=\linewidth]{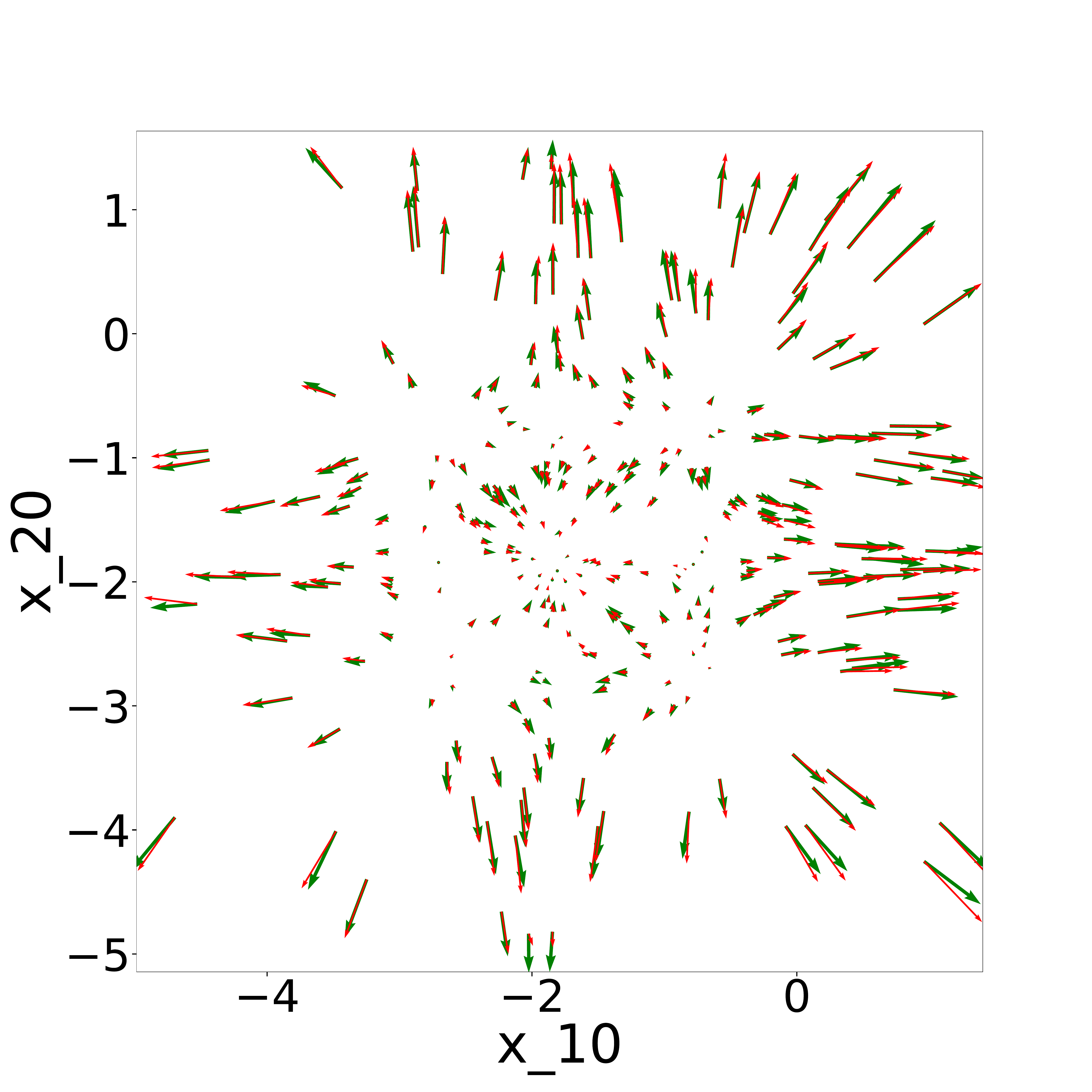}
  \caption{$t=0.4$}
\end{subfigure}
\begin{subfigure}{.19\textwidth}
  \centering
  \includegraphics[width=\linewidth]{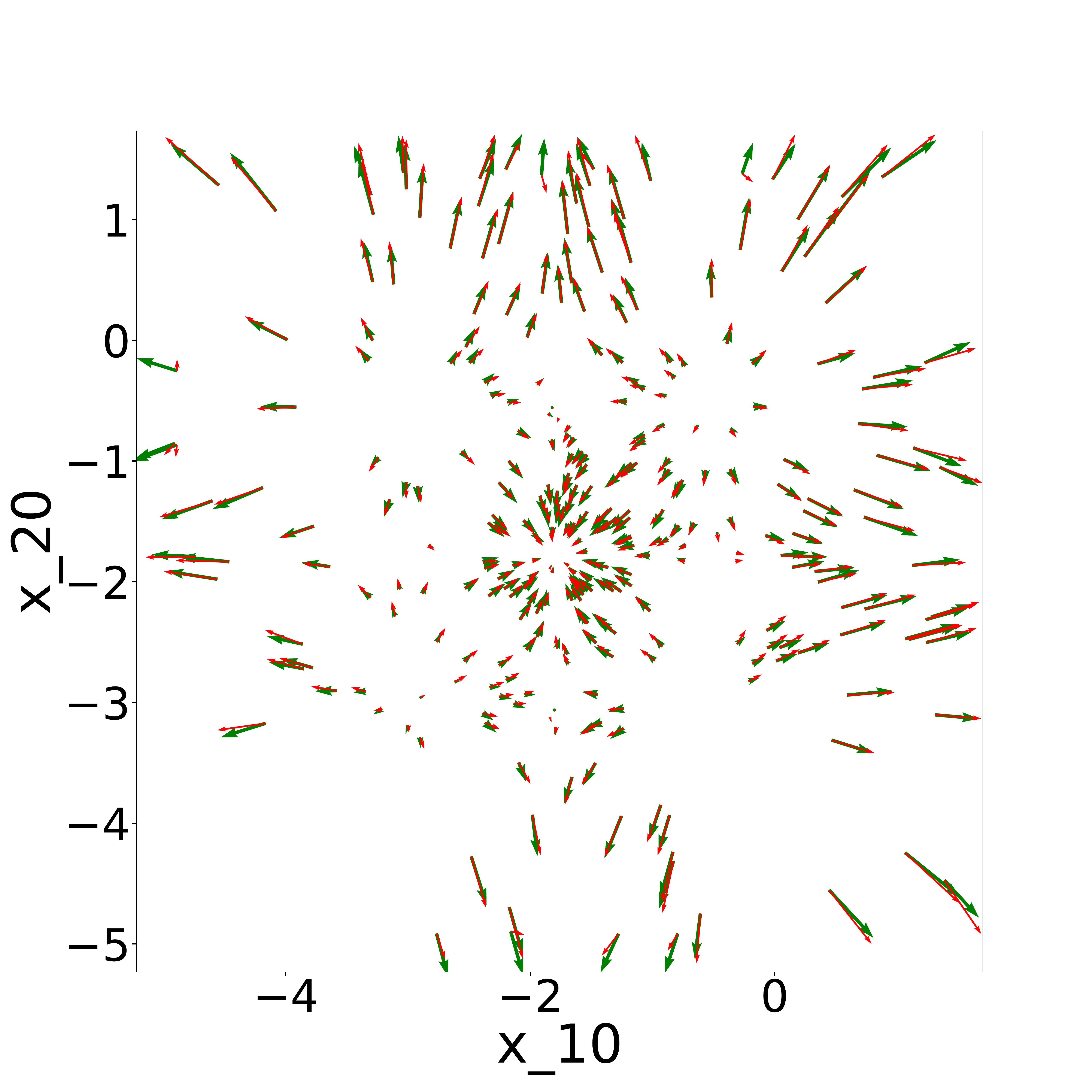}
  \caption{$t=0.6$}
\end{subfigure}
\begin{subfigure}{.19\textwidth}
  \centering
  \includegraphics[width=\linewidth]{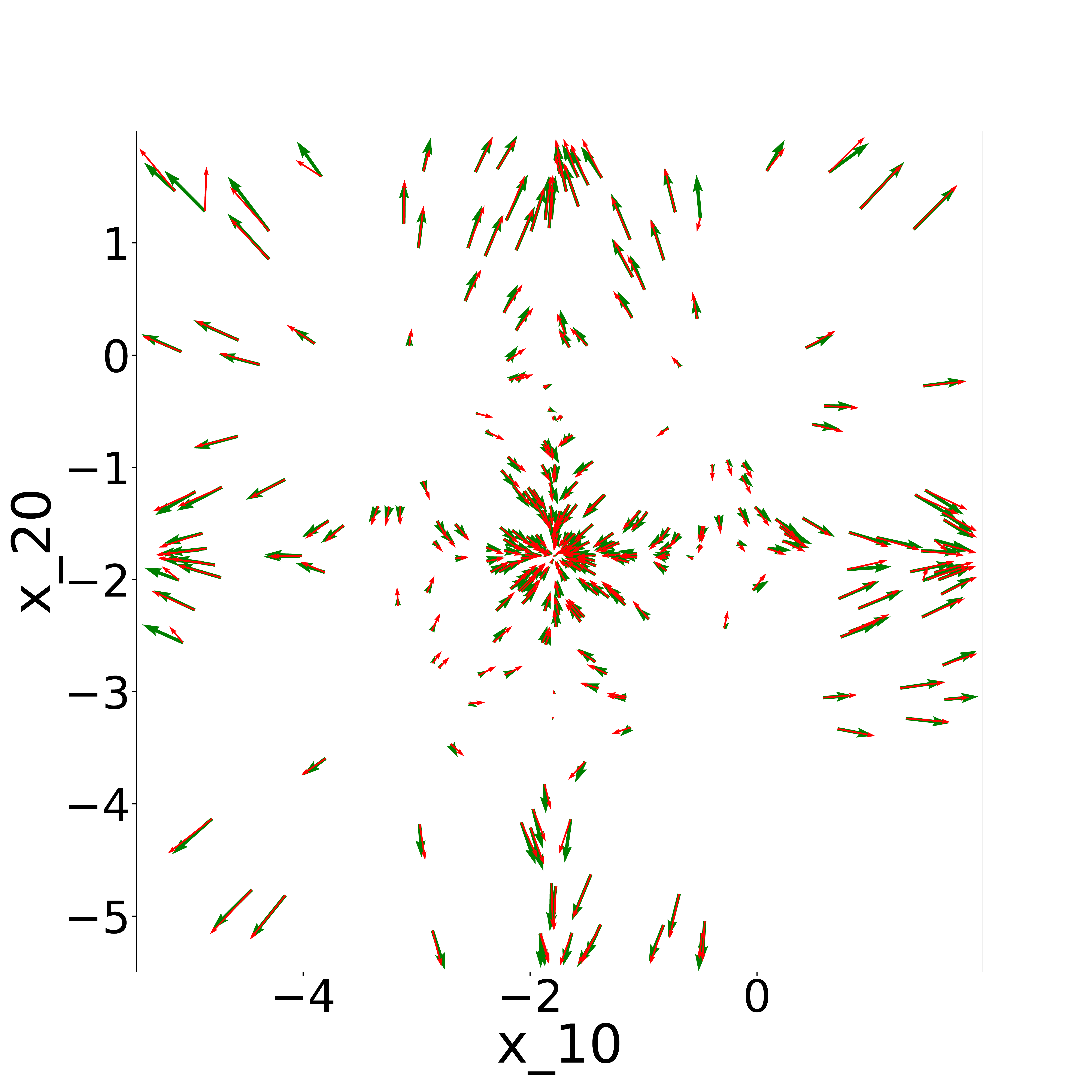}
  \caption{$t=0.8$}
\end{subfigure}
\begin{subfigure}{.19\textwidth}
  \centering
  \includegraphics[width=\linewidth]{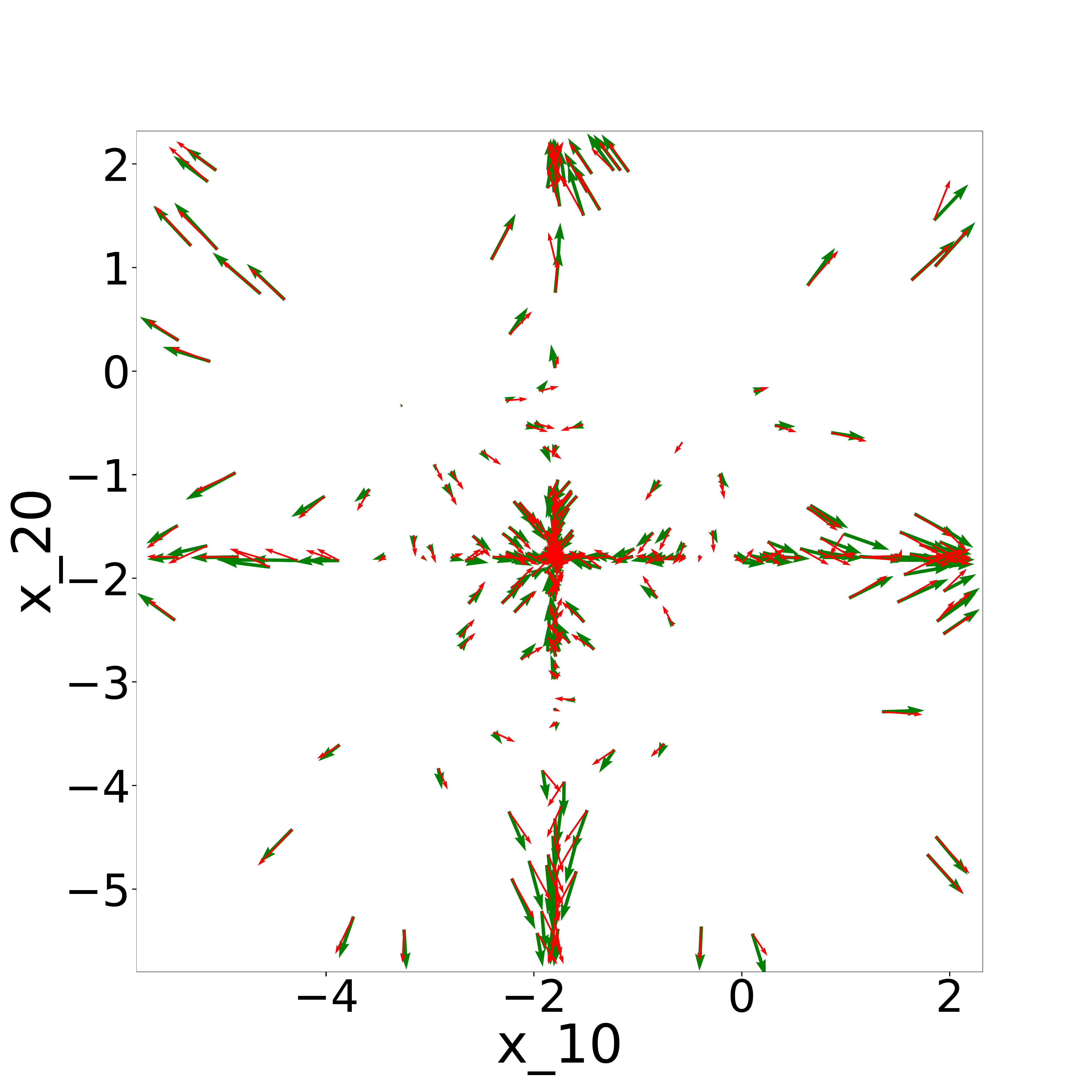}
  \caption{$t=1.0$}
\end{subfigure}
\vspace{-0.3cm}
\caption{{Plots of vector fields $\nabla \psi_\theta(\cdot, t)$ (green) with momentums of samples (red) at different time stages on the {$10\text{th}-20\text{th}$} plane.}}
\label{more general  30d plot vector field}
\end{figure}
Similarly, we plot the loss $\frac{1}{N}\sum_{k=1}^N|e_{t_i}^{(k)}|^2$ versus time nodes $t_i$ in Figure \ref{error vs time plots}(middle subfigure), which shows a significant increase in loss after $t=0.4$. We don't know the exact solution for this example. The numerical result suggests that the kinks of the solution may develop at $T_* \approx 0.4$.

\subsection{Example of non-separable Hamiltonian}\label{example: nonseparable hamiltonian }
In this example, we consider the following non-separable Hamiltonian 
\begin{equation} 
  H(x,p) = \frac{1}{2}(|x|^2+1)(|p|^2+1).   \label{def: non separable Hamiltonian }
\end{equation}
We take the initial value $u(x, 0) = g(x) = 0$ {and solve this equation} on $[0, 1]$.  We set the initial distribution $\rho_a = \mathcal N(0, 2I)$ {and the dimension $d=10$.}
We adopt the explicit symplectic scheme (with $\omega=10$) proposed in \cite{TaoPhysRevE} to integrate the Hamiltonian system \eqref{alg: Hamilton system} associated with the Hamiltonian \eqref{def: non separable Hamiltonian }. {The phase portraits are provided in the left of Figure \ref{non separable Hamiltonian : graph and 2d plot vector field }. Corresponding numerical results are presented in the right figures of Figure \ref{non separable Hamiltonian : graph and 2d plot vector field }.} The gradient field and the momentum match well before $t=0.4$ and after $t=0.9$. This is also verified in the $\frac{1}{N}\sum_{k=1}^N|e_{t_i}^{(k)}|^2$-versus-$t_i$ plot presented in Figure \ref{error vs time plots} (right subfigure).
\begin{figure}[htbp]
    \centering
    \begin{minipage}{0.39\textwidth}
        \centering
        \includegraphics[trim={7cm 5.5cm 20cm 24cm}, clip, width=\linewidth]{}
    \end{minipage}
    \begin{minipage}{0.6\textwidth}
        \centering
        \begin{minipage}{0.32\textwidth}
            \subcaptionbox*{}{\includegraphics[trim={5cm 1.8cm 3cm 2cm},clip, width=\linewidth]{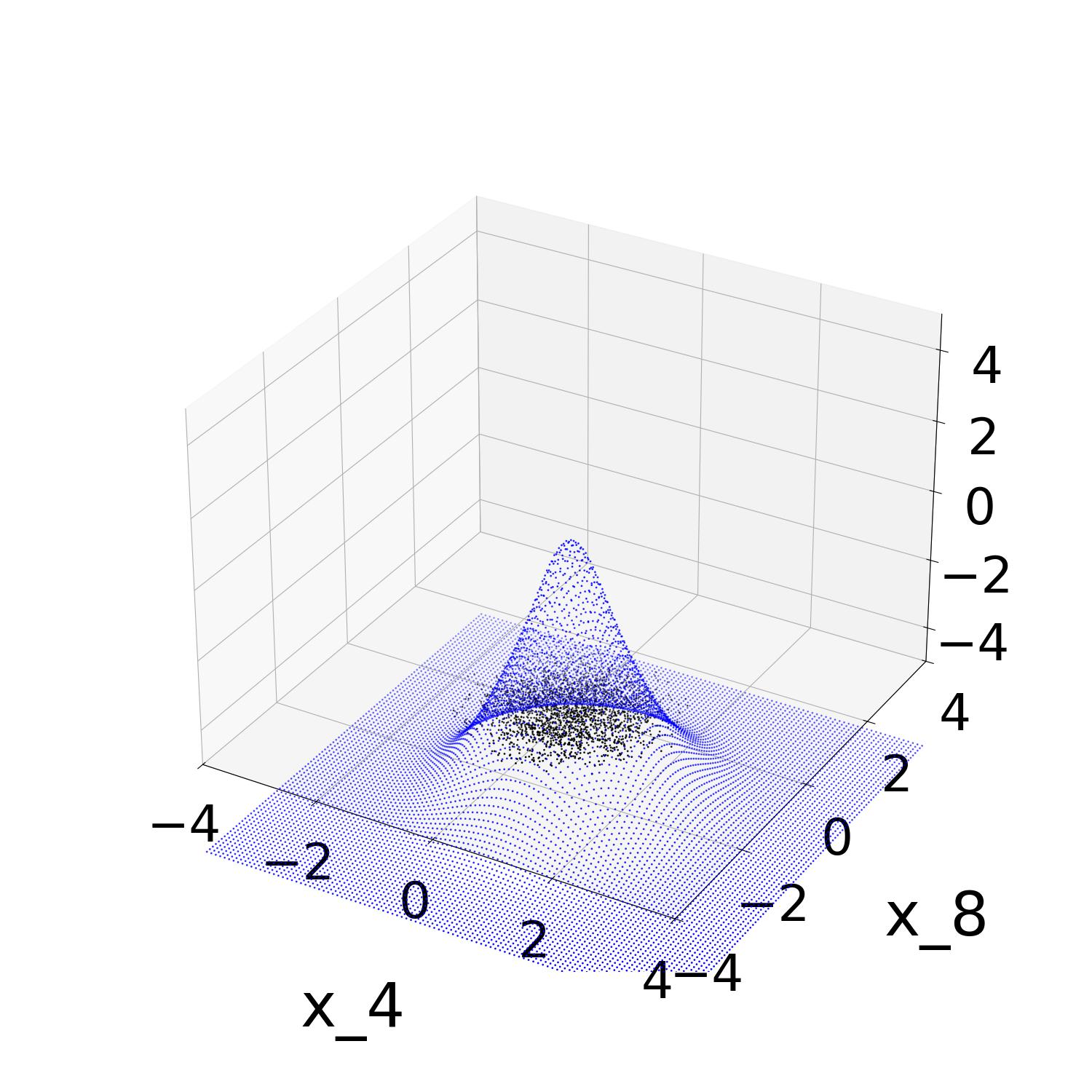}}\vspace{-6.5mm}
            \subcaptionbox*{$t=0.3$}{\includegraphics[trim={2cm 6cm 11cm 15cm},clip, width=\linewidth]{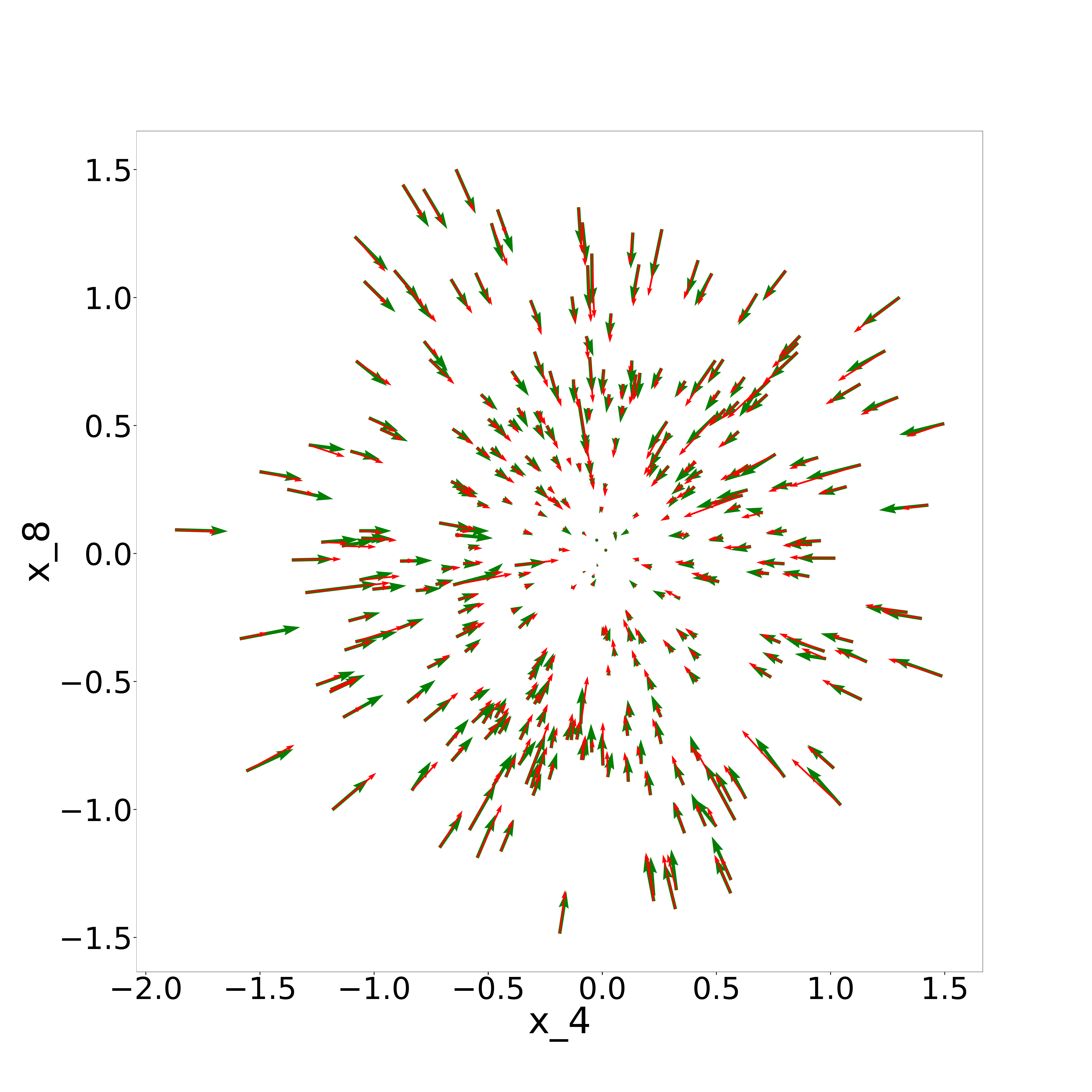}}
        \end{minipage}
        \begin{minipage}{0.32\textwidth}
            \subcaptionbox*{}{\includegraphics[trim={5cm 1.8cm 3cm 2cm},clip, width=\linewidth]{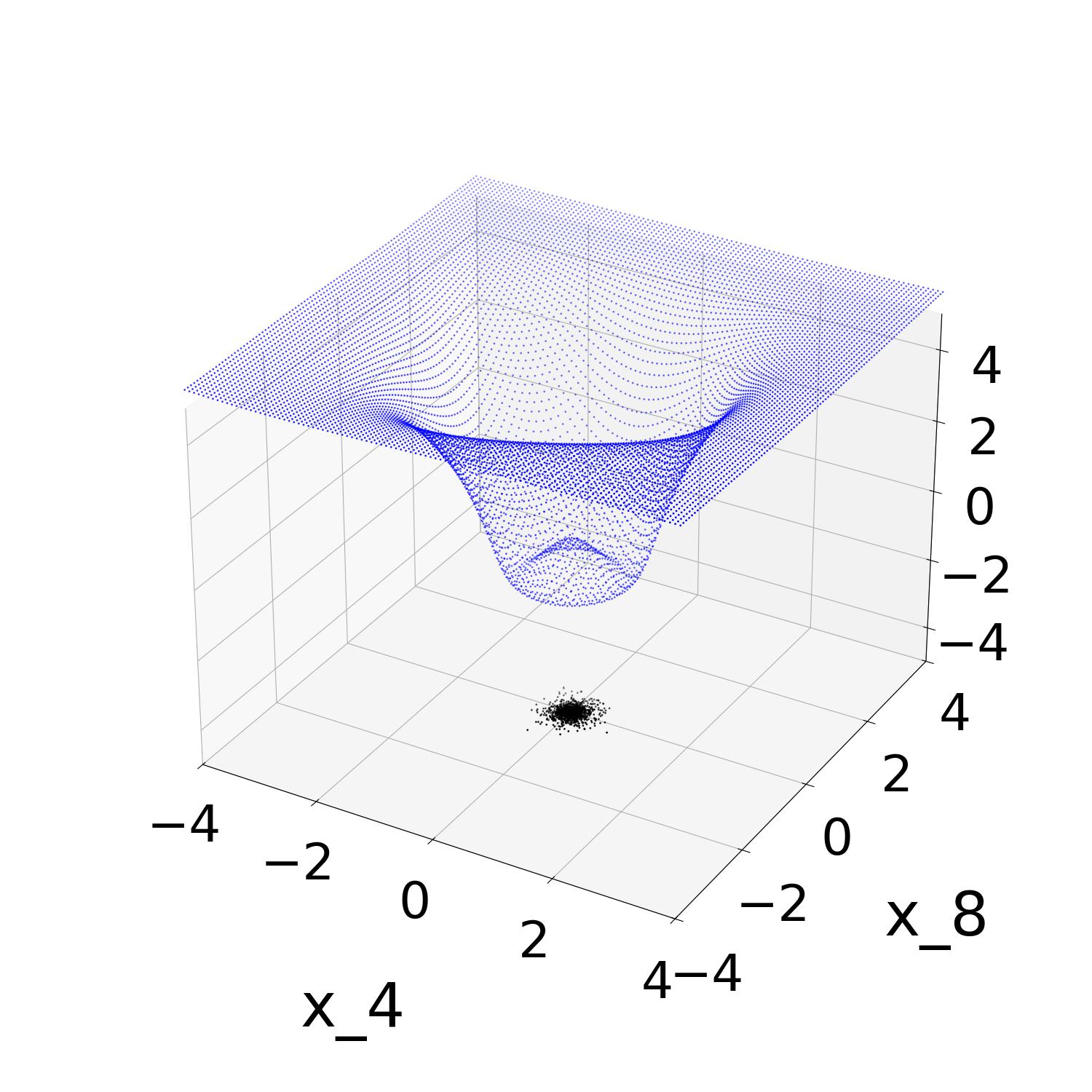}}\vspace{-6.5mm}
            \subcaptionbox*{$t=0.6$}{\includegraphics[trim={2cm 6cm 11cm 15cm},clip, width=\linewidth]{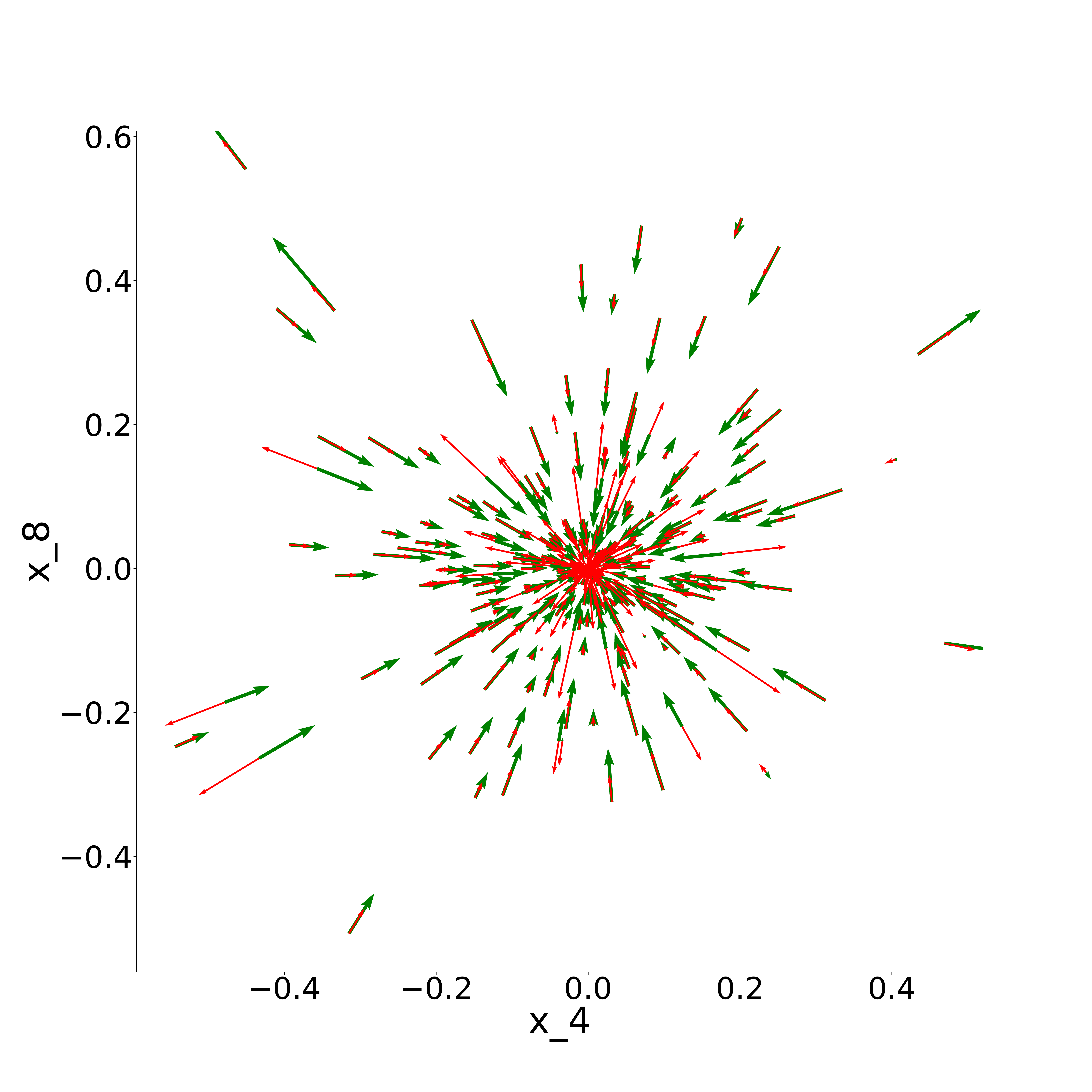}}
        \end{minipage}
        \begin{minipage}{0.32\textwidth}
            \subcaptionbox*{}{\includegraphics[trim={5cm 1.8cm 3cm 2cm},clip, width=\linewidth]{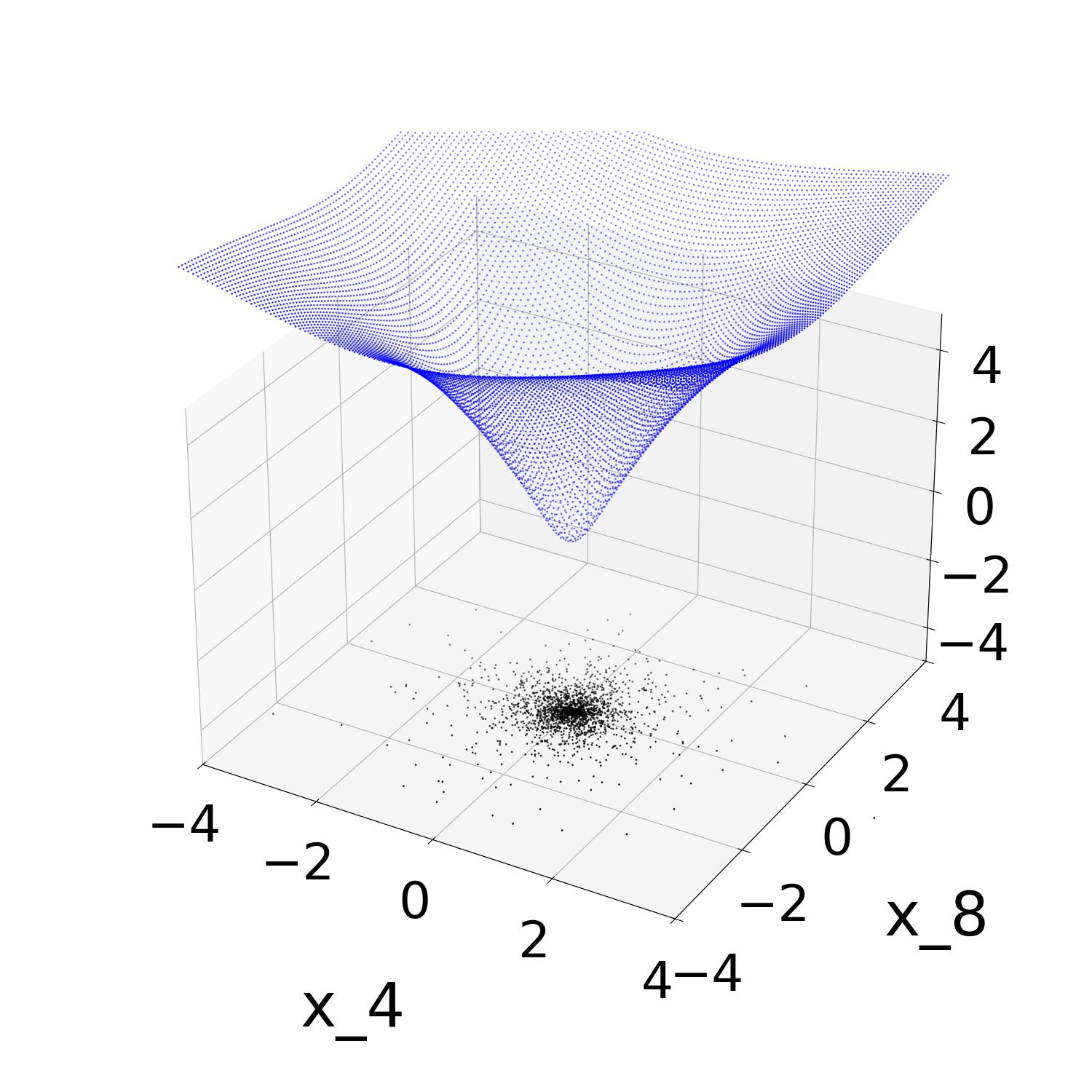}}\vspace{-6.5mm}
            \subcaptionbox*{$t=0.9$}{\includegraphics[trim={2cm 6cm 11cm 15cm},clip, width=\linewidth]{numerical_examples/non-separable_Hamiltonian_modif/gradient_phif_with_momentum_0.6.pdf}}
        \end{minipage}
    \end{minipage}
    \vspace{-0.3cm}
    \caption{{\textbf{Left:} Phase portraits of the Hamiltonian system associated with non-separable Hamiltonian \eqref{def: non separable Hamiltonian }. Here $0\leq t \leq 1$. We visualize the portraits by projecting the trajectories onto the first component of $x$ and $p$. Different colors separate the time intervals: green-$[0,0.2)$, blue-$[0.2,0.4)$, orange-$[0.4,0.6)$, red-$[0.6,8)$, pink-$[0.8,1.0)$. \textbf{Right:} (\textbf{Upper row}) Graphs of the numerical solution $\psi_\theta$ at different time stages on the {$4\text{th}-8\text{th}$} plane. (\textbf{Lower row}) Plots of $\nabla\psi_\theta(\cdot, t)$ (green) with the momentum of samples (red) at $t=0.3,0.6,0.9$.}}\label{non separable Hamiltonian : graph and 2d plot vector field }
\end{figure}

\begin{figure}[htb!]
    \centering
    \begin{subfigure}{0.24\textwidth}
        \includegraphics[width=\textwidth]{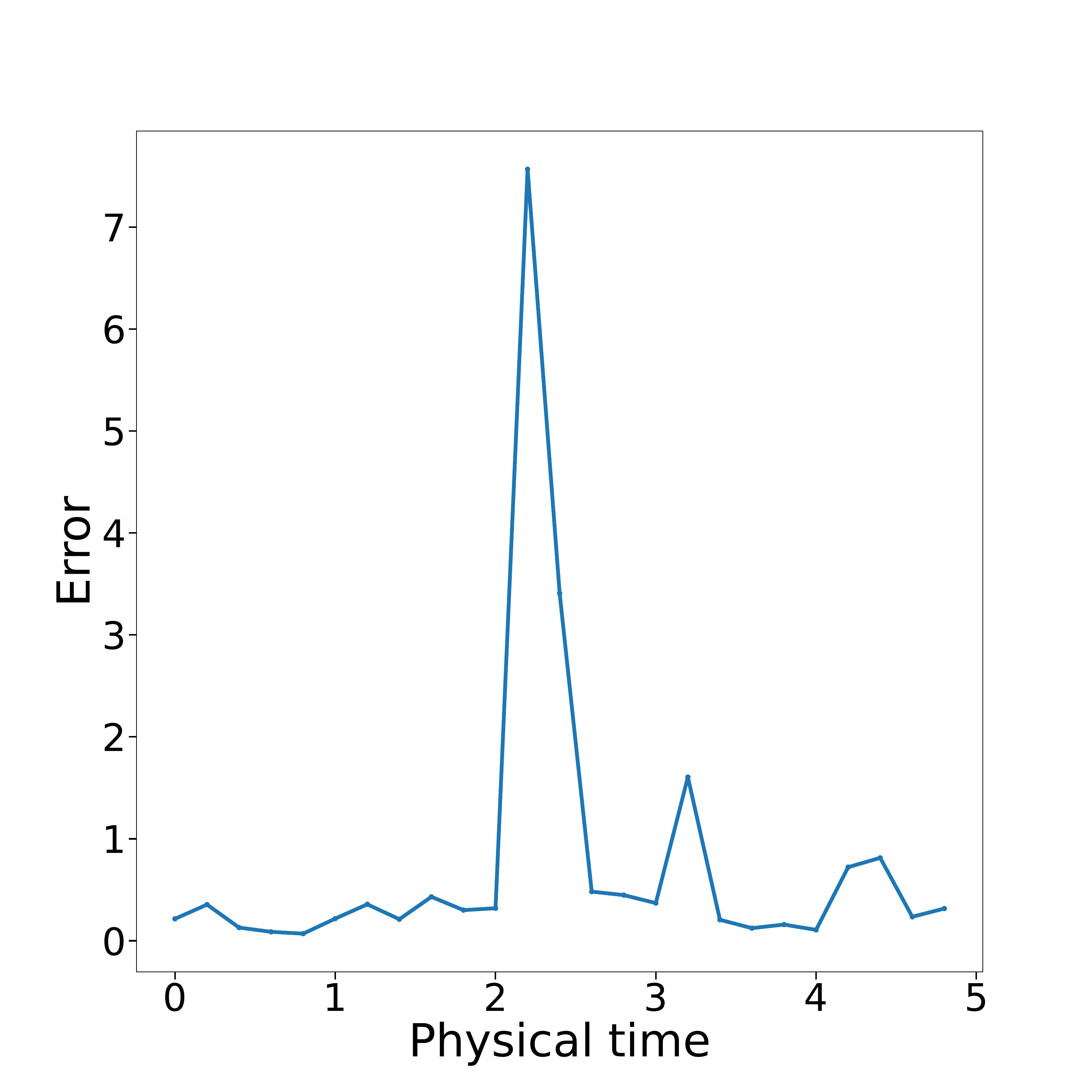}
        \subcaption{{Example} \ref{example: harmonic}}
    \end{subfigure}
    \begin{subfigure}{0.24\textwidth}
        \includegraphics[width=\textwidth]{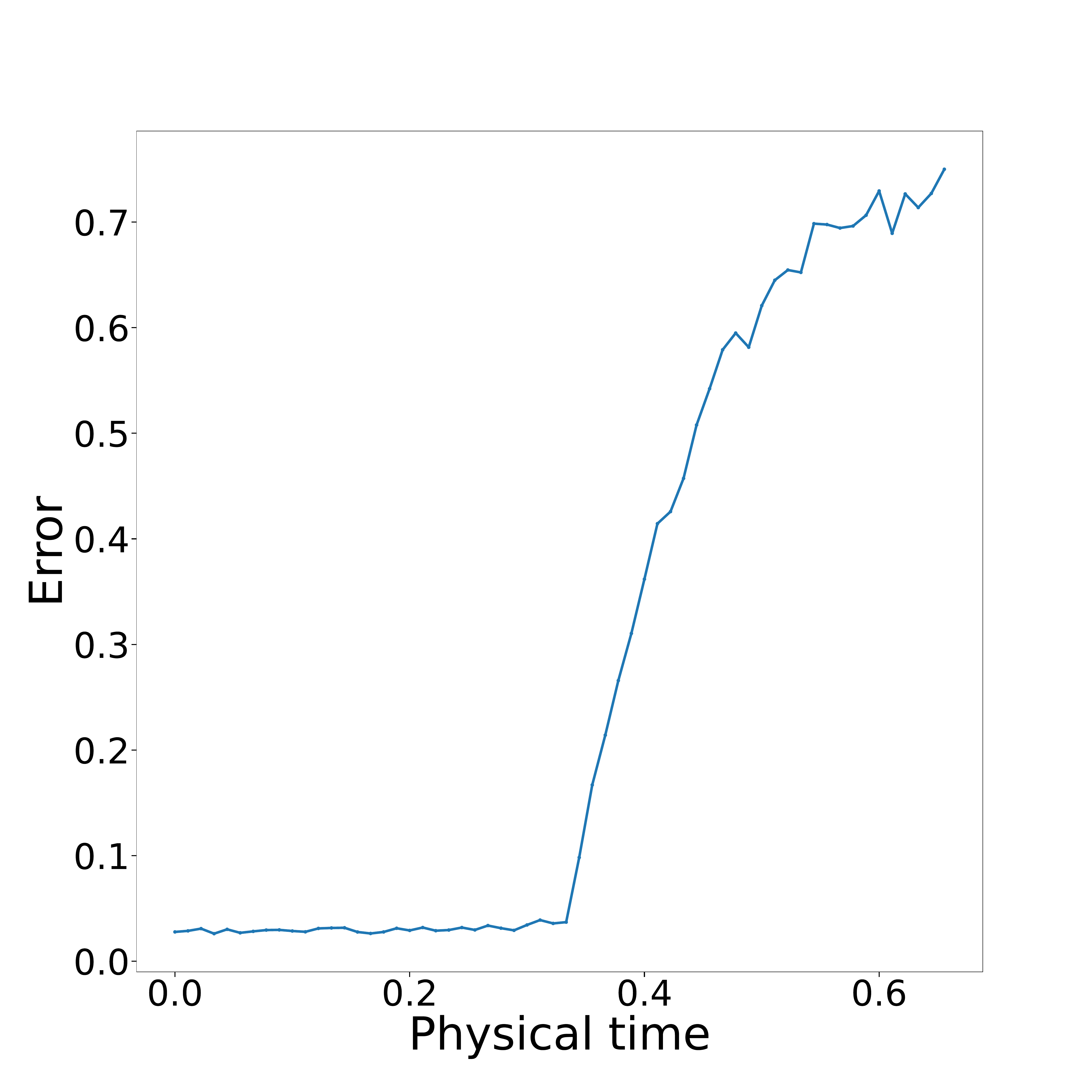}
        \subcaption{{Example} \ref{example: sinusoidal_initial}}
    \end{subfigure}
    \begin{subfigure}{0.24\textwidth}
        \includegraphics[width=\textwidth]{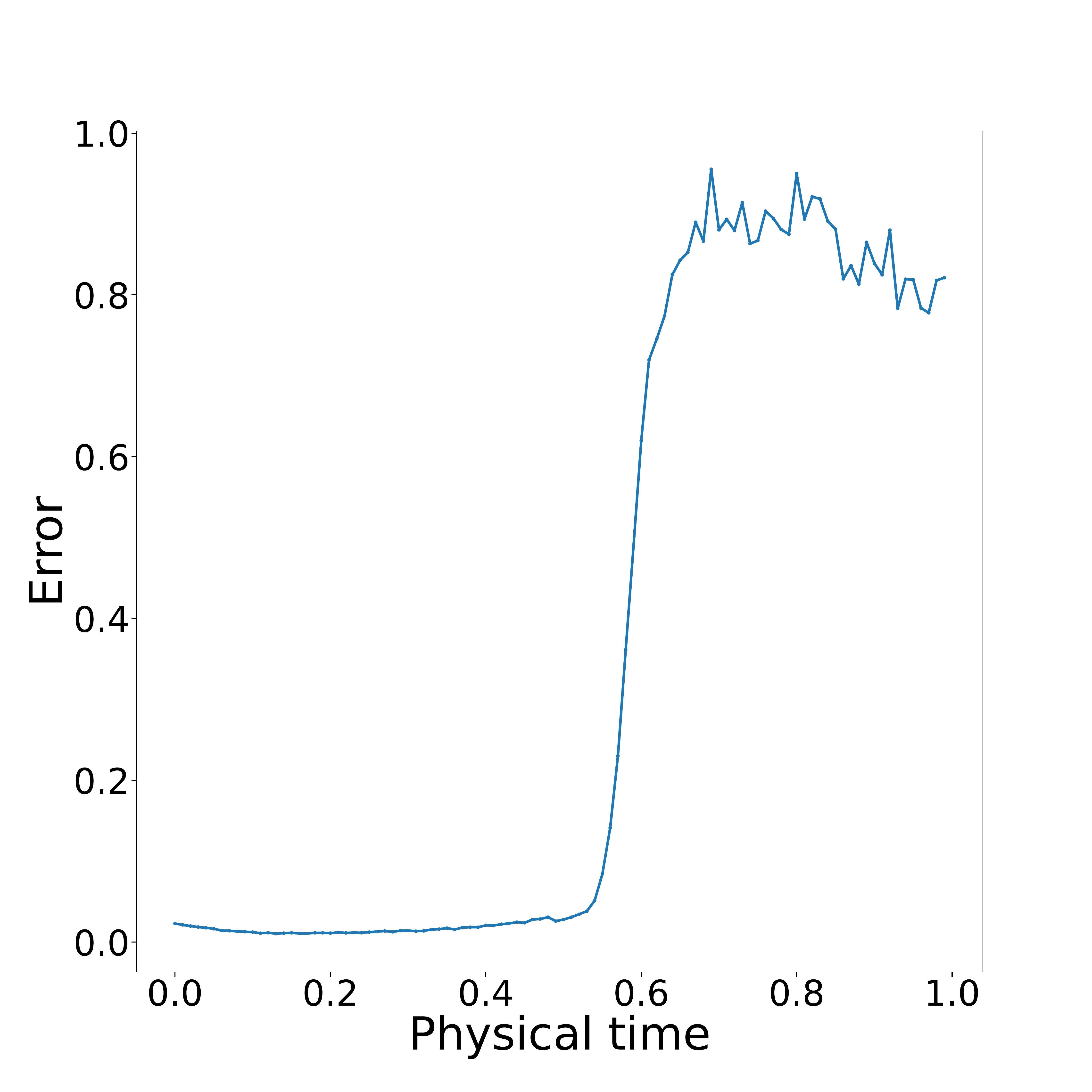}
        \subcaption{{Example} \ref{example: Shi Jin}}
    \end{subfigure}
        \begin{subfigure}{0.24\textwidth}
        \includegraphics[width=\textwidth]{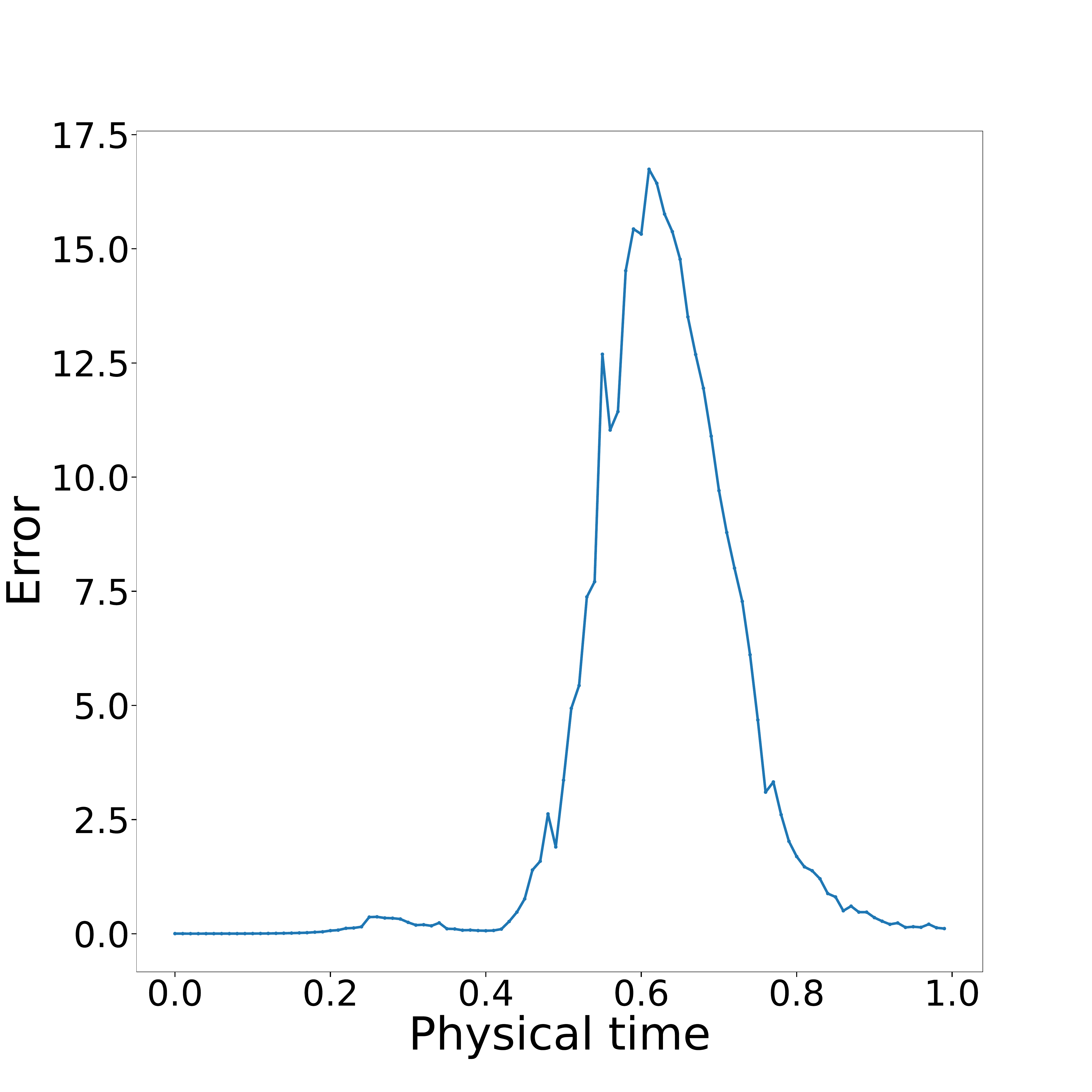}
        \subcaption{{Example} \ref{example: nonseparable hamiltonian }}
    \end{subfigure}
    \vspace{-0.3cm}
    \caption{ {Plots of loss $\frac{1}{N}\sum_{k=1}^N|e_{t_i}^{(k)}|^2$ versus $t_i$ for example \ref{example: harmonic}-\ref{example: nonseparable hamiltonian }.}
    }
    \label{error vs time plots}
\end{figure}
}

\subsection{The Linear Quadratic Control (LQC) problem and related HJ equation for inverted pendulum in subsection \ref{example: LQC inverted pendulum }}\label{example: LQC inverted pendulum 1}

This susbsection discusses more details in subbsection \ref{example: LQC inverted pendulum }.

The Linear Quadratic Control (LQC) problem in $\mathbb{R}^d$ \cite{lehtomaki1981robustness}\cite{underactuated} is usually posed as 
\begin{equation}
\begin{split}\label{LQC Control}
  & \min_{\{x_\tau\}_0^T, \{v_\tau\}_0^T }  ~  \int_0^T \frac12 v_\tau^\top R v_\tau + \frac12 x_\tau^\top Q x_\tau ~d\tau + \frac{1}{2} x_T^\top P_1 x_T, \\
  & \textrm{subject to  }  \dot x_\tau = A x_\tau + B v_\tau,  x_\tau|_{\tau=0} = x_0.
\end{split}
\end{equation}
Here we assume that $R, Q, P_1$ are symmetric matrices, $R$ is positive definite, $Q, P_1$ are semi-positive definite. The critical point of this LQC problem solves the following ODE system based on the Pontryagin's minimum principle,
\begin{equation}
\begin{split}\label{Pontryagin Principle of LCQ problem}
  \dot x_\tau = & Ax_\tau + B v_\tau, \quad v_\tau = R^{-1}B^\top \lambda_\tau, \quad x_\tau|_{\tau=0} = x_0,\\
  \dot \lambda_\tau = & - A^\top \lambda_\tau + Qx_\tau, \quad \lambda_T = -P_1 x_T.
\end{split} 
\end{equation}
Furthermore, if we define the value function
\begin{align*} 
  u(x,t) & = \min_{\{x_\tau\}_t^T, ~ \{v_\tau\}_t^T }  ~    \int_t^T  \frac12 v_\tau^\top R v_\tau + \frac12 x_\tau^\top Q x_\tau ~d\tau + \frac{1}{2} x_T^\top P_1 x_T  \\
   & \quad \textrm{subject to  } \dot x_\tau = A x_\tau + B v_\tau, ~ x_\tau|_{\tau=t} = x,
\end{align*}
then it can be verified that $u(\cdot, t)$ solves the following Hamilton-Jacobi equation with terminal condition
\begin{align}
    \frac{\partial u(x,t)}{\partial t} + \underbrace{\min_{v} \left\{\nabla u(x,t)^\top Bv + \frac{1}{2}v^\top R v + \nabla u(x,t)^\top Ax + \frac{1}{2} x^\top Q x \right\}}_{J(x, \nabla u(x))} = 0,  &  \label{HJ w T cond }  \\
    u(x, T) = \frac12 x^\top P_1 x.  &  \nonumber
\end{align}
The term $J(x, \nabla u(x,t))$ takes an explicit form
\begin{equation*}
  J(x, \nabla u(x, t) ) = -\frac12 (B^\top \nabla u(x,t))^\top R^{-1} (B^\top \nabla u(x,t)) + \nabla u(x,t)^\top Ax + \frac12 x^\top Qx.
\end{equation*}
The optimal control $v_\tau$ is given by 
\begin{equation}
  v_\tau = -R^{-1}B^\top \nabla u(x_\tau, \tau).  \label{optimal control of LQC equals grad u from HJ}
\end{equation}

\vspace{0.2cm}

Let we consider the LQC problem of a swarm of agents in which each of them minimizes its own control cost by resolving \eqref{LQC Control}, while their terminal distribution equals to a given probability distribution $\rho_T$. The proposed method readily handles this control problem with terminal density constraints. 

To be more specific, we consider the ``time-reversal'' of the Hamilton-Jacobi equation \eqref{HJ w T cond }, i.e., we denote $\widetilde{u}(x, t) = u(x, T-t)$. This yields $\partial_t \widetilde{u} = -\partial_t u$. Thus $\widetilde{u}$ solves the HJ equation with initial condition
\begin{align}
  \frac{\partial \widetilde{u}(x,t)}{\partial t} + \underbrace{\frac12 (B^\top \nabla \widetilde{u}(x,t))^\top R^{-1} (B^\top \nabla \widetilde{u}(x,t)) - \nabla \widetilde{u}(x,t)^\top Ax - \frac12 x^\top Qx}_{H(x, \nabla u(x))=-J(x, \nabla u(x))} = 0, & \label{HJ w init cond }\\
  \widetilde{u}(x,0) = \frac{1}{2} x^\top P_1x. &  \nonumber
\end{align}
Here we denote the Hamiltonian $H(x, p)$ as
\begin{equation*}
  H(x, p) = \frac12 (B^\top p)^\top R^{-1} (B^\top p) - p^\top Ax - \frac12 x^\top Qx.
\end{equation*}
We then apply the proposed method to \eqref{HJ w init cond } coupled with the initial probability distribution $\widetilde{\rho}_0 = \rho_T$. 

Notice that the associated Hamiltonian system is 
\begin{equation*}
          \dot q_t = \partial_p H(q_t, p_t) , \quad \dot p_t = - \partial_x H(q_t, p_t). \quad \textrm{with } q_0 \sim \widetilde{\rho}_0, ~ p_0 = P_1 q_0.
\end{equation*}
This yields the linear ODE system
\begin{equation*}
    {\left[\begin{array}{c}
        \dot  q_t \\
        \dot  p_t  
    \end{array}\right]} = \left[ \begin{array}{cc}
       -A    &   BR^{-1}B^\top  \\
        Q    &    A^\top
    \end{array} \right] \left[ \begin{array}{c}
         q_t    \\
         p_t    
    \end{array} \right], \quad  \begin{array}{c}
         q_0 \sim \widetilde{\rho}_0,  \\
         p_0 = P_1 q_0. 
    \end{array}
\end{equation*}
We denote $\widetilde{\rho}_T$ as the density of $\mathrm{Law}(q_T)$.

It is worth mentioning that this Hamiltonian system is equivalent to the ODE \eqref{Pontryagin Principle of LCQ problem} obtained from the Pontryagin's minimum Principle up to the transformation $q_t = x_{T-t}, p_t = -\lambda_{T - t}$. 

Recall \eqref{optimal control of LQC equals grad u from HJ} and the fact that $\widetilde{u}$ is the time-reversal of $u$, the optimal control is given by $v_\tau = -R^{-1}B^\top \nabla \widetilde{u}(x_{\tau} , T-\tau )$ for $0\leq \tau \leq T$. In our computation, we evaluate for the neural network-surrogate solution $\nabla \psi_\theta \approx \nabla \widetilde{u}$ of the HJ equation \eqref{HJ w T cond }. To verify the accuracy of $\nabla\psi_\theta$, we compare the trajectory $\{\widehat{x}_\tau\}$ under our learned control
\[ \dot{\widehat{x}}_\tau = -A \widehat{x}_\tau + B R^{-1} B^\top \nabla \psi_\theta(\widehat{x}_\tau, \tau), \quad \widehat{x}_0  \sim  {\rho}_0=\widetilde{\rho}_T, \]
with the dynamic computed from the Pontryagin's minimum principle \eqref{Pontryagin Principle of LCQ problem}.

\vspace{0.3cm}

\subsection{Hyperparameters for the algorithm}\label{append: hyperparams_alg}
{ We summarize the hyperparameters used in our algorithm for each numerical example in the following table. Recall that $L$ is the depth and $\widetilde d$ is the width of the neural network $\psi_\theta$; $M$ denotes the total number of time steps; $M_T$ denotes the number of subintervals used to divide the entire time interval $[0, T]$, which will be explained in details in example \ref{example: harmonic}; $N$ is the number of samples used in our computation; $lr$ is the learning rate for the Adam method; and $N_{\textrm{Iter}}$ denotes the total iteration number.
\begin{table}[htb!]
    \centering
    \begin{tabular}{c|ccccccc}
    \hline
        Example (dimension) & $L$ &  $\widetilde{d}$  & $M$ & $M_T$ & $N$ & $lr$ &  $N_{\textrm{Iter}}$   \\
         \hline\hline
       \ref{example: harmonic} $(d=30)$ & $6$ & $50$ & $200$ & $25$ & $8000$ & $10^{-4}$ & $30000$ \\
       \hline
       \ref{example: sinusoidal_initial} $(d=20)$ & $6$ & $50$ & $30$ & $1$  & $5000$ & $0.5\times 10^{-4}$ & $6000$ \\
       \hline
       \ref{example: Shi Jin} $(d=30)$ & $6$ & $80$ & $100$ &  $1$  & $5000$ & $0.5\times 10^{-4}$ & $6000$ \\
              \hline
       \ref{example: nonseparable hamiltonian } ${  (d=10)}$ & {$6$} & {$50$} & $100$ & $4$ &  $5000$ & {$0.5 \times 10^{-4}$} & $12000$ \\
       \hline
       {\ref{example: LQC inverted pendulum }  }  ${ (d=4)}$ & { $6$} & { $40$}  & { $100$} & { $1$} &  { $5000$} & { $0.5\cdot 10^{-4}$}  & {$20000$ } \\ 
       \hline  
    \end{tabular}
    \caption{Hyperparameters of our algorithm for examples \ref{example: harmonic}-{ \ref{example: LQC inverted pendulum }} and examples in section \ref{more-example}.}
    \label{tab : hyperparameter }
\end{table}
}

{
\section{Discussion on geometric integrator}\label{subsec-4.4}

As discussed in section \ref{sub : alg} and mentioned at the beginning of sections \ref{numerical example separable hamilton } and \ref{more-example}, for the numerical examples demonstrated above, we always apply the one-step St\"ormer-Verlet scheme to resolve the Hamiltonian system associated to the equation. 

In this subsection, we compare the numerical behaviors with and without the usage of geometric integrators in our algorithm based on the Kepler system. Consider the Hamiltonian of two-body dynamic (Kepler system) as $H(x, p) = \frac12|p|^2 - \frac{1}{|x|},$ $x \in \mathbb{R}^2\setminus\{0\}, p\in\mathbb{R}^2$. The associated Hamiltonian system is
\begin{align*}
  \dot x_t = p_t, ~ ~ \dot p_t = - \frac{x_t}{|x_t|^3}.
\end{align*}
We assume $x_0 \sim \rho_0$, and $\rho_0=\mathcal N(\mu_0, \sigma_0^2 I)$ with $\mu_0 = [-3, -3]^\top, \sigma_0=\frac12$. We set $g(x) = \vec{v}^\top x,$ with $\vec{v}=(\frac12, 0)^\top$. Then, $p_0 = \nabla g(x_0) = \vec{v}$ determines the initial momentum. The proposed algorithm is used to compute the numerical solution $\psi_\theta$ up to $T=9$ with time stepsize $h=0.03$. 

Figure \ref{fig: keplera} shows the resulting trajectories in configuration space obtained using different numerical integrators: forward Euler, St\"ormer–Verlet, and Runge–Kutta (RK4). As reflected in this figure, na\"ive schemes such as the forward Euler fail to maintain accuracy over the interval $[0, T]$, even for a moderate terminal time $T$.

We evaluate and verify the conservation of the following Hamiltonian on $t\in [0, T]$:
\[ 
    \bar{H}_\theta(t) := \mathbb{E} \left[ H(\boldsymbol{X}_t, \nabla \psi_\theta(\boldsymbol{X}_t, t)) \right], 
\]
where $\{\boldsymbol{X}_t\}_{0\leq t\leq T}$ denotes the numerical solution of \eqref{correspd Hamiltonian System} computed using a specific numerical integrator. As shown in Figure \ref{fig: keplerb}, the training loss drops to zero, suggesting that no caustics form throughout the dynamics, and that $\nabla\psi_\theta(\boldsymbol{X}_t, t)$ accurately approximates the momentum $\boldsymbol{P}_t$ on $t\in[0, T]$. It is known that the non-symplectic numerical schemes may fail to conserve the Hamiltonian along computed trajectories, particularly for large terminal times $T$. This degradation can adversely affect the quality of the learned solution $\nabla\psi_\theta$, as demonstrated in Figure \ref{fig: keplerc}. 

In contrast, we observe that employing the 4th-order Runge-Kutta(RK4) integrator achieves comparable performance with the 2nd-order symplectic St\"omer-Verlet scheme, likely due to the moderate terminal time $T = 9$. Nevertheless, identifying a clearly superior approach between symplectic and high-order non-symplectic schemes remains a nuanced and open question. 
\begin{figure}[htb!]
    \centering
    \begin{subfigure}{0.238\textwidth}
        \includegraphics[width=\textwidth]{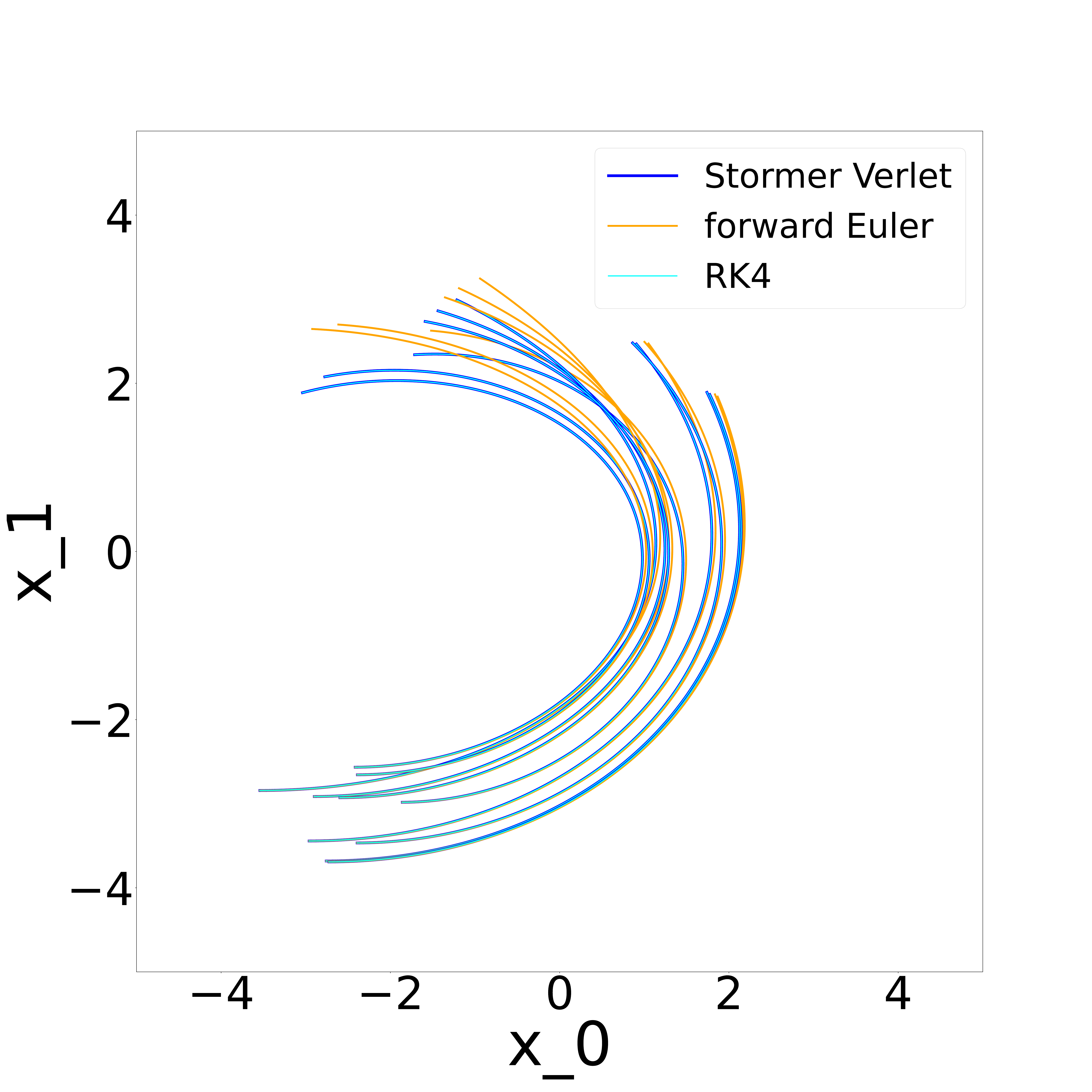}
        \subcaption{}\label{fig: keplera}
    \end{subfigure}
    \begin{subfigure}{0.31\textwidth}
        \includegraphics[width=\textwidth]{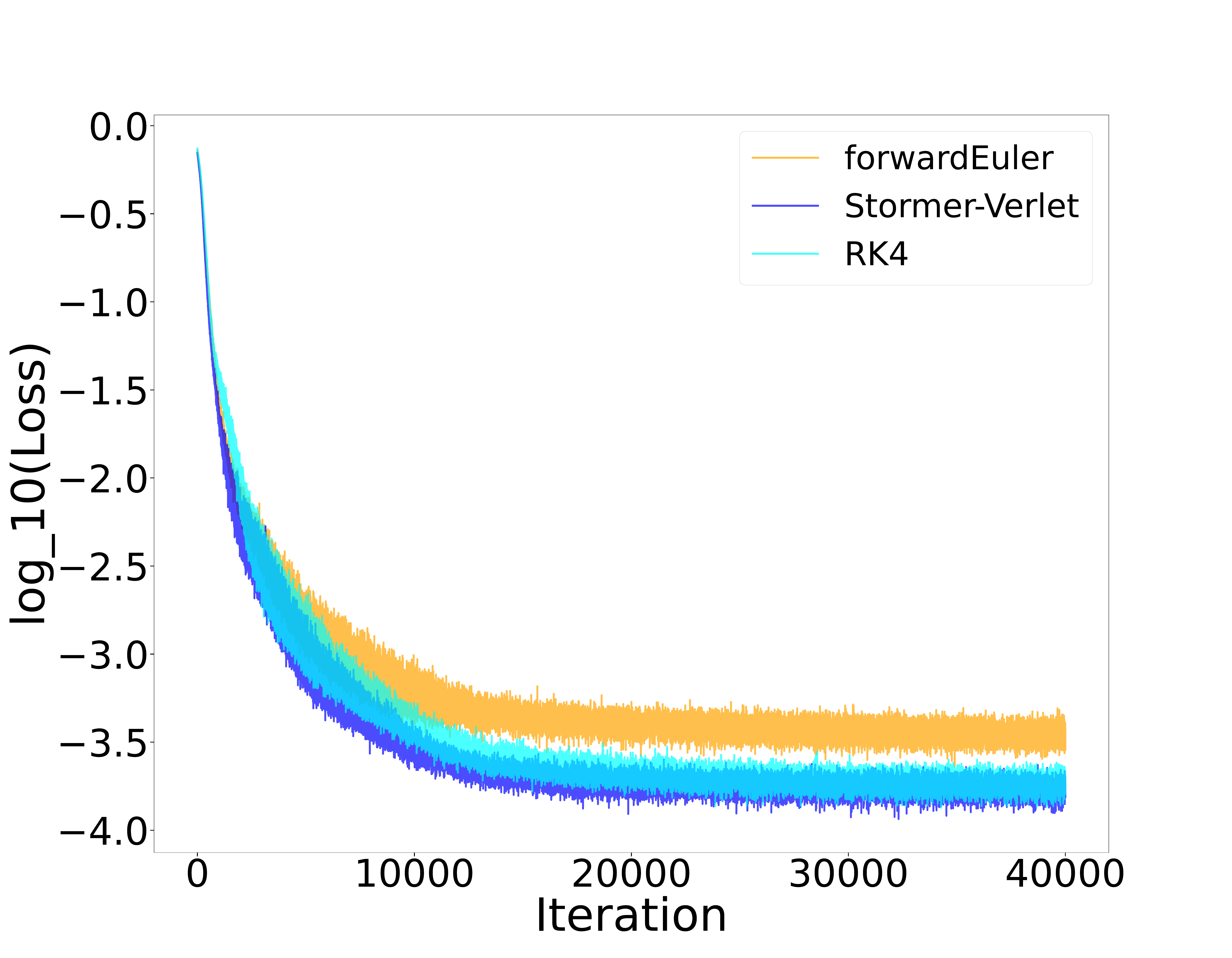}
        \subcaption{}\label{fig: keplerb}
    \end{subfigure}
    \begin{subfigure}{0.434\textwidth}
        \includegraphics[width=\linewidth]{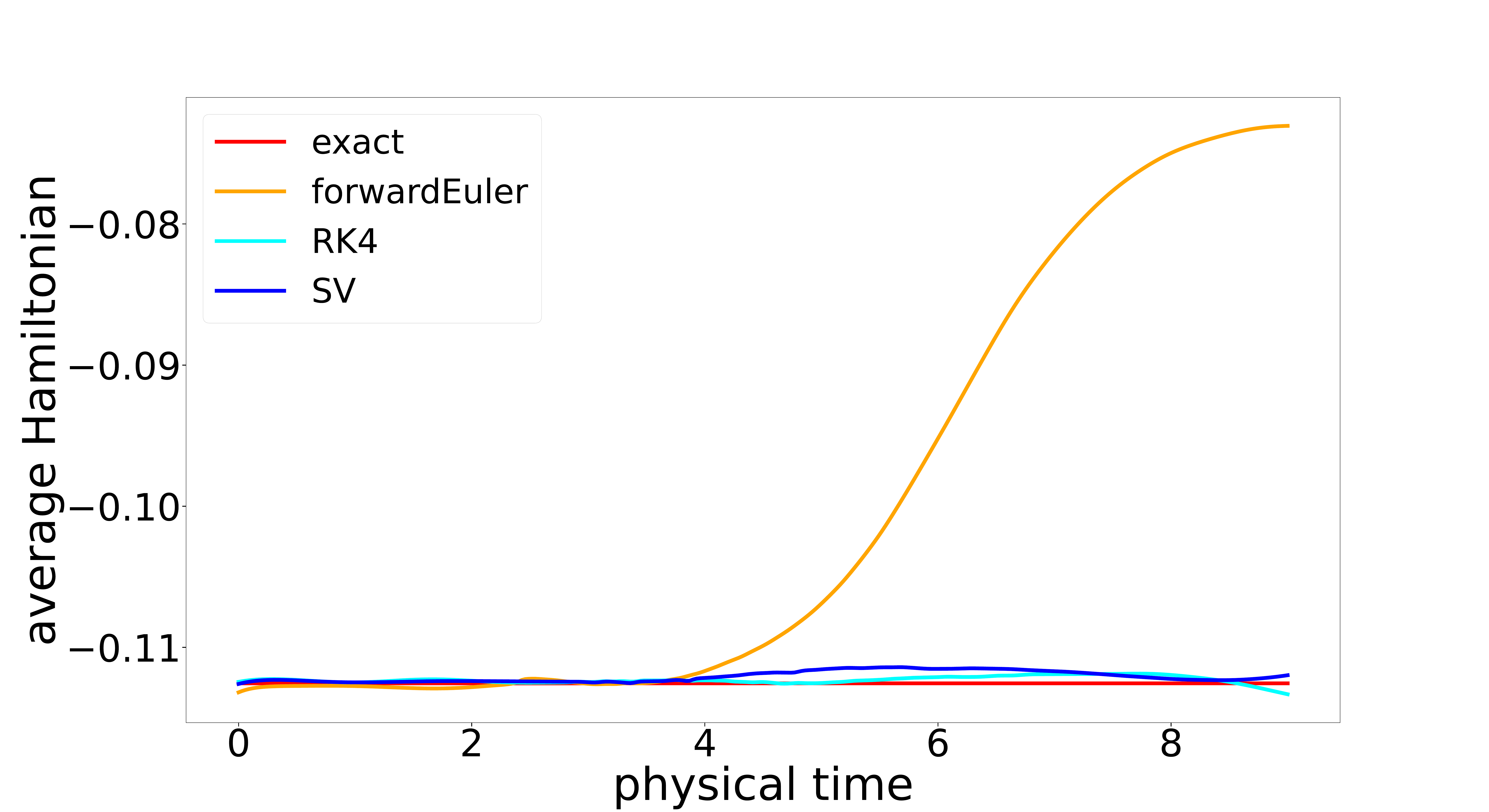}
        \subcaption{}\label{fig: keplerc}
    \end{subfigure}
    \vspace{-0.5cm}
    \caption{\textbf{Left:} Plot of trajectories of the Kepler system for $0\leq t \leq 9$ computed using different numerical integrators; \textbf{Middle:} Training loss (St\"ormer-Verlet scheme) vs. iteration; \textbf{Right:} Plot of average Hamiltonian $\bar{H}_\theta(t)$ vs $t$ for forward Euler, St\"ormer-Verlet, and Runge-Kutta(RK4) schemes.}\label{fig: kepler}
\end{figure}}

\end{document}